\newtheorem{theorem}{Theorem}[section]
\newtheorem{prop}[theorem]{Proposition}
\newtheorem{conj}[theorem]{Conjecture}
\newtheorem{definition}[theorem]{Definition}
\newtheorem{lem}[theorem]{Lemma}
\newtheorem{cor}[theorem]{Corollary}
\theoremstyle{definition}
\newtheorem{remark}[theorem]{Remark}
\newtheorem{example}[theorem]{Example}
\newtheorem{question}[theorem]{Question}
\newtheorem{maintheorem}{Theorem}
\numberwithin{equation}{section}
\newenvironment{private}{}{}
\newcommand{\franco}[2][]{\todo[size=\tiny,color=ForestGreen!30,#1]{#2 \\ \hfill --- Franco}}
\newcommand{\sarah}[2][]{\todo[size=\tiny,color=teal!30,#1]{#2 \\ \hfill --- Sarah}}
\newcommand{\darij}[2][]{\todo[size=\tiny,color=cyan!30,#1]{#2 \\ \hfill --- Darij}}
\newcommand{\symm}{\mathfrak{S}}
\newcommand{\frakt}{\mathfrak{t}}
\newcommand{\syt}{{\sf SYT}}
\newcommand{\word}{\operatorname{word}}
\newcommand{\id}{\operatorname{id}}
\newcommand{\Des}{\operatorname{Des}}
\newcommand{\charr}{\operatorname{char}}
\newcommand{\content}{\mathsf{c}}
\newcommand{\dq}{\mathfrak{d}}
\newcommand{\qbinom}[2]{\genfrac{[}{]}{0pt}{}{#1}{#2}}
\newcommand{\kk}{\mathbf{k}}
\newcommand{\ZZ}{\mathbb{Z}}
\newcommand{\basis}{\mathfrak{B}}
\newcommand{\eigen}{\mathcal{E}}
\DeclareMathOperator{\noninv}{noninv}
\DeclareMathOperator{\aff}{aff}
\DeclareMathOperator{\im}{im}
\newcommand{\sm}{\setminus}
\newcommand{\set}[1]{\left\{ #1 \right\}}
\newcommand{\abs}[1]{\left| #1 \right|}
\newcommand{\tup}[1]{\left( #1 \right)}
\newcommand{\ive}[1]{\left[ #1 \right]}
\DeclareMathOperator{\HH}{\mathcal{H}}
\DeclareMathOperator{\B}{\mathcal{B}}
\DeclareMathOperator{\R}{\mathcal{R}}
\DeclareMathOperator{\RR}{\mathbb{R}}
\DeclareMathOperator{\Rt}{\widetilde{\mathcal{R}}}
\DeclareMathOperator{\M}{\mathcal{M}}
\DeclareMathOperator{\X}{\mathcal{X}}
\title{The $q$-deformed random-to-random family in the Hecke algebra}
\author{Sarah Brauner, Patricia Commins, Darij Grinberg, Franco Saliola}
\date{\today, \currenttime}
\subjclass[2020]{20C08, 20C30, 60J10, 05E10}
\keywords{Hecke algebra, Iwahori--Hecke algebra, symmetric group algebra, random-to-random shuffle, card shuffling, Young--Jucys--Murphy elements, Specht modules, Young tableaux, permutations, algebraic combinatorics, discrete Markov chains, representation theory, q-deformations}
\begin{document}

\maketitle
\begin{abstract}
We generalize Reiner--Saliola--Welker's well-known but mysterious family of
\emph{$k$-random-to-random shuffles} from Markov chains on symmetric groups
to Markov chains on the Type-$A$ Iwahori--Hecke algebras. We prove that the family of
operators pairwise commutes and has eigenvalues that are polynomials in $q$
with non-negative integer coefficients. Our work generalizes work of
Reiner--Saliola--Welker and Lafreni\`ere for the symmetric group, and
simplifies all known proofs in this case.
\end{abstract}

\section{Introduction}

We introduce the \emph{$k$-random-to-random shuffling operators} $\R_{n,k}(q)$, which are elements of the Type-$A$ Iwahori--Hecke algebra $\HH_n(q)$, and show that
\begin{enumerate}
    \item they commute and are generically diagonalizable, and 
    \item their eigenvalues are polynomials in $q$ with non-negative integer coefficients.
\end{enumerate}
These elements encode Markov chains on $\HH_n(q)$.
The case $k=1$ is the \emph{random-to-random} shuffling operator defined and
studied in \cite{r2r1}.
The general-$k$ case was studied for $q = 1$ (that is, in the symmetric group
algebra $\symm_n$) by Reiner, Saliola and Welker \cite[Theorem 1.1]{RSW2014},
Dieker and Saliola \cite{DiekerSaliola}, and Lafreni\`ere \cite{Lafreniere}.
Our work generalizes their results to arbitrary $q$, provides simpler proofs than all previously published proofs of the $q=1$ case, and resolves a Conjecture of Lafreni\`ere about the second largest eigenvalue of each $\R_{n,k}(q)$.

\subsection*{Notation}
To simplify notation, we omit ``$(q)$'' for nonspecific $q$, writing
$\HH_n$ for $\HH_n(q)$ and $\R_{n,k}$ for $\R_{n,k}(q)$, and so on.
The specialization to the $q=1$ case will be denoted, for example, by $\R_{n,k}(1)$.

\subsection*{History of the problem}
This work arose from a long quest of studying Markov chains --- also known as
``random shuffles'' --- on the symmetric groups $\symm_n$.
More precisely, we are interested in those Markov chains that arise from right
multiplication by elements of the group algebra $\RR\ive{\symm_n}$, or more
generally its $q$-deformation the Hecke algebra $\HH_n := \HH_n(q)$.

One of the oldest such Markov chains is the \emph{Tsetlin library}, which --- in one of its forms --- is given by
right multiplication in $\RR\ive{\symm_n}$ by the element
\begin{equation*}
    \B^\ast_n(1) = \sum_{i=1}^n s_{i} s_{i+1} \cdots s_{n-2} s_{n-1}.
\end{equation*}
In probabilistic language, if we view permutations in one-line notation as
decks of $n$ cards labelled $1,2,\ldots,n$, then the Tsetlin library
corresponds to picking a card at random and moving it to the bottom of the
deck, where the bottom card of the deck is the $n$-th position. Thus,
we call $\B^\ast_n(1)$ the \emph{random-to-bottom shuffling element}. (In
the literature, $\B^\ast_n(1)$ is sometimes referred to as ``random-to-top''
by considering the card in position $n$ to be the top card of the deck. This is
the case for instance in \cite{DiekerSaliola}.)

The \emph{random-to-random
shuffling element} is obtained by composing $\B^\ast_n(1)$ with the
\emph{bottom-to-random shuffling element}:
\begin{equation*}
    \R_n(1) = \B^\ast_n(1) \B_n(1)
    \qquad
    \text{and}
    \qquad
    \B_n(1) = \sum_{i=1}^n s_{n-1} s_{n-2} \ldots s_{i+1} s_{i}.
\end{equation*}
This element was introduced by Diaconis and Saloff-Coste in
\cite[p. 2147]{DiaconisSaloffCoste1993}.

Reiner, Saliola and Welker introduced in their monograph \cite{RSW2014} a family of elements they called \emph{$k$-random-to-random shuffling elements} $\R_{n,k}(1)$ in the group algebra of $\symm_n$, which contained $\R_n(1)=: \R_{n,1}(1)$.

Informally, each element $\R_{n,k}(1)$ can be understood as a
(rescaled) \emph{shuffling} process as follows.
Associate a deck of $n$ cards with a
word $w$ of length $n$ with letters $1, 2, \ldots, n$ (i.e., a permutation in
one-line notation). Right multiplication by $\R_{n,k}(1)$ acts on $w$
by randomly selecting $k$ letters $w_{i_1}, \ldots, w_{i_k}$ and placing them
in new random positions. More precisely,
\[ \R_{n,k}(1) = \tfrac{1}{k!} \B^\ast_{n,k}(1) \B_{n,k}(1) \in \ZZ\ive{\symm_n},\]
where the actions of $\B_{n,k}^*(1)$ and $\B_{n,k}(1)$ are defined as follows
(cf. Definition~\ref{def.B-B*}):
\begin{enumerate}[itemsep=1ex, after=\smallskip]
    \item $\B^\ast_{n,k}(1)$ selects $k$ letters $w_{i_1}, \ldots, w_{i_k}$ from a word $w$ and moves them to the end of the word;
    \item $\B_{n,k}(1)$ moves the last $k$ letters in a word $u$ to new positions, chosen with uniform probability.
\end{enumerate}

\begin{example}
    Consider $n = 4$ and $k = 2$. Using one-line notation, $\B_{4,2}^\ast(1)$ is
    \begin{align*}
        \B^\ast_{4,2}(1) &
        = 34{\color{red}12} + 34{\color{red}21} + 24{\color{red}13} + 24{\color{red}31} + 23{\color{red}14} + 23{\color{red}41}
        + 14{\color{red}23} + 14{\color{red}32} + 13{\color{red}24} + 13{\color{red}42} + 12{\color{red}34} + 12{\color{red}43} ,
    \end{align*}
    which is the sum of all permutations that can be obtained from the identity permutation $1234$ by
    moving two letters to the end of the word; the red color indicates the
    moved letters.
    Similarly,
    \begin{align*}
        \B_{4,2}(1) &
        = 12{\color{red}3}{\color{red}4} + 12{\color{red}4}{\color{red}3} + 1{\color{red}3}2{\color{red}4} + 1{\color{red}3}{\color{red}4}2 + 1{\color{red}4}2{\color{red}3} + 1{\color{red}4}{\color{red}3}2
        + {\color{red}3}12{\color{red}4} + {\color{red}3}1{\color{red}4}2 + {\color{red}3}{\color{red}4}12 + {\color{red}4}12{\color{red}3} + {\color{red}4}1{\color{red}3}2 + {\color{red}4}{\color{red}3}12
    \end{align*}
    is the sum of all permutations that can be obtained from $1234$ by moving
    its last two letters.

    Since $\R_{4,2}(1) = \tfrac{1}{2!} \B^\ast_{4,2}(1) \B_{4,2}(1)$,
    the coefficient of $2314$ in $\R_{4, 2}(1)$ is $8/2!$,
    since there are eight ways (shown below) to obtain $2314$ by first moving
    two letters from $1234$ to the end of the word, and then moving those
    letters to new positions.
    {
        \def\r#1{{\color{red}#1}}
        \begin{align*}
            \r{1}23\r{4} \xrightarrow{}  23\r{1}\r{4} \xrightarrow{} 23\r{1}\r{4} \hspace{1cm} & \hspace{1cm} \r{1}23\r{4} \xrightarrow{} 23\r{4}\r{1} \xrightarrow{} 23\r{1}\r{4} \\
            \r{1}2\r{3}4 \xrightarrow{}  24\r{1}\r{3} \xrightarrow{} 2\r{3}\r{1}4 \hspace{1cm} & \hspace{1cm} \r{1}2\r{3}4 \xrightarrow{} 24\r{3}\r{1} \xrightarrow{} 2\r{3}\r{1}4 \\
            \r{1}\r{2}34 \xrightarrow{}  34\r{1}\r{2} \xrightarrow{} \r{2}3\r{1}4 \hspace{1cm} & \hspace{1cm} \r{1}\r{2}34 \xrightarrow{} 34\r{2}\r{1} \xrightarrow{} \r{2}3\r{1}4 \\
            1\r{2}\r{3}4 \xrightarrow{}  14\r{2}\r{3} \xrightarrow{} \r{2}\r{3}14 \hspace{1cm} & \hspace{1cm} 1\r{2}\r{3}4 \xrightarrow{} 14\r{3}\r{2} \xrightarrow{} \r{2}\r{3}14
        \end{align*}
    }
\end{example}

In fact, the family $\R_{n,k}(1)$ are examples of a more general construction introduced by Reiner--Saliola--Welker called \emph{symmetrized shuffling operators}. An important tool in their study is the theory of random walks on hyperplane arrangements initiated by Bidigare, Hanlon and Rockmore (BHR) in their seminal work \cite{BHR} and subsequently developed by Brown and Diaconis \cite{BrownDiaconis1998, Brown2000}. 

The symmetrized shuffling operators are obtained by composing a BHR random walk
with its transpose.  The term ``symmetrized'' refers to the fact that these
operators can be realized as a product of a real matrix with its transpose, and
hence are always symmetric. Reiner--Saliola--Welker introduced one such
operator $\nu_{\lambda}$ for every integer partition $\lambda \vdash n$, each
corresponding to a Markov chain on the symmetric group $\symm_n$.

The monograph \cite{RSW2014} posed several questions and puzzles about the symmetrized shuffling operators;
some of them remain open to this day.
For instance, Reiner--Saliola--Welker identified just \emph{two} families of partitions
giving rise to pairwise commuting elements in $\RR[\symm_n]$:
\begin{enumerate}[itemsep=1ex, after=\smallskip]
    \item the family $\{ \nu_\lambda \}$ with $\lambda = (n-k,1^k)$ for $1 \leq k \leq n-1$, and
    \item the family $\{ \nu_\lambda \}$ with $\lambda = (2^k, 1^{n-2k})$ for $1 \leq k \leq \lfloor n/2 \rfloor$.
\end{enumerate}
The first family is precisely the $k$-random-to-random shuffling elements:
\[ \R_{n,k}(1) = \nu_{(n-k,1^{k})}\]
which are the focus of our study---and generalization---here. A forthcoming paper will resolve conjectures regarding the second family.
It is suspected, but still unproved (\cite[Conjecture 1.7]{RSW2014}), that there are no further nontrivial commutativities between symmetrized shuffling operators.

Extending a conjecture for $\R_n(1)$ of Uyemura-Reyes
\cite{UyemuraReyes2002}, Reiner--Saliola--Welker conjectured that both families
of operators had eigenvalues in $\ZZ_{\geq 0}$, and were able to prove it
for the second family
(albeit without giving a manifestly positive formula).

The spectrum of $\R_{n,k}(1)$
remained elusive for nearly two decades, until
Dieker and Saliola~\cite{DiekerSaliola} proved that $\R_{n,1}(1)$ had
non-negative integer eigenvalues using the representation theory of the
symmetric group.  In her thesis, Lafreni\`ere \cite{Lafreniere} built on their
results to resolve the full conjecture for $\R_{n,k}(1)$, and provided a second
proof that these elements pairwise commute.
The proofs of these results were quite technical and specific to the symmetric
group, and many enigmatic features of the story endured.

One intriguing mystery was the emergence of the \emph{content} of a partition
in the Dieker--Saliola formula for the eigenvalues of $\R_{n,1}(1)$, which
hinted at a link to the \emph{Young--Jucys--Murphy elements} for the symmetric group.
This connection was elucidated in recent work by the first and second authors, along
with Axelrod-Freed, Chiang and Lang~\cite{r2r1},
who introduced a $q$-deformed version of $\R_{n,1}(1)$ in the Hecke algebra
$\HH_n$ and proved that its eigenvalues are in $\ZZ_{\geq 0}[q]$. Importantly,
the proof established the connection to the Young--Jucys--Murphy elements of
$\HH_n$, which opened the door to a variety of techniques from representation
theory.

\subsection*{Main results}
Here, we extend and deepen the perspective introduced in \cite{r2r1}.
We generalize the random-to-random shuffling elements $\R_{n,k}(1)$ in $\RR[\symm_n]$
to the \emph{$q$-deformed random-to-random shuffling elements $\R_{n,k}$} in $\HH_n$.
Each $\R_{n,k}$ is defined as a product
\[ \R_{n,k} := \tfrac{1}{[k]!_q}\B_{n,k}^* \B_{n,k} , \]
where $\B_{n,k}$ is a sum of elements in $\HH_n$ indexed by minimal right
coset representatives of the parabolic Young subgroup $\symm_{(n-k,1^k)}$,
and $[k]_q$ and $[k]!_q$ are the $q$-integers and the $q$-factorials defined by
\[
[k]_q = 1 + q + \cdots + q^{k-1}
\qquad \text{ and } \qquad
[k]!_q = [1]_q [2]_q \cdots [k]_q.
\]

This deformation is motivated in part by recent interest in Markov chains on Hecke algebras; for example, Bufetov showed that many interacting particle systems arising in statistical mechanics can be understood as random walks on $\HH_n$~\cite{Bufetov2020}. His work implies that questions of convergence of Markov chains are well-posed; from this perspective, it is of significant interest to understand the eigenvalues of such Markov chains.

On the algebra-combinatorial side, our study of the family $\{ \R_{n,k}
\}$ will uncover new properties of the Hecke algebra, while also
shedding light on the original mysterious family $\{ \R_{n,k}(1) \}$ in
$\RR[\symm_n]$. In particular, specializing our arguments to $q=1$,
we recover the work of
Reiner--Saliola--Welker \cite{RSW2014} (item~\ref{TheoremA-i} below), Dieker--Saliola
\cite{DiekerSaliola} (item~\ref{TheoremA-iii} for $k=1$) and Lafreni\`ere
\cite{Lafreniere} (items~\ref{TheoremA-i} and~\ref{TheoremA-iii}),
each time obtaining a simpler proof than previously known.
Our main results are summarized in the following theorem:

\begin{maintheorem}
    \label{TheoremA}
    Let $n \geq 0$. Then the following holds for $\{ \R_{n,k} \}_{k\geq 0}$:
    \begin{enumerate}[label={(\roman*)}, itemsep=1ex, after=\medskip]
        \item\label{TheoremA-i} (Theorem~\ref{thm:commut}) The elements $\R_{n,k}$ pairwise commute;
        \item\label{TheoremA-ii} (Proposition~\ref{prop:nested-kernels} (d)) When $q \in \RR_{>0}$, the elements $\R_{n,k}$ (acting from the right on $\HH_n$) are diagonalizable with a common eigenbasis defined in Theorem~\ref{thm:eigenbasis} and Corollary~\ref{cor:eigenbasisH}, and in fact, there is a containment of images 
        \[
            \HH_n = \im(\R_{n,0}) \supseteq \im(\R_{n,1}) \supseteq \im(\R_{n,2}) \supseteq \cdots
        \]
        \item\label{TheoremA-iii} (Theorem~\ref{thm:positive}) For any $q \in \mathbb{C}$, the eigenvalues of $\R_{n,k}$ are in $\ZZ_{\geq 0}[q]$.
    \end{enumerate}
\end{maintheorem}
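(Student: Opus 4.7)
The plan is to attack all three parts of Theorem~\ref{TheoremA} simultaneously by diagonalizing the family $\{\R_{n,k}\}_k$ in a common Gelfand--Tsetlin/seminormal basis of $\HH_n$, so that commutation (i), the nested image statement (ii), and positivity (iii) emerge from a single structural picture built on the Young--Jucys--Murphy (JM) elements, extending the $k=1$ argument of \cite{axelrodfreed2024spectrumrandomtorandomshufflinghecke}.

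First, I would reinterpret the factorization $\R_{n,k} = \tfrac{1}{[k]!_q}\B^\ast_{n,k}\B_{n,k}$ categorically. Since $\B_{n,k}$ is a sum over the minimal right coset representatives for the parabolic $\symm_{(n-k,1^k)}$, right multiplication by $\B_{n,k}$ realizes (a rescaled) restriction map $\HH_n \to \HH_{n-k}\text{-mod}$, and $\B^\ast_{n,k}$ is its adjoint under the standard anti-involution $T_w \mapsto T_{w^{-1}}$, implementing the corresponding induction. Thus $\R_{n,k}$ is a scaled induction--restriction composite along the chain $\HH_{n-k}\subset \HH_n$. This description ties $\R_{n,k}$ to the tower $\HH_1\subset \HH_2\subset \cdots \subset \HH_n$ that governs the JM theory.

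Next, for (i), I would promote the $k=1$ identity of \cite{axelrodfreed2024spectrumrandomtorandomshufflinghecke} — which writes $\R_{n,1}$ as a polynomial in the JM elements of $\HH_n$ — to arbitrary $k$ by induction, at each step using a recursion expressing $\R_{n,k}$ in terms of $\R_{m,1}$-type factors for $n-k \leq m \leq n$. The outcome is that every $\R_{n,k}$ lies in the commutative Gelfand--Tsetlin subalgebra generated by the JM elements of $\HH_1,\ldots,\HH_n$, which forces pairwise commutation. For (ii), at $q\in\RR_{>0}$ the algebra $\HH_n$ is semisimple and has a seminormal basis $\{v_T\}_{T\in \syt(\lambda),\,\lambda\vdash n}$ that simultaneously diagonalizes every JM element; by the previous step the $\R_{n,k}$ act diagonally on an explicit basis $\{b_T\}$ of $\HH_n$ (viewed as its own regular module) built from the seminormal idempotents along the branching chain. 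The nested images then reflect the fact that $\R_{n,k+1}$ kills every $b_T$ that does not have ``room'' for an additional removable box at the tail end of the branching chain, a strictly stronger vanishing condition than for $\R_{n,k}$.

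Finally, for (iii), the eigenvalue of $\R_{n,k}$ on $b_T$ should take the product form
\[
\R_{n,k} \cdot b_T \;=\; \Bigl(\prod_{j=n-k+1}^{n} [d_j(T)]_q\Bigr)\, b_T,
\]
with each $d_j(T)\in\ZZ_{\geq 0}$ a difference of content/hook-length quantities associated to the $j$-th box of $T$; at $q=1$ this must specialize to the Dieker--Saliola--Lafreni\`ere formulas. Positivity is then immediate since $[d]_q \in \ZZ_{\geq 0}[q]$ for $d\in\ZZ_{\geq 0}$, and the general-$q$ statement follows from the semisimple case by a standard continuity/integrality argument (the characteristic polynomial of $\R_{n,k}$ acting on $\HH_n$ has coefficients in $\ZZ[q]$, and its roots are determined by the $\RR_{>0}$ case). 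The main obstacle is establishing the explicit product formula for the eigenvalues while preserving integrality and non-negativity of the $d_j(T)$: already the $k=1$ case required a delicate JM-identity in $\HH_n$, and for general $k$ one must propagate this through the induction without collapsing factors into quantities that cannot be certified as $q$-integers.
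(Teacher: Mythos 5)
Your plan hinges on the claim that every $\R_{n,k}$ lies in the commutative Gelfand--Tsetlin subalgebra generated by the Young--Jucys--Murphy elements, so that the seminormal basis simultaneously diagonalizes the whole family. That claim is false: any element of that subalgebra commutes with $J_n$, but already for $n=3$, $q=1$ one has $\R_{3,1}(1) = 3 + 2(1,2) + 2(2,3) + (1,2,3) + (1,3,2)$ and $J_3 = (1,3)+(2,3)$, and a direct computation gives $\R_{3,1}(1)\,J_3 - J_3\,\R_{3,1}(1) = 2\big((1,2,3)-(1,3,2)\big) \neq 0$. (The prior work you cite does not write $\R_{n,1}$ as a polynomial in JM elements; the identity there is of the form $\B_n\B_n^* = \ive{n}_q + q^nJ_n + \B_{n-1}^*T_{n-1}\B_{n-1}$, i.e., a recursion modulo a lower-rank correction, not membership in a commutative subalgebra.) Consequently the common eigenbasis cannot be the seminormal basis, and your routes to (i) and (ii) collapse. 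In the paper, commutativity comes from the intertwining recursion $\B_n\R_{n,k} = \big(q^k\R_{n-1,k} + (\ive{n+1-k}_q + q^{n+1-k}J_n)\R_{n-1,k-1}\big)\B_n$ combined with the $*$-anti-involution and induction on $n$; the eigenbasis is built by a lifting construction, applying $\Phi_{\lambda\sm\mu}\,p_{\frakt^{\lambda\sm\mu}}\,\B_{n,n-\abs{\mu}}$ to vectors in $\ker\big(\R_{\abs{\mu},1}\vert_{S^\mu}\big)$; and the nested images follow from nested kernels ($\ker\B^*_{n,k-1}\subseteq\ker\B^*_{n,k}$ together with $\ker\R_{n,k}=\ker\B^*_{n,k}$ for $q>0$), not from a vanishing pattern of seminormal vectors.

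The proposed eigenvalue formula $\prod_{j=n-k+1}^{n}[d_j(T)]_q$ is also untenable. Even for $k=1$ the eigenvalues are sums, not products, of $q$-integers (namely $\sum_{\ell>j} q^{n-\ell}[\ell+\content_{\frakt^{\lambda\sm\mu},\ell}]_q$), and Example~\ref{exa:non-q-ints} exhibits the eigenvalue $q[4]_q+[3]_q$ of $\R_{5,1}$ on $S^{(2,2,1)}$, which has non-abelian Galois group and so is not a product of (quotients of) $q$-integers; moreover eigenvalues are indexed by horizontal strips $\lambda\sm\mu$ with multiplicities $d^\mu$, not by the boxes $n-k+1,\ldots,n$ of a single tableau. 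Positivity must therefore be argued for the sum-over-chains expression \eqref{eq:def-eigen}, where each factor $\dq_{\ell_m,m,\lambda\sm\mu}$ is a nonnegative $q$-integer and the prefactor $q^{nk-\binom{k}{2}}$ clears the negative powers. Your final step --- transferring from the generic/semisimple case to arbitrary $q$ by integrality/polynomiality of the characteristic polynomial --- is sound and is indeed how the paper proceeds, but it only becomes available once the eigenvalues are correctly identified.
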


Theorem~\ref{TheoremA} is proved by discovering a remarkable recursion relating the elements $\B_{n,k}$; working modulo the kernel of $\B_{n,k}^*$ yields a recursive relationship between the $\R_{n,k}$. In particular, letting $J_n$ be the $n$-th \emph{Young--Jucys--Murphy element} of $\HH_n$ (defined in equation~\eqref{eq:JMelementdef}), we prove the following.

\begin{maintheorem}[Theorem~\ref{thm:BnRnk}]
    \label{TheoremB}
For any $1 \le k \le n$, we have
\begin{equation*}
     \B_n  \R_{n, k}  = \Big(q^k \R_{n-1, k}  + \Big( \ive{n+1-k}_q + q^{n + 1 - k}J_n \Big) \R_{n - 1, k - 1} \Big)\B_n .
\end{equation*}
\end{maintheorem}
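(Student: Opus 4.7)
The plan is to prove Theorem~\ref{TheoremB} following the outline hinted at just before the statement: first establish a recursion for the element $\B_{n,k}$ (in terms of $\B_{n-1,k}$ and $\B_{n-1,k-1}$, possibly modulo error terms), then multiply through by $\B_{n,k}^*$ and $\B_n$ to deduce the claimed identity for $\R_{n,k}$.

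Concretely, the element $\B_{n,k}$ is a sum over minimal coset representatives of $\symm_{(n-k,1^k)}$ in $\symm_n$, which I would partition according to whether position $n$ is among the $k$ distinguished positions. The two pieces should, up to explicit Hecke algebra prefactors, yield $\B_{n-1,k}$ and $\B_{n-1,k-1}$ respectively, together with additional terms that must lie in the left annihilator of $\B_{n,k}^*$. Multiplying this decomposition by $\B_n$ and sliding $\B_n$ through using the quadratic relation $T_i^2 = (q-1) T_i + q$ and the braid relations of $\HH_n$ should produce a recursion for $\B_n \B_{n,k}$. The $q$-powers $q^k$, $\ive{n+1-k}_q$, and $q^{n+1-k}$ in the statement should emerge directly from this commutation bookkeeping, while the Young--Jucys--Murphy element $J_n$ should arise because summing the transposition-like contributions produced when the symbol $n$ is shuttled past the remaining letters reconstructs $J_n$ (cf.~\eqref{eq:JMelementdef}) up to the indicated scalar shift $\ive{n+1-k}_q$.

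Multiplying by $\B_{n,k}^*$ on the left then kills the junk terms and replaces $\B_{n-1,j}^* \B_{n-1,j}$ by $[j]!_q \R_{n-1,j}$ for $j \in \{k-1, k\}$; dividing by $[k]!_q = [k]_q [k-1]!_q$ produces the stated identity. The main obstacle is the careful combinatorial accounting needed to match every $q$-factor exactly and to verify that the extra terms in the $\B_{n,k}$ recursion indeed vanish upon multiplication by $\B_{n,k}^*$. A natural sanity check is the base case $k=1$, which should reduce to the recursion for $\R_{n,1}$ established in \cite{axelrodfreed2024spectrumrandomtorandomshufflinghecke} and pin down the normalization of $J_n$; once this is verified, the general case should follow either by direct calculation (exploiting the anti-involution $T_w \mapsto T_{w^{-1}}$ of $\HH_n$ to relate $\B_{n,k}$ and $\B_{n,k}^*$) or by induction on $k$, with the YJM term being the most delicate ingredient to control.
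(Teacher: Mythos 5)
Your overall skeleton --- induct on $k$, use the $k=1$ identity from \cite{axelrodfreed2024spectrumrandomtorandomshufflinghecke} as the source of $J_n$, and dispose of correction terms by an annihilation argument --- is the same as the paper's, but as written the proposal has a genuine gap: the two computations on which everything rests are neither precisely formulated nor carried out. Note first that the object you need to control is $\B_n\B^*_{n,k}$, not $\B_n\B_{n,k}$: in $\B_n\R_{n,k} = \frac{1}{\ive{k}!_q}\,\B_n\B^*_{n,k}\B_{n,k}$ the factor adjacent to $\B_n$ is $\B^*_{n,k}$, so a ``recursion for $\B_n\B_{n,k}$'' obtained by splitting the coset representatives of $\B_{n,k}$ does not address the crux (unless you pass to the $*$-mirrored statement, which you only hint at). The paper's proof hinges on the explicit identity $\B_n\B^*_{n,k} = \B^*_{n-1,k}\bigl(T_{n-1}T_{n-2}\cdots T_{n-k}\bigr)\B_{n-k} + \Lambda_{n,k}$ (Proposition~\ref{prop:rewritingBnkstar}), with a concretely defined correction $\Lambda_{n,k}=\B^*_{n-1,k-1}\gamma_{n,k}$, proved by induction on $k$ from Lemma~\ref{k1recursion}; and then on a second, separate induction (Proposition~\ref{identity-mod-ker}) showing $\gamma_{n,k} \equiv \ive{k}_q\bigl(\ive{n+1-k}_q + q^{n+1-k}J_n\bigr)$ modulo $\ker\B_{n,k}$, which requires Lemma~\ref{TiJi} ($T_{n-1}J_{n-1}=J_n(T_{n-1}-q+1)-1$), the relation $T_j\B_{n,k}=q\B_{n,k}$ for $n-k<j<n$ from \eqref{eq:uI:TjBnk}, and several $q$-integer identities. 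In your sketch all of this is replaced by ``the $q$-powers should emerge from commutation bookkeeping'' and ``$J_n$ should arise when $n$ is shuttled past the remaining letters''; that is exactly the content to be proved, and the coefficient $\ive{n+1-k}_q + q^{n+1-k}J_n$ does not fall out of a naive splitting.

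Second, your picture of the correction terms is structurally off. You expect the extra terms to lie in the left annihilator of $\B^*_{n,k}$ and to ``vanish upon multiplication by $\B^*_{n,k}$'' (note also that membership in the left annihilator controls $a\B^*_{n,k}$, whereas multiplying your decomposition on the left by $\B^*_{n,k}$ produces $\B^*_{n,k}a$, which that hypothesis does not kill). In the actual argument the correction $\Lambda_{n,k}$ does \emph{not} vanish: after right multiplication by $\B_{n,k}$ it yields the entire summand $\ive{k}_q\bigl(\ive{n+1-k}_q + q^{n+1-k}J_n\bigr)\B^*_{n-1,k-1}\B_{n,k}$ (Corollary~\ref{cor:lambda}); only the sub-piece $I_{n,k}$ of $\gamma_{n,k}$ dies, and it dies against $\B_{n,k}$ on the right. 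So a decomposition of $\B_{n,k}$ by whether $n$ is among the moved positions, with the leftover discarded as junk, cannot produce the $J_n$-term with the correct coefficient; you must identify the leftover exactly and compute its product with the other shuffle modulo the appropriate kernel. Until the analogues of Proposition~\ref{prop:rewritingBnkstar} and of Proposition~\ref{identity-mod-ker}/Corollary~\ref{cor:lambda} are stated and proved, what you have is a plan rather than a proof.
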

This recursion is the first key ingredient in proving each item in Theorem~\ref{TheoremA},
yielding in particular item~\ref{TheoremA-i} without too much trouble.
The remaining parts require a second ingredient, which is the representation theory of $\HH_n$; we use this theory to inductively construct a shared eigenbasis for the $\R_{n,k}$
(for $q \in \RR_{>0}$; such an eigenbasis does not always exist for negative or complex $q$).

Recall that when $\HH_n$ is semisimple,
the irreducible representations of $\HH_n$ --- like those of the symmetric group --- can be described as Specht modules $S^\lambda$ for $\lambda$ an integer partition of $n$.
We show that the eigenvalues of each $\R_{n,k}$ acting on $S^\lambda$ are parametrized by horizontal strips $\lambda \sm \mu$ with a certain restriction on $\mu$ (in terms of so-called \emph{desarrangement tableaux}).
The eigenvalue indexed by $\lambda \sm \mu$ is recursively constructed from the (smaller) Specht module $S^\mu$ by a ``lifting'' procedure.

In general, a closed form expression for the eigenvalues of $\R_{n,k}$, as well as a computation of precise multiplicities, is complicated. We give an explicit general formula for the eigenvalues in equation \eqref{eq:def-eigen}, which we use to prove Theorem~\ref{TheoremA}\ref{TheoremA-iii}. We show the general expression simplifies nicely in the following special cases, which proves and generalizes a conjecture of Lafreni\`ere about the second-largest eigenvalue~\cite[Conjecture 62]{Lafreniere} when $q=1$.

\begin{maintheorem}\label{TheoremC}   
 For $0 \leq j,\ell,k \leq n$, the eigenvalue $\eigen_{\lambda \sm \mu}(k)$ for partitions $\lambda = (n-\ell,1^\ell)$ and $\mu = (j, 1^{\ell})$ is given by 
 \begin{equation}\label{eq:eigenformulaintro}
	\eigen_{(n-\ell,1^\ell) \sm (j,1^\ell)}(k) =
    \ive{ k}!_q \qbinom{n-j-\ell}{k}_q \qbinom{n+j}{k}_q.
    \end{equation}  
When $q \in \RR_{>0}$, the case $\ell = j = 0$ yields the largest eigenvalue of $\R_{n,k}$:
\begin{equation}\label{eq:eigenformulaintro0}
    \eigen_{(n) \sm \emptyset}(k) = [k]!_q \qbinom{n}{k}_q^2.
    \end{equation}
When $q \in \RR_{>0}$, the case $\ell = j = 1$ yields the second-largest eigenvalue of $\R_{n,k}$:
    \[ \mathcal{E}_{(n-1,1) \sm (1,1)}\left(  k\right) = [k]!_q \qbinom{n-2}{k}_q\qbinom{n+1}{k}_q. \]



\end{maintheorem}
This theorem is detailed in Propositions~\ref{prop:evals-n},~\ref{prop:evals-n-1-1},~\ref{prop:evals-n-l-1l},~\ref{prop:max2}.
We further show that the case $\ell = 1$ and $j \in [n-1]$ above describes all of the eigenvalues of $\R_{n,k}$ acting on the Specht module $S^{(n-1,1)}$. The case $\ell = j = 0$ yields the unique eigenvalue from the $\R_{n,k}$ action on $S^{(n)}$. More generally, the formula in \eqref{eq:eigenformulaintro} describes some of the eigenvalues obtained from the $\R_{n,k}$ action on $S^{(n-\ell, 1^\ell)}$.



	

We note that our proof of Theorem~\ref{TheoremA} item~\ref{TheoremA-i} is self-contained except for some elementary computations in $\HH_n$ that we recall from \cite[Proposition 3.10, Lemma 3.12, (5.2)]{r2r1}.
The proofs of items~\ref{TheoremA-ii} and~\ref{TheoremA-iii}, on the other hand, rely on deeper results from \cite{r2r1} and prior work on Hecke algebras.
However, a weaker version of item~\ref{TheoremA-iii}, saying only that the eigenvalues are in $\ZZ[q]$ (rather than $\ZZ_{\geq 0}[q]$), is reproved using relatively elementary tools in Appendix~\ref{appendix.splitelements}.
We suspect that the latter tools might be reusable in other settings when an integrality of eigenvalues is to be shown (e.g., in algebraic graph theory or representation theory).

The remainder of the paper proceeds as follows:
\begin{itemize}[topsep=1ex, itemsep=1ex]
    \item In Section~\ref{section.background}, we review the definition of the Hecke algebra $\HH_n$, define the elements $\B_{n,k}$, $\B_{n,k}^*$ and $\R_{n,k}$, and introduce the Young--Jucys--Murphy elements $J_n$.
    \item  Section~\ref{section.recursion} introduces and subsequently proves our main recursion (Theorem~\ref{TheoremB} / Theorem~\ref{thm:BnRnk}).
    This lays the groundwork for the proofs in the remaining sections.
    \item We then
    prove the commutativity of the $\R_{n,k}$ in Section~\ref{section.commutativity} (Theorem~\ref{thm:commut}).
    This proves Theorem~\ref{TheoremA}\ref{TheoremA-i}.
    \item In Section~\ref{section.eigenvalues}, we build on results from \cite{r2r1} to compute an eigenbasis for the $\R_{n,k}$'s when $q$ is a positive real number. This allows us to prove Theorem~\ref{TheoremA}\ref{TheoremA-ii}.
    We also find explicit expressions for the eigenvalues of the $\R_{n, k}$'s, which help prove Theorem~\ref{TheoremA}\ref{TheoremA-iii}. Finally, we prove Theorem~\ref{TheoremC}.
    \item Appendix~\ref{sec:yjm-proofs} proves folklore theorems used throughout the paper.
        Appendix~\ref{sec:commut-pf2} gives another proof of Theorem~\ref{TheoremA}\ref{TheoremA-i}.
        Appendix~\ref{appendix.splitelements} develops a theory of split elements that can be used to prove the integrality of eigenvalues of linear operators recursively, and provides an alternative proof of Theorem~\ref{TheoremA}\ref{TheoremA-iii} without the positivity. Finally, for the curious reader, the polynomial expansion of the nonzero eigenvalues of $\R_{n, k}$ for $3 \leq n \leq 5$ is included in Appendix~\ref{sec:Appendix-Raw-Data}.
\end{itemize}

\section{The $q$-deformed $k$-random-to-random shuffles} \label{section.background}
\subsection{The Hecke algebra $\HH_n$}
\label{section:HHbackground}

We study the Hecke algebras of the symmetric groups $\symm_n$ over some arbitrary field $\kk$.
We briefly recall their definition.

Let $n$ be a nonnegative integer.
The symmetric group $\symm_n$ is the group of all permutations of the set $\ive{n} := \set{1,2,\ldots,n}$, and is generated by the simple transpositions $s_i = \left(i,i+1\right)$ for all $i \in \ive{n-1}$.
We will use cycle notation for permutations unless declared otherwise.
Permutations are multiplied by the rule $\tup{\alpha \beta}\tup{i} = \alpha\tup{\beta\tup{i}}$ for all $\alpha, \beta \in \symm_n$ and $i \in \ive{n}$.

We fix $q \in \kk$.
The \emph{Hecke algebra} $\HH_n$ (often denoted as $\HH_n(q)$, but we omit the mention of $q$) is the (unital, associative) $\kk$-algebra generated by elements $T_1, T_2, \ldots, T_{n-1}$ subject to the relations
\begin{align}
T_i^2 &= \tup{q-1} T_i + q \qquad \text{for all } i \in \ive{n-1};
\label{eq:Hndef:1} \\
T_i T_j &= T_j T_i \qquad \text{whenever } \abs{i-j} > 1;
\label{eq:Hndef:2} \\
T_i T_{i+1} T_i &= T_{i+1} T_i T_{i+1} \qquad \text{for all } i \in \ive{n-2}.
\label{eq:Hndef:3}
\end{align}
As a $\kk$-vector space, $\HH_n$ has a basis $\tup{T_w}_{w \in \symm_n}$, defined by the formula
\begin{align}
T_w &= T_{i_1} T_{i_2} \cdots T_{i_k}
\label{eq:Hndef:Tw=}
\end{align}
if $s_{i_1} s_{i_2} \cdots s_{i_k}$ is a reduced expression\footnote{A \emph{reduced expression} for a permutation $w \in \symm_n$ means a way of writing $w$ as a product of the form $s_{i_1} s_{i_2} \cdots s_{i_k}$ with minimum $k$. This minimum $k$ is called the \emph{(Coxeter) length} of $w$, and equals the number of inversions of $w$.} for $w$.
In particular, $T_{\id} = 1$ and $T_{s_i} = T_i$ for each $i \in \ive{n-1}$.
We refer to \cite{Mathas} and \cite[Section 7.4]{Humphreys} for more in-depth introductions to Hecke algebras.

There is a unique algebra anti-automorphism of $\HH_n$
that sends each basis vector $T_{w}$ to $T_{w^{-1}}$ and thus, in
particular, preserves all the generators $T_{1},T_{2},\ldots ,T_{n-1}$.
Following \cite[Lemma 2.3]{Murphy95}, we
shall denote this anti-automorphism by $a\mapsto a^\ast$.
Moreover, the map
$a\mapsto a^\ast$ is an involution, i.e., we have $a^{\ast \ast
}=a$ for each $a\in \HH_n$.

In what follows, we will often assume that $q$ is an indeterminate\footnote{That is, $\kk$ will be a field of rational functions $\mathbf{m}\left(q\right)$ over a smaller field $\mathbf{m}$.}.
Many of our proofs will require that $q$ is nonzero and not a root of unity of order $\leq n$. 
However, we can always specialize our indeterminate $q$ to any specific value in $\kk$, and all our results will remain true upon this specialization as long as they are well-defined (e.g., the denominators don't become zero).
In particular, for the specialization $q = 1$, the basis elements $T_w$ of $\HH_n$ become the permutations $w \in \symm_n$ in the group algebra $\kk\left[\symm_n\right]$.
When we specialize an element $a \in \HH_n$ by setting $q = 1$, we will denote the resulting element by $a\tup{1}$, to distinguish it from the general case.

We will use the $q$-integers $\ive{k}_q := q^0 + q^1 + \cdots + q^{k-1}$ and the $q$-factorials $\ive{k}!_q = \ive{1}_q \ive{2}_q \cdots \ive{k}_q$ for all $k \in \mathbb{N}$.
These are elements of $\kk$.

We implicitly consider the Hecke algebras of all symmetric groups to form a chain of subalgebras:
\[
\HH_0 \subseteq \HH_1 \subseteq \HH_2 \subseteq \cdots
\]
by identifying homonymous generators $T_i$ of different Hecke algebras.

\subsection{The shuffles}

In this work, we will focus on a series of elements of these Hecke algebras, the simplest of which we define next:

\begin{definition}
\label{def.B-B*}
For any $n \ge 0$, we define the elements
\begin{align*}
\B_n &= \sum_{i=1}^n T_{n-1} T_{n-2} \cdots T_i \qquad \text{ and} \\
\B^*_n &= \big(\B_n\big)^\ast = \sum_{i=1}^n T_i T_{i+1} \cdots T_{n-1}
\end{align*}
in the Hecke algebra $\HH_n$.
We understand $\B_n$ and $\B^*_n$ to be $0$ (like any empty sum) whenever $n \le 0$.
\end{definition}

\begin{example}
Suppose $n \geq 4$. Using cycle notation for permutations, we have:
\begin{align*}
    \B_4 &= 1 + T_3 + T_3 T_2 + T_3 T_2 T_1 = T_{\id} + T_{(4,3)} + T_{(4,3,2)} + T_{(4,3,2,1)}; \\
    \B^*_4 &= 1 + T_3 + T_2 T_3 + T_1 T_2 T_3 = T_{\id} + T_{(3,4)} + T_{(2,3,4)} + T_{(1,2,3,4)}.
\end{align*}
\end{example}

Note that our notations agree with those in \cite{r2r1}, but
we omit the ``$(q)$'' from notations like ``$\HH_n(q)$'' and ``$\B_n(q)$'', and we abbreviate $T_{s_i}$ by $T_i$.

\begin{definition}
\label{def:BR}
For any $n, k \ge 0$, we define the following elements of $\HH_n$:
\begin{align}
        \B_{n,k}  & := \B_{n-(k-1)}  \cdots \B_{n-1}  \B_n \qquad \text{and}
		\label{eq:Bnk:def} \\
        \B^*_{n,k}  & := \big(\B_{n, k}\big)^\ast = \B^*_n  \B^*_{n-1}  \cdots \B^*_{n-(k-1)} \qquad \text{and}
		\label{eq:Bastnk:def} \\
		\R_{n,k} & := \frac{1}{\ive{k}!_q} \B^*_{n,k} \B_{n,k}.
        \label{eq:Rnk:def}
\end{align}
\end{definition}

We will soon see that $\B^\ast_{n,k} \B_{n,k}$ is a multiple of $[k]!_q$ (Lemma \ref{lem:divisible}),
which implies that $\R_{n,k}$ is well-defined for any value of $q$, even if the denominator in \eqref{eq:Rnk:def} might vanish; in this case, we expand $\R_{n,k}$ for an indeterminate $q$ and then substitute our desired value of $q$.

The elements $\B_{n,k}$, $\B^*_{n,k}$ and $\R_{n,k}$ are called (respectively) the \emph{$k$-bottom-to-random shuffle}, the \emph{$k$-random-to-bottom shuffle} and the \emph{$k$-random-to-random shuffle} in the Hecke algebra $\HH_n$
(although the names are swapped in some of the literature).
The \emph{$k$-top-to-random shuffle} and \emph{$k$-random-to-top shuffle} are obtained from $\B_{n,k}$ and $\B^*_{n,k}$ by applying the involutive algebra endomorphism of $\HH_n$ that sends each $T_i$ to $T_{n-i}$.

Both $\B_{n,0}$ and $\B^*_{n,0}$ equal $1$, being empty products.
Hence, $\R_{n,0} = 1$ as well.
Note that $\B_{n,1} = \B_n$ and $\B^*_{n,1} = \B^*_n$.
On the other hand, recall that $\B_m$ and $\B^*_m$ are $0$ whenever $m \le 0$; thus, Definition~\ref{def:BR} yields that
\begin{equation}
\B_{n,k} = \B^*_{n,k} = 0 \qquad \text{ for all } k > n,
\label{eq:Bnk:zero}
\end{equation}
and consequently
\begin{equation}
\R_{n,k} = 0 \qquad \text{ for all } k > n.
\label{eq:Rnk:zero}
\end{equation}

Note that 
$\B_{n,k}$ is the $\B_{n,k}(q)$ from \cite{r2r1},
whereas 
$\R_{n,1}$ is the $\mathcal{R}_n(q)$ from \cite{r2r1}.

\begin{example}
For $n = 3$, we have (using cycle notation for permutations)
\begin{align*}
\B_{3,0} &= \B^*_{3,0} = \R_{3,0} = 1
\end{align*}
and
\begin{align*}
\B_{3,1} &= \B_3 = 1 + T_2 + T_2 T_1 = T_{\id} + T_{(3,2)} + T_{(3,2,1)}; \\
\B^*_{3,1} &= \B^*_3 = 1 + T_2 + T_1 T_2 = T_{\id} + T_{(2,3)} + T_{(1,2,3)}; \\
\R_{3,1} &= \dfrac{1}{[1]!_q} \B^*_{3,1} \B_{3,1}
= [3]_q T_{\id} + q[2]_q T_{(1,2)} + [2]_q T_{(2,3)} + q T_{(1,2,3)} + q T_{(1,3,2)} + \tup{q-1} T_{(1,3)}
\end{align*}
and
\begin{align*}
\B_{3,2} &= \B_2 \B_3 = \sum_{w \in \symm_3} T_w; \qquad \qquad
\B^*_{3,2} = \B^*_3 \B^*_2 = \sum_{w \in \symm_3} T_w; \qquad \qquad
\R_{3,2} = [3]_q
\sum_{w \in \symm_3} T_w
\end{align*}
and
\begin{align*}
\B_{3,3} &= \B^*_{3,3} = \sum_{w \in \symm_3} T_w; \qquad \qquad
\R_{3,3} = \sum_{w \in \symm_3} T_w.
\end{align*}
Note that the $\tup{q-1} T_{(1,3)}$ term in $\R_{3,1}$ is invisible for $q=1$.
\end{example}

\begin{example}
\label{exa:R42}
For $n = 4$ and $k = 2$, we have
\begin{equation*}
    \begin{aligned}
        \R_{4, 2}
        & = \left(q^{4} + q^{3} + 2 q^{2} + q + 1\right) T_{\id}
        + \left(q^{3} + 2 q^{2} + q + 1\right) T_{(3,4)}
        + \left(q^{4} + q^{3} + q^{2} + q + 1\right) T_{(2,3)}
        \\ &
        + \left(q^{3} + q^{2} + q + 1\right) T_{(2,3,4)}
        + \left(q^{3} + q^{2} + q + 1\right) T_{(2,4,3)}
        + \left(q^{3} + q + 1\right) T_{(2,4)}
        \\ &
        + \left(q^{4} + q^{3} + 2 q^{2} + q\right) T_{(1,2)}
        + \left(q^{3} + 2 q^{2} + q\right) T_{(1,2)(3,4)}
        + \left(q^{4} + q^{3} + q^{2} + q\right) T_{(1,2,3)}
        \\ &
        + \left(q^{3} + q^{2} + q\right) T_{(1,2,3,4)}
        + \left(q^{3} + q^{2} + q\right) T_{(1,2,4,3)}
        + \left(q^{3} + q\right) T_{(1,2,4)}
        \\ &
        + \left(q^{4} + q^{3} + q^{2} + q\right) T_{(1,3,2)}
        + \left(q^{3} + q^{2} + q\right) T_{(1,3,4,2)}
        + \left(q^{4} + q^{3} + q^{2} + q - 1\right) T_{(1,3)}
        \\ &
        + \left(q^{3} + q^{2} + q - 1\right) T_{(1,3,4)}
        + \left(q^{3} + q\right) T_{(1,3)(2,4)}
        + \left(q^{3} + q - 1\right) T_{(1,3,2,4)}
        \\ &
        + \left(q^{3} + q^{2} + q\right) T_{(1,4,3,2)}
        + \left(q^{3} + q\right) T_{(1,4,2)}
        + \left(q^{3} + q^{2} + q - 1\right) T_{(1,4,3)}
        \\ &
        + \left(q^{3} + q - 1\right) T_{(1,4)}
        + \left(q^{3} + q - 1\right) T_{(1,4,2,3)}
        + \left(q^{3} + q - 2\right) T_{(1,4)(2,3)}.
    \end{aligned}
\end{equation*}
Note that all $24$ basis elements $T_w$ of $\HH_n$ appear
in this sum.
However,
the coefficient $q^3 + q - 2$ in front of $T_{(1,4)(2,3)}$
vanishes at $q = 1$,
so that $\R_{4,2}\tup{1}$ contains only $23$ permutations
$w \in \symm_n$.
\end{example}

\subsection{Alternative formulas for the shuffles}

The elements $\B_n$ and $\B^*_n$ can also be rewritten in more combinatorial terms:

\begin{prop}
\label{prop:rewr}
Let $n\ge 0$. Then,
\begin{align}
\B_n &= \sum_{i=1}^n T_{(n, n-1, \ldots, i)} = \sum_{\substack{w \in \symm_n;\\ w^{-1}\left(1\right) < w^{-1}\left(2\right) < \cdots < w^{-1}\left(n-1\right)}} T_{w};
\label{eq:prop:rewr:B} \\
\B^*_n &= \sum_{i=1}^n T_{(i, i+1, \ldots, n)} = \sum_{\substack{w \in \symm_n;\\ w\left(1\right) < w\left(2\right) < \cdots < w\left(n-1\right)}} T_{w},
\label{eq:prop:rewr:Bast}
\end{align}
where $(n, n-1, \ldots, i)$ and $(i, i+1, \ldots, n)$ are understood as cycles.
\end{prop}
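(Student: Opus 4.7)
The plan is to prove the two equalities in \eqref{eq:prop:rewr:B} first, and then deduce \eqref{eq:prop:rewr:Bast} by applying the anti-automorphism $a \mapsto a^\ast$.

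For the first equality in \eqref{eq:prop:rewr:B}, I fix $i \in \ive{n}$ and compute the action of the product $s_{n-1} s_{n-2} \cdots s_i \in \symm_n$ directly on $\{1, 2, \ldots, n\}$, reading right-to-left under the convention $(\alpha\beta)(j) = \alpha(\beta(j))$. A short case analysis shows that $j \mapsto j$ for $j < i$, that $i \mapsto n$ (since successive transpositions $s_i, s_{i+1}, \ldots, s_{n-1}$ carry it up), and that $j \mapsto j-1$ for $i < j \le n$. This is exactly the cycle $(n, n-1, \ldots, i)$. Moreover the expression $s_{n-1} s_{n-2} \cdots s_i$ uses $n-i$ generators, and by counting directly one sees this cycle has exactly $n-i$ inversions, so the expression is reduced. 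Applying \eqref{eq:Hndef:Tw=} yields $T_{n-1} T_{n-2} \cdots T_i = T_{(n, n-1, \ldots, i)}$, which proves the first equality.

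For the second equality in \eqref{eq:prop:rewr:B}, I will show that the map $i \mapsto (n, n-1, \ldots, i)$ is a bijection from $\ive{n}$ onto the set of $w \in \symm_n$ satisfying $w^{-1}(1) < w^{-1}(2) < \cdots < w^{-1}(n-1)$. From the analysis above, the one-line notation of $(n, n-1, \ldots, i)$ is
\[
1,\ 2,\ \ldots,\ i-1,\ n,\ i,\ i+1,\ \ldots,\ n-1,
\]
so the positions of $1, 2, \ldots, n-1$ are $1, 2, \ldots, i-1, i+1, \ldots, n$, which are increasing. Conversely, given any $w$ satisfying the inequalities, locate the unique position $i$ with $w(i) = n$; then the remaining values $1, 2, \ldots, n-1$ must occupy the positions $1, 2, \ldots, i-1, i+1, \ldots, n$ in increasing order, which forces $w = (n, n-1, \ldots, i)$. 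This gives the second equality in \eqref{eq:prop:rewr:B}.

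For \eqref{eq:prop:rewr:Bast}, I apply the anti-automorphism $a \mapsto a^\ast$ of Section~\ref{section:HHbackground} to \eqref{eq:prop:rewr:B}. Since $\B^*_n = (\B_n)^\ast$ by Definition~\ref{def.B-B*}, and $(T_w)^\ast = T_{w^{-1}}$, and the inverse of the cycle $(n, n-1, \ldots, i)$ is $(i, i+1, \ldots, n)$, the first equality in \eqref{eq:prop:rewr:Bast} follows immediately; the second follows from the observation that $w^{-1}(1) < \cdots < w^{-1}(n-1)$ becomes $w(1) < \cdots < w(n-1)$ after replacing $w$ by $w^{-1}$ (and the latter substitution is a bijection of $\symm_n$). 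The only step that requires any care is the claim that the expression $s_{n-1} s_{n-2} \cdots s_i$ is reduced; this is the one point where one needs to do a small computation (either the inversion count above, or alternatively by noting the expression produces distinct elements of $\symm_n$ as $i$ varies, since they act differently on $i$).
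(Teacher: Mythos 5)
Your proposal is correct and takes essentially the same route as the paper: the first equality via the fact that $s_{n-1}s_{n-2}\cdots s_i$ is a reduced expression for the cycle $(n,n-1,\ldots,i)$, and the second via classifying the permutations $w$ with $w^{-1}(1)<\cdots<w^{-1}(n-1)$ by the value $w^{-1}(n)$. The only cosmetic differences are that you spell out the inversion count that the paper leaves implicit, and you obtain \eqref{eq:prop:rewr:Bast} by applying the anti-automorphism $a\mapsto a^\ast$ to \eqref{eq:prop:rewr:B} instead of repeating the symmetric argument --- both perfectly fine.
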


\begin{proof}
For each $i \in \left\{1,2,\ldots,n\right\}$, we have $T_{n-1} T_{n-2} \cdots T_i = T_{(n, n-1, \ldots, i)}$, since $s_{n-1} s_{n-2} \cdots s_i$ is a reduced expression for the cycle $(n, n-1, \ldots, i)$.
Thus, the first equality sign in \eqref{eq:prop:rewr:B} is just a restatement of the definition of $\B_n$.

The second equality sign in \eqref{eq:prop:rewr:B} follows from the combinatorial observation that the permutations $w \in \symm_n$ satisfying $w^{-1}\left(1\right) < w^{-1}\left(2\right) < \cdots < w^{-1}\left(n-1\right)$ are precisely the cycles $(n, n-1, \ldots, i)$ for $i \in \left\{1,2,\ldots,n\right\}$ (because each of the former permutations $w$ is uniquely determined by the value $w^{-1}\left(n\right)$, and if we denote this value by $i$, then our $w$ is the cycle $(n, n-1, \ldots, i)$).
Thus, \eqref{eq:prop:rewr:B} is proved.

The proof of \eqref{eq:prop:rewr:Bast} is similar (here, the permutations $w \in \symm_n$ satisfying $w\left(1\right) < w\left(2\right) < \cdots < w\left(n-1\right)$ are classified by the value $w\left(n\right)$).
\end{proof}

\begin{remark}
More generally, it can be shown that
\begin{align}
\B_{n,k} &= \sum_{\substack{w \in \symm_n;\\ w^{-1}\left(1\right) < w^{-1}\left(2\right) < \cdots < w^{-1}\left(n-k\right)}} T_{w};
\\
\B^*_{n,k} &= \sum_{\substack{w \in \symm_n;\\ w\left(1\right) < w\left(2\right) < \cdots < w\left(n-k\right)}} T_{w}
\end{align}
for all $n \ge k$.
Indeed, the former equality is \cite[Lemma 3.12]{r2r1} (applied to $j = n-k$), whereas the latter follows from it using \eqref{eq:star:Bnk}.
For $q = 1$, this implies that the element $\B^*_{n,k}(1)$ lies in the subalgebra of the symmetric group algebra known as \emph{Solomon's descent algebra} \cite{schocker-descent}.
However, the descent algebra does not seem to generalize to the Hecke algebra with general $q$ (the relevant basis elements no longer span a subalgebra, even for $n = 3$), which renders any use of the descent algebra a dead end at our level of generality.
\end{remark}

Before we move on to more substantial statements, recall from
Section~\ref{section:HHbackground} that $a \mapsto a^\ast$ denotes the unique
algebra anti-automorphism of $\HH_n$ that sends each basis vector $T_{w}$ to
$T_{w^{-1}}$.
This notation extends our existing notations:
In fact, from Definition~\ref{def.B-B*} and Definition~\ref{def:BR},
we have
\begin{align}
\left( \B_n\right) ^\ast &= \B_n^\ast \qquad \text{and}
\label{eq:star:Bn} \\
\left( \B_{n,k}\right) ^\ast &= \B_{n,k}^\ast
\label{eq:star:Bnk}
\end{align}
for all nonnegative $n$ and $k$.
Since $a\mapsto a^\ast$ is an involution,
from \eqref{eq:star:Bn} and
\eqref{eq:star:Bnk}, we obtain
\begin{align}
\left( \B_n^\ast\right) ^\ast &= \B_n\qquad 
\text{and}  \label{eq:star:B*n} \\
\left( \B_{n,k}^\ast\right) ^\ast &= \B_{n,k}
\label{eq:star:B*nk}
\end{align}
for all $n,k\geq 0$.
As a consequence, the anti-automorphism $a\mapsto a^\ast$
preserves the $k$-random-to-random shuffles:

\begin{lem}
\label{lem:star:R}
Let $n, k \geq 0$. Then,
\begin{equation}
\left( \R_{n,k}\right) ^\ast = \R_{n,k}.
\label{eq:star:R}
\end{equation}
\end{lem}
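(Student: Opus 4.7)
The plan is to apply the anti-automorphism $a \mapsto a^\ast$ directly to the defining formula \eqref{eq:Rnk:def}. Recall from Section~\ref{section:HHbackground} that $a \mapsto a^\ast$ is an algebra anti-automorphism, which means it reverses the order of products: $(ab)^\ast = b^\ast a^\ast$. It also fixes scalars, so applying it to $\R_{n,k} = \frac{1}{[k]!_q} \B^*_{n,k} \B_{n,k}$ yields
\[
(\R_{n,k})^\ast = \tfrac{1}{[k]!_q} \bigl(\B^*_{n,k} \B_{n,k}\bigr)^\ast = \tfrac{1}{[k]!_q} \bigl(\B_{n,k}\bigr)^\ast \bigl(\B^*_{n,k}\bigr)^\ast.
\]

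The key inputs are then the two formulas \eqref{eq:star:Bnk} and \eqref{eq:star:B*nk} already recorded in the excerpt, which give $(\B_{n,k})^\ast = \B^*_{n,k}$ and $(\B^*_{n,k})^\ast = \B_{n,k}$ (the latter following from the former because $a \mapsto a^\ast$ is an involution). Substituting these in, we obtain $(\R_{n,k})^\ast = \frac{1}{[k]!_q} \B^*_{n,k} \B_{n,k} = \R_{n,k}$, which is exactly \eqref{eq:star:R}.

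There is no real obstacle: the statement is a direct bookkeeping consequence of the definition of $\R_{n,k}$ and the already-established compatibility of $\B_{n,k}$ and $\B^*_{n,k}$ under the $\ast$-anti-automorphism. The only thing to be slightly careful about is the case where the scalar $[k]!_q$ could vanish at a specific value of $q$; but since Lemma~\ref{lem:divisible} (cited just after Definition~\ref{def:BR}) guarantees that $\B^*_{n,k} \B_{n,k}$ is a multiple of $[k]!_q$, the element $\R_{n,k}$ is defined by first working over $\kk(q)$ and then specializing, and the identity above holds at the level of the indeterminate and therefore persists after specialization.
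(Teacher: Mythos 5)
Your proposal is correct and follows essentially the same route as the paper: apply the anti-automorphism to the definition \eqref{eq:Rnk:def}, reverse the product, and invoke \eqref{eq:star:Bnk} and \eqref{eq:star:B*nk} to recover $\B^*_{n,k}\B_{n,k}$. Your extra remark about specializing $q$ (via Lemma~\ref{lem:divisible}) is a harmless refinement the paper leaves implicit.
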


\begin{proof}
Because of \eqref{eq:Rnk:def},
we just need to show that $\left( \B_{n,k}^\ast \B_{n,k}\right) ^\ast = \B_{n,k}^\ast \B_{n,k}$. But this follows from
\begin{align*}
\left( \B_{n,k}^\ast \B_{n,k}\right) ^\ast
&= \left( \B_{n,k}\right) ^\ast \left( \B_{n,k}^{\ast
}\right) ^\ast
\qquad \left( \text{since }a\mapsto a^\ast\text{ is an
anti-morphism}\right)  \\
&= \B^*_{n,k} \B_{n,k}\qquad \left( 
\text{by \eqref{eq:star:Bnk} and \eqref{eq:star:B*nk}}\right).
\qedhere
\end{align*}
\end{proof}

When $q = 1$, the elements $\R_{n,n-k} \in \HH_n$ become the elements $\nu_{(k,1^{n-k})} \in \kk\left[\symm_n\right]$ from \cite[Section 1.2]{RSW2014}.
Indeed, this is a consequence (Proposition~\ref{prop:q=1-R2R}) of a factorization \eqref{eq:uI:Rnk=} of $\R_{n,k}$ that holds for all $q$. To state the latter, we need some more notations.

For each subset $I$ of $\ive{n}$, let $u_I \in \symm_n$ be the unique permutation that
\begin{enumerate}
    \item is strictly increasing on $I$ and on $\ive{n} \setminus I$, and
    \item satisfies $u_I\tup{I} = \left\{1,2,\ldots,\abs{I}\right\}$.
\end{enumerate}
Thus, $u_I$ sends the elements of $I$ in increasing order to the numbers $1,2,\ldots,\abs{I}$, and sends the remaining elements of $\ive{n}$ in increasing order to the remaining numbers $\abs{I}+1,\abs{I}+2,\ldots,n$.

Given any $i \in \set{0,1,\ldots,n}$, we write $\symm_{\ive{n}-\ive{i}}$ for the subgroup of $\symm_n$ consisting of all permutations $w \in \symm_n$ that fix the numbers $1,2,\ldots,i$.
This subgroup is generated by $s_{i+1},s_{i+2},\ldots,s_{n-1}$, and is isomorphic to $\symm_{n-i}$.

\begin{prop}
\label{prop:uI}
Let $n \ge k$, and set
\[
\X_{n,k} := \sum_{\substack{I \subseteq \ive{n}; \\ \abs{I} = n-k}} T_{u_I}
\qquad \text{ and } \qquad
\M_{n,k} := \sum_{w \in \symm_{\ive{n}-\ive{n-k}}} T_w.
\]
Then,
\begin{align}
\B_{n,k} &= \M_{n,k} \X_{n,k}; \label{eq:uI:Bnk=} \\
\R_{n,k} &= \X^*_{n,k} \M_{n,k} \X_{n,k}; \label{eq:uI:Rnk=} \\
\M_{n,k}^2 &= \ive{k}!_q \M_{n,k}; \label{eq:uI:Mnk2} \\
\M_{n,k}^* &= \M_{n,k}. \label{eq:uI:Mnk*}
\end{align}
Furthermore, if $j$ is a positive integer such that $n-k < j < n$, then
\begin{align}
T_j \M_{n,k} &= q \M_{n,k}; \label{eq:uI:TjMnk} \\
T_j \B_{n,k} &= q \B_{n,k}. \label{eq:uI:TjBnk}
\end{align}
\end{prop}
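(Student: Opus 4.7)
The plan is to dispose of the statements about $\M_{n,k}$ first, then use them (together with a coset decomposition) to prove \eqref{eq:uI:Bnk=}, and finally read off \eqref{eq:uI:Rnk=} and \eqref{eq:uI:TjBnk} as consequences.

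First I would dispatch \eqref{eq:uI:Mnk*}, \eqref{eq:uI:TjMnk}, and \eqref{eq:uI:Mnk2}. Equation \eqref{eq:uI:Mnk*} is immediate since $\symm_{\ive{n}-\ive{n-k}}$ is closed under inversion and $(T_w)^\ast = T_{w^{-1}}$, so the sum reindexes to itself. For \eqref{eq:uI:TjMnk}, I would partition the subgroup $\symm_{\ive{n}-\ive{n-k}}$ into $A = \set{w : \ell(s_j w) > \ell(w)}$ and $B = \set{w : \ell(s_j w) < \ell(w)}$; left multiplication by $s_j$ swaps $A$ and $B$, and applying the two branches of the Hecke relation ($T_j T_w = T_{s_j w}$ on $A$ and $T_j T_w = q T_{s_j w} + (q-1) T_w$ on $B$) shows that $T_j \M_{n,k}$ collapses to $q \M_{n,k}$. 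This is the classical fact that parabolic Hecke symmetrizers are $s_j$-invariant up to the scalar $q$. Iterating \eqref{eq:uI:TjMnk} along reduced words then yields $T_w \M_{n,k} = q^{\ell(w)} \M_{n,k}$ for each $w \in \symm_{\ive{n}-\ive{n-k}}$, and summing over the subgroup gives \eqref{eq:uI:Mnk2} since the Poincar\'e polynomial of $\symm_k$ is $\ive{k}!_q$.

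For \eqref{eq:uI:Bnk=}, I would invoke the combinatorial expansion $\B_{n,k} = \sum_w T_w$ over permutations $w$ with $w^{-1}(1) < \cdots < w^{-1}(n-k)$, stated in the remark following Proposition~\ref{prop:rewr}. Every such $w$ factors uniquely as $w = \sigma u_I$ with $I := \set{w^{-1}(1), \ldots, w^{-1}(n-k)}$ and $\sigma := w u_I^{-1} \in \symm_{\ive{n}-\ive{n-k}}$; uniqueness follows by recovering $I$ as $w^{-1}(\ive{n-k})$. The key combinatorial claim is that $u_I$ is the minimum-length element of its right coset $\symm_{\ive{n}-\ive{n-k}} \cdot u_I$. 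This follows from the standard characterization that the minimum-length right coset representatives for the parabolic $\symm_{\ive{n}-\ive{n-k}}$ in $\symm_n$ are precisely those $w$ whose inverse is increasing on $\set{n-k+1, \ldots, n}$, and $u_I^{-1}$ has this property by construction. The general Coxeter fact $\ell(\sigma u_I) = \ell(\sigma) + \ell(u_I)$ for $\sigma \in \symm_{\ive{n}-\ive{n-k}}$ then gives $T_\sigma T_{u_I} = T_{\sigma u_I}$, so that $\M_{n,k} \X_{n,k} = \sum_{\sigma, I} T_{\sigma u_I} = \sum_w T_w = \B_{n,k}$.

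The remaining two identities are quick consequences. For \eqref{eq:uI:TjBnk}, I just chain: $T_j \B_{n,k} = T_j \M_{n,k} \X_{n,k} = q \M_{n,k} \X_{n,k} = q \B_{n,k}$. For \eqref{eq:uI:Rnk=}, I apply $\ast$ to \eqref{eq:uI:Bnk=} and use \eqref{eq:star:Bnk} together with \eqref{eq:uI:Mnk*} to get $\B_{n,k}^\ast = \X_{n,k}^\ast \M_{n,k}$, then substitute into \eqref{eq:Rnk:def} and apply \eqref{eq:uI:Mnk2}:
\[
\R_{n,k} = \tfrac{1}{\ive{k}!_q} \B_{n,k}^\ast \B_{n,k} = \tfrac{1}{\ive{k}!_q} \X_{n,k}^\ast \M_{n,k}^2 \X_{n,k} = \X_{n,k}^\ast \M_{n,k} \X_{n,k}.
\]
The only real obstacle is the coset/length claim in the third paragraph: verifying that the specific $u_I$, characterized by being increasing on $I$ and on $\ive{n}\sm I$, are indeed the distinguished (minimum-length) right coset representatives of $\symm_{\ive{n}-\ive{n-k}} \backslash \symm_n$. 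Everything else is routine Hecke algebra manipulation.
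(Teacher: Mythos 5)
Your proposal is correct, and most of it coincides with the paper's own argument: \eqref{eq:uI:Mnk*} via closure of $\symm_{\ive{n}-\ive{n-k}}$ under inversion, \eqref{eq:uI:TjMnk} via splitting the subgroup according to whether left multiplication by $s_j$ lengthens or shortens (the paper's ``southern/northern'' computation is exactly your two-branch Hecke relation argument), \eqref{eq:uI:Mnk2} by iterating to $T_w \M_{n,k} = q^{\ell(w)} \M_{n,k}$ and summing the Poincar\'e polynomial $[k]!_q$, and \eqref{eq:uI:TjBnk}, \eqref{eq:uI:Rnk=} as the same formal consequences. The one genuine divergence is \eqref{eq:uI:Bnk=}: the paper simply quotes the factorization $\B_{n,k} = m_{(1^{n-k},k)}\, x_{(n-k,k)}$ from \cite[Proposition 3.10]{axelrodfreed2024spectrumrandomtorandomshufflinghecke} and identifies the two factors with $\M_{n,k}$ and $\X_{n,k}$, whereas you start from the expansion $\B_{n,k} = \sum_{w:\, w^{-1}(1) < \cdots < w^{-1}(n-k)} T_w$ (the Remark after Proposition~\ref{prop:rewr}, itself imported from \cite[Lemma 3.12]{axelrodfreed2024spectrumrandomtorandomshufflinghecke}) and supply the parabolic-coset step yourself: the unique factorization $w = \sigma u_I$ with $\sigma \in \symm_{\ive{n}-\ive{n-k}}$, the observation that $u_I$ is the minimal-length representative of its right coset (since $u_I^{-1}$ is increasing on $\set{n-k+1,\ldots,n}$), and length additivity $\ell(\sigma u_I) = \ell(\sigma) + \ell(u_I)$ giving $T_\sigma T_{u_I} = T_{\sigma u_I}$. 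These steps all check out, and they make the product decomposition more transparent, using only standard Coxeter facts about parabolic cosets; note, however, that the dependence on the external reference is thereby traded (Lemma 3.12 in place of Proposition 3.10) rather than eliminated. One shared technicality: the final division by $[k]!_q$ in your derivation of \eqref{eq:uI:Rnk=} is legitimate for all $q$ because $\B^*_{n,k}\B_{n,k} = [k]!_q\, \X^*_{n,k}\M_{n,k}\X_{n,k}$ is a polynomial identity in $q$, in line with the convention fixed after Definition~\ref{def:BR}; the paper's proof handles this the same way.
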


\begin{proof}
We recall some notations from \cite{r2r1}.
A \emph{composition} of $n$ means a finite tuple of positive integers whose sum is $n$.
We shall use exponential notation, so that $(1^{n-k}, k)$ means the composition of $n$ that consists of $n-k$ many $1$s followed by a $k$.

For a composition $\alpha = \left(\alpha_1, \alpha_2, \ldots, \alpha_p\right)$ of $n$, consider the Young subgroup $\symm_\alpha$ of $\symm_n$ consisting of the permutations that permute the smallest $\alpha_1$ elements among themselves, permute the next-smallest $\alpha_2$ elements among themselves, and so on.
Thus, $\symm_{(1^{n-k}, k)} = \symm_{\ive{n}-\ive{n-k}}$.

Furthermore, for a composition $\alpha = \left(\alpha_1, \alpha_2, \ldots, \alpha_p\right)$ of $n$, we let $X_\alpha$ denote the set of minimal-length right coset representatives of $\symm_\alpha$ in $\symm_n$.
Explicitly, these representatives are the permutations that (when written in one-line notation) contain the smallest $\alpha_1$ elements in increasing order from left to right, contain the next-smallest $\alpha_2$ elements in increasing order from left to right, and so on.
Thus, $X_{(n-k, k)}$ consists of the permutations $w \in \symm_n$ that contain the numbers $1, 2, \ldots, n-k$ in increasing order from left to right, and contain the numbers $n-k+1, n-k+2, \ldots, n$ in increasing order from left to right.
These permutations are just the $u_I$ for the $n-k$-element subsets $I$ of $\ive{n}$.

For any composition $\alpha$ of $n$, we define the elements
\begin{align*}
    x_{\alpha} := \sum_{w \in X_\alpha} T_w
    \qquad \text{ and } \qquad
    m_\alpha := \sum_{w \in \symm_\alpha} T_w
    \qquad \text{ in } \HH_n.
\end{align*}
It is shown in \cite[Proposition 3.10]{r2r1}
(for $j = n-k$) that
\begin{align}
	\B_{n, k} = m_{(1^{n-k}, k)} \ x_{(n-k, k)}.
	\label{eq:B=mx}
\end{align}
However, $\symm_{(1^{n-k}, k)} = \symm_{\ive{n}-\ive{n-k}}$ shows that $m_{(1^{n-k}, k)} = \M_{n,k}$, whereas our above description of $X_{(n-k, k)}$ shows that $x_{(n-k, k)} = \X_{n,k}$.
Hence, \eqref{eq:B=mx} can be rewritten as
\[
	\B_{n, k} = \M_{n,k} \X_{n,k}.
\]
This proves \eqref{eq:uI:Bnk=}.

Now let us show \eqref{eq:uI:Mnk*}.
The set $\symm_{\ive{n}-\ive{n-k}}$ is a subgroup of $\symm_n$, and thus is closed under taking inverses (i.e., we have $w^{-1} \in \symm_{\ive{n}-\ive{n-k}}$ for each $w \in \symm_{\ive{n}-\ive{n-k}}$).
Thus, the map $a \mapsto a^*$ permutes the addends of the sum $\M_{n,k} = \sum_{w \in \symm_{\ive{n}-\ive{n-k}}} T_w$.
Hence, $\M_{n,k}^* = \M_{n,k}$.
This proves \eqref{eq:uI:Mnk*}.

We let $\ell\tup{w}$ denote the (Coxeter) length of any permutation $w \in \symm_n$, that is, the number of inversions of $w$.

Next, we shall prove \eqref{eq:uI:TjMnk}.
For this, we fix a positive integer $j$ such that $n-k < j < n$.
Then, $s_j \in \symm_{\ive{n}-\ive{n-k}} = \symm_{(1^{n-k},k)}$.
A permutation $w \in \symm_{\ive{n}-\ive{n-k}}$ will be called \emph{southern} if it satisfies $w^{-1}\tup{j} < w^{-1}\tup{j+1}$, and \emph{northern} otherwise.
If $w$ is a southern permutation, then $\ell\tup{s_jw} = \ell\tup{w} + 1$ and thus $T_{s_jw} = T_j T_w$, so that
\begin{align}
    T_w + T_{s_jw} = T_w + T_j T_w = \tup{1 + T_j} T_w.
    \label{pf:TjMnk:1}
\end{align}
Moreover, the map $w \mapsto s_jw$ is a bijection from the set of all southern permutations in $\symm_{\ive{n}-\ive{n-k}}$ to the set of all northern permutations in $\symm_{\ive{n}-\ive{n-k}}$.
Hence,
\begin{equation}
    \sum_{\substack{w \in \symm_{\ive{n}-\ive{n-k}}; \\ w\text{ is northern}}} T_{w}
    =
    \sum_{\substack{w \in \symm_{\ive{n}-\ive{n-k}}; \\ w\text{ is southern}}} T_{s_jw}.
    \label{pf:TjMnk:2}
\end{equation}

Now, the sum $\M_{n,k} = \sum_{w \in \symm_{\ive{n}-\ive{n-k}}} T_w$ can be split into southern and northern permutations. Thus we find
\begin{align*}
\M_{n,k}
&= \sum_{w \in \symm_{\ive{n}-\ive{n-k}}} T_w
= \sum_{\substack{w \in \symm_{\ive{n}-\ive{n-k}}; \\ w\text{ is southern}}} T_w + \sum_{\substack{w \in \symm_{\ive{n}-\ive{n-k}}; \\ w\text{ is northern}}} T_{w} \\
&= \sum_{\substack{w \in \symm_{\ive{n}-\ive{n-k}}; \\ w\text{ is southern}}} T_w + \sum_{\substack{w \in \symm_{\ive{n}-\ive{n-k}}; \\ w\text{ is southern}}} T_{s_jw}
\qquad \left(\text{by \eqref{pf:TjMnk:2}}\right) \\
&= \sum_{\substack{w \in \symm_{\ive{n}-\ive{n-k}}; \\ w\text{ is southern}}} \left(T_w + T_{s_jw}\right)
= \sum_{\substack{w \in \symm_{\ive{n}-\ive{n-k}}; \\ w\text{ is southern}}} \left(1 + T_j\right) T_w
\end{align*}
(by \eqref{pf:TjMnk:1}).
Left multiplication by $T_j$ transforms this into
\[
T_j \M_{n,k}
= \sum_{\substack{w \in \symm_{\ive{n}-\ive{n-k}}; \\ w\text{ is southern}}} \underbrace{T_j \left(1 + T_j\right)}_{= q \left(1 + T_j\right)} T_w
= q \underbrace{\sum_{\substack{w \in \symm_{\ive{n}-\ive{n-k}}; \\ w\text{ is southern}}} \left(1 + T_j\right) T_w}_{=\M_{n,k}}
= q \M_{n,k}.
\]
Thus, \eqref{eq:uI:TjMnk} is proved.

Equation~\eqref{eq:uI:TjBnk} follows from \eqref{eq:uI:Bnk=} and \eqref{eq:uI:TjMnk}:
if $j$ is a positive integer such that $n - k < j < n$, then
\begin{equation*}
    T_j \B_{n, k} = T_j \M_{n, k} \X_{n, k} = q \M_{n, k} \X_{n, k} = q \B_{n, k}.
\end{equation*}

By repeatedly applying \eqref{eq:uI:TjMnk}, we easily see that
\begin{align}
    T_w \M_{n,k} = q^{\ell\tup{w}} \M_{n,k}
    \label{eq:uI:TwMnk}
\end{align}
for any $w \in \symm_{\ive{n}-\ive{n-k}}$ (since $w$ has a reduced expression containing only $s_j$ with $n-k < j < n$).
Since $\M_{n,k} = \sum_{w \in \symm_{\ive{n}-\ive{n-k}}} T_w$, we now have
\begin{align}
    \M_{n,k} \M_{n,k}
    &= \sum_{w \in \symm_{\ive{n}-\ive{n-k}}} T_w \M_{n,k} \nonumber\\
    &= \sum_{w \in \symm_{\ive{n}-\ive{n-k}}} q^{\ell\tup{w}} \M_{n,k}
    \qquad \left(\text{by \eqref{eq:uI:TwMnk}}\right)
    \nonumber\\
    &= \tup{\sum_{w \in \symm_{\ive{n}-\ive{n-k}}} q^{\ell\tup{w}}} \M_{n,k}.
    \label{pf:eq:uI:Mnk2:1}
\end{align}
However, the subgroup $\symm_{\ive{n}-\ive{n-k}}$ of $\symm_n$ is isomorphic to the symmetric group $\symm_k$ (via the isomorphism that sends each generator $s_i$ to $s_{n-i}$), and this isomorphism preserves the length of a permutation.
Hence,
\begin{align}
\sum_{w \in \symm_{\ive{n}-\ive{n-k}}} q^{\ell\tup{w}} = \sum_{w \in \symm_k} q^{\ell\tup{w}} = \ive{k}!_q
\label{pf:eq:ui:Mnk2:lensum}
\end{align}
by a well-known identity (\cite[Theorem 3.2.1]{Sagan2020}).
Thus, \eqref{pf:eq:uI:Mnk2:1} rewrites as
$\M_{n,k} \M_{n,k} = \ive{k}!_q \M_{n,k}$.
In other words, $\M_{n,k}^2 = \ive{k}!_q \M_{n,k}$.
This proves \eqref{eq:uI:Mnk2}.


The two equalities \eqref{eq:uI:Mnk2} and \eqref{eq:uI:Mnk*} entail
\begin{equation*}
\M^*_{n,k} \M_{n,k}
= \M_{n,k} \M_{n,k}
= \M_{n,k}^2
= \ive{k}!_q \M_{n,k}.
\end{equation*}

Now, \eqref{eq:uI:Bnk=} yields
\begin{align}
\B_{n,k}^\ast \B_{n,k}
&= \left( \M_{n,k} \X_{n,k}\right)^* \M_{n,k} \X_{n,k}
= \X^*_{n,k} \underbrace{\M^*_{n,k} \M_{n,k}}_{=
\ive{k}!_q \M_{n,k}} \X_{n,k}
\nonumber \\
&= \ive{k}!_q \X^*_{n,k} \M_{n,k} \X_{n,k}.
\label{eq:BstarB-div}
\end{align}
Dividing this by $\ive{k}!_q$, we obtain
\[
\R_{n,k} = \X^*_{n,k} \M_{n,k} \X_{n,k},
\]
because of \eqref{eq:Rnk:def}.
This proves \eqref{eq:uI:Rnk=}.
\end{proof}

We can now prove something we have claimed right after Definition~\ref{def:BR}:

\begin{lem}
\label{lem:divisible}
Let $n,k\geq 0$. Then, $\R_{n,k}$ is
well-defined for all values of $q$, since the product $\B^*_{n,k} \B_{n,k}$ is a multiple of $\ive{k}!_q$.
\end{lem}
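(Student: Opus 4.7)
The plan is that essentially all of the real work has already been done inside the proof of Proposition~\ref{prop:uI}. Specifically, the chain of equalities culminating in \eqref{eq:BstarB-div} shows that, whenever $n \ge k$,
\[
\B^*_{n,k} \B_{n,k} \;=\; \ive{k}!_q \, \X^*_{n,k} \M_{n,k} \X_{n,k},
\]
which exhibits $\B^*_{n,k} \B_{n,k}$ explicitly as a $\ive{k}!_q$-multiple inside $\HH_n$. So in this case I would simply cite that line. The remaining range $n < k$ falls outside the hypothesis of Proposition~\ref{prop:uI}, and is handled separately by \eqref{eq:Bnk:zero}: there $\B_{n,k} = 0$, so $\B^*_{n,k}\B_{n,k} = 0$ is trivially a multiple of $\ive{k}!_q$ (the quotient being $0$).

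To unpack what is meant by $\R_{n,k}$ being well-defined for all values of $q$, I would argue as follows. The defining formula $\R_{n,k} = \tfrac{1}{\ive{k}!_q}\B^*_{n,k}\B_{n,k}$ performs a division which could a priori fail when $q$ is specialized to a root of unity of order $\le k$ (since then $\ive{k}!_q = 0$ in $\kk$). The remedy is to first perform the division formally: working in the Hecke algebra over $\mathbb{Z}[q]$ with $q$ an indeterminate, each $T_w$-coefficient of $\B^*_{n,k}\B_{n,k}$ is a polynomial in $q$, and the displayed identity shows each such polynomial to be divisible by $\ive{k}!_q$ inside $\mathbb{Z}[q]$. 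Carrying out the division produces $\R_{n,k}$ as an element of $\HH_n$ whose coefficients in the $T_w$-basis are bona fide polynomials in $q$ with integer coefficients, which can then be specialized to any element of $\kk$ without ambiguity.

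I do not anticipate any genuine obstacle: the nontrivial ingredient $\M_{n,k}^2 = \ive{k}!_q \M_{n,k}$ from Proposition~\ref{prop:uI} already does all the heavy lifting, and what remains is pure bookkeeping. The only minor point worth flagging is to remember the boundary case $k > n$, which is where \eqref{eq:Bnk:zero} is invoked; otherwise the proof reduces to a single reference to \eqref{eq:BstarB-div} together with the formal-polynomial reformulation above.
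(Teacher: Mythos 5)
Your proposal is correct and follows essentially the same route as the paper: the paper's proof also dispatches the case $k>n$ via $\B_{n,k}=0$ and the case $k\le n$ by citing \eqref{eq:BstarB-div}, with the polynomial-coefficient interpretation of "well-defined for all $q$" stated in the surrounding text. Nothing is missing.
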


\begin{proof}
If $k > n$, then the claim follows from $\B_{n,k} = 0$.
If $k \le n$, then it follows from \eqref{eq:BstarB-div}.
\end{proof}

The proof shows that $\R_{n,k}$ is actually a linear combination of basis elements $T_w$, with each coefficient being a universal polynomial in $q$ with integer coefficients.
Hence, any identity between $\R_{n,k}$'s that we prove for an infinite set of $q$'s will automatically be true for every $q$.
In particular, we can thus assume the invertibility of $[n]!_q$ and of $q$ in our proofs without having to require it in our claims (unless $[n]!_q$ or $q$ appear as denominators in said claim).

As another byproduct of Proposition \ref{prop:uI}, let us show that the $q=1$ specialization of $\R_{n,n-k}$ agrees with the element $\nu_{(k,1^{n-k})} \in \kk\left[\symm_n\right]$ from \cite[Section 1.2]{RSW2014}:

\begin{prop}
\label{prop:q=1-R2R}
Assume that $q=1$. Let $n \ge k \ge 0$.
For each $w \in \symm_n$, let $\noninv_k w$ denote the number of all $k$-element subsets of $\ive{n}$ on which $w$ is strictly increasing (i.e., the number of choices of $1 \leq i_1 < i_2 < \cdots < i_k \leq n$ satisfying $w\left(i_1\right) < w\left(i_2\right) < \cdots < w\left(i_k\right)$).
Then,
\[
\R_{n,n-k}(1) = \sum_{w \in \symm_n} \tup{\noninv_k w} w \in \kk\left[\symm_n\right].
\]
\end{prop}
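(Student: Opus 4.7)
The plan is to apply the factorization formula from equation~\eqref{eq:uI:Rnk=} with $k$ replaced by $n-k$ (noting that $n \geq n-k$ since $k \geq 0$); this yields $\R_{n,n-k} = \X^*_{n,n-k} \M_{n,n-k} \X_{n,n-k}$. Unpacking the definitions of $\X_{n,n-k}$ and $\M_{n,n-k}$ and specializing at $q = 1$ in $\kk[\symm_n]$, we get
\[
\R_{n,n-k}(1) = \Big(\sum_{\substack{J \subseteq \ive{n};\\ \abs{J} = k}} u_J^{-1}\Big) \Big(\sum_{v \in \symm_{\ive{n}-\ive{k}}} v\Big) \Big(\sum_{\substack{I \subseteq \ive{n};\\ \abs{I} = k}} u_I\Big),
\]
where the $u_J^{-1}$ in the first factor comes from the anti-automorphism $a \mapsto a^\ast$ sending $T_w \mapsto T_{w^{-1}}$. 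Consequently, the coefficient of a permutation $w \in \symm_n$ in $\R_{n,n-k}(1)$ equals the number of triples $(I,J,v)$ with $\abs{I} = \abs{J} = k$, $v \in \symm_{\ive{n}-\ive{k}}$, and $u_J^{-1} v u_I = w$.

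Next, I would establish a bijection between such triples (for fixed $w$) and the $k$-element subsets $I \subseteq \ive{n}$ on which $w$ is strictly increasing; the number of the latter is $\noninv_k w$ by definition, which would finish the proof. The bijection sends a valid triple $(I,J,v)$ to its first coordinate $I$; the inverse sends a $k$-subset $I$ on which $w$ is increasing to the triple $(I,\,J,\,v)$ with $J := w(I)$ and $v := u_J \, w \, u_I^{-1}$.

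The only nontrivial point is to check that this map is well-defined and inverse to itself, and both directions boil down to one short calculation. For $j \in \ive{k}$, let $i_j$ denote the $j$-th smallest element of $I$, so that $u_I^{-1}(j) = i_j$; similarly $u_J^{-1}(j)$ is the $j$-th smallest element of $J$. If $w$ is increasing on $I$ with $w(I) = J$, then $w(i_j)$ is the $j$-th smallest element of $J$, so $(u_J \, w \, u_I^{-1})(j) = u_J(w(i_j)) = j$; hence $u_J \, w \, u_I^{-1}$ fixes $\ive{k}$ pointwise and lies in $\symm_{\ive{n}-\ive{k}}$. Conversely, if $u_J^{-1} v u_I = w$ with $v$ fixing $\ive{k}$, the same computation read backwards shows that $w$ sends $i_j$ to the $j$-th smallest element of $J$, so $w$ is increasing on $I$ with image $J$. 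I anticipate no further obstacles beyond this direct unpacking of the definitions of $u_I$ and $u_J$.
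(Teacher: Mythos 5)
Your proposal is correct and follows essentially the same route as the paper: it invokes the factorization \eqref{eq:uI:Rnk=} with $k$ replaced by $n-k$, specializes to $q=1$, and identifies the triple product $u_J^{-1} v u_I$ (with $v$ fixing $\ive{k}$) with permutations that are increasing on the $k$-subset indexing the right-hand factor. Your explicit bijection on triples $(I,J,v)$ is just a coefficient-extraction phrasing of the paper's direct rewriting of the inner sum, so there is nothing to add.
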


\begin{proof}
Applying \eqref{eq:uI:Rnk=} to $n-k$ instead of $k$, we obtain
\begin{align*}
\R_{n,n-k}
&= \X^*_{n,n-k} \M_{n,n-k} \X_{n,n-k}
= \Bigg(\sum_{\substack{I \subseteq \ive{n}; \\ \abs{I} = k}} T_{u_I}\Bigg)^*
\Bigg(\sum_{w \in \symm_{\ive{n}-\ive{k}}} T_w\Bigg)
\Bigg(\sum_{\substack{I \subseteq \ive{n}; \\ \abs{I} = k}} T_{u_I}\Bigg)
\end{align*}
by the definitions of $\X_{n,n-k}$ and $\M_{n,n-k}$.
In the specialization $q = 1$, the element $T_w$ maps to $w \in \symm_n$, so the above simplifies to
\begin{align*}
\R_{n,n-k}(1)
&= \Bigg(\sum_{\substack{I \subseteq \ive{n}; \\ \abs{I} = k}} u_I\Bigg)^*
\Bigg(\sum_{w \in \symm_{\ive{n}-\ive{k}}} w\Bigg)
\Bigg(\sum_{\substack{I \subseteq \ive{n}; \\ \abs{I} = k}} u_I\Bigg) \\
&= \sum_{\substack{I \subseteq \ive{n}; \\ \abs{I} = k}}\ \ %
\sum_{w \in \symm_{\ive{n}-\ive{k}}}\ \ %
\sum_{\substack{J \subseteq \ive{n}; \\ \abs{J} = k}}
u_I^{-1} w u_J
\qquad \big(\text{since } u_I^* = u_I^{-1}\big) \\
&= \sum_{\substack{J \subseteq \ive{n}; \\ \abs{J} = k}}\ \ %
\sum_{\substack{I \subseteq \ive{n}; \\ \abs{I} = k}}
\Bigg(\sum_{w \in \symm_{\ive{n}-\ive{k}}} u_I^{-1} w u_J\Bigg) .
\end{align*}
The sum inside the parentheses here is the sum of all permutations in $\symm_n$ that send the $k$ elements of $J$ to the $k$ elements of $I$ in increasing order (because $u_J$ sends the $k$ elements of $J$ to the $k$ numbers $1,2,\ldots,k$ in increasing order, then $w \in \symm_{\ive{n}-\ive{k}}$ leaves these $k$ numbers $1,2,\ldots,k$ unchanged, and finally $u_I^{-1}$ sends the numbers $1,2,\ldots,k$ to the $k$ elements of $I$ in increasing order).
In other words, it is the sum of all permutations $w \in \symm_n$ that are strictly increasing on $J$ and satisfy $w\tup{J} = I$.
Hence, the above equality can be rewritten as
\begin{align*}
\R_{n,n-k}(1)
&= \sum_{\substack{J \subseteq \ive{n}; \\ \abs{J} = k}}\ \ %
\sum_{\substack{I \subseteq \ive{n}; \\ \abs{I} = k}}
\Bigg(\sum_{\substack{w \in \symm_n; \\ w \text{ is increasing on } J; \\ w\tup{J} = I}} w\Bigg)
= \sum_{\substack{J \subseteq \ive{n}; \\ \abs{J} = k}}\ \ %
\sum_{\substack{w \in \symm_n; \\ w \text{ is increasing on } J}} w \\
&= \sum_{w \in \symm_n} \underbrace{\sum_{\substack{J \subseteq \ive{n}; \\ \abs{J} = k; \\ w \text{ is increasing on } J}} w}_{= \tup{\noninv_k w} w}
= \sum_{w \in \symm_n} \tup{\noninv_k w} w .
\qedhere
\end{align*}
\end{proof}

\begin{remark}
If $w$ is the permutation $w_0 \in \symm_n$ that sends $1,2,\ldots,n$ to $n,n-1,\ldots,1$, then $\noninv_k w = 0$ for all $k > 1$.
Hence, $w_0$ does not appear on the right hand side of Proposition~\ref{prop:q=1-R2R} for $k > 1$.
However, $T_{w_0}$ can well appear in $\R_{n,n-k}$, as we saw in Example~\ref{exa:R42}.
This shows that Proposition~\ref{prop:q=1-R2R} is likely a special feature of the $q = 1$ case.
\end{remark}

\subsection{Recursions}

The following recursions for our shuffles $\B_{n,k}$, $\B^*_{n,k}$ and $\R_{n,k}$ follow easily from their definitions.

\begin{prop}
\label{prop:trivrec}
    Let $n$ and $k$ be positive. Then,
    \begin{align}
        \B_{n,k} &= \B_{n-1,k-1} \B_n ;
        \label{eq:trivrec1}
        \\
        \B^*_{n,k} &= \B^*_n \B^*_{n-1,k-1} ;
        \label{eq:trivrec2}
        \\
        \B_{n,k} &= \B_{n-k+1} \B_{n,k-1} ;
        \label{eq:trivrecb1}
        \\
        \B^*_{n,k} &= \B^*_{n,k-1} \B^*_{n-k+1} ;
        \label{eq:trivrecb2} \\
        \ive{k}_q  \R_{n, k} &= \B_n^\ast   \R_{n- 1, k - 1}  \B_n .
        \label{eq:trivrec3}
    \end{align}
\end{prop}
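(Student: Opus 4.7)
The plan is to unfold the definitions in Definition~\ref{def:BR} and manipulate the products. All five identities are essentially formal consequences of how $\B_{n,k}$, $\B^*_{n,k}$ and $\R_{n,k}$ are built up as products; the only real substance is making sure the indices line up correctly.

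First, for \eqref{eq:trivrec1} and \eqref{eq:trivrecb1}, I would simply expand $\B_{n,k} = \B_{n-k+1}\B_{n-k+2}\cdots\B_{n-1}\B_n$ and split off either the last factor or the first factor. Splitting off $\B_n$ on the right gives $\B_{n-k+1}\cdots\B_{n-1}$, which (shifting indices $n \mapsto n-1$ and $k \mapsto k-1$) is precisely $\B_{n-1,k-1}$, proving \eqref{eq:trivrec1}. Splitting off $\B_{n-k+1}$ on the left leaves $\B_{n-k+2}\cdots\B_{n-1}\B_n = \B_{n,k-1}$ (here the top index stays $n$ but the length of the product decreases by one), proving \eqref{eq:trivrecb1}.

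Next, for \eqref{eq:trivrec2} and \eqref{eq:trivrecb2}, I would apply the anti-automorphism $a \mapsto a^*$ to \eqref{eq:trivrec1} and \eqref{eq:trivrecb1}, respectively. Since $(ab)^* = b^*a^*$, and since \eqref{eq:star:Bn} and \eqref{eq:star:Bnk} tell us that $(\B_m)^* = \B^*_m$ and $(\B_{m,\ell})^* = \B^*_{m,\ell}$, the identity \eqref{eq:trivrec1} becomes $\B^*_{n,k} = \B^*_n \B^*_{n-1,k-1}$, and \eqref{eq:trivrecb1} becomes $\B^*_{n,k} = \B^*_{n,k-1}\B^*_{n-k+1}$.

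Finally, for \eqref{eq:trivrec3}, I would combine \eqref{eq:trivrec1} and \eqref{eq:trivrec2} with the definition \eqref{eq:Rnk:def} of $\R_{n,k}$. Substituting gives
\[
\R_{n,k} = \frac{1}{[k]!_q}\, \B^*_n \bigl(\B^*_{n-1,k-1}\B_{n-1,k-1}\bigr)\B_n,
\]
and by \eqref{eq:Rnk:def} applied at $(n-1,k-1)$ the middle bracket equals $[k-1]!_q\, \R_{n-1,k-1}$. Using $[k]!_q = [k]_q \cdot [k-1]!_q$ and clearing the denominator yields \eqref{eq:trivrec3}. There isn't really an obstacle here: the whole proposition is bookkeeping, with the only mild subtlety being the index juggling in the first pair of identities; I would just verify that step carefully and let the remaining items follow by symmetry and substitution.
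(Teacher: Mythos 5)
Your proposal is correct and follows essentially the same route as the paper: the first four identities come from splitting the defining products (the paper argues \eqref{eq:trivrec2} and \eqref{eq:trivrecb2} directly from Definition~\ref{def:BR}, whereas you obtain them by applying $a\mapsto a^*$ to \eqref{eq:trivrec1} and \eqref{eq:trivrecb1} via \eqref{eq:star:Bn} and \eqref{eq:star:Bnk} --- a purely cosmetic difference), and \eqref{eq:trivrec3} is derived by the identical substitution using \eqref{eq:Rnk:def} and $\ive{k}!_q = \ive{k-1}!_q\ive{k}_q$. No gaps.
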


\begin{proof}
    First, we prove \eqref{eq:trivrec1}.
    By definition, we have $\B_{n,k}  = \B_{n-(k-1)}  \cdots \B_{n-1}  \B_n$.
    Likewise,
    $\B_{n-1,k-1}  = \B_{n-(k-1)}  \cdots \B_{n-2}  \B_{n-1}$,
	since $\tup{n-1} - \tup{\tup{k-1}-1} = n - \tup{k-1}$.
    Thus,
    \[
    \B_{n,k} = \B_{n-(k-1)}  \cdots \B_{n-1}  \B_n
    = \left( \B_{n-(k-1)}  \cdots \B_{n-2} \B_{n-1} \right) \B_n = \B_{n-1,k-1} \B_n.
    \]
    This proves \eqref{eq:trivrec1}. The proof of \eqref{eq:trivrec2} is analogous,
	and the proofs of \eqref{eq:trivrecb1} and \eqref{eq:trivrecb2} are similar.

    Finally, to prove \eqref{eq:trivrec3}, we observe that
    \begin{align*}
        \B_n^\ast  \R_{n- 1, k - 1} \B_n  & =
        \B_n^* \tup{\frac{1}{\ive{k-1}!_q} \B^*_{n-1,k-1}  \B_{n-1,k-1} } \B_n
		\qquad \left(\text{by \eqref{eq:Rnk:def}}\right) \\
        &= \frac{1}{\ive{k-1}!_q} (\B_n^* \B^*_{n-1,k-1}) ( \B_{n-1,k-1}  \B_n)
        \\
        &= \frac{1}{\ive{k-1}!_q} \B^*_{n,k} \B_{n,k} \qquad \left(\text{by \eqref{eq:trivrec2} and \eqref{eq:trivrec1}}\right) \\
        &= \frac{\ive{k}_q}{\ive{k}!_q} \B^*_{n,k} \B_{n,k} \qquad \left(\text{since } \ive{k}!_q = \ive{k-1}!_q \ive{k}_q\right) \\
        &= \ive{k}_q \R_{n,k} \qquad \left(\text{by \eqref{eq:Rnk:def}}\right).
        \qedhere
    \end{align*}
\end{proof}

\subsection{Enter the Young--Jucys--Murphy elements}

Assume that $q \neq 0$.
Recall the \emph{Young--Jucys--Murphy elements} $J_1, J_2, J_3, \ldots$ in the respective Hecke algebras $\HH_1, \HH_2, \HH_3, \ldots$ defined by
\begin{equation}\label{eq:JMelementdef}
J_k = \sum_{i=1}^{k-1} q^{i-k} T_{(i,k)} \qquad \text{ for all } k \geq 1,
\end{equation}
where $(i,k)$ denotes the transposition that swaps $i$ with $k$.
For instance, $J_3 = q^{-2} T_{(1,3)} + q^{-1} T_{(2,3)}$.
Note that $J_1 = 0$.

The anti-automorphism $a\mapsto a^\ast$ of the
Hecke algebra $\HH_n$ preserves $T_w$ for
each transposition $w$ (since $w^{-1} = w$), and thus also
preserves the Young--Jucys--Murphy elements, i.e., we have
\begin{equation}
\left( J_k \right) ^\ast = J_k
\qquad \text{for all } k\geq 1.
\label{eq:star:J}
\end{equation}

We will now state two well-known properties of Young--Jucys--Murphy elements that will aid our computations later on.
While they both appear in the literature (e.g., \cite[Proposition 3.26]{Mathas} or \cite[(4.1)]{Murphy92}), we nevertheless include their proofs in Appendix~\ref{sec:yjm-proofs} to keep this work self-contained.

\begin{lem} \label{TiJi}
    Let $n > 1$. Then,
    \[
    T_{n-1}J_{n-1}
	= J_n(T_{n-1} - q + 1) - 1.
    \]
\end{lem}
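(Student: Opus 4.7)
The plan is to expand both sides in the $T_w$ basis and reduce the identity to a one-line computation that uses only the quadratic relation $T_{n-1}^2 = (q-1)T_{n-1} + q$. Since $J_{n-1} = \sum_{i=1}^{n-2} q^{i-n+1} T_{(i,n-1)}$, distributing $T_{n-1}$ through the sum reduces the problem to computing $T_{n-1} T_{(i,n-1)}$ for each $i \in \{1,\ldots,n-2\}$. The key observation is that left multiplication by $s_{n-1}$ carries the transposition $(i,n-1)$ to the $3$-cycle $(i,n,n-1) = (i,n)\,s_{n-1}$; a length count (using that $\ell((i,n-1)) = 2(n-1-i)-1$, $\ell((i,n)) = 2(n-i)-1$ and $\ell((i,n,n-1)) = \ell((i,n))-1$ since $(i,n)$ sends $n-1 \mapsto n-1 > i = (i,n)(n)$) shows that both $\ell(s_{n-1}(i,n-1)) = \ell((i,n-1))+1$ and $\ell((i,n)\,s_{n-1}) = \ell((i,n)) - 1$.

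From the first length comparison, $T_{n-1} T_{(i,n-1)} = T_{(i,n,n-1)}$. From the second, the standard Hecke identity for right multiplication by a length-decreasing simple transposition gives $T_{(i,n)} T_{n-1} = (q-1)T_{(i,n)} + q\,T_{(i,n,n-1)}$, which rearranges to
\[
T_{(i,n,n-1)} = q^{-1} T_{(i,n)}(T_{n-1} - q + 1).
\]
Combining the two displays yields the pivotal formula
\[
T_{n-1} T_{(i,n-1)} = q^{-1} T_{(i,n)}(T_{n-1} - q + 1).
\]

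Plugging this into the sum and pulling the common factor $(T_{n-1} - q + 1)$ out on the right gives
\[
T_{n-1} J_{n-1} = \Bigg(\sum_{i=1}^{n-2} q^{i-n} T_{(i,n)}\Bigg)(T_{n-1} - q + 1) = (J_n - q^{-1} T_{n-1})(T_{n-1} - q + 1),
\]
where the last step uses that the missing $i = n-1$ summand of $J_n$ is exactly $q^{-1} T_{n-1}$. Expanding the right-hand side, the desired identity reduces to the single equation $T_{n-1}(T_{n-1} - q + 1) = q$, which is an immediate consequence of the quadratic relation $T_{n-1}^2 = (q-1)T_{n-1} + q$.

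I do not expect any real obstacle here: the entire proof hinges on the length computation that justifies $T_{n-1} T_{(i,n-1)} = T_{(i,n,n-1)}$ and $T_{(i,n)} T_{n-1} = (q-1) T_{(i,n)} + q\,T_{(i,n,n-1)}$, and once these are in hand the rest is bookkeeping. The only place one must be a bit careful is in recognizing that the shift $q^{i-n+1} \cdot q^{-1} = q^{i-n}$ produced by the key formula exactly matches the coefficient of $T_{(i,n)}$ in $J_n$, which is what allows the sum to be packaged as $J_n - q^{-1}T_{n-1}$.
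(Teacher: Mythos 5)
Your proof is correct and takes essentially the same route as the paper: both expand the Jucys--Murphy elements in the transposition basis, relate $T_{(i,n-1)}$ to $T_{(i,n)}$ via multiplication by $T_{n-1}$ (your pivotal identity $T_{n-1}T_{(i,n-1)} = q^{-1}T_{(i,n)}(T_{n-1}-q+1)$ is just the paper's reduced-expression relation $T_{(i,n)} = T_{n-1}T_{(i,n-1)}T_{n-1}$ rewritten using $T_{n-1}^{-1} = q^{-1}(T_{n-1}-q+1)$), and both close with the quadratic relation $T_{n-1}(T_{n-1}-q+1)=q$. The only difference is organizational: the paper sandwiches reduced words to get $J_n = q^{-1}(1+T_{n-1}J_{n-1})T_{n-1}$ and then multiplies on the right by $T_{n-1}-q+1$, whereas you obtain the key relation from the length-based multiplication rules via the $3$-cycle $(i,n,n-1)$ and sum directly into $T_{n-1}J_{n-1} = (J_n - q^{-1}T_{n-1})(T_{n-1}-q+1)$.
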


\begin{lem} \label{lem:TiJk}
    Let $n, i > 0$ with $i \notin \set{n-1, n}$. Then,
    $T_i$ commutes with $J_n$.
\end{lem}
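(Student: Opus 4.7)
The plan is to compute the commutator $[T_i, J_n] = \sum_{k=1}^{n-1} q^{k-n}\,[T_i, T_{(k,n)}]$ directly from the definition \eqref{eq:JMelementdef} and show it vanishes by exhibiting pairwise cancellations. The central observation is that whenever $\{k, n\} \cap \{i, i+1\} = \varnothing$, the transpositions $s_i$ and $(k, n)$ have disjoint support, so their product (in either order) is a permutation of length $\ell(s_i) + \ell((k, n))$. Hence $T_i T_{(k, n)} = T_{s_i (k, n)} = T_{(k, n) s_i} = T_{(k, n)} T_i$, and the $k$-th summand contributes zero to the commutator.

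In the easy case $i > n$, every $k \in \ive{n-1}$ satisfies $k, n < i$, so the disjointness condition holds for all $k$ and the entire sum vanishes termwise. The interesting case is $i \leq n - 2$, where $n \notin \{i, i+1\}$ and the disjointness argument handles every $k$ except $k = i$ and $k = i+1$. This leaves
\[
[T_i, J_n] = q^{i-n}\,[T_i, T_{(i, n)}] + q^{i+1-n}\,[T_i, T_{(i+1, n)}],
\]
so the lemma reduces to showing that these two weighted commutators cancel.

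The key identity is $T_{(i, n)} = T_i\,T_{(i+1, n)}\,T_i$. It follows from the equality of permutations $(i, n) = s_i (i+1, n) s_i$ together with a length count: the concatenation of $s_i$ with the standard reduced expression $s_{i+1} s_{i+2} \cdots s_{n-1} \cdots s_{i+2} s_{i+1}$ for $(i+1, n)$ and another $s_i$ is reduced, since the resulting word has length $2(n-i)-1 = \ell((i, n))$. Applying the quadratic relation $T_i^2 = (q-1)T_i + q$ on the left and on the right of $T_{(i, n)} = T_i T_{(i+1, n)} T_i$ and subtracting yields
\[
[T_i, T_{(i, n)}] = q\bigl(T_{(i+1, n)} T_i - T_i T_{(i+1, n)}\bigr) = -q\,[T_i, T_{(i+1, n)}].
\]
Substituting this into the two-term expression above and using $q \cdot q^{i-n} = q^{i+1-n}$ gives exact cancellation. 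I do not expect a serious obstacle here; the only step demanding a little care is the length-additivity check that legitimizes $T_{(i, n)} = T_i T_{(i+1, n)} T_i$, which is a routine inversion count from the standard reduced word for $(i, n)$.
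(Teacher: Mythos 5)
Your proof is correct and follows essentially the same route as the paper's: split off the $k=i$ and $k=i+1$ terms of $J_n$, commute $T_i$ past all other $T_{(k,n)}$, and cancel the two exceptional terms using $T_{(i,n)} = T_i T_{(i+1,n)} T_i$ together with the quadratic relation (the paper packages this last computation as the auxiliary fact that $T_i$ commutes with $T_i a T_i + qa$, which is identical to your commutator identity $[T_i,T_{(i,n)}] = -q\,[T_i,T_{(i+1,n)}]$). One small caveat: your blanket justification that disjoint supports force $\ell(uv)=\ell(u)+\ell(v)$ is false in general (e.g.\ $u=(1,3)$, $v=(2,4)$ gives $\ell(uv)=4\neq 6$), but the instance you actually need—multiplying by the simple transposition $s_i$ on either side of a permutation fixing $i$ and $i+1$—is exactly the standard length criterion $\ell(s_i w)=\ell(w)+1$ when $w^{-1}(i)<w^{-1}(i+1)$ (and its right-handed analogue), so the argument goes through.
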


\begin{lem} \label{lem:TiHn-1}
    Assume that $n > 0$.
    Then, $J_n$ commutes with any element of $\HH_{n-1}$.
\end{lem}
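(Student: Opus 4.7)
The plan is immediate from the preceding lemma. The subalgebra $\HH_{n-1} \subseteq \HH_n$ is generated, as a unital $\kk$-algebra, by the simple generators $T_1, T_2, \ldots, T_{n-2}$. Since the commutant of $J_n$ in $\HH_n$ is itself a $\kk$-subalgebra (commuting with $J_n$ is preserved under sums, scalar multiples, and products), it suffices to verify that each of these generators commutes with $J_n$.

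So first I would note that for every $i \in \ive{n-2}$, we have $i \leq n-2$, hence $i \notin \{n-1, n\}$. Thus Lemma~\ref{lem:TiJk} applies directly and yields $T_i J_n = J_n T_i$ for all $i \in \ive{n-2}$. Extending multiplicatively and additively, this gives $a J_n = J_n a$ for every $a \in \HH_{n-1}$, which is exactly the claim.

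There is no real obstacle here: the whole lemma is a one-line corollary of Lemma~\ref{lem:TiJk} together with the observation that $T_1, \ldots, T_{n-2}$ generate $\HH_{n-1}$. The only thing worth spelling out carefully is the range condition $i \notin \{n-1, n\}$, which holds automatically for indices of generators of $\HH_{n-1}$; this is the reason Lemma~\ref{lem:TiJk} was stated with exactly that hypothesis.
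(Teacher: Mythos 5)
Your proof is correct and is essentially identical to the paper's: both reduce the claim to the fact that $\HH_{n-1}$ is generated by $T_1,\ldots,T_{n-2}$, each of which commutes with $J_n$ by Lemma~\ref{lem:TiJk}. Your extra remark that the commutant of $J_n$ is a subalgebra just makes explicit the standard step the paper leaves implicit.
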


\begin{proof}
    The algebra $\HH_{n-1}$ is generated by the elements $T_1, T_2, \ldots, T_{n-2}$.
    But Lemma~\ref{lem:TiJk} shows that all these elements commute with $J_n$.
\end{proof}

Lemma~\ref{lem:TiHn-1} easily yields the well-known fact that the Young--Jucys--Murphy elements $J_1, J_2, J_3, \ldots$ all commute.

\begin{lem}\label{k1recursion}
    Let $n > 1$. Then,
    \begin{align*}
        \B_n  \B_n^\ast  = \ive{n}_q + q^n  J_n  + \B_{n-1}^\ast T_{n-1}\B_{n-1}.
    \end{align*}
\end{lem}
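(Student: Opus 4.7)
The plan is to prove the identity by induction on $n$. The base case $n = 2$ is handled by direct computation: expanding $\B_2\B_2^\ast = (1+T_1)^2$ via the quadratic relation $T_1^2 = (q-1)T_1 + q$ gives $\ive{2}_q + \ive{2}_q T_1$, which matches the right-hand side $\ive{2}_q + q^2 J_2 + \B_1^\ast T_1 \B_1 = \ive{2}_q + qT_1 + T_1$ after noting $J_2 = q^{-1}T_1$ and $\B_1 = \B_1^\ast = 1$.

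For the inductive step, I would first record the elementary recursion
\[
\B_n = 1 + T_{n-1}\B_{n-1} \qquad \text{and its dual} \qquad \B_n^\ast = 1 + \B_{n-1}^\ast T_{n-1},
\]
each obtained from Definition~\ref{def.B-B*} by isolating the $i = n$ summand. Expanding the product and substituting the induction hypothesis $\B_{n-1}\B_{n-1}^\ast = \ive{n-1}_q + q^{n-1}J_{n-1} + \B_{n-2}^\ast T_{n-2}\B_{n-2}$ into the only nontrivial cross term $T_{n-1}\B_{n-1}\B_{n-1}^\ast T_{n-1}$ produces three pieces to simplify. Three Hecke-algebraic moves then suffice: (i) the quadratic relation $T_{n-1}^2 = (q-1)T_{n-1} + q$ resolves $\ive{n-1}_q T_{n-1}^2$; (ii) combining Lemma~\ref{TiJi} with the quadratic relation yields the key identity $T_{n-1}J_{n-1}T_{n-1} = qJ_n - T_{n-1}$, which is the sole mechanism introducing $J_n$ into the computation; (iii) since $\B_{n-2}$ and $\B_{n-2}^\ast$ lie in $\HH_{n-2}$ and thus commute with $T_{n-1}$, the third piece rearranges as $\B_{n-2}^\ast (T_{n-1}T_{n-2}T_{n-1})\B_{n-2} = \B_{n-2}^\ast T_{n-2}T_{n-1}T_{n-2}\B_{n-2}$ by the braid relation.

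Collecting everything, the constants combine as $1 + q\ive{n-1}_q = \ive{n}_q$; the $T_{n-1}$-coefficients arising from (i) and (ii) telescope to $-T_{n-1}$; and the expected $q^nJ_n$ appears. The final task is to verify that the leftover on the left-hand side equals $\B_{n-1}^\ast T_{n-1}\B_{n-1}$. To this end I would expand the latter via the same recursion $\B_{n-1} = 1 + T_{n-2}\B_{n-2}$ and its dual, giving
\[
\B_{n-1}^\ast T_{n-1}\B_{n-1} = T_{n-1} + T_{n-1}T_{n-2}\B_{n-2} + \B_{n-2}^\ast T_{n-2}T_{n-1} + \B_{n-2}^\ast T_{n-2}T_{n-1}T_{n-2}\B_{n-2},
\]
which matches term-by-term with what remains, after the $-T_{n-1}$ cancellation.

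The only non-mechanical step I foresee is the identity $T_{n-1}J_{n-1}T_{n-1} = qJ_n - T_{n-1}$ used in (ii); once this is isolated, everything else reduces to routine manipulation via the defining relations of $\HH_n$ together with the commutation of $T_{n-1}$ with $\HH_{n-2}$.
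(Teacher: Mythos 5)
Your proof is correct, and it takes a genuinely different route from the paper: the paper does not prove Lemma~\ref{k1recursion} at all, but defers it to \cite[Eq.~(5.2) in the proof of Theorem 5.2]{axelrodfreed2024spectrumrandomtorandomshufflinghecke}, whereas you give a self-contained induction on $n$ relying only on the defining relations of $\HH_n$, the commutation of $T_{n-1}$ with $\HH_{n-2}$, and Lemma~\ref{TiJi} (which the paper proves in Appendix~\ref{sec:yjm-proofs}). I checked the details: the splittings $\B_n = 1 + T_{n-1}\B_{n-1}$ and $\B_n^\ast = 1 + \B_{n-1}^\ast T_{n-1}$ are correct readings of Definition~\ref{def.B-B*}; your key identity $T_{n-1}J_{n-1}T_{n-1} = qJ_n - T_{n-1}$ follows from Lemma~\ref{TiJi} upon right-multiplying by $T_{n-1}$ and using $T_{n-1}^2-(q-1)T_{n-1}=q$; the constants combine as $1+q\ive{n-1}_q=\ive{n}_q$; the $T_{n-1}$-terms from (i) and (ii) give $(q^{n-1}-1)T_{n-1}-q^{n-1}T_{n-1}=-T_{n-1}$; and the remaining four terms $T_{n-1} + T_{n-1}T_{n-2}\B_{n-2} + \B_{n-2}^\ast T_{n-2}T_{n-1} + \B_{n-2}^\ast T_{n-2}T_{n-1}T_{n-2}\B_{n-2}$ indeed reassemble into $\B_{n-1}^\ast T_{n-1}\B_{n-1}$, the braid relation and the commutation $[T_{n-1},\HH_{n-2}]=0$ being used exactly where you say. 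The base case $n=2$ also checks out (note $\B_1=\B_1^\ast=1$ and $J_2=q^{-1}T_1$). What your approach buys is self-containedness: the paper's stated goal of making the commutativity proof independent of \cite{axelrodfreed2024spectrumrandomtorandomshufflinghecke} except for a few elementary facts would be furthered by including an argument like yours, since Lemma~\ref{k1recursion} is one of the imported ingredients; the cost is a page of bookkeeping that the citation avoids.
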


\begin{proof}
    See \cite[Equation (5.2) in the proof of Theorem 5.2]{r2r1}.
\end{proof}

\section{A recursion}\label{section.recursion}

We shall next work our way towards a crucial recursive formula expressing
$\B_{n} \R_{n,k}$ in terms of $\R_{n-1,k} \B_{n}$, $\R_{n-1,k-1} \B_{n}$ and
$J_n$ (Theorem~\ref{thm:BnRnk}).

For each $1 \leq i \leq n$, we define the element
\[
\lambda_{n,i}:=  \Big(T_{n-1} \cdots T_{n-i+1} \Big) \bigg(\ive{n-i+1}_q + q^{n-i+1} J_{n-i+1}\bigg)
\]
of $\HH_n$.
These elements clearly satisfy the formula
\begin{equation}
    \lambda_{n,1} = \ive{n}_q + q^n J_n
    \label{eq.lambdan1}
\end{equation}
and the recursion
\begin{equation}
    \lambda_{n,i} = T_{n-1} \lambda_{n-1,i-1} \qquad \text{for all $1 < i \leq n$.}
    \label{eq.lambda-recursion}
\end{equation}

Furthermore, for any $0 \le k \le n$, we define
\[
    \gamma_{n,k}
    := \sum_{i=1}^{k} \lambda_{n, i}
    = \sum_{i=1}^{k} T_{n-1} \cdots T_{n-i+1} \bigg(\ive{n-i+1}_q + q^{n-i+1} J_{n-i+1}\bigg).
\]
Thus, $\gamma_{n,0} = 0$ and $\gamma_{n,1} = \lambda_{n,1} = \ive{n}_q + q^n J_n$. The elements $\gamma_{n,k}$ furthermore satisfy the following recursion:

\begin{lem}
    For any $1 \le k \le n$, we have
    \begin{equation}
    \label{gamma-recursion}
    \gamma_{n,k} = \ive{n}_q + q^n J_n + T_{n-1}\gamma_{n-1,k-1}.
    \end{equation}
\end{lem}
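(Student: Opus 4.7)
The proof is essentially a one-line computation: isolate the $i=1$ term of the sum defining $\gamma_{n,k}$, and apply the recursion \eqref{eq.lambda-recursion} to the remaining terms.

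In more detail, the plan is to start from the definition
\[
\gamma_{n,k} = \sum_{i=1}^{k} \lambda_{n,i} = \lambda_{n,1} + \sum_{i=2}^{k} \lambda_{n,i},
\]
where splitting off $i=1$ is legal because $k \geq 1$. By \eqref{eq.lambdan1}, the first term equals $\ive{n}_q + q^n J_n$, which is precisely the first summand on the right-hand side we are trying to produce. For each $i$ with $2 \le i \le k$, the recursion \eqref{eq.lambda-recursion} yields $\lambda_{n,i} = T_{n-1}\lambda_{n-1,i-1}$, so that
\[
\sum_{i=2}^{k} \lambda_{n,i} = T_{n-1}\sum_{i=2}^{k}\lambda_{n-1,i-1} = T_{n-1}\sum_{j=1}^{k-1}\lambda_{n-1,j} = T_{n-1}\gamma_{n-1,k-1}
\]
after the reindexing $j = i-1$ and recalling the definition of $\gamma_{n-1,k-1}$. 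Combining the two contributions gives \eqref{gamma-recursion}.

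There is no substantive obstacle here: the statement is a purely formal consequence of the definition of $\gamma_{n,k}$ as a telescoping-style sum together with the recursion \eqref{eq.lambda-recursion} for $\lambda_{n,i}$. The only thing to check is that the index shift is done correctly, and that the base case $i=1$ of the sum is handled separately (since \eqref{eq.lambda-recursion} only applies for $i > 1$). The condition $k \geq 1$ in the hypothesis is exactly what is needed to legitimately extract that base term, and the condition $k \le n$ ensures that all $\lambda_{n-1,j}$ appearing on the right are among those summed in the definition of $\gamma_{n-1,k-1}$ (since $k-1 \le n-1$). No appeal to the properties of the Young--Jucys--Murphy elements or to any deeper structure is required.
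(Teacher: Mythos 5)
Your argument is correct and is essentially identical to the paper's own proof: both split off the $i=1$ summand, apply \eqref{eq.lambda-recursion} to the terms with $i \geq 2$, reindex the sum, and invoke \eqref{eq.lambdan1}. No issues.
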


\begin{proof}
The definition of $\gamma_{n,k}$ yields
\begin{align}
\gamma_{n,k}
&= \sum_{i=1}^{k} \lambda_{n, i}
= \lambda_{n,1} + \sum_{i=2}^{k} \lambda_{n, i}
= \lambda_{n,1} + \sum_{i=2}^{k} T_{n-1} \lambda_{n-1,i-1}
\qquad \left(\text{by \eqref{eq.lambda-recursion}}\right) \nonumber\\
&= \lambda_{n,1} + T_{n-1} \sum_{i=2}^{k} \lambda_{n-1,i-1}
= \lambda_{n,1} + T_{n-1} \sum_{i=1}^{k-1} \lambda_{n-1,i}
\label{pf:gamma-recursion:1}
\end{align}
(here, we reindexed the sum by substituting $i$ for $i-1$).
However, by the definition of $\gamma_{n-1,k-1}$, we have
\[ \sum_{i=1}^{k-1} \lambda_{n-1,i} = \gamma_{n-1,k-1} \]
and we also know that $\lambda_{n,1} = \ive{n}_q + q^n J_n$.
Thus, we can rewrite \eqref{pf:gamma-recursion:1} as
$\gamma_{n,k} = \ive{n}_q + q^n J_n + T_{n-1}\gamma_{n-1,k-1}$.
\end{proof}

Finally, we set
    \begin{align}
        \Lambda_{n,k}
        := \B^*_{n-1,k-1} \gamma_{n,k}
        \label{eq:defLambdank}
    \end{align}
for each $1 \le k \le n$.
We extend this to $k = 0$ by setting $\Lambda_{n,0} := 0$.
Note that
\begin{equation}
\Lambda_{n,1} = \B^*_{n-1,0} \gamma_{n,1} = \gamma_{n,1} = \ive{n}_q + q^n J_n.
\label{eq:Lambdan1}
\end{equation}

\begin{prop}\label{prop:rewritingBnkstar}
For any $1 \leq k \leq n$, we have
\begin{equation*}
\B_n \B^*_{n,k}
=
\B^*_{n-1, k} \Big(T_{n-1} T_{n-2} \cdots T_{n-k}\Big) \B_{n-k} + \,\Lambda_{n,k}.
\end{equation*}
Here, we understand $T_0$ to mean $0$.
\end{prop}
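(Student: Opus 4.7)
The plan is to prove the identity by induction on $k \geq 1$. The base case $k=1$ is immediate: we have $\B^*_{n,1} = \B^*_n$, $\B^*_{n-1,1} = \B^*_{n-1}$, $T_{n-1} \cdots T_{n-1}$ is just $T_{n-1}$, and $\Lambda_{n,1} = \ive{n}_q + q^n J_n$ by \eqref{eq:Lambdan1}, so the claim becomes exactly Lemma \ref{k1recursion}.

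For the inductive step, I would write $\B^*_{n,k} = \B^*_n \B^*_{n-1,k-1}$ using \eqref{eq:trivrec2}, then rewrite $\B_n \B^*_n$ via Lemma \ref{k1recursion} to obtain
\[
\B_n \B^*_{n,k} = (\ive{n}_q + q^n J_n)\,\B^*_{n-1,k-1} + \B^*_{n-1}\, T_{n-1}\, \B_{n-1}\B^*_{n-1,k-1}.
\]
Applying the inductive hypothesis at $(n-1,k-1)$ expands $\B_{n-1}\B^*_{n-1,k-1}$ as $\B^*_{n-2,k-1}(T_{n-2}\cdots T_{n-k})\B_{n-k} + \Lambda_{n-1,k-1}$, splitting the right-hand side into a ``main'' piece and a ``remainder'' piece, which I will match to the two summands of the target identity.

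For the main piece, I observe that $\B^*_{n-2,k-1}$ lies in $\HH_{n-2}$ (being a product of $\B^*_j$ for $j \le n-2$, each built from generators $T_i$ with $i \le n-3$), so it commutes with $T_{n-1}$. Therefore
\[
\B^*_{n-1}\, T_{n-1}\, \B^*_{n-2,k-1}(T_{n-2}\cdots T_{n-k})\B_{n-k}
= \B^*_{n-1}\B^*_{n-2,k-1}\,(T_{n-1}T_{n-2}\cdots T_{n-k})\B_{n-k},
\]
and the factor $\B^*_{n-1}\B^*_{n-2,k-1}$ equals $\B^*_{n-1,k}$ by \eqref{eq:trivrec2}. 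For the remainder piece, I want
\[
(\ive{n}_q + q^n J_n)\B^*_{n-1,k-1} + \B^*_{n-1}\,T_{n-1}\,\Lambda_{n-1,k-1} = \Lambda_{n,k}.
\]
Using the recursion \eqref{gamma-recursion} and the definition $\Lambda_{n,k} = \B^*_{n-1,k-1}\gamma_{n,k}$, the right-hand side becomes $\B^*_{n-1,k-1}(\ive{n}_q + q^n J_n) + \B^*_{n-1,k-1}T_{n-1}\gamma_{n-1,k-1}$. The first summand matches after noting that $J_n$ commutes with $\B^*_{n-1,k-1} \in \HH_{n-1}$ by Lemma \ref{lem:TiHn-1}. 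For the second summand, use $\Lambda_{n-1,k-1} = \B^*_{n-2,k-2}\gamma_{n-1,k-1}$ together with the fact that $T_{n-1}$ commutes with $\B^*_{n-2,k-2} \in \HH_{n-2}$, and finish by rewriting $\B^*_{n-1}\B^*_{n-2,k-2} = \B^*_{n-1,k-1}$ via \eqref{eq:trivrec2}.

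The main obstacle is purely bookkeeping: keeping track of the correct ranges in the subscripts of $\B^*_{n',k'}$ and $\gamma_{n',k'}$, and confirming that the boundary conventions ($T_0 = 0$, $\B_0 = 0$, and $\B^*_{n',k'} = 0$ for $k' > n'$) are consistent so that the induction passes through the edge case $k=n$, in which the main term vanishes and both sides reduce to $\Lambda_{n,n}$. I do not expect any new ingredient beyond Lemma \ref{k1recursion}, Lemma \ref{lem:TiHn-1}, the recursion \eqref{gamma-recursion}, and the trivial product formulas \eqref{eq:trivrec2}; everything else is algebraic manipulation.
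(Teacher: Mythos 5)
Your proposal is correct and follows essentially the same route as the paper: induction on $k$ with Lemma~\ref{k1recursion} as the base case, expanding $\B_n\B^*_{n,k}$ via \eqref{eq:trivrec2}, applying the induction hypothesis at $(n-1,k-1)$, and matching the main term via commutation of $T_{n-1}$ with $\HH_{n-2}$ and the remainder term via the recursion \eqref{gamma-recursion} and the definition of $\Lambda_{n,k}$. The only detail worth spelling out (which the paper handles with a parenthetical remark) is that the $k=1$ case with $n=1$ is not covered by Lemma~\ref{k1recursion} and must be checked directly using the conventions $T_0=\B_0=\B^*_{0,1}=0$; your closing paragraph already flags exactly this kind of boundary bookkeeping.
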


\begin{proof}
We proceed by induction on $k$. As a base case, when $k = 1$, we have
\begin{align}
    \B_n \B^\ast_n
    &= \B^\ast_{n-1} T_{n-1}  \B_{n-1} + \ive{n}_q + q^n J_n
	\qquad \left(\text{by Lemma \ref{k1recursion}}\right) \nonumber\\
	&= \B^\ast_{n-1} T_{n-1}  \B_{n-1} + \,\Lambda_{n, 1}
	\qquad \left(\text{by \eqref{eq:Lambdan1}}\right).
	\label{eqn:k=1}
\end{align}
(This also holds for $n = 1$, since we set $T_0 = 0$.)

Now, let $k > 1$, and assume the proposition holds for $k - 1$.
Then, the induction hypothesis (applied to $n-1$ and $k-1$ instead of $n$ and $k$) yields
\begin{align}
    &\B_{n-1} \B^*_{n-1,k-1}
    \nonumber\\
	&=
    \B^*_{n-2, k-1} \Big(T_{n-2} T_{n-3} \cdots T_{n-k}\Big) \B_{n-k} + \,\Lambda_{n-1,k-1}
	\label{pf:prop:rewritingBnkstar:IH}
\end{align}
since $(n-1)-(k-1)=n-k$.
Furthermore, since $\B^*_{n-1,k-1} \in \HH_{n-1}$,
Lemma~\ref{lem:TiHn-1} shows that $J_n$ commutes with $\B^*_{n-1,k-1}$.
Thus, $\ive{n}_q + q^n J_n$ commutes with $\B^*_{n-1,k-1}$ as well.
In view of \eqref{eq:Lambdan1}, we can restate this as follows:
$\Lambda_{n, 1}$ commutes with $\B^*_{n-1,k-1}$.
But \eqref{eq:trivrec2} yields
\begin{align*}
    \B_n \B^\ast_{n, k} &= \left(\B_n  \B^\ast_n\right) \B^\ast_{n-1, k - 1}\\
    &= \left(\B^\ast_{n-1}T_{n-1} \B_{n-1} + \,\Lambda_{n, 1}\right) \B^\ast_{n-1, k - 1} \qquad \left(\text{by \eqref{eqn:k=1}}\right)\\
    &= \B^\ast_{n-1}T_{n-1} \left(\B_{n-1} \B^\ast_{n-1,k-1}\right) + \,\Lambda_{n, 1}  \B_{n-1, k - 1}^\ast\\
    &= \B^\ast_{n-1}T_{n-1} \left(\B_{n-1} \B^\ast_{n-1,k-1}\right) + \B_{n-1, k - 1}^\ast \,\Lambda_{n, 1}
    \qquad \left(\text{since $\Lambda_{n, 1}$ commutes with $\B^*_{n-1,k-1}$}\right) \\
    &= \begin{multlined}[t]
        \B^\ast_{n-1} T_{n-1}\Big(\B^\ast_{n-2, k - 1}\left(T_{n-2}T_{n-3}\cdots T_{n-k}\right)\B_{n-k}
        + \, \Lambda_{n - 1,k - 1}\Big) + \B_{n-1, k-1}^\ast \,\Lambda_{n, 1}        \qquad \left(\text{by \eqref{pf:prop:rewritingBnkstar:IH}}\right)
	    \end{multlined} \\
    &= \begin{multlined}[t]
        \underbrace{\B^\ast_{n-1} T_{n-1} \B^\ast_{n-2, k - 1}\left(T_{n-2}T_{n-3}\cdots T_{n-k}\right)\B_{n-k}}_{=:\  \alpha} + \underbrace{\B^\ast_{n-1} T_{n-1} \Lambda_{n - 1,k - 1} + \B_{n-1, k-1}^\ast \,\Lambda_{n, 1}}_{=:\  \beta}.
    \end{multlined}
\end{align*}
Note that $T_{n-1}$ commutes with $\B^\ast_{n-2, k - 1}$ since $\B^\ast_{n-2, k - 1} \in \HH_{n-2}$. With $\alpha,\beta$ defined above, we thus have
\begin{align*}
    \alpha
    &= \B^\ast_{n-1} \B^\ast_{n-2, k - 1}\left(T_{n-1}  T_{n-2}T_{n-3}\cdots T_{n-k}\right)\B_{n-k}
    \\
    &= \B^\ast_{n-1, k}\left(T_{n-1}T_{n-2}\cdots T_{n-k}\right)\B_{n-k} \qquad \left(\text{by \eqref{eq:trivrec2}}\right).
\end{align*}
Hence, to complete the proof, it suffices to prove that $\beta = \Lambda_{n,k}$. But the definition of $\Lambda_{n-1,k-1}$ yields
\begin{align*}
    \beta
    &= \B^*_{n-1} T_{n-1} \B^*_{n-2,k-2} \gamma_{n-1,k-1} + \B_{n-1, k-1}^\ast \,\Lambda_{n, 1} \\
    &= \B^*_{n-1} \B^*_{n-2,k-2} T_{n-1} \gamma_{n-1,k-1} + \B_{n-1, k-1}^\ast \,\Lambda_{n, 1}
    \qquad \left(\text{since $T_{n-1}$ commutes with $\B^\ast_{n-2, k-2}$}\right) \\
    &= \B^*_{n-1,k-1} T_{n-1} \gamma_{n-1,k-1} + \B_{n-1, k-1}^\ast \,\Lambda_{n, 1} \qquad \left(\text{by \eqref{eq:trivrec2}}\right) \\
    &= \B^*_{n-1,k-1} \left( \,\Lambda_{n, 1} + T_{n-1} \gamma_{n-1,k-1} \right) \\
	&= \B^*_{n-1,k-1} \left( \ive{n}_q + q^nJ_n + T_{n-1} \gamma_{n-1,k-1} \right)
    \qquad \left(\text{since } \Lambda_{n,1} = \ive{n}_q + q^n J_n \right) \\
    &= \B^*_{n-1,k-1} \gamma_{n,k}
    \qquad \left(\text{by \eqref{gamma-recursion}}\right) \\
    &= \Lambda_{n, k},
\end{align*}
completing the proof.
\end{proof}

\begin{prop}
    \label{identity-mod-ker}
    Let $0 \leq k \leq n$.
    There exists $I_{n, k} \in \HH_n$ such that 
    \[
	\gamma_{n,k} = \ive{k}_q \Big(\ive{n-k+1}_q + q^{n-k+1}J_n \Big) + I_{n,k}
    \qquad\text{and}\qquad
    I_{n,k}  \B_{n,k} = 0.
    \]
\end{prop}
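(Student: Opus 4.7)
The plan is to set $\theta_{n,k} := \ive{k}_q\bigl(\ive{n-k+1}_q + q^{n-k+1} J_n\bigr)$ and take $I_{n,k} := \gamma_{n,k} - \theta_{n,k}$, so that the statement becomes the identity $\gamma_{n,k}\B_{n,k} = \theta_{n,k}\B_{n,k}$. I will prove this by induction on $k$. The cases $k=0$ (where $\gamma_{n,0} = 0 = [0]_q \cdot (\cdots)$) and $k=1$ (where \eqref{eq:Lambdan1} gives $\gamma_{n,1} = \ive{n}_q + q^n J_n = \theta_{n,1}$) are immediate, with $I_{n,0} = I_{n,1} = 0$.

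For the inductive step ($k \ge 2$), I would combine the recursion \eqref{gamma-recursion}, namely $\gamma_{n,k} = \ive{n}_q + q^n J_n + T_{n-1}\gamma_{n-1,k-1}$, with the factorization $\B_{n,k} = \B_{n-1,k-1}\B_n$ from \eqref{eq:trivrec1}. By the induction hypothesis applied at $(n-1,k-1)$, there is some $I_{n-1,k-1}$ with $\gamma_{n-1,k-1} = \theta_{n-1,k-1} + I_{n-1,k-1}$ and $I_{n-1,k-1}\B_{n-1,k-1} = 0$; hence $T_{n-1}I_{n-1,k-1}\B_{n,k} = T_{n-1}I_{n-1,k-1}\B_{n-1,k-1}\B_n = 0$, and it suffices to verify
\[
\bigl(\ive{n}_q + q^n J_n + T_{n-1}\theta_{n-1,k-1}\bigr)\B_{n,k} = \theta_{n,k}\B_{n,k}.
\]

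To simplify the left-hand side, I would use Lemma~\ref{TiJi} to rewrite $T_{n-1}J_{n-1} = J_n(T_{n-1} - q + 1) - 1$, which turns $T_{n-1}\theta_{n-1,k-1}$ into
\[
\ive{k-1}_q\Bigl(\bigl(\ive{n-k+1}_q + q^{n-k+1}J_n\bigr)T_{n-1} + q^{n-k+1}(1-q)J_n - q^{n-k+1}\Bigr).
\]
Since $k \ge 2$, we have $n-k < n-1 < n$, so \eqref{eq:uI:TjBnk} gives $T_{n-1}\B_{n,k} = q\B_{n,k}$; multiplying by $\B_{n,k}$ on the right thus collapses the $T_{n-1}$. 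What remains is a purely scalar identity in $\ZZ[q]$ between the coefficients of $1$ and of $J_n$. The $J_n$-coefficient matches via $q^n + q^{n-k+1}\ive{k-1}_q = q^{n-k+1}\ive{k}_q$ (using $\ive{k}_q = \ive{k-1}_q + q^{k-1}$), and the constant part is handled by the two elementary $q$-identities $\ive{n}_q = \ive{n-k+1}_q + q^{n-k+1}\ive{k-1}_q$ and $q^{n-k+1} + q\ive{n-k}_q = q\ive{n-k+1}_q$, which together reduce the scalar part of the left-hand side to $\ive{k}_q\ive{n-k+1}_q$, exactly matching $\theta_{n,k}$.

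The main obstacle is purely bookkeeping: keeping the $q$-arithmetic straight when consolidating $J_n$-terms after applying Lemma~\ref{TiJi}. Conceptually nothing deep happens past that point, as the key point is that once $T_{n-1}\gamma_{n-1,k-1}$ acts on $\B_{n,k}$ through the induction hypothesis, the only ``new'' $T_{n-1}$ appearing can be absorbed using \eqref{eq:uI:TjBnk}. An equivalent, more transparent reformulation (which the induction actually proves) is that $\gamma_{n,k} \equiv \bigl(\ive{n-k+1}_q + q^{n-k+1}J_n\bigr)\sum_{i=1}^{k} T_{n-1}T_{n-2}\cdots T_{n-i+1}$ modulo the left annihilator of $\B_{n,k}$, whereupon iterating \eqref{eq:uI:TjBnk} gives the sum a value of $\ive{k}_q$ upon right-multiplication by $\B_{n,k}$.
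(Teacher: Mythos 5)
Your proposal is correct and follows essentially the same route as the paper's proof: induction on $k$ with base cases $k=0,1$, the recursion \eqref{gamma-recursion} combined with $\B_{n,k}=\B_{n-1,k-1}\B_n$ to absorb the inductive error term, Lemma~\ref{TiJi} to convert $T_{n-1}J_{n-1}$ into $J_n$-terms, the relation $T_{n-1}\B_{n,k}=q\B_{n,k}$ from \eqref{eq:uI:TjBnk}, and the same elementary $q$-integer identities. The paper merely phrases the argument as a congruence modulo the left annihilator $\ker(\B_{n,k})$ rather than multiplying by $\B_{n,k}$ explicitly, which is the same computation.
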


\begin{proof}
We let $\ker(\B_{n,k})$ be the set of all $h \in \HH_n$ that satisfy $h \B_{n,k} = 0$. This is a left ideal of $\HH_n$.

Furthermore, set
\[
\rho_{n,k} := \ive{k}_q \Big(\ive{n-k+1}_q + q^{n-k+1} J_n \Big) .
\]

We must show that $\gamma_{n,k} \equiv \rho_{n,k} \pmod{ \ker(\B_{n,k})}$.

We proceed by induction on $k$, where the $k=0$ case boils down to $0 = 0$, and the $k = 1$ case follows from $\gamma_{n,1} = \ive{n}_q + q^n J_n = \rho_{n,1}$.

Now assume $k > 1$ (whence $n > 1$ as well).
By our induction hypothesis, 
\[
\gamma_{n-1, k-1} \equiv \rho_{n-1, k-1} \pmod{\ker(\B_{n-1, k-1})}.
\]
Since $\ker(\B_{n-1, k-1}) \subseteq \ker(\B_{n,k})$ (a consequence of \eqref{eq:trivrec1}), this entails
\[
\gamma_{n-1, k-1} \equiv \rho_{n-1, k-1} \pmod{\ker(\B_{n, k})}.
\]
In view of this, \eqref{gamma-recursion} becomes
\begin{equation}
    \gamma_{n,k} \equiv \ive{n}_q + q^n J_n + T_{n-1}\rho_{n-1,k-1} \pmod{\ker(\B_{n, k})}.
    \label{pf:identity-mod-ker:IHmutated}
\end{equation}
We rewrite the term $T_{n-1} \ \rho_{n-1,k-1}$ as follows:
\begin{align*}
    T_{n-1} \ \rho_{n-1,k-1}
    & = T_{n-1} \ive{k-1}_q \Big(\ive{n-k+1}_q + q^{n-k+1} J_{n-1} \Big)
    \qquad \left(\text{by definition}\right)
    \\
    &= \ive{k-1}_q\ive{n-k+1}_q T_{n-1}
        + q^{n-k+1}\ive{k-1}_q T_{n-1}J_{n-1} \\
    &\qquad\qquad \left(\text{by distributivity}\right)
    \\
    &= \ive{k-1}_q\ive{n-k+1}_q T_{n-1}
      +  q^{n-k+1}\ive{k-1}_q \Big( J_n (T_{n-1} - q + 1) - 1 \Big)
	\\
    &\qquad\qquad \left(\text{by Lemma \ref{TiJi}}\right) \\
	& \equiv
        \ive{k-1}_q\ive{n-k+1}_q \ q
        + q^{n-k+1}\ive{k-1}_q \Big( J_n (q - q + 1) - 1\Big)
    \\
	& \qquad\qquad \left(\text{since \eqref{eq:uI:TjBnk} shows that $T_{n-1} \equiv q \pmod{\ker(\B_{n, k})}$}
              \right) \\
    &= q \ive{k-1}_q\ive{n-k+1}_q  +  q^{n-k+1}\ive{k-1}_q ( J_n -1) \pmod{\ker(\B_{n, k})}.
\end{align*}
We substitute this into \eqref{pf:identity-mod-ker:IHmutated} to obtain
\begin{align*}
    \gamma_{n,k} 
    &\equiv \ive{n}_q + q^n J_n + q \ive{k-1}_q\ive{n-k+1}_q  +  q^{n-k+1}\ive{k-1}_q ( J_n -1) \\
    &=
        \Big(\ive{n}_q  - q^{n-k+1}\ive{k-1}_q\Big)
        + q \ive{k-1}_q\ive{n-k+1}_q
        + q^{n-k+1}\big(q^{k-1} + \ive{k-1}_q \big) J_n
    \\
    &= \ive{n-k+1}_q + q \ive{k-1}_q\ive{n-k+1}_q + q^{n-k+1}\ive{k}_q J_n
	\qquad \left(\text{by \eqref{eq:qint-id:1} and \eqref{eq:qint-id:2}}\right) \\
    &= (1 + q \ive{k-1}_q) \ive{n-k+1}_q + q^{n-k+1}\ive{k}_q J_n \\
    &= \ive{k}_q \ive{n-k+1}_q + q^{n-k+1}\ive{k}_q J_n
	\qquad \left(\text{by \eqref{eq:qint-id:3}}\right) \\
    &= \ive{k}_q \big( \ive{n-k+1}_q +  q^{n-k+1} J_n \big) \pmod{\ker(\B_{n, k})}, 
\end{align*}
where we used the $q$-integer identities
\begin{align}
        \ive{n}_q - q^{n-k+1}\ive{k-1}_q & = \ive{n-k+1}_q, \label{eq:qint-id:1}\\
        q^{k-1} + \ive{k-1}_q & = \ive{k}_q, \label{eq:qint-id:2}\\
        1 + q \ive{k-1}_q & = \ive{k}_q. \label{eq:qint-id:3}
\end{align}
In other words, $\gamma_{n,k} \equiv \rho_{n,k} \pmod{\ker(\B_{n, k})}$. This completes the induction step, and with it the proof of the proposition.
\end{proof}

We obtain the following natural corollary.
\begin{cor}\label{cor:lambda}
    Let $1 \leq k \leq n$. Then,
    \[
	\Lambda_{n,k} \B_{n,k} = \ive{k}_q \bigg( \ive{n+1-k}_q + q^{n+1-k} J_n \bigg) \B^*_{n-1,k-1} \B_{n,k}.
	\]
\end{cor}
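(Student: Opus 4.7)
The plan is to unwind $\Lambda_{n,k}$ via its definition, invoke Proposition~\ref{identity-mod-ker} to simplify the $\gamma_{n,k}\B_{n,k}$ portion, and then commute $\B^*_{n-1,k-1}$ past the Young--Jucys--Murphy factor using Lemma~\ref{lem:TiHn-1}.

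First I would expand the left-hand side using the definition \eqref{eq:defLambdank} of $\Lambda_{n,k}$:
\[
\Lambda_{n,k}\,\B_{n,k} \;=\; \B^*_{n-1,k-1}\,\gamma_{n,k}\,\B_{n,k}.
\]
Next I would apply Proposition~\ref{identity-mod-ker}, which gives a decomposition $\gamma_{n,k} = \ive{k}_q\bigl(\ive{n-k+1}_q + q^{n-k+1}J_n\bigr) + I_{n,k}$ with $I_{n,k}\B_{n,k}=0$. Right-multiplying by $\B_{n,k}$ kills the $I_{n,k}$ term, so
\[
\gamma_{n,k}\,\B_{n,k} \;=\; \ive{k}_q\bigl(\ive{n-k+1}_q + q^{n-k+1}J_n\bigr)\B_{n,k}.
\]
Substituting back yields
\[
\Lambda_{n,k}\,\B_{n,k} \;=\; \ive{k}_q\,\B^*_{n-1,k-1}\bigl(\ive{n-k+1}_q + q^{n-k+1}J_n\bigr)\B_{n,k}.
\]

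Finally, to match the stated form, I would move the scalar-plus-$J_n$ factor to the left of $\B^*_{n-1,k-1}$. Since $\B^*_{n-1,k-1}$ is built from generators $T_1,\ldots,T_{n-2}$ and thus lies in $\HH_{n-1}$, Lemma~\ref{lem:TiHn-1} tells us that $J_n$ commutes with $\B^*_{n-1,k-1}$. The scalar $\ive{n-k+1}_q$ commutes with everything, hence so does the whole factor $\ive{n-k+1}_q + q^{n-k+1}J_n$. Pulling it through gives exactly
\[
\Lambda_{n,k}\,\B_{n,k} \;=\; \ive{k}_q\bigl(\ive{n+1-k}_q + q^{n+1-k}J_n\bigr)\B^*_{n-1,k-1}\,\B_{n,k},
\]
which is the claim.

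There is no real obstacle: the result is essentially an assembly of Proposition~\ref{identity-mod-ker} and the $\HH_{n-1}$-centrality of $J_n$ from Lemma~\ref{lem:TiHn-1}. The only minor thing to verify is the containment $\B^*_{n-1,k-1}\in \HH_{n-1}$, which is immediate from \eqref{eq:Bastnk:def} since all factors $\B^*_m$ with $m\le n-1$ involve only the generators $T_1,\ldots,T_{n-2}$.
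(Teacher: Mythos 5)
Your proof is correct and follows essentially the same route as the paper: expand $\Lambda_{n,k}$ by its definition, kill the $I_{n,k}$ term via Proposition~\ref{identity-mod-ker} after right-multiplying by $\B_{n,k}$, and commute $\ive{n+1-k}_q + q^{n+1-k}J_n$ past $\B^*_{n-1,k-1}$ using Lemma~\ref{lem:TiHn-1}. The only difference is the order in which the commutation observation is stated, which is purely cosmetic.
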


\begin{proof}
	Lemma~\ref{lem:TiHn-1} shows that $J_n$ commutes with $\B^*_{n-1, k-1}$ since $\B^*_{n-1, k-1} \in \HH_{n-1}$.
	Hence, 
    \[ \ive{k}_q \bigg( \ive{n+1-k}_q + q^{n+1-k} J_n \bigg) \] commutes with $\B^*_{n-1, k-1}$ as well.
	
    By Proposition \ref{identity-mod-ker},
    there exists $I_{n, k} \in \HH_n$ such that 
    \[
	\gamma_{n,k} = \ive{k}_q \Big(\ive{n+1-k}_q + q^{n+1-k}J_n \Big) + I_{n,k}
    \qquad\text{and}\qquad
    I_{n,k}  \B_{n,k} = 0.
    \]
    Multiplying this by $\B_{n,k}$ from the right, we obtain
    \begin{align}
     \gamma_{n,k} \B_{n,k}
     &= \ive{k}_q \bigg( \ive{n+1-k}_q + q^{n+1-k} J_n \bigg) \B_{n,k}.
     \label{pf.cor:lambda.1}
    \end{align}
    Recalling the definition of $\Lambda_{n,k}$, we have
    \begin{align*}
        \Lambda_{n,k} \B_{n,k}
        &= \B^*_{n-1, k-1} \gamma_{n,k} \B_{n,k} \\
		&= \B^*_{n-1, k-1} \ive{k}_q \bigg( \ive{n+1-k}_q + q^{n+1-k} J_n \bigg) \B_{n,k}
		\qquad \left(\text{by \eqref{pf.cor:lambda.1}}\right) \\
        &= \ive{k}_q \bigg( \ive{n+1-k}_q + q^{n+1-k} J_n \bigg) \B^*_{n-1,k-1} \B_{n,k}
    \end{align*} 
    since $\ive{k}_q \big( \ive{n+1-k}_q + q^{n+1-k} J_n \big)$ commutes with $\B^*_{n-1, k-1}$.
\end{proof}

\begin{lem}
\label{lem:BnBnk*Bnk}
For any $1 \le k \le n$, we have
\[
    \B_n \B_{n,k}^* \B_{n,k} = q^k \B_{n-1,k}^* \B_{n,k+1} + \ive{k}_q \Big(  \ive{n+1-k}_q + q^{n + 1 - k}J_n \Big)\B_{n-1,k-1}^* \B_{n,k}.
\]
\end{lem}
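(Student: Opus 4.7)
The approach is to right-multiply the identity in Proposition~\ref{prop:rewritingBnkstar} by $\B_{n,k}$, and then simplify each of the resulting two summands separately using tools already in hand. Doing so yields
\[
\B_n \B_{n,k}^* \B_{n,k} = \B^*_{n-1,k} \bigl(T_{n-1} T_{n-2} \cdots T_{n-k}\bigr) \B_{n-k} \B_{n,k} + \Lambda_{n,k} \B_{n,k},
\]
so it suffices to identify each term with the corresponding term in the claim.

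The second summand is handled immediately by Corollary~\ref{cor:lambda}, which states
\[
\Lambda_{n,k} \B_{n,k} = \ive{k}_q \bigl(\ive{n+1-k}_q + q^{n+1-k} J_n\bigr) \B^*_{n-1,k-1} \B_{n,k}.
\]
This already matches the second term of the claim.

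For the first summand, I first apply \eqref{eq:trivrecb1} with $k+1$ in place of $k$ to combine $\B_{n-k} \B_{n,k}$ into $\B_{n,k+1}$. Then, observing that every integer $j \in \set{n-k, n-k+1, \ldots, n-1}$ satisfies $n - (k+1) < j < n$, I use the absorption rule \eqref{eq:uI:TjBnk} (applied to $\B_{n,k+1}$) to get $T_j \B_{n,k+1} = q\, \B_{n,k+1}$. Absorbing the $k$ factors $T_{n-k}, T_{n-k+1}, \ldots, T_{n-1}$ one by one (from right to left) then yields
\[
\bigl(T_{n-1} T_{n-2} \cdots T_{n-k}\bigr) \B_{n,k+1} = q^k \B_{n,k+1},
\]
so the first summand equals $q^k \B^*_{n-1,k} \B_{n,k+1}$, matching the first term of the claim.

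There is no real obstacle here: the proof is essentially a direct assembly of Proposition~\ref{prop:rewritingBnkstar}, Corollary~\ref{cor:lambda}, and the absorption rule \eqref{eq:uI:TjBnk}. The only minor subtlety is the boundary case $k = n$, where one of the factors in the telescoping formally becomes $T_0 = 0$ by the convention of Proposition~\ref{prop:rewritingBnkstar}; however, in this case $\B^*_{n-1,k} = \B^*_{n-1,n}$ and $\B_{n,k+1} = \B_{n,n+1}$ both vanish by \eqref{eq:Bnk:zero}, so the first terms on both sides are zero and the identity still holds.
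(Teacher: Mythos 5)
Your proof is correct and follows essentially the same route as the paper's: right-multiply Proposition~\ref{prop:rewritingBnkstar} by $\B_{n,k}$, combine $\B_{n-k}\B_{n,k}$ into $\B_{n,k+1}$ via \eqref{eq:trivrecb1}, absorb the $T_j$ factors via \eqref{eq:uI:TjBnk}, and apply Corollary~\ref{cor:lambda} to the $\Lambda_{n,k}$ term. Your handling of the border case $k=n$ matches the paper's remark that $\B_{n,k+1}=0$ there by \eqref{eq:Bnk:zero}.
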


\begin{proof}
By Proposition \ref{prop:rewritingBnkstar}, we have that 
\[
\B_n \B_{n,k}^* = \B^*_{n-1, k} \Big(T_{n-1} T_{n-2} \cdots T_{n-k}\Big) \B_{n-k} + \,\Lambda_{n,k}.
\]
Multiply both sides by $\B_{n,k}$ from the right. The first term on the right-hand side becomes 
\begin{align*}
    &\B^*_{n-1, k} \Big(T_{n-1} T_{n-2} \cdots T_{n-k}\Big) \B_{n-k} \B_{n,k} \\
    &= \B^*_{n-1, k} \Big(T_{n-1} T_{n-2} \cdots T_{n-k}\Big)  \B_{n,k+1}
    \qquad \left(\text{by \eqref{eq:trivrecb1}}\right) \\
    &= q^{k} \B^*_{n-1, k} \B_{n,k+1},
\end{align*} 
since \eqref{eq:uI:TjBnk} yields
$\Big(T_{n-1} T_{n-2} \cdots T_{n-k}\Big)  \B_{n,k+1} = q^k \B_{n,k+1}$
(this holds in the border case $k = n$ as well, since \eqref{eq:Bnk:zero} entails $\B_{n,k+1}=0$ in this case).
On the other hand, Corollary \ref{cor:lambda} implies that 
    \[
	\Lambda_{n,k} \B_{n,k} = \ive{k}_q \Big( \ive{n+1-k}_q + q^{n+1-k} J_n \Big) \B^*_{n-1,k-1} \B_{n,k}.
	\]
Combining these terms completes the proof.
\end{proof}

We are now ready to prove a recursion that will help us understand $\R_{n,k}$.

\begin{theorem}
\label{thm:BnRnk}
For any $1 \le k \le n$, we have
\begin{align*}
     \B_n  \R_{n, k}  &= \Big(q^{k}\R_{n-1, k}  + \Big( \ive{n+1-k}_q + q^{n + 1 - k}J_n \Big) \R_{n - 1, k - 1} \Big)\B_n .
\end{align*}
\end{theorem}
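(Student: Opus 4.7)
The plan is to bootstrap directly from Lemma~\ref{lem:BnBnk*Bnk} by dividing through by $[k]!_q$ and rewriting the right-hand side in terms of $\R_{n-1,k}$, $\R_{n-1,k-1}$ and $\B_n$. By the definition of $\R_{n,k}$ in \eqref{eq:Rnk:def}, we have
\[
\B_n \R_{n,k} = \tfrac{1}{[k]!_q}\, \B_n \B^*_{n,k} \B_{n,k},
\]
and Lemma~\ref{lem:BnBnk*Bnk} expresses $\B_n \B^*_{n,k} \B_{n,k}$ as a sum of two terms of the forms $q^k\, \B^*_{n-1,k} \B_{n,k+1}$ and $[k]_q \bigl([n+1-k]_q + q^{n+1-k} J_n\bigr)\, \B^*_{n-1,k-1} \B_{n,k}$. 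So the main task is to identify each of these two pieces as $[k]!_q$ times the appropriate product $(\text{coefficient})\cdot \R_{n-1,\ast}\, \B_n$.

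For the first piece, I would apply \eqref{eq:trivrec1} in the form $\B_{n,k+1} = \B_{n-1,k} \B_n$ to write
\[
\B^*_{n-1,k}\, \B_{n,k+1} = \B^*_{n-1,k}\, \B_{n-1,k}\, \B_n = [k]!_q\, \R_{n-1,k}\, \B_n,
\]
by the definition of $\R_{n-1,k}$. For the second piece, I would similarly apply \eqref{eq:trivrec1} in the form $\B_{n,k} = \B_{n-1,k-1} \B_n$ to obtain
\[
\B^*_{n-1,k-1}\, \B_{n,k} = \B^*_{n-1,k-1}\, \B_{n-1,k-1}\, \B_n = [k-1]!_q\, \R_{n-1,k-1}\, \B_n.
\]
The only subtlety is that the coefficient $[k]_q\bigl([n+1-k]_q + q^{n+1-k} J_n\bigr)$ sits to the \emph{left} of $\B^*_{n-1,k-1}$ in Lemma~\ref{lem:BnBnk*Bnk}, whereas in the target identity the factor $\bigl([n+1-k]_q + q^{n+1-k} J_n\bigr)$ appears multiplied against $\R_{n-1,k-1}$. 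Since $\R_{n-1,k-1} \in \HH_{n-1}$, Lemma~\ref{lem:TiHn-1} tells us that $J_n$ commutes with $\R_{n-1,k-1}$, so no rearrangement is needed: the coefficient can be placed on either side of $\R_{n-1,k-1}$ freely.

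Putting these two computations together yields
\[
\B_n \B^*_{n,k} \B_{n,k} = [k]!_q\,\Bigl(q^k \R_{n-1,k} + \bigl([n+1-k]_q + q^{n+1-k} J_n\bigr) \R_{n-1,k-1}\Bigr) \B_n,
\]
using $[k]_q [k-1]!_q = [k]!_q$. Dividing by $[k]!_q$ gives exactly the claimed identity. The only real computational input is Lemma~\ref{lem:BnBnk*Bnk} (together with its preparation through Propositions \ref{prop:rewritingBnkstar} and \ref{identity-mod-ker} and Corollary \ref{cor:lambda}); the remainder is a matter of bookkeeping with \eqref{eq:trivrec1} and the commutation of $J_n$ with $\HH_{n-1}$. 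There is no real obstacle in this last step—the genuine work has already been done in building up $\Lambda_{n,k}$ and reducing $\gamma_{n,k}$ modulo $\ker(\B_{n,k})$.
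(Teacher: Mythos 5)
Your proposal is correct and follows essentially the same route as the paper's own proof: both start from Lemma~\ref{lem:BnBnk*Bnk}, substitute $\B_{n,k+1} = \B_{n-1,k}\B_n$ and $\B_{n,k} = \B_{n-1,k-1}\B_n$ via \eqref{eq:trivrec1}, identify $\B^*_{n-1,k}\B_{n-1,k}$ and $\B^*_{n-1,k-1}\B_{n-1,k-1}$ as $[k]!_q\,\R_{n-1,k}$ and $[k-1]!_q\,\R_{n-1,k-1}$, and divide by $[k]!_q$. The ``subtlety'' you raise about moving the factor $\bigl([n+1-k]_q + q^{n+1-k}J_n\bigr)$ is in fact a non-issue since it already sits on the correct side, but your remark is harmless.
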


\begin{proof}
By Lemma \ref{lem:BnBnk*Bnk}, we have
\begin{align*}
	\B_n \B_{n,k}^* \B_{n,k}
	&= q^k \B_{n-1,k}^* \B_{n,k+1} + \ive{k}_q \Big(  \ive{n+1-k}_q + q^{n + 1 - k}J_n \Big)\B_{n-1,k-1}^* \B_{n,k} \\
	&= q^k \B_{n-1,k}^* \B_{n-1,k} \B_n + \ive{k}_q \Big(  \ive{n+1-k}_q + q^{n + 1 - k}J_n \Big)\B_{n-1,k-1}^* \B_{n-1,k-1} \B_n \\
	& \qquad \qquad \left(\text{since $\B_{n,k+1} = \B_{n-1,k} \B_n$ and $\B_{n,k} = \B_{n-1,k-1} \B_n$}\right) \\
	&= \left(q^k \B_{n-1,k}^* \B_{n-1,k} + \ive{k}_q \Big(  \ive{n+1-k}_q + q^{n + 1 - k}J_n \Big)\B_{n-1,k-1}^* \B_{n-1,k-1} \right) \B_n .
\end{align*}
Now divide both sides by $\ive{k}!_q$.
In view of the fact that $\R_{n,k} = \frac{1}{\ive{k}!_q} \B^*_{n,k} \B_{n,k}$ and the analogous identities for smaller subscripts, this becomes precisely the claim of the theorem.
\end{proof}

\begin{remark}
Proposition~\ref{identity-mod-ker} and Corollary~\ref{cor:lambda}
can be made stronger by replacing each appearance of $\B_{n,k}$
by $\M_{n,k}$ (see Proposition~\ref{prop:uI} and recall
\eqref{eq:uI:Bnk=}).
However, we will have no need for these stronger variants,
so we leave the proofs to the interested reader
(the main new ingredient is the easy fact that
$\M_{n,k} \in \M_{n-1,k-1} \HH_n$).
\end{remark}

\section{Commutativity of the $k$-random-to-random shuffles}\label{section.commutativity}

We are now ready to prove our first main result:

\begin{theorem}
\label{thm:commut}
Let $n\geq0$. The elements $\R_{n,k}$ for all $k \geq0$ commute.
\end{theorem}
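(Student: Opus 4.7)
The plan is to prove commutativity by induction on $n$, using two ingredients already established: the recursion from Theorem~\ref{thm:BnRnk}, which writes $\B_n \R_{n,k}$ as a combination of $\R_{n-1,\bullet}$ and $J_n$, multiplied by $\B_n$; and the identity $\ive{\ell}_q \R_{n,\ell} = \B_n^\ast \R_{n-1,\ell-1} \B_n$ from \eqref{eq:trivrec3}. The base cases $n \leq 1$ are trivial since every $\R_{n,k}$ in that range is $0$ or $1$. In the inductive step, I can assume $1 \le k, \ell \le n$ (the other cases are trivial as $\R_{n,0}=1$), and by specialization I may assume $q$ is generic so that $\ive{\ell}_q$ is invertible.

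Define
\[
C_k := q^k \R_{n-1,k} + \bigl(\ive{n+1-k}_q + q^{n+1-k} J_n\bigr)\R_{n-1,k-1},
\]
so that Theorem~\ref{thm:BnRnk} reads $\B_n \R_{n,k} = C_k \B_n$. I would first observe that $C_k^\ast = C_k$: applying the anti-involution uses $\R_{n-1,i}^\ast = \R_{n-1,i}$ (Lemma~\ref{lem:star:R}), $J_n^\ast = J_n$ (see \eqref{eq:star:J}), and the fact that $\R_{n-1,k-1} \in \HH_{n-1}$ commutes with $J_n$ by Lemma~\ref{lem:TiHn-1}. Applying $\ast$ to $\B_n \R_{n,k} = C_k \B_n$ and using $\R_{n,k}^\ast = \R_{n,k}$ therefore yields the dual identity
\[
\R_{n,k} \B_n^\ast = \B_n^\ast C_k.
\]

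Next, I would note that $C_k$ commutes with $\R_{n-1,\ell-1}$: the two $\R_{n-1,\bullet}$ factors appearing in $C_k$ commute with $\R_{n-1,\ell-1}$ by the inductive hypothesis, and $J_n$ commutes with $\R_{n-1,\ell-1} \in \HH_{n-1}$ again by Lemma~\ref{lem:TiHn-1}. With both observations in hand, the computation collapses:
\begin{align*}
\ive{\ell}_q\, \R_{n,k}\R_{n,\ell}
&= \R_{n,k}\,\B_n^\ast \R_{n-1,\ell-1}\B_n
 = \B_n^\ast C_k\, \R_{n-1,\ell-1}\B_n\\
&= \B_n^\ast \R_{n-1,\ell-1}\, C_k\B_n
 = \B_n^\ast \R_{n-1,\ell-1}\B_n\, \R_{n,k}
 = \ive{\ell}_q\, \R_{n,\ell}\R_{n,k}.
\end{align*}
Dividing by $\ive{\ell}_q$ gives the desired commutation for generic $q$; since both sides are polynomials in $q$ (times the basis elements $T_w$), the identity extends to all $q \in \kk$.

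The main obstacle is really bookkeeping: one must verify that every factor appearing in $C_k$ commutes with $\R_{n-1,\ell-1}$, which hinges on the simultaneous use of the inductive hypothesis (for the $\R_{n-1,\bullet}$ parts) and Lemma~\ref{lem:TiHn-1} (for the $J_n$ part). Once those commutations are in place, Theorem~\ref{thm:BnRnk} and \eqref{eq:trivrec3} do all the real work, sandwiching $\R_{n,k}$ symmetrically against $\B_n$ and $\B_n^\ast$.
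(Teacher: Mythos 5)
Your proposal is correct and takes essentially the same route as the paper's own proof: induction on $n$, the recursion of Theorem~\ref{thm:BnRnk}, the identity \eqref{eq:trivrec3}, the self-adjointness of your $C_k$ (the paper's $\mathcal{W}_{n,i}$) under the anti-involution, and its commutation with $\R_{n-1,\ell-1}$ via the induction hypothesis and Lemma~\ref{lem:TiHn-1}. The only cosmetic difference is that you apply $\ast$ once to the recursion to obtain $\R_{n,k}\B_n^{\ast} = \B_n^{\ast} C_k$ and then run a direct chain of equalities, whereas the paper applies $\ast$ to the whole product $\R_{n,j}\R_{n,i}$ and compares; the ingredients and structure are identical.
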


We shall give one proof of this theorem here and another in Appendix~\ref{sec:commut-pf2}.

\begin{proof}[Proof of Theorem \protect\ref{thm:commut}.]
We induct on $n$. The base case ($n=0$) is obvious, so we proceed to the induction step.

Let $n>0$ and $i,j\geq 0$. We must show that $\R_{n,i} \R_{n,j}
= \R_{n,j} \R_{n,i}$. This is obvious if $i=0$
(since $\R_{n,i}=1$ in this case) and also obvious if $i>n$ (since
\eqref{eq:Rnk:zero} yields that $\R_{n,i}=0$ in this case).
Hence, we assume without loss of generality that $1\leq i\leq n$.
For similar reasons, we assume that $1\leq j\leq n$.

The element $\R_{n-1,j-1}\in \HH_{n-1}$ commutes with $\R_{n-1,i}$ and $\R_{n-1,i-1}$ by the induction
hypothesis, and also commutes with $J_n$ since any element of
$\HH_{n-1}$ commutes with $J_n$ by Lemma~\ref{lem:TiHn-1}.
The element $\R_{n-1,i-1}\in \HH_{n-1}$ also commutes with $J_n$ for
a similar reason.

Theorem~\ref{thm:BnRnk} yields
\begin{equation*}
\B_n \R_{n,i} = \left( q^i\R_{n-1,i}+\left( \ive{n+1-i}_q +q^{n+1-i}J_n\right) \R_{n-1,i-1}\right) \B_n.
\end{equation*}
Setting
\begin{equation}
\mathcal{W}_{n,i} := q^{i}\R_{n-1,i}+\left( \ive{n+1-i}_q +q^{n+1-i}J_n\right) \R_{n-1,i-1} ,
\label{pf:thm:commut:W}
\end{equation}
we can rewrite this as
\begin{equation}
\B_n \R_{n,i} = \mathcal{W}_{n,i} \B_n.  \label{pf:thm:commut:1}
\end{equation}

Recall that $\R_{n-1,j-1}$ commutes with $\R_{n-1,i}$ and $\R_{n-1,i-1}$ and with $J_n$.
Thus, $\R_{n-1,j-1}$ commutes with $\mathcal{W}_{n,i}$ as well (since
$\mathcal{W}_{n,i}$ is a polynomial in $\R_{n-1,i}$, $\R_{n-1,i-1}$ and $J_n$).

Applying the anti-involution $a \mapsto a^*$ to the equality \eqref{pf:thm:commut:W}, we obtain
\begin{align}
    \left(\mathcal{W}_{n,i}\right)^*
    &= q^{i} \left(\R_{n-1,i}\right)^* + \left( \R_{n-1,i-1} \right)^* \left( \ive{n+1-i}_q + q^{n+1-i} \left(J_n\right)^* \right)
    \nonumber \\
    &\qquad\qquad \left(\text{since an anti-involution reverses all products}\right) \nonumber \\
    &= q^i \R_{n-1,i} + \R_{n-1,i-1} \left( \ive{n+1-i}_q + q^{n+1-i} J_n \right)
    \qquad \left(\text{by \eqref{eq:star:R} and \eqref{eq:star:J}}\right) \nonumber  \\
    &= q^i \R_{n-1,i} + \left( \ive{n+1-i}_q + q^{n+1-i} J_n \right) \R_{n-1,i-1} \nonumber \\
    &
    \qquad\qquad \left(\text{since $\R_{n-1,i-1} $ commutes with $J_n$}\right) \nonumber \\
    &= \mathcal{W}_{n,i}
    \qquad \left(\text{by \eqref{pf:thm:commut:W}}\right).
	\label{eq:star:W}
\end{align}

But \eqref{eq:trivrec3} yields $\ive{j}_q \R_{n,j}=
\B_n^\ast \R_{n-1,j-1} \B_n$, so that 
\begin{equation*}
\R_{n,j}
= \dfrac{1}{\ive{j}_q} \B_n^\ast \R_{n-1,j-1} \B_n.
\end{equation*}
Thus,
\begin{align}
\R_{n,j} \R_{n,i} &= \dfrac{1}{\ive{j}_q}
\B_n^\ast \R_{n-1,j-1} \B_n \R_{n,i}
\nonumber \\
&= \dfrac{1}{\ive{j}_q} \B_n^\ast \R_{n-1,j-1}
 \mathcal{W}_{n,i} \B_n  \label{pf:thm:commut:2}
\end{align}
(by \eqref{pf:thm:commut:1}).
Applying the anti-involution $a\mapsto a^\ast$ to
both sides of this equality (recalling that an
anti-involution reverses all products), we find
\begin{align*}
 \left( \R_{n,i}\right) ^\ast \left( \R_{n,j}\right)
^\ast
&= \dfrac{1}{\ive{j}_q} \left( \B_n\right) ^\ast
\left( \mathcal{W}_{n,i} \right)^*
\left( \R_{n-1,j-1}\right) ^\ast 
\left( \B_n^\ast\right) ^\ast \\
&= \dfrac{1}{\ive{j}_q} \B_n^\ast
\mathcal{W}_{n,i} \R_{n-1,j-1}  \B_n
\qquad
\left(\text{by \eqref{eq:star:Bn}, \eqref{eq:star:W}, \eqref{eq:star:R} and
\eqref{eq:star:B*n}}\right) \\
&= \dfrac{1}{\ive{j}_q} \B_n^\ast
\R_{n-1,j-1} \mathcal{W}_{n,i} \B_n
\end{align*}
(since $\R_{n-1,j-1}$ commutes with $\mathcal{W}_{n,i}$).
Comparing this with \eqref{pf:thm:commut:2}, we obtain
\begin{equation*}
\R_{n,j} \R_{n,i}=\left( \R_{n,i}\right) ^{\ast
} \left( \R_{n,j}\right) ^\ast
= \R_{n,i} \R_{n,j}
\qquad \left(\text{by \eqref{eq:star:R}}\right).
\end{equation*}
In other words, $\R_{n,i} \R_{n,j}
= \R_{n,j} \R_{n,i}$. Thus, the induction is
complete.
\end{proof}

\section{Computing the Eigenvalues}\label{section.eigenvalues}

We shall now study the actions of the elements $\R_{n,k} \in \HH_n$ on the
irreducible $\HH_n$-modules in the case where $\HH_n$ is semisimple.
We will rely on the eigenbasis $\basis_\lambda$ of $\R_{n,1}$ constructed in \cite[Theorem 7.2]{r2r1} in the case when $q \in \RR_{>0}$ (in other cases, there might be no eigenbasis, as the operator can fail to be semisimple).
We will show that $\basis_\lambda$ is in fact an eigenbasis for \textbf{all} the operators $\R_{n,k}$ (Theorem~\ref{thm:eigenbasis}), and compute the eigenvalues.
We will then generalize the latter to the case of arbitrary $q$ (Theorem~\ref{thm:charpoly-lam}), and combine the different Specht modules to describe the eigenvalues of $\R_{n,k}$ on the whole Hecke algebra $\HH_n$ (Theorem~\ref{thm:charpoly}).

In this section, we assume that $q$ is neither $0$ nor a primitive $k$-th root of unity for any $k \in \set{2,3,\ldots,n}$.
Moreover, we assume that if $q = 1$, then $n!$ is invertible in $\kk$ (that is, we don't have $1 \leq \charr \kk \leq n$).
Thus, the Hecke algebra $\HH_n$ is semisimple, and its representation theory is well-understood (cf. \cite[Chapter 3]{Mathas}):
For each partition $\lambda$ of $n$, there is a simple right $\HH_n$-module $S^\lambda$ called the \emph{Specht module}, and these Specht modules constitute all the simple right $\HH_n$-modules up to isomorphism.
The regular right $\HH_n$-module $\HH_n$ can be decomposed as
\begin{align}
\HH_n \cong \bigoplus_{\lambda \vdash n} \tup{S^\lambda}^{\oplus f^\lambda},
\label{eq:Hn=sumSlam}
\end{align}
where $f^\lambda$ denotes the number of standard Young tableaux of shape $\lambda$.
See also \cite[Theorem 5.2]{DipperJames86} for a proof in the case when $\kk$ is a field of characteristic $0$ and $q$ is either an indeterminate or a positive real.
(This case is sufficient for our needs, since our results --- that is, Theorems~\ref{thm:charpoly-lam}, \ref{thm:charpoly} and \ref{thm:positive} --- can be automatically extended from it to the general case.)

\subsection{Notation}

We follow the definitions and notations in \cite{r2r1} for Specht modules and related concepts,
which we briefly recall here.

\subsubsection{(Skew) partitions and tableaux}

\begin{itemize}[itemsep=0.5pc]
    \item A \emph{partition} is a finite weakly decreasing tuple $\lambda = \tup{\lambda_1, \lambda_2, \ldots, \lambda_k}$ of positive integers. We identify it with the infinite sequence $\tup{\lambda_1, \lambda_2, \ldots, \lambda_k, 0, 0, 0, \ldots}$.
	The notation ``\emph{$\lambda \vdash n$}'' is shorthand for ``$\lambda$ is a partition of $n$'', which means that $\lambda_1 + \lambda_2 + \ldots + \lambda_k = n$.
	\item The \emph{Young diagram} of a partition $\lambda = \tup{\lambda_1, \lambda_2, \ldots, \lambda_k}$ is a table with rows of lengths $\lambda_1, \lambda_2, \ldots, \lambda_k$. These rows are listed from the top (English notation) and left-aligned. Formally, this Young diagram is the set of all pairs $\tup{i, j}$ of positive integers satisfying $i \leq k$ and $j \leq \lambda_i$. These pairs are called the \emph{cells} (or \emph{boxes}) of the diagram.
	\item Two partitions $\mu$ and $\lambda$ are said to satisfy $\mu \subseteq \lambda$ if and only if the Young diagram of $\mu$ is contained in that of $\lambda$.
	\item The \emph{content} (or \emph{diagonal index}) of a cell $\tup{i, j}$ is defined to be the integer $j-i$.
	\item If $\lambda$ is a partition, then a \emph{(Young) tableau} of shape $\lambda$ means a filling of the Young diagram of $\lambda$ with positive integers (one positive integer per cell). Formally, this means a function from the Young diagram of $\lambda$ to $\set{1,2,3,\ldots}$.
	\item A Young tableau $\frakt$ with $n$ cells is said to be \emph{standard} if its entries are $1,2,\ldots,n$ and increase weakly left-to-right along rows and top-to-bottom down columns. The set of all standard tableaux of shape $\lambda$ is denoted \emph{$\syt\tup{\lambda}$}.
	\item If $\frakt$ is a standard tableau of shape $\lambda \vdash n$, and if $m \in \set{0,1,\ldots,n}$, then the \emph{restriction $\frakt \vert_m$} of $\frakt$ to $m$ is the tableau defined by removing all entries larger than $m$ from $\frakt$ (along with the cells that they occupy). This is again a standard tableau, but now of a shape $\mu$ for some partition $\mu \vdash m$.
	\item A \emph{skew partition} is a pair $\tup{\lambda, \mu}$ of two partitions satisfying $\mu \subseteq \lambda$. It is stylized $\lambda \sm \mu$. Its \emph{(skew Young) diagram} is the set of all cells of the Young diagram of $\lambda$ that do not belong to the Young diagram of $\mu$.
    \item A \emph{skew tableau} of shape $\lambda \sm \mu$ (where $\lambda \sm \mu$ is a skew diagram) is a filling of this skew Young diagram. Standard skew tableaux are defined in the obvious way, and are filled with the integers $j + 1, j + 2, \ldots, n$ if $\mu \vdash j$ and $\lambda \vdash n$.
	\item A skew partition $\lambda \sm \mu$ is said to be a \emph{horizontal strip} if its skew diagram has no two cells in the same column.
	\item Given two partitions $\lambda$ and $\mu$, we write ``$\mu \lessdot \lambda$'' meaning ``the Young diagram of $\mu$ is the Young diagram of $\lambda$ with one cell removed''. In other words, it means that $\mu \subseteq \lambda$ and the skew diagram $\lambda \sm \mu$ consists of a single box.
\end{itemize}

\subsubsection{Hecke algebra}

\begin{itemize}[itemsep=0.5pc]
	\item Each partition $\lambda$ of $n$ determines an idempotent $p_\lambda \in \HH_n$, called the \emph{isotypic projector}. Each standard tableau $\frakt \in \syt\tup{\lambda}$ determines an idempotent $p_\frakt \in \HH_n$, called the \emph{Young idempotent} (or \emph{seminormal idempotent}). For details, see \cite[Section 2.2.1]{r2r1}.
    \item Every partition $\lambda$ of $n$ corresponds to a right $\HH_n$-module $W^\lambda$, called the \emph{(word) Young module}. As a vector space, it is the span of all words of length $n$ and content $\lambda$ (that is, with $\lambda_i$ letters $i$ for each positive integer $i$). The $\HH_n$-action on $W^\lambda$ is specified in \cite[(2.9)]{r2r1}.
    \item To every partition $\lambda$ of $n$ corresponds a simple right $\HH_n$-module, called the \emph{Specht module} and denoted by $S^\lambda$.
	It is defined as the submodule of the word module $W^\lambda$ spanned by the \emph{Young seminormal units} $w_\frakt = \word(\frakt) \ p_\frakt$.
	See \cite[Section 2.2.2]{r2r1}.
	\item Let $\lambda \sm \mu$ be a skew partition. Let $c_1,c_2,\ldots,c_k$ be the cells of its Young diagram, listed row by row from top to bottom, where the cells in each given row are listed from left to right.
	Let $r_1,r_2,\ldots,r_k$ be the numbers of the rows in which the respective cells $c_1,c_2,\ldots,c_k$ lie.
	The linear map $\Phi_{\lambda \sm \mu} : W^\mu \to W^\lambda$ sends each word $w \in W^\mu$ to the concatenation $wr_1r_2\ldots r_k$.
        We write this map $\Phi_{\lambda \sm \mu}$ on the right of its argument -- i.e., we write $a \ \Phi_{\lambda \sm \mu}$ rather than $\Phi_{\lambda \sm \mu}\tup{a}$.
    \item
	Furthermore, we define the skew tableau $\frakt^{\lambda \sm \mu}$ of shape $\lambda \sm \mu$ by filling the numbers $j+1,j+2,\ldots,n$ (where $\lambda \vdash n$ and $\mu \vdash j$) into the respective cells $c_1,c_2,\ldots, c_k$.
	(See Example~\ref{exa.tlammu} for an illustration.)
    \item
	Finally, we define $p_{\lambda \sm \mu} \in \HH_{\abs{\lambda}}$ to be the element $p_{\frakt^{\lambda \sm \mu}}$ from \cite[Definition 2.14]{r2r1}.
\end{itemize}

\subsubsection{$q$-integers}
\begin{itemize}[itemsep=0.5pc]
\item
We extend our definition of the $q$-integers $[k]_q$ to the case of negative $k$ by setting
\[ [k]_q = \tup{1-q^k} / \tup{1-q}. \]
For negative $k$, this is no longer a polynomial, but rather a Laurent polynomial in $q$, and explicitly equals $-q^{-1} - q^{-2} - \cdots - q^k$.
\end{itemize}
These ``negative'' $q$-integers will not end up in our explicit formula for the eigenvalues of $\R_{n,k}$, but they can appear in some of the intermediate results below, such as Lemma~\ref{lem:eval-recursion-J}.

\subsection{Eigenvector construction: adding a single box}
As in \cite{r2r1}, \cite{DiekerSaliola}, and \cite{Lafreniere}, the heart of our eigenvector construction involves building Specht modules $S^\lambda$ from $\lambda' \lessdot \lambda$. Lemma \ref{lem:eval-recursion-J} will be an important tool in this regard.

\begin{lem}\label{lem:eval-recursion-J}
    Let ${\lambda}' \vdash n - 1$ and ${\lambda} \vdash n$ be such that ${\lambda}' \lessdot \lambda$.
    Let $c_{\lambda \sm \lambda'}$ be the content of the box in $\lambda \sm \lambda'$, and $r$ be its row.
	Let $\Phi_r$ be the operator $\Phi_{\lambda \sm \lambda'}$ (which simply appends the letter $r$ to the end of each word).
    Let $v \in S^{{\lambda'}}$. Then,
    \[
    v \ \Phi_r \  p_\lambda \ J_n
    = [c_{\lambda \sm \lambda'}]_q \ v \ \Phi_r \  p_\lambda.
    \]
\end{lem}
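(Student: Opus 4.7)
The plan is to reduce the claim to the classical eigenvalue formula $w_\frakt J_n = [c_\frakt(n)]_q\, w_\frakt$ for the Young--Jucys--Murphy element acting on the seminormal basis. Since $\set{w_\frakt : \frakt \in \syt(\lambda')}$ spans $S^{\lambda'}$ and both sides of the desired identity are linear in $v$, it suffices to treat $v = w_\frakt = \word(\frakt)\, p_\frakt$ for a fixed $\frakt \in \syt(\lambda')$. Let $\frakt^+ \in \syt(\lambda)$ denote the standard tableau obtained by placing the entry $n$ in the unique cell of $\lambda \sm \lambda'$; this cell lies in row $r$, so by the definition of $\word$, we have $\word(\frakt^+) = \word(\frakt) \cdot r = \word(\frakt)\, \Phi_r$.

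First I would rewrite $w_\frakt\, \Phi_r\, p_\lambda$ as a single seminormal unit. Since $p_\frakt \in \HH_{n-1}$ and $\Phi_r$ is $\HH_{n-1}$-equivariant (appending a letter commutes with the action on the first $n-1$ positions), we obtain
\[
w_\frakt \, \Phi_r = \word(\frakt)\, p_\frakt\, \Phi_r = \word(\frakt)\, \Phi_r\, p_\frakt = \word(\frakt^+)\, p_\frakt.
\]
Next I would apply the nested-shape product rule for Young idempotents: $p_\frakt\, p_{\frakt'} = p_{\frakt'}$ whenever $\frakt' \in \syt(\lambda)$ restricts to $\frakt$ on the first $n-1$ entries, and $p_\frakt\, p_{\frakt'} = 0$ otherwise. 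This holds because both $p_\frakt$ and $p_{\frakt'}$ are joint weight projectors for the commuting family $J_1,\ldots,J_{n-1}$ (see after Lemma~\ref{lem:TiHn-1}), and the condition $\frakt'|_{n-1} = \frakt$ exactly expresses that their weights agree. Since $p_\lambda = \sum_{\frakt' \in \syt(\lambda)} p_{\frakt'}$ and the unique $\frakt' \in \syt(\lambda)$ restricting to $\frakt$ is $\frakt^+$, we get $p_\frakt\, p_\lambda = p_{\frakt^+}$, and therefore
\[
w_\frakt\, \Phi_r\, p_\lambda = \word(\frakt^+)\, p_{\frakt^+} = w_{\frakt^+}.
\]

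Finally I would invoke the classical eigenvalue formula $w_{\frakt^+} J_n = [c_{\frakt^+}(n)]_q\, w_{\frakt^+}$ (see \cite[Chapter 3]{Mathas}), noting that $c_{\frakt^+}(n) = c_{\lambda \sm \lambda'}$, to conclude the lemma for $v = w_\frakt$ and hence in general. The main obstacle is locating the two ingredients --- the branching absorption $p_\frakt\, p_{\frakt^+} = p_{\frakt^+}$ and the YJM eigenvalue formula on seminormal units --- in a form compatible with the conventions of Section~\ref{section:HHbackground} and \cite{axelrodfreed2024spectrumrandomtorandomshufflinghecke}. Both follow from the idempotency and weight characterizations of $p_\frakt$ together with the commutativity of the $J_k$'s, and can be derived directly if not cited verbatim.
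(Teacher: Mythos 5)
Your proposal is correct and follows essentially the same route as the paper's proof: reduce by linearity to a seminormal unit $v = w_{\frakt}$ with $\frakt \in \syt(\lambda')$, identify $w_{\frakt}\,\Phi_r\,p_\lambda$ with the seminormal unit $w_{\frakt^+}$ for the extended tableau, and then apply the Young--Jucys--Murphy eigenvalue formula $w_{\frakt^+} J_n = [c_{\lambda \sm \lambda'}]_q\, w_{\frakt^+}$. The only difference is that the paper simply cites \cite[Proposition 3.8]{axelrodfreed2024spectrumrandomtorandomshufflinghecke} for the identity $w_{\frakt}\,\Phi_r\,p_\lambda = w_{\frakt^+}$, whereas you re-derive it from the $\HH_{n-1}$-equivariance of $\Phi_r$ together with the branching/orthogonality of the seminormal idempotents, which is a valid in-line substitute under the semisimplicity assumptions in force in that section.
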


\begin{proof}
Recall that the Specht module $S^{\lambda'}$ has a basis consisting of the $w_{\frakt'}$, where $\frakt'$ ranges over all standard Young tableaux of shape $\lambda'$.
By linearity, we can, without loss of generality, assume that $v$ is one of these basis vectors.
Thus, $v = w_{\frakt'}$ for some $\frakt' \in \syt(\lambda')$.
Consider this $\frakt'$.
Extend this standard tableau $\frakt'$ to a standard tableau $\frakt$ of shape $\lambda$ by putting the number $n$ into the remaining box of $\lambda$ (the one that is not in $\lambda'$).
Then, \cite[Proposition 3.8]{r2r1} yields
\begin{equation}
w_\frakt = w_{\frakt'} \ \Phi_r \ p_\lambda,
\label{pf:lem:eval-recursion-J:1}
\end{equation}
whereas \cite[Theorem 2.22 (2)]{r2r1} yields
\begin{equation}
w_\frakt \ J_n = [c_{\lambda \sm \lambda'}]_q \ w_\frakt.
\label{pf:lem:eval-recursion-J:2}
\end{equation}

Using \eqref{pf:lem:eval-recursion-J:1}, we can rewrite \eqref{pf:lem:eval-recursion-J:2} as
\begin{align*}
w_{\frakt'} \ \Phi_r \ p_\lambda \ J_n = [c_{\lambda \sm \lambda'}]_q \ w_{\frakt'} \ \Phi_r \ p_\lambda .
\end{align*}
Since $v = w_{\frakt'}$, this is precisely the claim of Lemma~\ref{lem:eval-recursion-J}.
\end{proof}

We next use Theorem \ref{thm:BnRnk} to show how one obtains an eigenvector for $\R_{n,k}$ from a joint eigenvector of $\R_{n-1,k}$ and $\R_{n-1,k-1}$.

\begin{prop}\label{prop:eval-recursion-1-step}
    Let ${\lambda}' \vdash n - 1$ and ${\lambda} \vdash n$ be such that ${\lambda}' \lessdot \lambda$.
    Let $c_{\lambda \sm \lambda'}$ be the content of the box in $\lambda \sm \lambda'$, and $r$ be its row. Finally, let $1 \leq k \leq n$.
    
    If $v \in S^{{\lambda'}}$ is an eigenvector\footnote{We do not require eigenvectors to be nonzero. Thus, the zero vector is an eigenvector of any linear map with any eigenvalue. As a consequence, when we say that some vector is an eigenvector of some operator with eigenvalue $z$, we are not actually saying that $z$ is an eigenvalue of the operator; this conclusion can only be drawn if our vector is nonzero.}
	of $\R_{n-1, k}$ with eigenvalue $\varepsilon_k$ and of $\R_{n-1, k - 1}$ with eigenvalue $\varepsilon_{k-1}$,
    then
	\[
	v \ \Phi_r \  \B_n  \ p_\lambda
	\]
    is in $S^\lambda$ and is an eigenvector\footnote{As we said, we allow eigenvectors to be zero. This is a real possibility here, even when $v \neq 0$.} for $\R_{n, k}$ with eigenvalue
	\[
	q^k \varepsilon_k + [n + 1 - k + c_{\lambda \sm \lambda'}]_q  \ \varepsilon_{k - 1}.
	\]
\end{prop}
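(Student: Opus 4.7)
The plan is to first verify membership in $S^\lambda$ and then directly compute the right action of $\R_{n,k}$ on $v \ \Phi_r \ \B_n \ p_\lambda$, comparing to the claimed scalar multiple of this vector. Since the isotypic projector $p_\lambda$ is a central idempotent in the semisimple algebra $\HH_n$ whose image is the $\lambda$-isotypic component, any element with $p_\lambda$ on the right lies in this isotypic component, which is identified with $S^\lambda$ in the paper's conventions. This handles the membership claim.

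For the main computation, I would use centrality of $p_\lambda$ to rewrite $v \ \Phi_r \ \B_n \ p_\lambda \ \R_{n,k} = v \ \Phi_r \ \B_n \ \R_{n,k} \ p_\lambda$, and then invoke Theorem~\ref{thm:BnRnk} to expand $\B_n \ \R_{n,k}$ as $\big(q^k \R_{n-1,k} + (\ive{n+1-k}_q + q^{n+1-k} J_n) \R_{n-1,k-1}\big) \B_n$. For the two terms not involving $J_n$, the factors $\R_{n-1,k}$ and $\R_{n-1,k-1}$ lie in $\HH_{n-1}$, which acts on words only through positions $1, 2, \ldots, n-1$; hence they commute past $\Phi_r$ in the sense that $v \ \Phi_r \ h = (v \ h) \ \Phi_r$ for any $h \in \HH_{n-1}$. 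The eigenvector hypothesis on $v$ then contributes $q^k \varepsilon_k$ and $\ive{n+1-k}_q \varepsilon_{k-1}$ to the coefficient of $v \ \Phi_r \ \B_n \ p_\lambda$.

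The main obstacle is the $J_n$ term, which I would handle as follows. First, Lemma~\ref{lem:TiHn-1} lets me commute $J_n$ past $\R_{n-1,k-1}$, extracting the eigenvalue $\varepsilon_{k-1}$ and reducing the task to simplifying $v \ \Phi_r \ J_n \ \B_n \ p_\lambda$. Now $J_n \notin \HH_{n-1}$, so the direct shortcut via $\Phi_r$ does not apply. The key trick is to use centrality of $p_\lambda$ once more: since $p_\lambda$ commutes with both $\B_n$ and $J_n$, I can rewrite $J_n \ \B_n \ p_\lambda = p_\lambda \ J_n \ \B_n$. This places $p_\lambda$ just to the right of $v \ \Phi_r$, where Lemma~\ref{lem:eval-recursion-J} applies and produces the factor $\ive{c_{\lambda \sm \lambda'}}_q$; sliding $p_\lambda$ back through $\B_n$, the third term contributes $q^{n+1-k} \ive{c_{\lambda \sm \lambda'}}_q \varepsilon_{k-1}$ to the coefficient.

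Summing the three contributions gives the coefficient $q^k \varepsilon_k + (\ive{n+1-k}_q + q^{n+1-k} \ive{c_{\lambda \sm \lambda'}}_q) \varepsilon_{k-1}$. A final application of the standard $q$-integer identity $\ive{a}_q + q^a \ive{b}_q = \ive{a+b}_q$ (valid even when $c_{\lambda \sm \lambda'}$ is negative, by the extended definition of $q$-integers introduced just before this proposition) rewrites this as the claimed $q^k \varepsilon_k + \ive{n+1-k+c_{\lambda \sm \lambda'}}_q \varepsilon_{k-1}$, completing the proof.
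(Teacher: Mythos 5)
Your proof is correct and takes essentially the same route as the paper's: centrality of $p_\lambda$, the recursion of Theorem~\ref{thm:BnRnk}, commuting $\R_{n-1,k}$ and $\R_{n-1,k-1}$ past $\Phi_r$ and $J_n$ past $\R_{n-1,k-1}$ (Lemma~\ref{lem:TiHn-1}), Lemma~\ref{lem:eval-recursion-J} for the $J_n$ term, and the identity $[a]_q + q^a[b]_q = [a+b]_q$. The only divergence is the membership claim $v\,\Phi_r\,\B_n\,p_\lambda \in S^\lambda$, where the paper cites \cite[Theorem 5.4]{axelrodfreed2024spectrumrandomtorandomshufflinghecke} while you argue via the image of $p_\lambda$; that argument is fine, but it implicitly uses that the element lies in the word module $W^\lambda$ and that $S^\lambda$ occurs there with multiplicity one, so that the $\lambda$-isotypic component of $W^\lambda$ is exactly $S^\lambda$ --- a fact worth stating explicitly, since the $\lambda$-isotypic component of $\HH_n$ itself would only be $(S^\lambda)^{\oplus f^\lambda}$.
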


\begin{proof}
To see that $v \ \Phi_r \B_n \ p_\lambda \in S^\lambda$, see \cite[Theorem 5.4]{r2r1}.
It remains to prove the eigenvector claim. 

Recall that $p_\lambda$ is central in $\HH_n$, hence commutes with both $\B_n$ and $\R_{n,k}$.
Thus,
    \begin{align*}
        \left({v \ \Phi_r \B_n \ p_\lambda}\right)  \ \R_{n, k}
		&= v \ \Phi_r \ \left(\B_n \R_{n, k}\right)  \ p_\lambda
		\\
        &= v  \ \Phi_r \ \Bigg(q^k \R_{n-1, k} + \left([n + 1 - k]_q + q^{n + 1 - k}J_n\right)\R_{n-1, k - 1}\Bigg) \B_n \ p_\lambda
    \end{align*}
	by Theorem \ref{thm:BnRnk}.
We need to show that this equals
\[
	\left(q^k \varepsilon_k + [n + 1 - k + c_{\lambda \sm \lambda'}]_q  \ \varepsilon_{k - 1}\right) {v \ \Phi_r \ \B_n  \ p_\lambda}.
\]
Since $p_\lambda$ commutes with $\B_n$, this is tantamount to showing that
\begin{align*}
&v  \ \Phi_r \ \Bigg(q^k \R_{n-1, k} + \left([n + 1 - k]_q + q^{n + 1 - k}J_n\right)\R_{n-1, k - 1}\Bigg) p_\lambda \ \B_n \\
&= \left(q^k \varepsilon_k + [n + 1 - k + c_{\lambda \sm \lambda'}]_q  \ \varepsilon_{k - 1}\right) v \ \Phi_r \ p_\lambda \ \B_n.
\end{align*}

It will suffice to show that
\begin{align}
& v  \ \Phi_r \ \Bigg(q^k \R_{n-1, k} + \left([n + 1 - k]_q + q^{n + 1 - k}J_n\right)\R_{n-1, k - 1}\Bigg) p_\lambda \nonumber \\
&= \left(q^k \varepsilon_k + [n + 1 - k + c_{\lambda \sm \lambda'}]_q  \ \varepsilon_{k - 1}\right) {v \ \Phi_r \ \ p_\lambda} ,
\label{pf:prop:eval-recursion-1-step:goal}
\end{align}
because this equality will yield our claim upon right multiplication with $\B_n$.

To prove \eqref{pf:prop:eval-recursion-1-step:goal}, we recall that $\R_{n-1, k-1}$ commutes with $J_n$ (by Lemma~\ref{lem:TiHn-1}), and furthermore that $\R_{n-1, k - 1}$ and $\R_{n-1, k}$ commute with the operator $\Phi_r$.
Hence, distributivity yields
\begin{align}
& v  \ \Phi_r \ \Bigg(q^k \R_{n-1, k} + \left([n + 1 - k]_q + q^{n + 1 - k}J_n\right)\R_{n-1, k - 1}\Bigg) p_\lambda \nonumber \\
&= q^k \ v \R_{n-1,k} \ \Phi_r \ p_\lambda +
\ [n + 1 - k]_q \ v \R_{n-1, k - 1} \ \Phi_r \ p_\lambda \nonumber\\
&\qquad + \ q^{n + 1 - k} \ v\ \Phi_r \ J_n \ \R_{n-1, k - 1}\ p_\lambda \nonumber\\
&= q^k \ \varepsilon_k v \ \Phi_r \ p_\lambda +
\ [n + 1 - k]_q \ \varepsilon_{k-1} v \ \Phi_r \ p_\lambda
\nonumber\\
&\qquad + \ q^{n + 1 - k} \ v\  \Phi_r \ J_n \ \R_{n-1, k - 1}\ p_\lambda,
\label{pf:prop:eval-recursion-1-step:3adds}
\end{align}
since the eigenvector condition on $v$ yields $v \R_{n - 1, k} = \varepsilon_k v$ and $v \R_{n-1, k-1} = \varepsilon_{k-1} v$.
But
\begin{align*}
v\  \Phi_r \ J_n \ \R_{n-1, k - 1}\ p_\lambda
&= v\  \Phi_r \ p_\lambda \ J_n \ \R_{n-1, k - 1}
\qquad \left(\text{since $p_\lambda$ is central}\right) \\
&= [c_{\lambda \sm \lambda'}]_q \ v \ \Phi_r \  p_\lambda \ \R_{n-1, k - 1}
\qquad \left(\text{by Lemma \ref{lem:eval-recursion-J}}\right) \\
&= [c_{\lambda \sm \lambda'}]_q \ v \R_{n-1, k - 1} \ \Phi_r \  p_\lambda  \\
&\qquad\qquad \left(\text{since $p_\lambda$ is central, and $\R_{n-1,k-1}$ commutes with $\Phi_r$}\right) \\
&= [c_{\lambda \sm \lambda'}]_q \ \varepsilon_{k-1} v \ \Phi_r \  p_\lambda
\qquad \left(\text{since }v \R_{n-1, k-1} = \varepsilon_{k-1} v\right).
\end{align*}
Hence, \eqref{pf:prop:eval-recursion-1-step:3adds} rewrites as
\begin{align*}
& v  \ \Phi_r \ \Bigg(q^k \R_{n-1, k} + \left([n + 1 - k]_q + q^{n + 1 - k}J_n\right)\R_{n-1, k - 1}\Bigg) p_\lambda \nonumber \\
&= q^k \ \varepsilon_k v \ \Phi_r \ p_\lambda +
\ [n + 1 - k]_q \ \varepsilon_{k-1} v \ \Phi_r \ p_\lambda
 + \ q^{n + 1 - k} \ [c_{\lambda \sm \lambda'}]_q \ \varepsilon_{k-1} v \ \Phi_r \  p_\lambda \\
&= \left(q^k \varepsilon_k + \big([n + 1 - k]_q + q^{n + 1 - k}\ [c_{\lambda \sm \lambda'}]_q\big)  \ \varepsilon_{k - 1}\right) {v \ \Phi_r \ \ p_\lambda} \\
&= \left(q^k \varepsilon_k + [n + 1 - k + c_{\lambda \sm \lambda'}]_q  \ \varepsilon_{k - 1}\right) {v \ \Phi_r \ \ p_\lambda} ,
\end{align*}
since
\[
[n + 1 - k]_q + q^{n + 1 - k}[c_{\lambda \sm  \lambda'}]_q = [n + 1 - k + c_{\lambda \sm \lambda'}]_q.
\]
This proves \eqref{pf:prop:eval-recursion-1-step:goal}, thus completing the proof.
\end{proof}

\subsection{Nested kernels and images}

Recall that elements of $\HH_n$ act from the right on $\HH_n$ by multiplication.
Thus, for any $x \in \HH_n$, we let
\begin{itemize}
    \item $\ker x$ denote the kernel of the right action of $x$ on $\HH_n$, that is, the set of all $y \in \HH_n$ such that $yx = 0$;
    \item $\im x$ denote the image of this right action, i.e., the left ideal $\HH_n x$ of $\HH_n$.
\end{itemize}

In the ``positive case''\ (the case when $q$
is a positive real), the kernels and images of $\B_{n,k}$, $\B_{n,k}^{\ast}$
and $\R_{n,k}$ are related due to the following lemma:

\begin{lem}
\label{lem:ker-im-pos}Assume that $\mathbf{k}\subseteq\mathbb{R}$ and
$q\in\RR_{>0}$. Let $x\in\HH_{n}$ be arbitrary. Then,
\[
\ker\left(  x^{\ast}x\right)  =\ker\left(  x^{\ast}\right)  \qquad\text{ and
}\qquad\im\left(  x^{\ast}x\right)  =\im\left(  x\right)  .
\]

\end{lem}

\begin{proof}
From \cite[Lemma 6.1 (1)]{r2r1}, we know that
\begin{equation}
\text{if }a\in\HH_{n}\text{ satisfies }a^{\ast}a=0\text{, then }
a=0.
\label{pf.lem:ker-im-pos.1}
\end{equation}
This innocent-looking fact will be crucial to the present proof.

Clearly, $\ker\left(  x^{\ast}\right)  \subseteq\ker\left(  x^{\ast}x\right)
$. Let us now prove the reverse inclusion. Indeed, let $v\in\ker\left(
x^{\ast}x\right)  $. Then, $vx^{\ast}x=0$, so that $\left(  xv^{\ast}\right)
^{\ast}\left(  xv^{\ast}\right)  =\underbrace{v^{\ast\ast}}_{=v}x^{\ast
}xv^{\ast}=\underbrace{vx^{\ast}x}_{=0}v=0$. Hence,
\eqref{pf.lem:ker-im-pos.1} (applied to $a=xv^{\ast}$) yields $xv^{\ast}=0$.
Hence, $\left(  xv^{\ast}\right)  ^{\ast}=0^{\ast}=0$. In view of $\left(
xv^{\ast}\right)  ^{\ast}=v^{\ast\ast}x^{\ast}=vx^{\ast}$, this rewrites as
$vx^{\ast}=0$. That is, $v\in\ker\left(  x^{\ast}\right)  $. Since we have
proved this for each $v\in\ker\left(  x^{\ast}x\right)  $, we thus obtain
$\ker\left(  x^{\ast}x\right)  \subseteq\ker\left(  x^{\ast}\right)  $.
Combined with $\ker\left(  x^{\ast}\right)  \subseteq\ker\left(  x^{\ast
}x\right)  $, this entails $\ker\left(  x^{\ast}x\right)  =\ker\left(
x^{\ast}\right)  $. Thus, the first half of Lemma \ref{lem:ker-im-pos} is proved.

It remains to show that $\im\left(  x^{\ast}x\right)  =\im\left(  x\right)  $.
Since $\im\left(  x^{\ast}x\right)  \subseteq\im\left(  x\right)  $ is
obvious, it will suffice to show that $\dim\left(  \im\left(  x^{\ast
}x\right)  \right)  \geq\dim\left(  \im\left(  x\right)  \right)  $.

Assume the contrary. Thus, $\dim\left(  \im\left(  x^{\ast}x\right)  \right)
<\dim\left(  \im\left(  x\right)  \right)  $, so that $\dim\left(  \im\left(
x\right)  \right)  >\dim\left(  \im\left(  x^{\ast}x\right)  \right)
=\dim\left(  \HH_{n}\right)  -\dim\left(  \ker\left(  x^{\ast}x\right)
\right)  $ by the rank-nullity theorem. Hence, $\dim\left(  \im\left(
x\right)  \right)  +\dim\left(  \ker\left(  x^{\ast}x\right)  \right)
>\dim\left(  \HH_{n}\right)  $. Therefore, the subspaces $\im\left(  x\right)
$ and $\ker\left(  x^{\ast}x\right)  $ of $\mathcal{H}_{n}$ have a nonzero
vector $u$ in common. Consider this $u$. Thus, $u=wx$ for some $w\in
\mathcal{H}_{n}$ (since $u\in\im\left(  x\right)  $) and $ux^{\ast}=0$ (since
$u\in\ker\left(  x^{\ast}x\right)  =\ker\left(  x^{\ast}\right)  $). From
$u=wx$, we obtain $u^{\ast}=\left(  wx\right)  ^{\ast}=x^{\ast}w^{\ast}$, so
that $uu^{\ast}=\underbrace{ux^{\ast}}_{=0}w^{\ast}=0$.

Set $a:=u^{\ast}$. Then, $a^{\ast}=u^{\ast\ast}=u$ and thus $a^{\ast
}a=uu^{\ast}=0$. Hence, \eqref{pf.lem:ker-im-pos.1} yields $a=0$.
Consequently, $a^{\ast}=0$, so that $u=a^{\ast}=0$, which contradicts $u$
being nonzero. This contradiction shows that our assumption was false, and the
proof of the lemma is complete.
\end{proof}

\begin{prop} \label{prop:imR=imB}
    Assume that $\kk \subseteq \RR$ and $q \in \RR_{>0}$.
    Then, each $k \geq 0$ satisfies
	\[
	\ker \R_{n,k} = \ker \B^*_{n,k}
	\qquad \text{ and } \qquad
	\im \R_{n, k} = \im \B_{n, k}.
	\]
\end{prop}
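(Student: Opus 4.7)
The plan is to prove the kernel equality first, then bootstrap the image equality from it by a dimension count. The containment $\ker \B^*_{n,k} \subseteq \ker \R_{n,k}$ is immediate from $\R_{n,k} = \tfrac{1}{[k]!_q}\B^*_{n,k}\B_{n,k}$. The reverse containment is exactly what was invoked in passing in the proof of Proposition~\ref{prop:nested-kernels}(b): the argument of \cite[Lemma 6.1(2)]{axelrodfreed2024spectrumrandomtorandomshufflinghecke} applies verbatim with $\B_n$ replaced by $\B_{n,k}$. Concretely, the assumptions $\kk \subseteq \RR$ and $q \in \RR_{>0}$ guarantee the existence of a positive-definite symmetric bilinear form $\langle \cdot , \cdot \rangle$ on $\HH_n$ under which $a \mapsto a^*$ is the adjoint; then
\[ y \R_{n,k} = 0 \;\Longrightarrow\; \langle y\B^*_{n,k},\, y\B^*_{n,k}\rangle = \langle y\B^*_{n,k}\B_{n,k},\, y\rangle = 0 \;\Longrightarrow\; y\B^*_{n,k} = 0, \]
giving $\ker \R_{n,k} \subseteq \ker \B^*_{n,k}$. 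So I would simply invoke this lemma in the form we need.

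For the image equality, the easy direction is $\HH_n \R_{n,k} = \tfrac{1}{[k]!_q}(\HH_n \B^*_{n,k})\B_{n,k} \subseteq \HH_n \B_{n,k}$, i.e., $\im \R_{n,k} \subseteq \im \B_{n,k}$. The reverse containment I would derive by showing that the two sides have the same $\kk$-dimension. By rank-nullity applied to right multiplication on $\HH_n$, we have $\dim \im x = n! - \dim \ker x$ for every $x \in \HH_n$; combined with the kernel equality just established, this yields $\dim \im \R_{n,k} = \dim \im \B^*_{n,k}$. Next, the anti-automorphism $a \mapsto a^*$ is a $\kk$-linear bijection that reverses products, so it carries $\HH_n \B_{n,k}$ onto $\B^*_{n,k} \HH_n$, giving $\dim(\HH_n \B_{n,k}) = \dim(\B^*_{n,k} \HH_n)$. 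Finally, since our running hypotheses on $q$ and $\kk$ make $\HH_n$ semisimple and finite-dimensional, Lemma~\ref{lem:left-right-ids-sesi} gives $\dim(\B^*_{n,k} \HH_n) = \dim(\HH_n \B^*_{n,k}) = \dim \im \B^*_{n,k}$. Stringing these equalities together yields $\dim \im \B_{n,k} = \dim \im \R_{n,k}$, and then the containment forces equality.

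The only genuine obstacle is the kernel equality, which requires the positive-definite form and hence the positivity of $q$; without it, $\B^*_{n,k}\B_{n,k}$ can annihilate vectors that $\B^*_{n,k}$ alone does not. Once that step is granted, the image equality is a formality, combining only rank-nullity, the anti-involution, and the left/right symmetry of principal ideals in semisimple algebras (Lemma~\ref{lem:left-right-ids-sesi}).
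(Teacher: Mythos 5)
Your proposal is correct and follows essentially the same route as the paper: the kernel equality is obtained from the positive-definiteness argument behind \cite[Lemma 6.1(2)]{axelrodfreed2024spectrumrandomtorandomshufflinghecke} (as already invoked in Proposition~\ref{prop:nested-kernels}(b)), and the image equality then follows from rank--nullity, the anti-involution $a \mapsto a^*$, Lemma~\ref{lem:left-right-ids-sesi}, and the trivial containment $\im \R_{n,k} \subseteq \im \B_{n,k}$. The only (immaterial) difference is that you apply Lemma~\ref{lem:left-right-ids-sesi} to $\B^*_{n,k}$ while the paper applies it to $\B_{n,k}$.
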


\begin{proof}
Let $k \geq 0$.
Recall that $\R_{n,k}$ equals $\B^*_{n,k} \B_{n,k}$ up to
a nonzero scalar factor (by \eqref{eq:Rnk:def}, since $q > 0$
entails $[k]!_q \neq 0$).
Hence, $\ker \R_{n,k} = \ker\tup{\B^*_{n,k} \B_{n,k}}
= \ker \B^*_{n,k}$ (by the first equality in
Lemma~\ref{lem:ker-im-pos}, applied to
$x = \B_{n,k}$) and
$\im \R_{n,k} = \im\tup{\B^*_{n,k} \B_{n,k}}
= \im \B_{n,k}$ (by the second equality in
Lemma~\ref{lem:ker-im-pos}, applied to
$x = \B_{n,k}$).
This proves the proposition.
\end{proof}

The kernels and the images of the $\R_{n,k}$ exhibit a nesting behavior when $q$ is a positive real.
The part about the kernels will be used in the computation of eigenvalues later on (proof of Corollary~\ref{cor:eigenvalues-of-k-random-to-random}).

\begin{prop}[Nested kernels and images]\label{prop:nested-kernels}
Let $q \in \kk$ be arbitrary.
\begin{enumerate}
    \item[(a)] We have
    \[
    \ker \B^*_{n, 0} \subseteq \ker \B^*_{n, 1} \subseteq \ker \B^*_{n, 2} \subseteq \cdots. 
    \]
    \item[(b)] If $\kk \subseteq \mathbb{R}$, and if $q \in \RR_{>0}$, then
    \[
    \ker \R_{n, 0} \subseteq \ker \R_{n, 1} \subseteq \ker \R_{n, 2}\subseteq \cdots. 
    \]
    \item[(c)] We have
    \[
    \im \B_{n, 0} \supseteq \im \B_{n, 1} \supseteq \im \B_{n, 2} \supseteq \cdots. 
    \]
    \item[(d)] If $\kk \subseteq \mathbb{R}$, and if $q \in \RR_{>0}$, then
    \[
    \im \R_{n, 0} \supseteq \im \R_{n, 1} \supseteq \im \R_{n, 2} \supseteq \cdots. 
    \]
\end{enumerate}
\end{prop}

\begin{proof}
(a) For each $k > 0$, we have $\ker \B^*_{n,k-1} \subseteq \ker \B^*_{n,k}$ by \eqref{eq:trivrecb2}.
This proves part (a).

(b) This follows from part (a), since (under the assumptions
$\kk \subseteq \mathbb{R}$ and $q\in \RR_{>0}$)
we have $\ker \R_{n,k} = \ker \B^*_{n,k}$ for each $k \geq 0$
by Proposition~\ref{prop:imR=imB}.

(c) For each $k > 0$, we have $\im \B_{n,k-1} \supseteq \im \B_{n,k}$ by \eqref{eq:trivrecb1}.

(d) This follows from part (c), since (under the assumptions
$\kk \subseteq \mathbb{R}$ and $q\in \RR_{>0}$)
we have $\im \R_{n,k} = \im \B_{n,k}$ for each $k \geq 0$
by Proposition~\ref{prop:imR=imB}.
\end{proof}

\subsection{The numbers $\dq$}
We will now define a statistic on skew partitions that depends on both $\lambda \sm \mu$ and two integers $\ell$ and $i$.
These will be used to describe the eigenvalues of $\R_{n, k}$.

\begin{definition}
\label{def:dq}
Let $\lambda \sm \mu$ be a skew partition with $\mu \vdash j$ and $\lambda \vdash n$.
\begin{itemize}
    \item For any $j < \ell \leq n$, we let $\content_{\frakt^{\lambda \sm \mu}, \ell}$ denote the content of the cell of $\frakt^{\lambda \sm \mu}$ that contains the entry $\ell$. \smallskip
\item For any $j < \ell \leq n$ and $i \in \mathbb{Z}$, we define a constant $\dq_{\ell, i, \lambda \sm \mu} \in \kk$ by
\[
\dq_{\ell, i, \lambda \sm \mu} := [\ell + 1 - i + \content_{\frakt^{\lambda \sm \mu}, \ell}]_q.
\]
\end{itemize}
\end{definition}

\begin{example}
\label{exa.tlammu}
Let $\mu = \tup{3,1} \vdash j = 4$ and $\lambda = \tup{5,3,1} \vdash n = 9$. Then,
\[
\frakt^{\lambda \sm \mu} = \begin{ytableau}
      *(lightgray)  & *(lightgray) & *(lightgray) & 5 & 6\\
      *(lightgray) & 7 & 8\\
      9
\end{ytableau}
\]
Thus, the contents $\content_{\frakt^{\lambda \sm \mu}, \ell}$ for $\ell = 5,6,7,8,9$ are $3,4,0,1,-2$, respectively. Hence, the respective constants $\dq_{\ell, i, \lambda \sm \mu}$ are
\begin{align*}
\ell = 5:& \quad \quad  [5+1-i+3]_q = [9-i]_q,
\\ \ell = 6:& \quad \quad [6+1-i+4]_q = [11-i]_q,
\\\ell = 7:& \quad \quad [7+1-i+0]_q = [8-i]_q, 
\\ \ell = 8:& \quad \quad[8+1-i+1]_q = [10-i]_q,
\\ \ell = 9:& \quad \quad[9+1-i+(-2)]_q = [8-i]_q.
\end{align*}
\end{example}

\subsection{Eigenvector construction: adding multiple boxes}

We now show how an eigenvector of $\R_{n,k}$ can be obtained from an eigenvector of smaller $\R_{j,k-i}$ for $j,i \leq k$.
This is similar to the procedures in \cite{r2r1} (for $k=1$) and \cite{DiekerSaliola} (for $k=1$ and $q=1$).

\begin{lem}\label{lem:removing-horiz-strip}
Let $\mu \vdash j$ and let $v \in S^\mu$ be an eigenvector of the operators $\{\R_{j, i}: 0 \leq i \leq k\}$ with eigenvalues $\{\varepsilon_i: 0 \leq i \leq k\}$, respectively.
Fix $\lambda \vdash n$ with $\mu \subseteq \lambda$.

Then,
\[
v  \ \Phi_{\lambda \sm \mu} \ p_{\frakt^{\lambda \sm \mu}}\B_{n, n-j} \in S^\lambda
\]
and is an eigenvector for $\R_{n, k}$ with eigenvalue
\[
\sum_{i = 0}^{n-j} q^{(n-j - i)k}\varepsilon_{k - i} \sum_{j < (\ell_1 < \ell_2 < \cdots < \ell_i) \leq n} \ \prod_{m = 1}^i q^{-(\ell_m- (j + m))} \dq_{\ell_m, k + m - i, \lambda \sm \mu}.
\]
(When $i = 0$, we interpret $\ell_1 < \ell_2 < \cdots < \ell_i$ to be an empty chain of length zero, so that the $i = 0$ summand is $q^{(n-j)k} \varepsilon_k$. By convention, we also let $\varepsilon_{i} = 0$ for $i < 0$.)
\end{lem}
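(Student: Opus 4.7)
The plan is to proceed by induction on the number of boxes $n - j$ in the skew shape $\lambda \sm \mu$. The base case $n = j$ is immediate: the map $\Phi_{\lambda \sm \mu}$ and the element $p_{\frakt^{\lambda \sm \mu}}$ act trivially, $\B_{n,0} = 1$, and the stated sum collapses to its $i = 0$ term $q^{(n-j)k}\varepsilon_k = \varepsilon_k$, which matches the hypothesis that $v$ is an $\varepsilon_k$-eigenvector of $\R_{j, k}$.

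For the inductive step, I would let $\lambda' \lessdot \lambda$ be the partition obtained by removing the cell of $\frakt^{\lambda \sm \mu}$ that contains the entry $n$, and let $r$ and $c := \content_{\frakt^{\lambda \sm \mu}, n}$ denote its row and content. The first task is to establish the rewriting
\[
v \, \Phi_{\lambda \sm \mu} \, p_{\frakt^{\lambda \sm \mu}} \, \B_{n, n-j} \;=\; v' \, \Phi_r \, \B_n \, p_\lambda, \qquad \text{where } v' := v \, \Phi_{\lambda' \sm \mu} \, p_{\frakt^{\lambda' \sm \mu}} \, \B_{n-1, n-1-j}.
\]
This assembles four ingredients: the factorization $\B_{n, n-j} = \B_{n-1, n-1-j} \, \B_n$ from \eqref{eq:trivrec1}; the centrality of the isotypic projector $p_\lambda$ in $\HH_n$; the fact that $\Phi_r$ commutes with right multiplication by any $h \in \HH_{n-1}$ (since such $h$ involves only $T_1, \ldots, T_{n-2}$ and acts on the first $n-1$ letters of a word); and a recursive identity for skew seminormal idempotents extending \cite[Proposition 3.8]{axelrodfreed2024spectrumrandomtorandomshufflinghecke}, saying essentially that $p_{\frakt^{\lambda \sm \mu}}$ can be built from $p_{\frakt^{\lambda' \sm \mu}}$ by ``finishing'' with $\Phi_r \, p_\lambda$.

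Having set this up, I would apply the induction hypothesis to $\lambda' \sm \mu$: since $v$ is a joint eigenvector of $\R_{j, 0}, \ldots, \R_{j, k}$, the hypothesis (applied with $k$ replaced by each $\ell \in \{0, 1, \ldots, k\}$) guarantees that $v' \in S^{\lambda'}$ is an eigenvector of $\R_{n-1, \ell}$ with eigenvalue $\varepsilon'_\ell$ obtained from the lemma's formula with $\lambda \sm \mu$ replaced by $\lambda' \sm \mu$ and $n$ by $n - 1$. Then Proposition \ref{prop:eval-recursion-1-step} applied to $v'$ yields that $v' \, \Phi_r \, \B_n \, p_\lambda \in S^\lambda$ is an eigenvector of $\R_{n, k}$ with eigenvalue $q^k \, \varepsilon'_k + [n+1-k+c]_q \, \varepsilon'_{k-1}$.

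The final step is a bookkeeping verification that this equals the claimed formula, which I would carry out by splitting the sum over chains $j < \ell_1 < \cdots < \ell_i \leq n$ according to whether $\ell_i \leq n-1$ or $\ell_i = n$. The first class recombines into $q^k \varepsilon'_k$ (after pulling a factor of $q^k$ out of the exponent $q^{(n-j-i)k}$), using that $\content_{\frakt^{\lambda \sm \mu}, \ell_m} = \content_{\frakt^{\lambda' \sm \mu}, \ell_m}$ for $\ell_m \leq n-1$, so the relevant $\dq$-values agree. The second class, after removing the terminal $\ell_i = n$ and reindexing $i \mapsto i - 1$, recombines into $\dq_{n, k, \lambda \sm \mu} \cdot \varepsilon'_{k-1} = [n+1-k+c]_q \, \varepsilon'_{k-1}$, with the exponent identity $q^{(n-j-i)k - (n-j-i)} = q^{(n-j-i)(k-1)}$ aligning with the formula for $\varepsilon'_{k-1}$. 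The main obstacle is the recursive identity for the skew-shape seminormal idempotent underlying the factorization; the horizontal-strip generalization of \cite[Proposition 3.8]{axelrodfreed2024spectrumrandomtorandomshufflinghecke} should follow by iterating the single-box case, but may merit a small independent verification to justify interchanging $p_{\frakt^{\lambda' \sm \mu}}$ past $\Phi_r$.
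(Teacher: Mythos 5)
Your argument is correct, and it is a mirror image of the paper's induction rather than a literal copy: the paper peels off the \emph{first} box of the strip (the cell of $\frakt^{\lambda\sm\mu}$ containing $j+1$), applies Proposition~\ref{prop:eval-recursion-1-step} first to get a joint eigenvector in $S^{\mu^{(1)}}$ with the modified eigenvalues $\nu_i = q^i\varepsilon_i + \dq_{j+1,i,\lambda\sm\mu}\varepsilon_{i-1}$, and only then invokes the induction hypothesis on $\lambda\sm\mu^{(1)}$, whereas you peel off the \emph{last} box (the cell containing $n$), apply the induction hypothesis first to produce $v'\in S^{\lambda'}$, and finish with one application of Proposition~\ref{prop:eval-recursion-1-step}. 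Both routes hinge on the same key one-box proposition and the same commutation facts ($\Phi$ maps commute with the $\HH_{n-1}$-action, $p_\lambda$ central), but your ordering buys a cleaner bookkeeping step: the identity you must verify is exactly the ``remove the box containing $n$'' recursion, i.e.\ the splitting of chains according to $\ell_i\le n-1$ versus $\ell_i=n$, which is the same computation the paper later records as Proposition~\ref{prop:eigen-rec}(c); the paper's ordering instead requires the messier recombination displayed in its proof, but is what forces the lemma to be stated for arbitrary joint eigenvectors (the $\nu_i$ are no longer of the special form), a generality your route also uses but less visibly. Two small points you flag or omit deserve a sentence each: (i) the ``recursive identity for skew seminormal idempotents'' you worry about is not an extension of \cite[Proposition 3.8]{axelrodfreed2024spectrumrandomtorandomshufflinghecke} but is immediate from the definition of $p_{\frakt^{\lambda\sm\mu}}$ as the product $p_{\mu^{(1)}}p_{\mu^{(2)}}\cdots p_{\mu^{(n-j)}}$ of isotypic projectors along the chain of $\frakt^{\lambda\sm\mu}$: the last factor is $p_\lambda$, each factor is central in its own $\HH_m$, hence $p_{\frakt^{\lambda\sm\mu}} = p_{\frakt^{\lambda'\sm\mu}}\,p_\lambda$ and $p_\lambda$ commutes with $\B_{n,n-j}$, which is precisely your claimed rewriting $v\,\Phi_{\lambda\sm\mu}\,p_{\frakt^{\lambda\sm\mu}}\B_{n,n-j} = v'\,\Phi_r\,\B_n\,p_\lambda$ after using $\B_{n,n-j}=\B_{n-1,n-1-j}\B_n$ and moving $\Phi_r$ past $\HH_{n-1}$; (ii) you should note that the cell of $\frakt^{\lambda\sm\mu}$ containing $n$ (the last cell in row-reading order) is a removable corner of $\lambda$, so that $\lambda'$ is indeed a partition with $\mu\subseteq\lambda'\lessdot\lambda$ and $\frakt^{\lambda'\sm\mu}$ is the restriction of $\frakt^{\lambda\sm\mu}$; this is easy (using that $\mu$ and $\lambda$ are partitions) but is the one structural fact your reversed induction needs that the paper's forward chain gets for free.
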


\begin{proof}
    We prove this by induction on $n-j$. The base case $n - j = 0$ is straightforward. 

  Let $n > j$ and let $\mu  = \mu^{(0)} \lessdot \mu^{(1)} \lessdot \cdots \lessdot \mu^{(n-j)} = \lambda$ be the path in Young's lattice encoding the skew standard tableau $\frakt^{\lambda \sm \mu}$, in the sense that $\mu^{(i)}$ is obtained from $\mu^{(i-1)}$ by adding a box in the position of $i + j$ in $\frakt^{\lambda \sm \mu}$.
  
  For $0 \leq i \leq k$, define
  \begin{equation}
  \nu_i := q^i \varepsilon_i + \dq_{j+1, i, \lambda \sm \mu} \varepsilon_{i - 1}.
  \label{eq:pf:lem:removing-horiz-strip:nui=}
  \end{equation}
  By Proposition~\ref{prop:eval-recursion-1-step}, the vector $v\  \Phi_{\mu^{(1)} \sm \mu}\ \B_{j + 1} \ p_{\mu^{(1)}}$ is in $S^{\mu^{(1)}}$ and is an eigenvector of the operators $\{\R_{j + 1, i }\ : \ 1 \leq i \leq j + 1\}$ with eigenvalues $\{\nu_i : \ 1 \leq i \leq j + 1\}$.
  Additionally, given our convention that $\varepsilon_\ell = 0$ for $\ell < 0$ and the fact that $\R_{i,0}$ is the identity operator for all $i,$ we can extend our claim to encompass zero and negative values for $i$; namely,  $v\  \Phi_{\mu^{(1)} \sm \mu}\ \B_{j + 1} \ p_{\mu^{(1)}}$ is an eigenvector of the operators $\{\R_{j + 1, i }\ : \ i \leq j + 1\}$ with eigenvalues $\{\nu_i :  i \leq j + 1\}$.
  
  Hence, by the induction hypothesis,
  \[
  (v\  \Phi_{\mu^{(1)} \sm \mu}\ \B_{j + 1} \ p_{\mu^{(1)}}) \ \Phi_{\lambda \sm \mu^{(1)}} \ p_{\frakt^{\lambda \sm \mu^{(1)}}}\B_{n, n -j - 1} \in S^\lambda
  \]
  is an eigenvector for $\R_{n, k}$ with eigenvalue 
  \begin{align*}
      & \sum_{i = 0}^{n -j - 1} q^{(n-j - 1 - i)k}\nu_{k - i} \sum_{j + 1 < (\ell_1 < \ell_2 < \cdots < \ell_i) \leq n} \ \prod_{m = 1}^i q^{-(\ell_m- (j + 1+ m))}\dq_{\ell_m, k + m - i, \lambda \sm \mu}\\
      &=
      \begin{multlined}[t]
          \sum_{i = 0}^{n -j - 1} q^{(n-j - 1 - i)k} {\left( q^{k - i}\varepsilon_{k - i} + \dq_{j + 1, k - i, \lambda \sm \mu}\varepsilon_{k - (i + 1)} \right)} \sum_{j + 1 < (\ell_1 < \ell_2 < \cdots < \ell_i) \leq n} \ \prod_{m = 1}^i q^{-(\ell_m- (j + 1+ m))}\dq_{\ell_m, k + m - i, \lambda \sm \mu}\\
      \end{multlined}
      \\
	  & \qquad \qquad\qquad \tup{\text{since $\nu_{k - i} = q^{k - i}\varepsilon_{k - i} + \dq_{j + 1, k - i, \lambda \sm \mu}\varepsilon_{k - (i + 1)} $}}
	  \\
      &=
      \begin{multlined}[t]
          \sum_{i = 0}^{n-j} \varepsilon_{k - i } \Bigg( q^{(n-j-1-i)k}q^{k - i}\sum_{j + 1 < (\ell_1 < \ell_2 < \cdots < \ell_i) \leq n} \ \prod_{m = 1}^i q^{-(\ell_m- (j + 1+ m))}\dq_{\ell_m, k + m - i, \lambda \sm \mu}\\
          \qquad + q^{(n - j - 1 - (i - 1))k}\dq_{j + 1, k - (i - 1), \lambda \sm \mu}\sum_{j + 1 < (\ell_1 < \ell_2 < \cdots < \ell_{i - 1}) \leq n} \ \prod_{m = 1}^{i - 1} q^{-(\ell_m- (j + 1+ m))}\dq_{\ell_m, k + m - (i - 1), \lambda \sm \mu}\Bigg)\\
      \end{multlined}
      \\
      &=
      \begin{multlined}[t]
          \sum_{i = 0}^{n-j}q^{(n-j-i)k} \varepsilon_{k - i} \Bigg( \sum_{j + 1 < (\ell_1 < \ell_2 < \cdots < \ell_i) \leq n} \ q^{-i} \prod_{m = 1}^i q^{-(\ell_m- (j + 1+ m))}\dq_{\ell_m, k + m - i, \lambda \sm \mu}\\
          \qquad + \sum_{j + 1 < (\ell_1 < \ell_2 < \cdots < \ell_{i - 1}) \leq n} \ \dq_{j + 1, k - (i - 1), \lambda \sm \mu} \prod_{m = 1}^{i - 1} q^{-(\ell_m- (j + 1+ m))}\dq_{\ell_m, k + m - (i - 1), \lambda \sm \mu}\Bigg).
      \end{multlined}
  \end{align*}
  The claim then follows by observing that 
  \begin{align*}
      \sum_{j + 1 < (\ell_1 < \ell_2 < \cdots < \ell_i) \leq n} & \ q^{-i} \prod_{m = 1}^i q^{-(\ell_m- (j + 1+ m))}\dq_{\ell_m, k + m - i, \lambda \sm \mu}\\ &+ \sum_{j + 1 < (\ell_1 < \ell_2 < \cdots < \ell_{i - 1}) \leq n} \ \dq_{j + 1, k - (i - 1), \lambda \sm \mu} \prod_{m = 1}^{i - 1} q^{-(\ell_m- (j + 1+ m))}\dq_{\ell_m, k + m - (i - 1), \lambda \sm \mu} \\
	  &=
      \sum_{j < (\ell_1 < \ell_2 < \cdots < \ell_i) \leq n} \ \prod_{m = 1}^i q^{-(\ell_m - (j + m))}\dq_{\ell_m, k + m - i, \lambda \sm \mu}
  \end{align*}
  and
  \begin{align*}
  & (v\  \Phi_{\mu^{(1)} \sm \mu}\ \B_{j + 1} \ p_{\mu^{(1)}}) \ \Phi_{\lambda \sm \mu^{(1)}} \ p_{\frakt^{\lambda \sm \mu^{(1)}}}\B_{n, n -j - 1} \\
  &= (v\  \Phi_{\mu^{(1)} \sm \mu} \ p_{\mu^{(1)}} \ \B_{j + 1}) \ \Phi_{\lambda \sm \mu^{(1)}} \ p_{\frakt^{\lambda \sm \mu^{(1)}}}\B_{n, n -j - 1} \\
  & \qquad\qquad \left(\text{since $p_{\mu^{(1)}}$ is central in $\HH_{j+1}$}\right) \\
  &= v\  \Phi_{\mu^{(1)} \sm \mu} \ \Phi_{\lambda \sm \mu^{(1)}} \ p_{\mu^{(1)}} \ \B_{j + 1} \ p_{\frakt^{\lambda \sm \mu^{(1)}}}\B_{n, n -j - 1} \\
  & \qquad\qquad \left(\text{since the $\Phi$ operators commute with the Hecke algebra actions}\right) \\
  &= v\  \Phi_{\lambda \sm \mu} \ p_{\mu^{(1)}} \ \B_{j + 1} \ p_{\frakt^{\lambda \sm \mu^{(1)}}}\B_{n, n -j - 1} \\
  & \qquad\qquad \left(\text{since $\Phi_{\mu^{(1)} \sm \mu} \ \Phi_{\lambda \sm \mu^{(1)}} = \Phi_{\lambda \sm \mu}$}\right) \\
  &= v\  \Phi_{\lambda \sm \mu} \ p_{\mu^{(1)}} \ p_{\frakt^{\lambda \sm \mu^{(1)}}} \B_{j + 1} \ \B_{n, n -j - 1} \\
  & \qquad\qquad \left(\begin{array}{c} \text{since $p_{\frakt^{\lambda \sm \mu^{(1)}}}$ commutes with all elements of $\HH_{j+1}$} \\ \text{(because it is a product of central elements of $\HH_{j+2},\HH_{j+3},\ldots,\HH_n$)}\end{array}\right) \\
  &= v \  \Phi_{\lambda \sm \mu} \ p_{\frakt^{\lambda \sm \mu}} \ \B_{n, n-j} \\
  & \qquad\qquad \left(\text{since $p_{\mu^{(1)}} \ p_{\frakt^{\lambda \sm \mu^{(1)}}} = p_{\frakt^{\lambda \sm \mu}}$ and $\B_{j + 1} \ \B_{n, n -j - 1} = \B_{n, n-j}$}\right).
  \qedhere
  \end{align*}
\end{proof}

When the eigenvector we are lifting is in the kernel of $\R_{n, 1}$, we can simplify our eigenvalue formula from Lemma~\ref{lem:removing-horiz-strip}.

\begin{cor}\label{cor:eigenvalues-of-k-random-to-random}
    Let $\mu \vdash j$. Assume $q \in \mathbb{R}_{>0}$, and let $v \in S^\mu$ be in $\ker \R_{j, 1}$.
    Fix $\lambda \vdash n$ and recall the constants $\dq_{\ell, i, \lambda \sm \mu} \in \kk$ defined in Definition~\ref{def:dq}. Then,
\[
v  \ \Phi_{\lambda \sm \mu} \  p_{\frakt^{\lambda \sm \mu}} \ \B_{n, n-j} \in S^\lambda
\]
and is an eigenvector for $\R_{n, k}$ with eigenvalue 
\[
\eigen_{\lambda \sm \mu}(k) := q^{nk - {k \choose 2}} \sum_{j < (\ell_1 < \ell_2 < \cdots < \ell_k) \leq n} \ \prod_{m = 1}^k q^{-\ell_m}\dq_{\ell_m, m, \lambda \sm \mu}.
\]
(Note that this eigenvalue is $0$ when $k > n-j$, since the sum is empty.)
\end{cor}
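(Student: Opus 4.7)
The plan is to deduce this corollary directly from Lemma~\ref{lem:removing-horiz-strip} by exploiting the hypothesis $v \in \ker \R_{j,1}$ to collapse the double sum in that lemma to a single index.

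First I would observe that since $q \in \mathbb{R}_{>0}$, Proposition~\ref{prop:nested-kernels}(b) gives the nested chain $\ker \R_{j,1} \subseteq \ker \R_{j,2} \subseteq \cdots \subseteq \ker \R_{j,j}$, so from $v \in \ker \R_{j,1}$ I get $v \ \R_{j,i} = 0$ for all $i \geq 1$. Combined with the fact that $\R_{j,0} = 1$, this means the eigenvalues $\varepsilon_i$ of $v$ appearing in Lemma~\ref{lem:removing-horiz-strip} satisfy $\varepsilon_0 = 1$ and $\varepsilon_i = 0$ for all $i \geq 1$. Applying Lemma~\ref{lem:removing-horiz-strip} (whose hypotheses are now verified), the $\HH_n$-module containment $v\ \Phi_{\lambda \sm \mu}\ p_{\frakt^{\lambda \sm \mu}}\ \B_{n,n-j} \in S^\lambda$ follows immediately, and the eigenvalue formula reduces to just the summand with $k-i = 0$, namely $i = k$ (which requires $k \leq n-j$; otherwise the inner sum is empty and the eigenvalue is $0$, as claimed).

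The remainder is a straightforward exponent computation. The surviving summand is
\[
q^{(n-j-k)k} \sum_{j < \ell_1 < \cdots < \ell_k \leq n} \prod_{m=1}^{k} q^{-(\ell_m - (j+m))} \dq_{\ell_m, m, \lambda \sm \mu},
\]
and I would pull the $q$'s depending only on $j,m$ out of the product, giving a prefactor $q^{(n-j-k)k + kj + (1+2+\cdots+k)} = q^{(n-j-k)k + kj + k(k+1)/2}$. Simplifying the exponent:
\[
(n-j-k)k + kj + \tfrac{k(k+1)}{2} = nk - k^2 + \tfrac{k(k+1)}{2} = nk - \tfrac{k(k-1)}{2} = nk - \binom{k}{2},
\]
which yields exactly the claimed expression $\eigen_{\lambda\sm\mu}(k)$.

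There is no real obstacle here beyond careful bookkeeping of the $q$-powers; the substantive work was done in Lemma~\ref{lem:removing-horiz-strip} and Proposition~\ref{prop:nested-kernels}(b). The only subtle point to flag is the border case $k > n-j$, where the inner sum over $\ell_1 < \cdots < \ell_k$ is empty and the eigenvalue is $0$ --- this is consistent with the fact that applying $\R_{j,i}$ with $i > j$ already gives $0$ by \eqref{eq:Rnk:zero}, so the surviving summand in Lemma~\ref{lem:removing-horiz-strip} simply has no range to sum over.
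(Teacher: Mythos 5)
Your proposal is correct and follows essentially the same route as the paper's own proof: invoke Proposition~\ref{prop:nested-kernels}(b) to get $\varepsilon_0=1$ and $\varepsilon_i=0$ for $i\geq 1$, apply Lemma~\ref{lem:removing-horiz-strip} so that only the $i=k$ summand survives, and simplify the $q$-exponent to $nk-\binom{k}{2}$. Only your side remark on the case $k>n-j$ is slightly off target --- the cleaner reason is that every summand $0\leq i\leq n-j$ then has $k-i\geq 1$, hence $\varepsilon_{k-i}=0$ --- but this does not affect the argument.
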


\begin{proof}
    By Proposition \ref{prop:nested-kernels} (b), we have $v \in \ker \R_{j, 1} \subseteq \ker \R_{j,i}$ for all $i \geq 1$.
	Thus, $v$ is an eigenvector of the operators $\R_{j,1},\ \R_{j,2},\ \ldots,\ \R_{j,k}$ with eigenvalue $0$ each, and of course an eigenvector of the operator $\R_{j,0}$ with eigenvalue $1$ (since $\R_{j,0} = 1$).
	Thus, by Lemma~\ref{lem:removing-horiz-strip} (applied to $\varepsilon_1 = \varepsilon_2 = \cdots = \varepsilon_k = 0$ and $\varepsilon_0 = 0$), the vector $v  \ \Phi_{\lambda \sm \mu} \  p_{\frakt^{\lambda \sm \mu}} \ \B_{n, n-j} \in S^\lambda$ and is an eigenvector for $\R_{n, k}$ with eigenvalue
    \begin{align*}
        q^{(n-j -k)k}\sum_{j < (\ell_1 < \ell_2 < \cdots < \ell_k) \leq n}\ \prod_{m = 1}^k q^{-(\ell_m -(j + m))}\dq_{\ell_{m}, m, \lambda \sm \mu}\\
		= q^{nk - {k \choose 2}} \sum_{j < (\ell_1 < \ell_2 < \cdots < \ell_k) \leq n} \ \prod_{m = 1}^k q^{-\ell_m}\dq_{\ell_m, m, \lambda \sm \mu}.
        & \qedhere
    \end{align*}
\end{proof}

\begin{example}
    When $k = 1$, Corollary \ref{cor:eigenvalues-of-k-random-to-random} recovers the eigenvalues of $\R_{n, 1}$ found in \cite{r2r1}. Specifically, when $k = 1$, we can rewrite 
    \begin{align*}
         q^{nk - {k \choose 2}} \sum_{j < (\ell_1 < \ell_2 < \cdots < \ell_k) \leq n} \ \prod_{m = 1}^k q^{-\ell_m}\dq_{\ell_m, m, \lambda \sm \mu} &= q^n \sum_{\ell = j + 1}^n q^{-\ell}\dq_{\ell, 1, \lambda \sm \mu}
		 = \sum_{\ell = j + 1}^n q^{n-\ell}[\ell +  \content_{\frakt^{\lambda \sm \mu}, \ell}]_q,
    \end{align*}
    which agrees with the eigenvalues of $\R_{n, 1}$ as written in \cite[Equation (7.2)]{r2r1}.
\end{example}

For more examples of the formula in Corollary~\ref{cor:eigenvalues-of-k-random-to-random}, see Section~\ref{sec:eigenvalue-tables}.

Let us give the eigenvalues in Corollary~\ref{cor:eigenvalues-of-k-random-to-random} a name:
For any horizontal strip $\lambda \sm \mu$ with $\lambda \vdash n$ and $\mu \vdash j$, and for any $k \geq 0$, we set
\begin{equation}
\eigen_{\lambda \sm \mu}(k) :=
q^{nk - {k \choose 2}} \sum_{j < (\ell_1 < \ell_2 < \cdots < \ell_k) \leq n} \ \prod_{m = 1}^k q^{-\ell_m}\dq_{\ell_m, m, \lambda \sm \mu},
\label{eq:def-eigen}
\end{equation}
where the $\dq_{\ell, i, \lambda \sm \mu}$ are given in Definition~\ref{def:dq}.

For future reference, we record the recursion for the eigenvalues $\eigen_{\lambda \sm \mu}(k)$ that we implicitly showed in the proof of Lemma~\ref{lem:removing-horiz-strip}:

\begin{prop}
\label{prop:eigen-rec}
Let $n,k\geq 0$.
Let $\lambda \sm \mu$ be a horizontal strip with $\lambda \vdash n$. Then:
\begin{enumerate}
\item[(a)] If $k = 0$, then $\eigen_{\lambda \sm \mu}(k) = 1$.
\item[(b)] If $k > 0$ and $\mu = \lambda$, then $\eigen_{\lambda \sm \mu}(k) = 0$.
\item[(c)] If $k > 0$ and $\mu \neq \lambda$, then
\[
\eigen_{\lambda \sm \mu}(k) = q^k \, \eigen_{\lambda' \sm \mu}(k) + [n+1-k + c_{\lambda \sm \lambda'}]_q \, \eigen_{\lambda' \sm \mu}(k-1),
\]
where $\lambda' \vdash n-1$ is the partition obtained from $\lambda$ by removing the box containing $n$ in $\frakt^{\lambda \sm \mu}$,
and where $c_{\lambda \sm \lambda'}$ is the content of this box.
\end{enumerate}
\end{prop}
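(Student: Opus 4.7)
The plan is to prove all three parts by direct manipulation of the closed formula \eqref{eq:def-eigen}. Parts (a) and (b) are essentially definitional: for $k = 0$, the outer sum reduces to the unique empty chain, the inner product is empty, and the prefactor $q^{nk - \binom{k}{2}}$ equals $1$, giving $1$; for $k > 0$ with $\mu = \lambda$ (so $j = n$), the index condition $j < \ell_1 < \cdots < \ell_k \leq n$ has no solutions, and the sum is empty.

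The substantive work is part (c). First I would verify that the cell of $\frakt^{\lambda \sm \mu}$ containing $n$ is a removable corner of $\lambda$: if $r$ denotes the lowest row of $\lambda \sm \mu$, this cell is $(r, \lambda_r)$, and the horizontal-strip inequality $\mu_r \geq \lambda_{r+1}$ together with $(r, \lambda_r) \notin \mu$ (so $\mu_r < \lambda_r$) forces $\lambda_{r+1} < \lambda_r$. Hence $\lambda' \vdash n - 1$ is a genuine partition, $\lambda' \sm \mu$ is again a horizontal strip (so $\eigen_{\lambda' \sm \mu}(\cdot)$ is defined), and $\frakt^{\lambda' \sm \mu}$ is simply $\frakt^{\lambda \sm \mu}$ with the cell labelled $n$ deleted. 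Consequently $\content_{\frakt^{\lambda \sm \mu}, \ell} = \content_{\frakt^{\lambda' \sm \mu}, \ell}$ for every $\ell < n$, and therefore $\dq_{\ell, i, \lambda \sm \mu} = \dq_{\ell, i, \lambda' \sm \mu}$ whenever $\ell < n$.

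With this in hand, I would split the sum defining $\eigen_{\lambda \sm \mu}(k)$ in \eqref{eq:def-eigen} according to whether $\ell_k \leq n - 1$ or $\ell_k = n$. In the block $\ell_k \leq n - 1$, every $\dq$-factor descends to $\lambda' \sm \mu$, and what remains differs from $\eigen_{\lambda' \sm \mu}(k)$ only by the prefactor ratio $q^{nk - (n-1)k} = q^k$, contributing $q^k \, \eigen_{\lambda' \sm \mu}(k)$. In the block $\ell_k = n$, the $m = k$ factor peels off as $q^{-n} \dq_{n, k, \lambda \sm \mu} = q^{-n}[n + 1 - k + c_{\lambda \sm \lambda'}]_q$, while the remaining sum over $(k-1)$-chains in $\{j+1, \ldots, n-1\}$ matches $\eigen_{\lambda' \sm \mu}(k-1)$ up to a prefactor that simplifies to $1$ via the exponent calculation $nk - \binom{k}{2} - n - ((n-1)(k-1) - \binom{k-1}{2}) = 0$; this contributes $[n+1-k+c_{\lambda \sm \lambda'}]_q \, \eigen_{\lambda' \sm \mu}(k-1)$. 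Summing the two contributions yields the claimed recursion.

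The only step that requires genuine input rather than bookkeeping is the removable-corner verification, which is the sole place where the horizontal-strip hypothesis enters essentially; everything else amounts to comparing exponents and reindexing sums. I see no serious obstacle beyond careful tracking of the prefactors.
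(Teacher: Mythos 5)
Your proposal is correct and follows essentially the same route as the paper: parts (a) and (b) read off directly from \eqref{eq:def-eigen}, and part (c) splits the sum according to whether $\ell_k \leq n-1$ or $\ell_k = n$, identifying the two blocks with $q^k\,\eigen_{\lambda' \sm \mu}(k)$ and $[n+1-k+c_{\lambda \sm \lambda'}]_q\,\eigen_{\lambda' \sm \mu}(k-1)$ via $\dq_{n,k,\lambda\sm\mu}=[n+1-k+c_{\lambda\sm\lambda'}]_q$. You additionally spell out the removable-corner check and the prefactor bookkeeping, which the paper's proof leaves implicit; both are correct.
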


\begin{proof}
(a) If $k=0$, then the sum on the right hand side of \eqref{eq:def-eigen} has only one addend, and this addend equals $1$.

(b) If $k>0$ and $\mu = \lambda$, then the sum on the right hand side of \eqref{eq:def-eigen} is empty, since $j = n$.

(c) Assume that $k>0$ and $\mu \neq \lambda$. Then, $j < n$ in \eqref{eq:def-eigen}.
Each $k$-tuple $j < (\ell_1 < \ell_2 < \cdots < \ell_k) \leq n$ satisfies either $\ell_k \leq n-1$ or $\ell_k = n$.
Thus, the sum on the right hand side of \eqref{eq:def-eigen} can be split into two subsums.
The first simplifies to $q^k \, \eigen_{\lambda' \sm \mu}(k)$, while the latter rewrites as $[n+1-k + c_{\lambda \sm \lambda'}]_q \, \eigen_{\lambda' \sm \mu}(k-1)$
(since $\dq_{n, k, \lambda \sm \mu} = [n+1-k + c_{\lambda \sm \lambda'}]_q$).
\end{proof}

We also note the following nonnegativity property, which will later imply the nonnegativity of all eigenvalues of $\R_{n,k}$ for $q>0$ (but is more general, since only some of the $\eigen_{\lambda \sm \mu}(k)$ appear as eigenvalues with nonzero multiplicities):

\begin{theorem}
\label{lem:positive}
Let $n,k\geq 0$. Let $\lambda \sm \mu$ be a horizontal strip. Then, $\eigen_{\lambda \sm \mu}(k)$ is a polynomial in $q$ with nonnegative integer coefficients.
\end{theorem}

\begin{proof}
Let $j = \abs{\mu}$, so that $\mu \vdash j$.
Then, the equality \eqref{eq:def-eigen} can be rewritten as
\begin{equation}
\eigen_{\lambda \sm \mu}(k)
= \sum_{j < (\ell_1 < \ell_2 < \cdots < \ell_k) \leq n} q^{nk - {k \choose 2}} \prod_{m = 1}^k q^{-\ell_m}\dq_{\ell_m, m, \lambda \sm \mu}.
\label{eq:def-eigen2}
\end{equation}

Hence, it remains to show
\begin{enumerate}
\item[(A)] that the negative powers $q^{-\ell_m}$ in the product in \eqref{eq:def-eigen2} get cancelled by the $q^{nk - {k \choose 2}}$, and
\item[(B)] that each $\dq_{\ell_m, m, \lambda \sm \mu}$ in \eqref{eq:def-eigen2} is a positive $q$-integer (i.e., has the form $[u]_q$ for some $u > 0$).
\end{enumerate}

Claim (A) is easy: The condition $j < (\ell_1 < \ell_2 < \cdots < \ell_k) \leq n$ under the sum entails that $\ell_1 + \ell_2 + \cdots + \ell_k \leq n + (n-1) + \cdots + (n-k+1) = nk - {k \choose 2}$, which ensures that the $q^{nk - {k \choose 2}}$ term cancels all the negative powers $q^{-\ell_m}$ in the product.

It remains to prove Claim (B). Fix a sequence $j < (\ell_1 < \ell_2 < \cdots < \ell_k) \leq n$ and an $m \in \set{1,2,\ldots,k}$.
We must prove that $\dq_{\ell_m, m, \lambda \sm \mu}$ is a positive $q$-integer.
By Definition~\ref{def:dq}, we have
$\dq_{\ell_m, m, \lambda \sm \mu} = [\ell_m + 1 - m + \content_{\frakt^{\lambda \sm \mu}, \ell_m}]_q$.
Hence, it remains to show that $\ell_m + 1 - m + \content_{\frakt^{\lambda \sm \mu}, \ell_m} > 0$.
But from $j < (\ell_1 < \ell_2 < \cdots < \ell_k)$, we obtain $\ell_m \geq j+m$, so that $\ell_m+1-m \geq j+1$.
Moreover, $\mu \vdash j$ shows that the partition $\mu$ has at most $j$ rows. Thus, $\lambda$ has at most $j+1$ rows, since $\lambda \sm \mu$ is a horizontal strip.
Therefore, any cell of $\lambda$ has content $\geq 1-(j+1)=-j$.
In particular, this shows that $\content_{\frakt^{\lambda \sm \mu}, \ell_m} \geq -j$.
Therefore, 
\begin{align}
\underbrace{\ell_m + 1 - m}_{\geq j+1} + \underbrace{\content_{\frakt^{\lambda \sm \mu}, \ell_m}}_{\geq -j} \geq j+1+(-j) = 1 > 0,
\label{eq:pf:lem:positive:posnum}
\end{align}
which concludes the proof of Claim (B).
Thus, the theorem is proved.
\end{proof}

\subsection{The eigenbasis}

We will now use our eigenvector construction, as well as results in \cite{r2r1}, to produce an \emph{eigenbasis} for $\R_{n,k}$ whenever $q \in \RR_{>0}$.

\begin{theorem}
\label{thm:eigenbasis}
Assume that $\kk \subseteq \mathbb{R}$ and $q \in \RR_{>0}$.

Let $\lambda \vdash n$.
Fix a basis $\kappa_\mu$ of the vector space $ \ker \left(\R_{\abs{\mu}, 1} \vert_{S^\mu} \right)$ for each partition $\mu \subseteq \lambda$.
For any partition $\mu \subseteq \lambda$ and any $u \in S^\mu$, we set
\[
y_{\mu(u)}^{\lambda} := u\  \Phi_{\lambda \sm \mu}  \  \B_{n, n-\abs{\mu}} \ p_{\lambda} .
\]
Then, the set
\[
\basis_\lambda := \big \{ y_{\mu(u)}^{\lambda}  : u \in \kappa_\mu \text{ and } \lambda \sm \mu \textrm{ is a horizontal strip}  \big \}
\]
is a basis of $S^\lambda$, and every $y_{\mu(u)}^{\lambda} \in \basis_\lambda$ is an eigenvector of each $\R_{n,k}$ with eigenvalue $\eigen_{\lambda \sm \mu}(k)$ defined in \eqref{eq:def-eigen}.
\end{theorem}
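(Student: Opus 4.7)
The plan is to treat the basis claim and the eigenvalue claim separately. For the basis claim I would cite \cite[Theorem~7.2]{axelrodfreed2024spectrumrandomtorandomshufflinghecke}, which already constructs exactly the set $\basis_\lambda$ using the same formula $y_{\mu(u)}^\lambda = u\,\Phi_{\lambda \sm \mu}\,\B_{n, n-|\mu|}\,p_\lambda$ and proves it is a basis of $S^\lambda$. Thus no new work is needed for part (1), and everything reduces to showing that each $y_{\mu(u)}^\lambda$ is simultaneously an eigenvector of every $\R_{n,k}$ with the asserted eigenvalue $\eigen_{\lambda \sm \mu}(k)$.

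For the eigenvalue claim I would induct on $n - |\mu|$. The base case $\mu = \lambda$ is straightforward: $y_{\lambda(u)}^\lambda = u\,p_\lambda = u$ since $u \in S^\lambda$, and the nested-kernels property (Proposition~\ref{prop:nested-kernels}(b)) forces $u \in \ker \R_{n,k}$ for every $k \geq 1$, matching $\eigen_{\lambda \sm \lambda}(0) = 1$ and $\eigen_{\lambda \sm \lambda}(k) = 0$ for $k \geq 1$ from Proposition~\ref{prop:eigen-rec}(a)-(b). For the inductive step, let $\lambda' \vdash n-1$ be obtained from $\lambda$ by deleting the box labeled $n$ in $\frakt^{\lambda \sm \mu}$, with row $r$ and content $c = c_{\lambda \sm \lambda'}$; then $\lambda' \sm \mu$ is again a horizontal strip. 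By the induction hypothesis $y_{\mu(u)}^{\lambda'} \in S^{\lambda'}$ is a joint eigenvector of every $\R_{n-1,i}$ with eigenvalue $\eigen_{\lambda' \sm \mu}(i)$, so Proposition~\ref{prop:eval-recursion-1-step} gives that $y_{\mu(u)}^{\lambda'}\,\Phi_r\,\B_n\,p_\lambda$ lies in $S^\lambda$ and is an eigenvector of each $\R_{n,k}$ with eigenvalue $q^k\,\eigen_{\lambda' \sm \mu}(k) + [n+1-k+c]_q\,\eigen_{\lambda' \sm \mu}(k-1)$, which equals $\eigen_{\lambda \sm \mu}(k)$ by the recursion in Proposition~\ref{prop:eigen-rec}(c). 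What remains is the identification $y_{\mu(u)}^\lambda = y_{\mu(u)}^{\lambda'}\,\Phi_r\,\B_n\,p_\lambda$.

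This identification is where I expect the main difficulty. Exploiting the factorizations $\Phi_{\lambda \sm \mu} = \Phi_{\lambda' \sm \mu}\,\Phi_r$ and $\B_{n, n-|\mu|} = \B_{n-1, n-1-|\mu|}\,\B_n$, together with the fact that $\Phi_r$ commutes with the right $\HH_{n-1}$-action, the desired identity reduces to
\[
u\,\Phi_{\lambda' \sm \mu}\,\B_{n-1, n-1-|\mu|}\,(1 - p_{\lambda'})\,\Phi_r\,\B_n\,p_\lambda = 0.
\]
Writing $1 - p_{\lambda'} = \sum_{\nu \neq \lambda',\,\nu \vdash n-1} p_\nu$, the summands with $\nu \not\lessdot \lambda$ vanish cleanly because $p_\nu\,p_\lambda = 0$ in $\HH_n$ (a quick semisimplicity argument: $p_\nu$ annihilates $S^\lambda$ via the Hecke-algebraic branching rule, while $p_\lambda$ annihilates every other simple $\HH_n$-module, so the product acts as zero on every simple module). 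The harder summands are those with $\nu \lessdot \lambda$ and $\nu \neq \lambda'$, and my plan to handle them is to bypass the direct identity altogether by instead identifying $y_{\mu(u)}^\lambda$ with the alternative eigenvector $\tilde y_{\mu(u)}^\lambda := u\,\Phi_{\lambda \sm \mu}\,p_{\frakt^{\lambda \sm \mu}}\,\B_{n, n-|\mu|}$ supplied by Corollary~\ref{cor:eigenvalues-of-k-random-to-random}. Using the factorization $p_{\frakt^{\lambda \sm \mu}} = p_{\mu^{(1)}}\,p_{\mu^{(2)}}\cdots p_\lambda$ into commuting central idempotents (as already unwound in the proof of Lemma~\ref{lem:removing-horiz-strip}), together with the fact that each $p_{\mu^{(i)}}$ centralizes every $\B_j$ with $j \leq |\mu^{(i)}|$ and that $p_\lambda$ is central in $\HH_n$, one commutes the intermediate idempotents past the appropriate $\B$-factors and concludes that $y_{\mu(u)}^\lambda$ and $\tilde y_{\mu(u)}^\lambda$ coincide as elements of the unique copy of $S^\lambda$ inside $W^\lambda$.
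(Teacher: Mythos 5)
Your overall strategy for the eigenvalue claim ultimately lands on the same reduction as the paper: cite \cite[Theorem 7.2]{axelrodfreed2024spectrumrandomtorandomshufflinghecke} for the basis statement, and feed $u \in \kappa_\mu \subseteq \ker\big(\R_{|\mu|,1}\vert_{S^\mu}\big)$ into Corollary~\ref{cor:eigenvalues-of-k-random-to-random}, which produces the eigenvector $\tilde y_{\mu(u)}^{\lambda} = u\,\Phi_{\lambda\sm\mu}\,p_{\frakt^{\lambda\sm\mu}}\,\B_{n,n-|\mu|}$ with eigenvalue $\eigen_{\lambda\sm\mu}(k)$. The remaining task is then exactly the identification $y_{\mu(u)}^{\lambda} = \tilde y_{\mu(u)}^{\lambda}$, and this is where your proposal has a genuine gap. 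Writing $p_{\frakt^{\lambda\sm\mu}} = p_{\mu^{(1)}}\cdots p_{\mu^{(n-|\mu|-1)}}\,p_\lambda$ and commuting idempotents cannot do the job: the idempotent $p_{\mu^{(i)}}$ is central only in $\HH_{|\mu^{(i)}|}$, so it commutes with $\B_{|\mu|+1},\ldots,\B_{|\mu^{(i)}|}$ but not with the later factors $\B_{|\mu^{(i)}|+1},\ldots,\B_n$ of $\B_{n,n-|\mu|}$; and even where commutation is legal, moving an idempotent around never makes it disappear, whereas in $y_{\mu(u)}^{\lambda} = u\,\Phi_{\lambda\sm\mu}\,\B_{n,n-|\mu|}\,p_\lambda$ only $p_\lambda$ occurs. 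Your fallback, that both vectors ``coincide as elements of the unique copy of $S^\lambda$ inside $W^\lambda$,'' only shows (at best) that they lie in the same irreducible submodule, which does not imply they are equal as vectors.

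The missing ingredient is a genuine representation-theoretic fact, not a formal rearrangement: for $u \in S^\mu$ one has $u\,\Phi_{\lambda\sm\mu}\,p_\lambda = u\,\Phi_{\lambda\sm\mu}\,p_{\frakt^{\lambda\sm\mu}}$, which is \cite[Lemma 6.12]{axelrodfreed2024spectrumrandomtorandomshufflinghecke}. With that lemma the identification is immediate (and is exactly the paper's argument): since $p_\lambda$ is central in $\HH_n$,
\begin{align*}
y_{\mu(u)}^{\lambda}
= u\,\Phi_{\lambda\sm\mu}\,\B_{n,n-|\mu|}\,p_\lambda
= u\,\Phi_{\lambda\sm\mu}\,p_\lambda\,\B_{n,n-|\mu|}
= u\,\Phi_{\lambda\sm\mu}\,p_{\frakt^{\lambda\sm\mu}}\,\B_{n,n-|\mu|}
= \tilde y_{\mu(u)}^{\lambda}.
\end{align*}
So you should either cite that lemma (or prove an equivalent statement about how the isotypic projector $p_\lambda$ interacts with $u\,\Phi_{\lambda\sm\mu}$ along the path of intermediate shapes); your inductive route via Proposition~\ref{prop:eval-recursion-1-step} and Proposition~\ref{prop:eigen-rec}(c) faces the same obstruction at the step $y_{\mu(u)}^{\lambda} = y_{\mu(u)}^{\lambda'}\,\Phi_r\,\B_n\,p_\lambda$, which you correctly flagged but did not resolve.
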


\begin{proof}
The first claim (i.e., that $\basis_\lambda$ is a basis) is \cite[Theorem 7.2]{r2r1}.

The second claim (i.e., that each $y_{\mu(u)}^\lambda$ is an eigenvector of $\R_{n,k}$ with eigenvalue as in \eqref{eq:def-eigen})
follows from Corollary~\ref{cor:eigenvalues-of-k-random-to-random} (applied to $v = u \in \kappa_\mu = \ker \left(\R_{\abs{\mu}, 1} \vert_{S^\mu} \right)$), since each $u \in S^\mu$ satisfies
\begin{align*}
y_{\mu(u)}^{\lambda}
&= u\  \Phi_{\lambda \sm \mu}  \  \B_{n, n-\abs{\mu}} \ p_{\lambda} \\
&= u\  \Phi_{\lambda \sm \mu}  \ p_{\lambda} \  \B_{n, n-\abs{\mu}}
\qquad \left(\text{since $p_\lambda$ is central}\right)\\
&= u\  \Phi_{\lambda \sm \mu}  \ p_{\frakt^{\lambda \sm \mu}} \  \B_{n, n-\abs{\mu}}
\end{align*}
because \cite[Lemma 6.12]{r2r1} yields $u\  \Phi_{\lambda \sm \mu}  \ p_{\lambda} = u\  \Phi_{\lambda \sm \mu}  \ p_{\frakt^{\lambda \sm \mu}}$.
\end{proof}

Note that by base change, we can generalize Theorem~\ref{thm:eigenbasis} from the case $\kk \subseteq \mathbb{R}$ to the case when $\kk$ is any field of characteristic $0$, as long as we still require that $q \in \RR_{>0}$ (embedded in $\kk$ by a field morphism).
We will have no need for this generality, though.

For the sake of reference, we also mention an obvious consequence of Theorem~\ref{thm:eigenbasis} and \eqref{eq:Hn=sumSlam}:

\begin{cor}
\label{cor:eigenbasisH}
Assume that $\kk \subseteq \mathbb{R}$ and $q \in \RR_{>0}$.
Then, there is a basis of $\HH_n$ that is an eigenbasis for all the operators $\R_{n,0}, \R_{n,1}, \R_{n,2}, \ldots$.
\end{cor}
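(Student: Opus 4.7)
The plan is to bootstrap from Theorem~\ref{thm:eigenbasis}, which produces an eigenbasis $\basis_\lambda$ of each Specht module $S^\lambda$ on which all the $\R_{n,k}$ act by the scalars $\eigen_{\lambda \sm \mu}(k)$, together with the semisimple decomposition \eqref{eq:Hn=sumSlam} of the regular right $\HH_n$-module. Under the hypothesis $\kk \subseteq \RR$ and $q \in \RR_{>0}$, the Hecke algebra $\HH_n$ is semisimple (this is one of the standing assumptions of Section~\ref{section.eigenvalues}), so \eqref{eq:Hn=sumSlam} applies.

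First I would fix, for every partition $\lambda \vdash n$, a decomposition
\[
\HH_n \;\cong\; \bigoplus_{\lambda \vdash n} \bigoplus_{i=1}^{f^\lambda} S^{\lambda}_{(i)}
\]
of $\HH_n$ as a right $\HH_n$-module, where each summand $S^{\lambda}_{(i)}$ is isomorphic to the Specht module $S^\lambda$. This is an isomorphism of right $\HH_n$-modules, so in particular each right-multiplication operator by an element $x \in \HH_n$ respects the summands: on $S^\lambda_{(i)}$, the element $\R_{n,k}$ acts through the isomorphism $S^\lambda_{(i)} \cong S^\lambda$ as right multiplication by $\R_{n,k}$ on $S^\lambda$.

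Next I would transport the basis $\basis_\lambda$ of $S^\lambda$ supplied by Theorem~\ref{thm:eigenbasis} to each copy $S^\lambda_{(i)}$, obtaining a basis $\basis_\lambda^{(i)}$ of $S^\lambda_{(i)}$. By Theorem~\ref{thm:eigenbasis}, every vector in $\basis_\lambda$ is a simultaneous eigenvector of all $\R_{n,k}$ acting on $S^\lambda$; because the isomorphism $S^\lambda \to S^\lambda_{(i)}$ is one of right $\HH_n$-modules, the corresponding vectors in $\basis_\lambda^{(i)}$ are simultaneous eigenvectors of all $\R_{n,k}$ acting on $\HH_n$, with the same eigenvalues $\eigen_{\lambda \sm \mu}(k)$.

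Finally, taking the union
\[
\basis \;:=\; \bigsqcup_{\lambda \vdash n} \bigsqcup_{i=1}^{f^\lambda} \basis_\lambda^{(i)}
\]
produces a basis of $\HH_n$ (as the disjoint union of bases of the summands) consisting entirely of simultaneous eigenvectors of the pairwise-commuting family $\{\R_{n,k}\}_{k \geq 0}$. There is no real obstacle here; the only subtlety is that the decomposition \eqref{eq:Hn=sumSlam} must be chosen as a decomposition of \emph{right} $\HH_n$-modules, which is exactly what makes right multiplication by any element of $\HH_n$ (and in particular by the $\R_{n,k}$) preserve each summand.
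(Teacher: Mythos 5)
Your argument is correct and is exactly the route the paper intends: the corollary is stated there as an immediate consequence of Theorem~\ref{thm:eigenbasis} together with the semisimple decomposition \eqref{eq:Hn=sumSlam}, and your proposal simply spells out the (routine) transport of the eigenbases $\basis_\lambda$ along a decomposition of $\HH_n$ into right-module copies of the Specht modules. No gaps; the one subtlety you flag (that the decomposition is one of right $\HH_n$-modules) is indeed the only point that needs care.
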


\subsection{The eigenvalues of $\R_{n,k}$}
For the rest of this section, we no longer make any assumptions on $q$, and allow $q$ to be any scalar, including the ``degenerate'' cases $q = 0$ and $[n]!_q = 0$.

We are now ready to describe the spectrum of the $k$-random-to-random shuffles $\R_{n,k}$ acting on the Specht modules $S^\lambda$.

To do so, we need the following definitions.
If $\frakt$ is a standard tableau with $n$ entries, then the \emph{descents} of $\frakt$ are the numbers $i \in \ive{n-1}$ that appear further north (and thus weakly east) than $i+1$ in $\frakt$.
The set of all descents of $\frakt$ is denoted by $\Des\tup{\frakt}$.
A standard tableau $\frakt$ with $n$ entries is said to be a \emph{desarrangement tableau} if the smallest element of $\ive{n} \setminus \Des\tup{\frakt}$ is even.
For instance, here are the five standard tableaux $\frakt$ of shape $\tup{2,2,1}$ with their descent sets $\Des\tup{\frakt}$:
\begin{center}
\begin{tabular}{c||c|c|c|c|c}
& & & & & \\
$\frakt$ & $\ytableaushort{14,25,3}$ & $\ytableaushort{13,25,4}$ & $\ytableaushort{13,24,5}$ & $\ytableaushort{12,35,4}$ & $\ytableaushort{12,34,5}$
\\
& & & & & \\
$\Des\tup{\frakt}$ & $\set{1,2}$ & $\set{1,3}$ & $\set{1,3,4}$ & $\set{2,3}$ & $\set{2,4}$ \\
& & & & & \\
dt? & no & yes & yes & no & no \\
& & & & &
\end{tabular}
\end{center}
(where ``dt'' stands for ``desarrangement tableau'').

\begin{theorem}
\label{thm:charpoly-lam}
Let $n,k\geq 0$.
Let $\lambda \vdash n$.
Consider the linear endomorphism of the Specht module $S^\lambda$ given by the action of the element $\R_{n,k}$. The characteristic polynomial of this endomorphism\footnote{We are using $x$ for the indeterminate.} is
\[
\prod_{j=0}^n \prod_{\substack{\lambda \sm \mu \text{ is a} \\ \text{horizontal strip;} \\ \abs{\mu} = j}}
\tup{x - \eigen_{\lambda \sm \mu}(k)}^{d^\mu},
\]
where $d^\mu$ is the number of desarrangement tableaux of shape $\mu$.
Here, the $\eigen_{\lambda \sm \mu}(k)$ are as in \eqref{eq:def-eigen}.
\end{theorem}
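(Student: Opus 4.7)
The plan is to combine Theorem~\ref{thm:eigenbasis} with the known fact from \cite{axelrodfreed2024spectrumrandomtorandomshufflinghecke} that $\dim \ker\!\tup{\R_{|\mu|,1}\vert_{S^\mu}} = d^\mu$, and then pass from the semisimple regime to arbitrary $q$ by a polynomial identity argument.

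First, I would assume temporarily that $\kk \subseteq \RR$ and $q \in \RR_{>0}$, so that the hypotheses of Theorem~\ref{thm:eigenbasis} hold. That theorem then furnishes a basis $\basis_\lambda$ of $S^\lambda$ consisting of $\R_{n,k}$-eigenvectors $y_{\mu(u)}^\lambda$, indexed by pairs $(\mu, u)$ where $\mu \subseteq \lambda$ with $\lambda \sm \mu$ a horizontal strip and $u$ ranges over the chosen basis $\kappa_\mu$ of $\ker\!\tup{\R_{|\mu|,1}\vert_{S^\mu}}$, with the eigenvalue on $y_{\mu(u)}^\lambda$ equal to $\eigen_{\lambda \sm \mu}(k)$. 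Grouping these basis vectors by the partition $\mu$ immediately identifies the characteristic polynomial of $\R_{n,k}\vert_{S^\lambda}$ as
\[
\prod_{\substack{\mu \subseteq \lambda;\\ \lambda \sm \mu \text{ horizontal strip}}} \tup{x - \eigen_{\lambda \sm \mu}(k)}^{\,\dim \ker\!\tup{\R_{|\mu|,1}\vert_{S^\mu}}}.
\]

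Second, I would appeal to \cite{axelrodfreed2024spectrumrandomtorandomshufflinghecke}, where the spectrum of $\R_{|\mu|,1}$ acting on $S^\mu$ was completely described, and in particular $\dim \ker\!\tup{\R_{|\mu|,1}\vert_{S^\mu}} = d^\mu$ for every partition $\mu$. Plugging this equality into the product above yields exactly the claimed formula for the characteristic polynomial, in the case $q \in \RR_{>0}$.

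Finally, to remove the assumption on $q$, I would observe that both sides of the asserted identity are polynomials in $q$: the right-hand side because each $\eigen_{\lambda \sm \mu}(k)$ lies in $\ZZ_{\geq 0}[q]$ by Lemma~\ref{lem:positive} and the exponents $d^\mu$ are independent of $q$; the left-hand side because $\R_{n,k}$ is (up to the normalization $[k]!_q$, which divides by Lemma~\ref{lem:divisible}) a $\ZZ[q]$-linear combination of the $T_w$, so it acts on any uniform-in-$q$ model of $S^\lambda$ by a matrix with entries in $\ZZ[q]$. Since the two sides agree on the infinite set $\RR_{>0}$, they coincide as polynomials in $q$, and therefore specialize to agree at any scalar $q$. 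The main obstacle will be the last step: justifying that the characteristic polynomial really is polynomial in $q$, which requires choosing a $\ZZ[q]$-form of $S^\lambda$ preserved by $\HH_n$ (for example, via the word module construction, rather than the Young seminormal basis whose change-of-basis matrix involves $q$-integer denominators) and checking that this form specializes compatibly at every $q$.
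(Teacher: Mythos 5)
Your proposal is essentially the paper's own proof: reduce to $\kk=\RR$, $q\in\RR_{>0}$ by a polynomiality argument, read the eigenvalues off the eigenbasis $\basis_\lambda$ of Theorem~\ref{thm:eigenbasis}, and identify the multiplicities via $\dim\ker\!\tup{\R_{|\mu|,1}\vert_{S^\mu}}=d^\mu$ from \cite[Proposition 7.1]{axelrodfreed2024spectrumrandomtorandomshufflinghecke}. The step you flag as the main obstacle is resolved in the paper simply by citing \cite[Theorem 5.6]{DipperJames86} for a basis of $S^\lambda$ defined uniformly in $q$; rational-function entries already suffice, since together with Lemma~\ref{lem:positive} the claimed identity becomes one between rational functions of $q$ that agree on all of $\RR_{>0}$, hence everywhere.
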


\begin{proof}
We proceed similarly to the proof of \cite[Lemma 2.5 (1)]{r2r1}.
The claim we are trying to prove is a family of intricate polynomial identities in $q$.
This is because each Specht module $S^\lambda$ has a basis defined uniformly for all $q\in\kk$ ---see \cite[Theorem 5.6]{DipperJames86}---and therefore the action of $\R_{n,k}$ on $S^\lambda$ can be described by a matrix whose entries are rational functions in $q$;
moreover, Theorem~\ref{lem:positive} shows that the $\eigen_{\lambda \sm \mu}(k)$ are polynomials in $q$ as well.
Thus, we can assume without loss of generality that $\kk = \mathbb{R}$ and $q \in \mathbb{R}_{>0}$, since two polynomials in $\ZZ\ive{q}$ that are equal for all $q \in \mathbb{R}_{>0}$ must be identical.
Under this assumption, the set $\basis_\lambda$ constructed in Theorem~\ref{thm:eigenbasis} is a basis of $S^\lambda$, and is in fact an eigenbasis for $\R_{n,k}$, with eigenvalues as given in Theorem~\ref{thm:eigenbasis}.
Thus, the characteristic polynomial of $\R_{n,k}$ (or, more precisely, of the action of $\R_{n,k}$ on $S^\lambda$) is the product $\prod \tup{x-\varepsilon}$, where $\varepsilon$ ranges over these eigenvalues.
This gives precisely the formula claimed in Theorem~\ref{thm:charpoly-lam}, once we remember that the size of the basis $\kappa_\mu$ is $\dim \ker \left(\R_{\abs{\mu}, 1} \vert_{S^\mu} \right) = d^\mu$
by \cite[Proposition 7.1]{r2r1}.
\end{proof}

Since the whole Hecke algebra $\HH_n$ decomposes as a direct sum of Specht modules, we can now also describe the spectrum of $\R_{n,k}$ acting on the whole of $\HH_n$:

\begin{theorem}
\label{thm:charpoly}
Let $n,k\geq 0$.
Then, the characteristic polynomial of the element $\R_{n,k}$ (acting on $\HH_n$ by right multiplication) is
\[
\prod_{j=0}^n \prod_{\substack{\lambda \sm \mu \text{ is a} \\ \text{horizontal strip;} \\ \abs{\lambda} = n;\ \abs{\mu} = j}}
\tup{x - \eigen_{\lambda \sm \mu}(k)}^{d^\mu f^\lambda},
\]
where $f^\lambda$ is the number of standard tableaux of shape $\lambda$,
where $d^\mu$ is the number of desarrangement tableaux of shape $\mu$.
Here, the $\eigen_{\lambda \sm \mu}(k)$ are as in \eqref{eq:def-eigen}.
\end{theorem}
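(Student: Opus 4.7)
The plan is to bootstrap Theorem~\ref{thm:charpoly-lam} to the whole Hecke algebra via the Specht module decomposition, and then pass from the semisimple regime to the general case using a polynomial identity argument.

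First, I would note that both sides of the claimed identity, viewed as polynomials in the formal variable $x$, have coefficients that are polynomials in $q$ with integer coefficients. On the left, this is because $\R_{n,k}$ is a $\ZZ\ive{q}$-linear combination of the basis elements $T_w$ (as noted after Lemma~\ref{lem:divisible}), so its matrix in the $T_w$-basis has entries in $\ZZ\ive{q}$, hence so do the coefficients of its characteristic polynomial. On the right, the eigenvalues $\eigen_{\lambda \sm \mu}(k)$ belong to $\ZZ_{\geq 0}\ive{q}$ by Lemma~\ref{lem:positive}. Therefore it suffices to prove the identity for an infinite subset of $q$-values; I will choose $q \in \RR_{>0}$ (working over $\kk = \RR$).

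Under this choice of $q$, the Hecke algebra $\HH_n$ is semisimple, and the decomposition \eqref{eq:Hn=sumSlam} applies:
\[
\HH_n \cong \bigoplus_{\lambda \vdash n} \tup{S^\lambda}^{\oplus f^\lambda}
\]
as right $\HH_n$-modules. Since $\R_{n,k}$ is an element of $\HH_n$ acting by right multiplication, this decomposition is $\R_{n,k}$-equivariant, so the characteristic polynomial of $\R_{n,k}$ on $\HH_n$ is the product of the characteristic polynomials on each summand. That is,
\[
\charr_{\R_{n,k} \mid \HH_n}(x) = \prod_{\lambda \vdash n} \bigl(\charr_{\R_{n,k} \mid S^\lambda}(x)\bigr)^{f^\lambda}.
\]

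Now I would substitute the formula from Theorem~\ref{thm:charpoly-lam} for each factor $\charr_{\R_{n,k} \mid S^\lambda}(x)$ and collect terms:
\[
\charr_{\R_{n,k} \mid \HH_n}(x)
= \prod_{\lambda \vdash n} \prod_{j=0}^n \prod_{\substack{\lambda \sm \mu \text{ horizontal strip}; \\ \abs{\mu} = j}} \tup{x - \eigen_{\lambda \sm \mu}(k)}^{d^\mu f^\lambda}.
\]
Rearranging the triple product so that $j$ is outermost and $(\lambda,\mu)$ both range inside yields exactly the claimed expression. This establishes the identity for all $q \in \RR_{>0}$, and the polynomial identity argument from the first paragraph extends it to all $q \in \kk$. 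The main (minor) obstacle is the equivariance step: one must verify that the isomorphism \eqref{eq:Hn=sumSlam} respects the right action of $\R_{n,k}$, but this is automatic since \eqref{eq:Hn=sumSlam} is an isomorphism of right $\HH_n$-modules and $\R_{n,k} \in \HH_n$.
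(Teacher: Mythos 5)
Your proposal is correct and follows essentially the same route as the paper: reduce to the semisimple case $q \in \RR_{>0}$ by the polynomiality argument (both sides have coefficients in $\ZZ\ive{q}$), then combine Theorem~\ref{thm:charpoly-lam} with the decomposition \eqref{eq:Hn=sumSlam}. You have merely spelled out the details that the paper's two-line proof leaves implicit.
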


\begin{proof}
If $q$ is neither zero nor a root of unity, then this follows from Theorem~\ref{thm:charpoly-lam}, due to the decomposition \eqref{eq:Hn=sumSlam}.
The general case follows from this by a similar polynomiality argument as in Theorem~\ref{thm:charpoly-lam}.
\end{proof}

\begin{theorem}
\label{thm:positive}
Let $n,k\geq 0$. Then, all eigenvalues of the element $\R_{n,k}$ acting on $\HH_n$ by right multiplication (or on a Specht module $S^\lambda$) are polynomials in $q$ with nonnegative integer coefficients.
\end{theorem}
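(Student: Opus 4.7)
The plan is to derive Theorem~\ref{thm:positive} as an essentially immediate consequence of the characteristic polynomial formula (Theorem~\ref{thm:charpoly} and its Specht-module version Theorem~\ref{thm:charpoly-lam}) combined with the positivity lemma (Lemma~\ref{lem:positive}). Since all the heavy lifting (the eigenbasis construction, the recursion $\eigen_{\lambda\sm\mu}(k) = q^k \eigen_{\lambda' \sm \mu}(k) + [n+1-k+c_{\lambda\sm\lambda'}]_q \eigen_{\lambda'\sm\mu}(k-1)$, and the base-change argument pushing everything from $q \in \RR_{>0}$ to arbitrary $q$) has already been carried out, there is nothing new to prove here beyond assembling the pieces.

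First, I would invoke Theorem~\ref{thm:charpoly}: the characteristic polynomial of $\R_{n,k}$ acting on $\HH_n$ by right multiplication factors as
\[
\prod_{j=0}^n \prod_{\substack{\lambda \sm \mu \text{ horiz. strip} \\ \abs{\lambda}=n,\ \abs{\mu}=j}} \tup{x - \eigen_{\lambda \sm \mu}(k)}^{d^\mu f^\lambda}.
\]
Consequently, every eigenvalue of $\R_{n,k}$ on $\HH_n$ appears as some $\eigen_{\lambda \sm \mu}(k)$, indexed by a horizontal strip $\lambda \sm \mu$ with $\abs{\lambda} = n$. The same reasoning, but using Theorem~\ref{thm:charpoly-lam}, shows that every eigenvalue of $\R_{n,k}$ on a given Specht module $S^\lambda$ is of the form $\eigen_{\lambda \sm \mu}(k)$ for some $\mu \subseteq \lambda$ with $\lambda \sm \mu$ a horizontal strip.

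Second, I would apply Lemma~\ref{lem:positive}, which says precisely that for any horizontal strip $\lambda \sm \mu$ and any $k \geq 0$, the element $\eigen_{\lambda \sm \mu}(k) \in \kk$ is obtained by evaluating a polynomial in $\ZZ_{\geq 0}[q]$ at our chosen $q$. Combining these two observations yields both statements of Theorem~\ref{thm:positive} at once: every eigenvalue of the right action of $\R_{n,k}$ on $\HH_n$ (and, a fortiori, on each Specht module summand $S^\lambda$) lies in $\ZZ_{\geq 0}[q]$.

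There is no real obstacle here, since both ingredients are already in place. The only thing to be a little careful about is the phrasing: Theorem~\ref{thm:charpoly} gives the characteristic polynomial, not the minimal polynomial, and the $\eigen_{\lambda \sm \mu}(k)$ listed there may coincide or may fail to be distinct from one another, but each eigenvalue of $\R_{n,k}$ must still be one of them (as a root of the characteristic polynomial). This makes the conclusion of the theorem exactly the nonnegative-integer-polynomial statement provided by Lemma~\ref{lem:positive}, completing the proof.
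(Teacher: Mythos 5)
Your argument is correct and coincides with the paper's own proof: the paper likewise reads the eigenvalues off the characteristic polynomial factorizations in Theorems~\ref{thm:charpoly-lam} and~\ref{thm:charpoly}, so that every eigenvalue is some $\eigen_{\lambda \sm \mu}(k)$, and then cites Lemma~\ref{lem:positive} for the nonnegativity. Nothing further is needed.
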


\begin{proof}
By Theorem~\ref{thm:charpoly-lam} and Theorem~\ref{thm:charpoly}, all these eigenvalues have the form $\eigen_{\lambda \sm \mu}(k)$.
Hence, the claim follows from Theorem~\ref{lem:positive}.
\end{proof}

\subsection{Tables of eigenvalues}\label{sec:eigenvalue-tables}

We shall now use Theorem~\ref{thm:charpoly} and \eqref{eq:def-eigen} to explicitly determine the eigenvalues $\eigen_{\lambda \sm \mu}(k)$ of $\R_{n,k}$ for some small values of $n$.

\begin{example} \label{ex:eigen-n=3}
The eigenvalues and multiplicities of $\R_{n,k}$ for $n = 3$ and $k \in \set{1,2,3}$ are shown in Table~\ref{table:Rnk.n=3}.
\begin{table}[!h]
     \renewcommand{\arraystretch}{1.5}
        \begin{tabular}{c||c|| c|c|c}
          Shape $\lambda \sm \mu$  &  Multiplicity of $\eigen_{\lambda \sm \mu}(k)$ & $\eigen_{\lambda \sm \mu}(1)$ & $\eigen_{\lambda \sm \mu}(2)$ & $\eigen_{\lambda \sm \mu}(3)$  \\
\hline \hline
$(3) \sm \emptyset$ & $1$ & $[3]_q^2$& $[2]!_q \cdot [3]_q^2$& $[3]!_q$\\ \hline \hline
$(2,1) \sm (2,1)$ & $2$ & $0$ & $0$ & $0$\\ \hline
$(2,1) \sm (1,1)$ & $2$ & $[4]_q$ & $0$ & $0$\\ \hline \hline
$(1,1,1) \sm (1,1)$ & $1$ & $[1]_q$ & $0$ & $0$\\ \hline
\end{tabular}
\caption{The eigenvalues of $\R_{n,k}$ when $n=3$ for $k \in \{ 1, 2, 3 \}$. } \label{table:Rnk.n=3}
\end{table}

We explain each entry in Table~\ref{table:Rnk.n=3}.
\begin{itemize}
    \item $\lambda \sm \mu = (3) \sm \emptyset$:  This eigenvalue has multiplicity $d^{\emptyset}f^{(3)} = 1 \cdot 1 = 1$. The relevant values of $\dq_{\ell, m, \lambda \sm \mu}$ are given by the following table:
    \begin{center}
\renewcommand{\arraystretch}{1.5}
    \begin{tabular}{c||c|c|c}
         & $\ell = 1$ & $\ell = 2$ & $\ell = 3$\\
         \hline \hline
        $m = 1$ & $ [1]_q$ & $[3]_q$ & $[5]_q$ \\ \hline
        $m = 2$ & & $[2]_q$ & $[4]_q$ \\ \hline
        $m = 3$ & & & $[3]_q$
    \end{tabular}
    \end{center}
    Hence, the eigenvalue $\eigen_{(3) \sm \emptyset}(k)$ of $\R_{n, k}$ indexed by $(3) \sm \emptyset$ is:
    \begin{center}
    \renewcommand{\arraystretch}{1.5}
        \begin{tabular}{c|c|c}
            $k$ & $\eigen_{(3) \sm \emptyset}(k)$ & Simplified expression \\ 
            \hline \hline
            $1$ & $q^{3}\Bigg(\underbrace{q^{-1}[1]_q}_{\ell_1 = 1} + \underbrace{q^{-2}[3]_q}_{\ell_1 = 2} + \underbrace{q^{-3}[5]_q}_{\ell_1 = 3}\Bigg)$ & $[3]_q^2$\\ \hline
            $2$ & $q^5 \Bigg(\underbrace{q^{-3}[1]_q[2]_q}_{\ell_1, \ell_2 = 1,2} + \underbrace{q^{-4}[1]_q[4]_q}_{\ell_1, \ell_2 = 1,3} + \underbrace{q^{-5}[3]_q[4]_q}_{\ell_1, \ell_2 = 2,3}\Bigg)$ & $[2]!_q \cdot [3]_q^2$\\ \hline
            $3$ & $q^6 \Bigg(q^{-6}\underbrace{[1]_q[2]_q[3]_q}_{\ell_1, \ell_2, \ell_3 = 1, 2, 3}\Bigg)$ & $[3]!_q$
        \end{tabular} 
    \end{center} \bigskip
    
        \item $\lambda \sm \mu = (2,1) \sm (1,1)$: This eigenvalue has multiplicity $d^{(1,1)}f^{(2,1)} = 1 \cdot 2 = 2$. The only relevant value of $\dq_{\ell, m, \lambda \sm \mu}$ is \[\dq_{3,1, \lambda \sm \mu} = [4]_q.\]
          Note that the sum in the eigenvalue formula is empty unless $k = 1$. Hence, the eigenvalue $\eigen_{(2,1) \sm (1,1)}(k)$ of $\R_{n, k}$ indexed by $(2,1) \sm (1,1)$ is:
          \begin{align*}
           \eigen_{(2,1) \sm (1,1)}(k) = 
              \begin{cases}
                 q^{3}\Bigg( \underbrace{q^{-3}[4]_q}_{\ell_1 = 3}\Bigg) = [4]_q & \text{ if }k = 1,\\
                 0 & \text{otherwise.}
              \end{cases}
          \end{align*}

    \item $\lambda \sm \mu = (2,1) \sm (2,1)$: This eigenvalue has multiplicity $d^{(2,1)}\cdot f^{(2,1)} = 1 \cdot 2 = 2$. Since the interval $(3,3)$ is empty, the sum in the eigenvalue formula is always empty and so the eigenvalue is \[\eigen_{(2,1) \sm (2,1)}(k) = 0 \qquad \text{for all }k. \] \bigskip

    \item $\lambda \sm \mu = (1,1,1) \sm (1,1)$: This eigenvalue has multiplicity $d^{(1,1)} \cdot f^{(1,1,1)} = 1 \cdot 1 = 1.$ The only relevant value of $\dq_{\ell, m, \lambda \sm \mu}$ is \[\dq_{3,1, \lambda \sm \mu} = 1.\] Note that the sum in the eigenvalue formula is empty unless $k = 1.$ Hence, the eigenvalue is 
 \begin{align*}
           \eigen_{(1,1,1) \sm (1,1)}(k) = 
              \begin{cases}
                 q^{3}\Bigg( \underbrace{q^{-3}[1]_q}_{\ell_1 = 3}\Bigg) = [1]_q & \text{ if }k = 1,\\
                 0 & \text{ otherwise.}
              \end{cases}
          \end{align*}
    \end{itemize}

\end{example}

\begin{example} \label{ex:eigen-n=4}
We use Theorem \ref{thm:charpoly} and \eqref{eq:def-eigen} to compute the eigenvalues of $\R_{4,k}$ in Table \ref{table:Rnk.n=4}.
    \begin{center}
    \renewcommand{\arraystretch}{1.5}
    \begin{table}[!h]
        \begin{tabular}{c||c|| c|c|c|c}
          Shape $\lambda \sm \mu$  &  Mult. of $\eigen_{\lambda \sm \mu}(k)$ & $\eigen_{\lambda \sm \mu}(1)$ & $\eigen_{\lambda \sm \mu}(2)$ & $\eigen_{\lambda \sm \mu}(3)$ & $\eigen_{\lambda \sm \mu}(4)$ \\
\hline \hline
$(4) \sm \emptyset$ & $1$ & $[4]_q^2$ & $\qbinom{4}{2}_q^2 \cdot [2]!_q$ & $[4]_q^2 \cdot [3]!_q$ & $[4]!_q$\\ \hline \hline
$(3,1) \sm (3,1)$ & $3$ & $0$ & $0$ & $0$ & $0$ \\ \hline
$(3,1) \sm (2,1)$ & $3$ &$[6]_q$ & $0$ & $0$ & $0$\\ \hline 
$(3,1) \sm (1,1)$ & $3$ & $q[4]_q + [6]_q$& $[4]_q[5]_q$& $0$ & $0$\\ \hline \hline
$(2,2) \sm (2,2)$ & $2$ & $0$ & $0$ & $0$ & $0$ \\ \hline 
$(2,2) \sm (2,1)$ & $2$ & $[4]_q$ &$0$ & $0$ & $0$ \\ \hline \hline
$(2,1,1) \sm (2,1,1)$ & $3$ & $0$ & $0$ & $0$ & $0$ \\ \hline 
$(2,1,1) \sm (2,1)$ & $3$ & $[2]_q$ & $0$& $0$ & $0$ \\ \hline 
$(2,1,1) \sm (1,1)$ & $3$ & $q[4]_q + [2]_q$& $[4]_q[1]_q$& $0$& $0$ \\ \hline \hline
$(1,1,1,1) \sm (1,1,1,1)$ & $1$ & $0$ & $0$ & $0$ & $0$ \\ \hline 
        \end{tabular}
        \caption{Eigenvalues for $\R_{n,k}$ when $n=4$ and $k \in \{1,2,3,4 \}$.} \label{table:Rnk.n=4}
        \end{table}
    \end{center}
\end{example}

\begin{example}
\label{exa:non-q-ints}
For $n = 5$ and $\lambda \sm \mu = \tup{2,2,1} \sm \tup{2,1}$, Theorem~\ref{thm:charpoly-lam} yields that $\R_{5,1}$ has an eigenvalue
\begin{align*}
q^5 \sum_{3 < (\ell_1) \leq 5} \ q^{-\ell_1} \dq_{\ell_1, 1, \lambda \sm \mu}
&= q^5 \tup{ q^{-4} \dq_{4, 1, \lambda \sm \mu} + q^{-5} \dq_{5, 1, \lambda \sm \mu} }
= q [4]_q + [3]_q
= q^4 + q^3 + 2q^2 + 2q + 1
\end{align*}
(with multiplicity $d^{\tup{2,1}} = 1$ on $S^\lambda$).
As a polynomial in $q$, this has a non-abelian Galois group over $\mathbb{Q}$ and two roots outside of the unit circle.
Thus, it is not a product of quotients of $q$-integers, showing that the eigenvalues of $\R_{n,k}$ do not always have expressions as products of $q$-integers. The same behavior appears for $\lambda \sm \mu = \tup{3,2} \sm \tup{2,1}$ and for $\lambda \sm \mu = \tup{4, 1, 1} \sm \tup{1, 1}$.
\end{example}

\begin{example}\label{ex:eigen-n=5}
    We use Theorem \ref{thm:charpoly} and \eqref{eq:def-eigen} to compute the eigenvalues of $\R_{5, k}$ in Table \ref{table:n-5}.
    \begin{center}
    \renewcommand{\arraystretch}{1.5}
    \begin{table}[!h]
        \begin{tabular}{|c||c|| c|c|c|c|c|}\hline
          Shape $\lambda \sm \mu$  &  Mult. & $\eigen_{\lambda \sm \mu}(1)$ & $\eigen_{\lambda \sm \mu}(2)$ & $\eigen_{\lambda \sm \mu}(3)$ & $\eigen_{\lambda \sm \mu}(4)$ & $\eigen_{\lambda \sm \mu}(5)$\\
          \hline \hline
           $(5) \setminus \emptyset$ & $1$ & $[{5}]_q^2$ & $[2]!_q\qbinom{5}{2}^2$ & $[3]!_q$ $\qbinom{5}{3}^2$ & $[4]!_q[5]_q^2$  & $[5]!_q$\\ \hline \hline
           $(4,1) \setminus (4,1)$ & $4$ & $0$ & $0$ & $0$ & $0$  & $0$\\ \hline 
           $(4,1) \setminus (3,1)$ & $4$ & $[8]_q$ & $0$ & $0$ & $0$  & $0$\\ \hline 
        $(4,1) \setminus (2,1)$ & $4$ & $q[6]_q + [8]_q$ & $[6]_q[7]_q$ & $0$ & $0$  & $0$\\ \hline 
        $(4,1) \setminus (1^2)$ & $4$ & $q^2[4]_q + q[6]_q + [8]_q$ & $q^2[4]_q[5]_q + q[4]_q[7]_q + [6]_q[7]_q$ & $[4]_q[5]_q[6]_q$ & $0$  & $0$\\ \hline \hline
          $(3,2) \setminus (3,2)$ & $10$ & $0$ & $0$ & $0$ & $0$  & $0$\\
        \hline 
        $(3,2) \setminus (2,2)$ & $5$ & $[7]_q$ & $0$ & $0$ & $0$ & $0$\\
        \hline
        $(3,2) \setminus (3,1)$ & $5$ & $[5]_q$ & $0$ & $0$ & $0$  & $0$\\
        \hline
        $(3,2) \setminus (2,1)$ & $5$ & $q[6]_q
        + [5]_q$ &  $[6]_q[4]_q$ &$0$ & $0$ & $0$\\
        \hline \hline
          $(3,1^2) \setminus (3,1^2)$ & $12$ & $0$ & $0$ & $0$ & $0$  & $0$\\ \hline 
    $(3,1^2) \setminus (2,1^2)$ & $6$ & $[7]_q$ & $0$ & $0$ & $0$  & $0$\\ \hline 
    $(3,1^2) \setminus (3,1)$ & $6$ & $[3]_q$ & $0$ & $0$ & $0$  & $0$\\ \hline 
    $(3,1^2) \setminus (2,1)$ & $6$ & $q[6]_q + [3]_q$ & $[6]_q[2]_q$ & $0$ & $0$  & $0$\\ \hline 
    $(3,1^2) \setminus (1^2)$ & $6$ & $q^2[4]_q + q[6]_q + [3]_q$ & $q^2[4]_q[5]_q + q[4]_q[2]_q + [6]_q[2]_q$ & $[4]_q[5]_q[1]_q$ & $0$  & $0$\\ \hline \hline
    $(2^2,1) \setminus (2^2,1)$ & $10$ & $0$ & $0$ & $0$ & $0$  & $0$\\ \hline 
     $(2^2, 1) \setminus (2,2)$ & $5$ & $[3]_q$ & $0$ & $0$ & $0$  & $0$\\ \hline 
     $(2^2,1) \setminus (2,1^2)$ & $5$ & $[5]_q$ & $0$ & $0$ & $0$  & $0$\\ \hline 
     $(2^2,1) \setminus (2,1)$ & $5$ & $q[4]_q + [3]_q$ & $[4]_q[2]_q$ & $0$ & $0$  & $0$\\ \hline \hline
     $(2,1^3) \setminus (2,1^3)$ & $8$ &  $0$& $0$ & $0$ & $0$  & $0$\\ \hline 
     $(2,1^3) \setminus (1^4)$ & $4$ &  $[6]_q$ & $0$ & $0$ & $0$  & $0$\\ \hline 
     $(2,1^3) \setminus (2,1^2)$ & $4$ &  $[2]_q$ & $0$ & $0$ & $0$  & $0$\\ \hline \hline
     $(1^5) \setminus (1^4)$ & $1$ & $[1]_q$ & $0$ & $0$ & $0$ & $0$ \\ \hline
          \end{tabular}
          \caption{Eigenvalues of $\R_{n,k}$ for $n=5$ and $k \in \{ 1,2,3,4,5 \}.$} \label{table:n-5}
          \end{table}
          \end{center}
\end{example}



\subsection{Product formulas for horizontal strips from hooks}

The formula \eqref{eq:def-eigen} for the eigenvalues $\eigen_{\lambda \sm \mu}(k)$ in Theorem~\ref{thm:charpoly-lam} does not appear to succumb to simplification in general.
However, when the shape $\lambda$ is one of the partitions $\tup{n}$ and $\tup{n-1,1}$, the eigenvalues can be computed explicitly in terms of $q$-binomial coefficients, with no need for sums.
This leads to the formulas in Theorem~\ref{TheoremC}, which we shall prove now (in Propositions~\ref{prop:evals-n},~\ref{prop:evals-n-1-1},~\ref{prop:evals-n-l-1l},~\ref{prop:max2}).

We recall that $q$-binomial coefficients are defined by
\[
\qbinom{n}{k}_q := \dfrac{[n]_q [n-1]_q \cdots [n-k+1]_q}{[k]!_q}
= \dfrac{[n]!_q}{[k]!_q [n-k]!_q}
\qquad \text{ for all } n, k \in \mathbb{N} ,
\]
where we understand $[m]!_q$ to be $\infty$ for all negative $m$ (and any fraction with $\infty$ in its denominator is $0$).

We begin with describing the action of $\R_{n,k}$ on the Specht module $S^{(n)}$.
We recall that this Specht module is $1$-dimensional, and is the $q$-analogue of the trivial representation of $\symm_n$.
The action of $\HH_n$ on it is given by the rule $v T_w = q^{\ell(w)} v$ for each $w \in \symm_n$ and each $v \in S^{(n)}$
(see, e.g., \cite[Example 2.24]{r2r1}).
Thus, the following should come as no surprise:

\begin{prop}
    \label{prop:evals-n}
    Let $k\geq 0$.
    The only eigenvalue of $\R_{n,k}$ acting on the Specht module $S^{(n)}$ is
    \[
    \eigen_{(n) \sm \varnothing}(k) = [k]!_q \qbinom{n}{k}_q^2.
    \]
\end{prop}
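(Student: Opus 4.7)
The plan is to exploit the fact that $S^{(n)}$ is one-dimensional, so that $\R_{n,k}$ necessarily acts as a scalar; the only work is to identify that scalar. Recall that the action of $\HH_n$ on $S^{(n)}$ is given by $v T_w = q^{\ell(w)} v$ for each $w \in \symm_n$, so the scalar by which any element $a \in \HH_n$ acts is obtained by replacing each basis vector $T_w$ in the expansion of $a$ with $q^{\ell(w)}$.

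First I would compute the scalar by which $\B_{n,k}$ acts, using the factorization $\B_{n,k} = \M_{n,k} \X_{n,k}$ from \eqref{eq:uI:Bnk=}. Applied to $S^{(n)}$, the factor $\M_{n,k}$ contributes $\sum_{w \in \symm_{\ive{n}-\ive{n-k}}} q^{\ell(w)} = [k]!_q$ by \eqref{pf:eq:ui:Mnk2:lensum}, while $\X_{n,k} = \sum_{\abs{I}=n-k} T_{u_I}$ contributes $\sum_{\abs{I}=n-k} q^{\ell(u_I)}$. The latter sum is the standard Poincaré-style generating function for minimal-length right coset representatives of $\symm_{(n-k,k)}$ in $\symm_n$, which equals $\qbinom{n}{k}_q$ (a well-known identity, e.g.\ a consequence of the $q$-binomial theorem). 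Hence $\B_{n,k}$ acts on $S^{(n)}$ as $[k]!_q \qbinom{n}{k}_q$. Since $\ell(w) = \ell(w^{-1})$, the anti-involution $a \mapsto a^\ast$ preserves this scalar, so $\B^\ast_{n,k}$ acts by the same scalar. Dividing by $[k]!_q$ (per \eqref{eq:Rnk:def}) gives
\[
\R_{n,k}\bigg|_{S^{(n)}} = \frac{1}{[k]!_q}\bigl([k]!_q \qbinom{n}{k}_q\bigr)^2 = [k]!_q \qbinom{n}{k}_q^2.
\]

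Finally I would reconcile this scalar with the name $\eigen_{(n) \sm \varnothing}(k)$. Two options are available: either evaluate \eqref{eq:def-eigen} directly in the case $\lambda = (n)$, $\mu = \varnothing$ (where $\frakt^{(n) \sm \varnothing}$ has contents $0,1,\ldots,n-1$, giving $\dq_{\ell, m, (n) \sm \varnothing} = [2\ell - m]_q$, and the resulting sum can be simplified), or invoke Theorem~\ref{thm:charpoly-lam} to observe that the only horizontal strip $(n) \sm \mu$ with $d^\mu \neq 0$ is the one with $\mu = \varnothing$ (every standard tableau of a nonempty one-row shape $(j)$ has smallest non-descent $1$, which is odd, hence is not a desarrangement tableau). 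The latter option is essentially free and is the route I would take, as it sidesteps any messy simplification of the defining sum. There is no real obstacle here — the only conceptual step is recognizing the $q$-binomial coefficient as the length generating function for minimal coset representatives of $\symm_{(n-k,k)}$.
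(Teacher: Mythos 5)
Your proposal is correct and follows essentially the same route as the paper: the paper likewise uses Theorem~\ref{thm:charpoly-lam} together with the fact that $d^{(j)}=0$ for all $j\geq 1$ (only $\mu=\varnothing$ survives) to see that $\eigen_{(n)\sm\varnothing}(k)$ is the only eigenvalue, and then pins it down by computing the scalar by which $\R_{n,k}$ acts on the one-dimensional module $S^{(n)}$. The only cosmetic difference is in how that scalar is obtained: the paper applies $v\,\B_m=[m]_q\,v$ to each factor of $\B_{n,k}=\B_{n-(k-1)}\cdots\B_n$ to get $[n]_q[n-1]_q\cdots[n-k+1]_q$, whereas you use the factorization $\B_{n,k}=\M_{n,k}\X_{n,k}$ together with the standard coset-representative length generating function $\qbinom{n}{k}_q$ --- the same quantity, reached via one extra well-known identity.
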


\begin{proof}
    We read off the eigenvalues from the characteristic polynomial
	in Theorem~\ref{thm:charpoly-lam}.
    The partitions $\mu$ such that $(n) \sm \mu$ is a horizontal strip
	are precisely the partitions $(j)$ with $0 \le j \le n$.
	But the only one of them that furthermore satisfies $d^\mu \neq 0$ is
	the empty partition $(0) = \varnothing$. Hence, the only eigenvalue is
	$\eigen_{(n) \sm 0}(k)
	= \eigen_{(n) \sm \varnothing}(k)$.
	It remains to show that it equals $[k]!_q \qbinom{n}{k}_q^2$.
	This can be done by induction on $n$ using Proposition~\ref{prop:eigen-rec},
	but is probably easier to do directly by seeing how $\R_{n,k}$ acts on $S^{(n)}$:
	Pick any nonzero vector $v \in S^{(n)}$.
	Recall that the action of $\HH_n$ on it is given by the rule $v T_w = q^{\ell(w)} v$ for each $w \in \symm_n$.
	In particular,
	\begin{align}
	v T_i = q v \qquad \text{ for each } i \in \ive{n-1}.
	\label{eq:vTi=qv}
	\end{align}
    Thus, for each $m \in \left\{0,1,\ldots,n\right\}$, we have
	\[
	v \B_m = v \sum_{i=1}^m T_{m-1} T_{m-2} \cdots T_i
	= \sum_{i=1}^m \underbrace{v T_{m-1} T_{m-2} \cdots T_i}_{\substack{= q^{m-i} v\\ \text{(by \eqref{eq:vTi=qv}, applied many times)}}}
	= \sum_{i=1}^m q^{m-i} v
	= [m]_q v.
	\]
	Hence, easily,
	\[
	v \B_{n,k} = [n]_q [n-1]_q \cdots [n-k+1]_q v,
	\]
	and similarly
	\[
	v \B^*_{n,k} = [n]_q [n-1]_q \cdots [n-k+1]_q v.
	\]
	Thus, by the definition of $\R_{n,k}$, we easily get
	\[
	v \R_{n,k} = \dfrac{1}{[k]!_q}{[n]_q [n-1]_q \cdots [n-k+1]_q \cdot [n]_q [n-1]_q \cdots [n-k+1]_q} v
	= [k]!_q \qbinom{n}{k}_q^2 v.
	\]
	This shows that the only eigenvalue of $\R_{n,k}$ on $S^{(n)}$ is $[k]!_q \qbinom{n}{k}_q^2$.
	Hence, $\eigen_{(n) \sm \varnothing}(k)$ must be $[k]!_q \qbinom{n}{k}_q^2$, qed.
\end{proof}

The next-simplest shape is the partition $\tup{n-1,1}$ (for $n \geq 2$).
Its Specht module $S^{(n-1,1)}$ is the $q$-analogue of the reflection representation of $\symm_n$; in particular, it has dimension $n-1$.
Generalizing Lafreni\`ere's \cite[Th\'eor\`eme 88]{Lafreniere} to arbitrary values of $q$, we describe the eigenvalues of $\R_{n,k}$ on it as follows:\footnote{Our $j$ corresponds to Lafreni\`ere's $n+1-i$.}

\begin{prop}
    \label{prop:evals-n-1-1}
    Let $k\geq 0$.
    The eigenvalues of $\R_{n,k}$ acting on the Specht module $S^{(n-1,1)}$ are
    the numbers
    \[
	\eigen_{(n-1,1) \sm (j,1)}(k) =
    \ive{ k}!_q \qbinom{n-j-1}{k}_q \qbinom{n+j}{k}_q
    \qquad \text{ for all } j \in \ive{n-1}.
    \]
    Each appears with multiplicity $1$, unless two of these numbers happen to coincide.
\end{prop}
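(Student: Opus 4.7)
The plan is to apply Theorem~\ref{thm:charpoly-lam} with $\lambda = (n-1,1)$ and then evaluate the resulting eigenvalues $\eigen_{(n-1,1) \sm (j,1)}(k)$ via the recursion in Proposition~\ref{prop:eigen-rec}. First I would determine which partitions $\mu \subseteq (n-1,1)$ yield a horizontal strip $\lambda \sm \mu$ with $d^\mu > 0$. The candidates are $\mu = \varnothing$, $\mu = (i)$ with $1 \le i \le n-1$, and $\mu = (i,1)$ with $1 \le i \le n-1$. The case $\mu = \varnothing$ fails the horizontal-strip condition since column $1$ of $\lambda$ has two cells. For $\mu = (i)$ the skew shape is a horizontal strip, but the unique SYT of shape $(i)$ has empty descent set, so its smallest non-descent is $1$ (odd) and $d^{(i)} = 0$. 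For $\mu = (i,1)$ the skew shape lies entirely in row $1$, hence is a horizontal strip; the SYTs of shape $(i,1)$ are parameterized by the entry in position $(2,1)$, and a direct check shows that the descent set is $\{m-1\}$ when that entry is $m$, so only the tableau with $2$ in $(2,1)$ is a desarrangement tableau, giving $d^{(i,1)} = 1$. Theorem~\ref{thm:charpoly-lam} thus produces the characteristic polynomial $\prod_{j=1}^{n-1}\bigl(x - \eigen_{(n-1,1) \sm (j,1)}(k)\bigr)$ with exactly $n-1 = \dim S^{(n-1,1)}$ factors, each of multiplicity $1$.

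It remains to verify the closed form $\eigen_{(n-1,1)\sm (j,1)}(k) = [k]!_q \qbinom{n-j-1}{k}_q \qbinom{n+j}{k}_q$, which I plan to do by induction on $|\lambda \sm \mu| = n - j - 1$ using Proposition~\ref{prop:eigen-rec}. The base case $j = n-1$ (so $\mu = \lambda$) is immediate: both sides equal $1$ for $k = 0$ and vanish for $k > 0$ since $\qbinom{0}{k}_q = 0$. For $j < n-1$, the last box of $\frakt^{\lambda \sm \mu}$ lies at $(1, n-1)$ with content $n - 2$, so $\lambda' = (n-2,1)$ and Proposition~\ref{prop:eigen-rec}(c) gives
\[
\eigen_{(n-1,1)\sm(j,1)}(k) = q^k\,\eigen_{(n-2,1)\sm(j,1)}(k) + [2n-k-1]_q\,\eigen_{(n-2,1)\sm(j,1)}(k-1).
\]
Substituting the claimed formula on the right-hand side, factoring out $[k-1]!_q$, and rearranging, the problem reduces to a purely $q$-binomial identity.

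Applying the relations $[k]_q \qbinom{m}{k}_q = [m]_q \qbinom{m-1}{k-1}_q$ and a $q$-Pascal rule (for instance $\qbinom{a}{k}_q = q^{a-k}\qbinom{a-1}{k-1}_q + \qbinom{a-1}{k}_q$), that $q$-binomial identity collapses to the elementary relation $[a - k]_q + q^{a - k}[b]_q = [a + b - k]_q$ with $a = n + j$ and $b = n - j - 1$ (so $a + b - k = 2n - k - 1$). The main obstacle is nothing deeper than organizing this $q$-binomial bookkeeping cleanly; the enumerative inputs about horizontal strips and desarrangement tableaux are routine once the three families of candidate $\mu$'s are examined.
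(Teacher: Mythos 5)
Your proposal is correct and takes essentially the same route as the paper's proof: identify via Theorem~\ref{thm:charpoly-lam} that the contributing shapes are exactly $(n-1,1) \sm (j,1)$ with $d^{(j,1)}=1$, then fix $j$ and induct using Proposition~\ref{prop:eigen-rec}(c), reducing the step to the $q$-integer identity $[n-j-1]_q[n+j]_q = q^k[n-j-1-k]_q[n+j-k]_q + [2n-1-k]_q[k]_q$ (equivalent to your stated collapse). The only detail to add is to dispose of $k=0$ separately in the induction step, since Proposition~\ref{prop:eigen-rec}(c) only applies for $k>0$; that case is trivial, exactly as in the paper.
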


\begin{proof}
    Theorem~\ref{thm:charpoly-lam} tells us that the eigenvalues are the numbers
$\eigen_{\left(  n-1,1\right) \sm \mu}\left(  k\right)  $ for the
partitions $\mu$ such that $\left(  n-1,1\right) \sm \mu$ is a
horizontal strip and such that $d^{\mu}\neq0$. It is easy to see that these
partitions $\mu$ are precisely the $\left(  j,1\right)  $ for $j\in\left[
n-1\right]  $ (indeed, any partition $\mu$ of length $1$ would have $d^{\mu
}=0$, whereas the empty partition $\varnothing$ fails the
``horizontal strip''\ test). Thus, there is one eigenvalue
$\eigen_{\left(  n-1,1\right) \sm \left(  j,1\right)  }\left(
k\right)  $ for each of these partitions $\left(  j,1\right)  $. The
multiplicity of this eigenvalue is $d^{\mu}=d^{\left(  j,1\right)  }=1$, since
there is only one desarrangement tableau of shape $\left(  j,1\right)  $ (the
standard tableau with the entry $2$ in box $\left(  2,1\right)  $).

It remains to show that each $j\in\left[  n-1\right]  $ satisfies
\begin{equation}
\eigen_{(n-1,1) \sm (j,1)}(k)
= \left[  k\right]!_q \qbinom{n-j-1}{k}_q
\qbinom{n+j}{k}_q.
\label{pf.prop:evals-n-1-1:equal}
\end{equation}

To prove \eqref{pf.prop:evals-n-1-1:equal}, we fix $j$ and induct on $n\geq
j+1$.

The \textit{base case} $n=j+1$ is easy: In this case, $n-1=j$, so that the
partitions $\left(  n-1,1\right)  $ and $\left(  j,1\right)  $ are equal. If
$k=0$, then Proposition \ref{prop:eigen-rec} (a) yields
$\eigen_{(n-1,1) \sm (j,1)}(k)=1$, and thus \eqref{pf.prop:evals-n-1-1:equal}
boils down to $1=1$. Thus, we can restrict ourselves to the case $k>0$. Hence,
Proposition \ref{prop:eigen-rec} (b) shows that the left hand side of
\eqref{pf.prop:evals-n-1-1:equal} equals $0$ (since the partitions $\left(
n-1,1\right)  $ and $\left(  j,1\right)  $ are equal). The same holds for the
right hand side of \eqref{pf.prop:evals-n-1-1:equal}, since the
$\qbinom{n-j-1}{k}_q$ factor simplifies to $\qbinom{0}{k}_q$ in view of
$n=j+1$. 

For the \textit{induction step}, we fix $n>j+1$. Again, we assume that $k>0$,
since the $k=0$ case is trivial. Then, Proposition \ref{prop:eigen-rec} (c)
(applied to $\lambda=\left(  n-1,1\right)  $ and $\mu=\left(  j,1\right)  $)
yields
\[
\eigen_{\left(  n-1,1\right) \sm \left(  j,1\right)  } (k)
= q^k \,\eigen_{\left(  n-2,1\right) \sm \left(  j,1\right) } (k)
+ [n+1-k+\left(  n-2\right)  ]_q\,\eigen_{\left(  n-2,1\right)
 \sm \left(  j,1\right)  } (k-1)
\]
(since $\lambda^{\prime}=\left(  n-2,1\right)  $ and $c_{\lambda
 \sm \lambda^{\prime}}=n-2$). This is a recurrence for the left hand side
of \eqref{pf.prop:evals-n-1-1:equal}. Thus, it suffices to show that the right
hand side of \eqref{pf.prop:evals-n-1-1:equal} satisfies the same recurrence,
as we will then be able to conclude (by the induction hypothesis) that the two
sides are equal. In other words, it suffices to show that
\begin{align*}
&  \ \left[  k\right]!_q
\qbinom{n-j-1}{k}_q
\qbinom{n+j}{k}_q\\
= & \  q^{k}\left[  k\right]!_q
\qbinom{n-1-j-1}{k}_q
\qbinom{n-1+j}{k}_q
+ \left[  n+1-k+\left(  n-2\right)  \right]  _q
\left[  k-1\right]  !_q
\qbinom{n-1-j-1}{k-1}_q
\qbinom{n-1+j}{k-1}_q.
\end{align*}
Upon using the $q$-factorial formula
$\qbinom{u}{v}_q
= \dfrac{\left[  u\right]  !_q}{\left[  v\right]  !_q
\left[ u-v\right]  !_q}$, this simplifies to the identity
\[
\left[  n-j-1\right]  _q\left[  n+j\right]  _q
=q^{k}\left[
n-j-1-k\right]  _q\left[  n+j-k\right]  _q+\left[  2n-1-k\right]
_q\left[  k\right]  _q,
\]
which is easily verified by hand. This completes the induction step, and thus
\eqref{pf.prop:evals-n-1-1:equal} is proven, and with it the proposition.
\end{proof}

Propositions~\ref{prop:evals-n} and~\ref{prop:evals-n-1-1} might suggest that the eigenvalues of $\R_{n,k}$ should be describable as products and fractions of $q$-integers for other shapes $\lambda$ as well.
But, as Example~\ref{exa:non-q-ints} shows, this is too much to hope for, even for hook-shaped partitions $\lambda$.
However, in the case of hook-shaped $\lambda$, we can at least describe some of the eigenvalues in this form:

\begin{prop}
    \label{prop:evals-n-l-1l}
    Let $k\geq 0$ and $\ell \in \left\{0,1,\ldots,n-1\right\}$.
    Then,
    \[
	\eigen_{(n-\ell,1^\ell) \sm (j,1^\ell)}(k) =
    \ive{ k}!_q \qbinom{n-j-\ell}{k}_q \qbinom{n+j}{k}_q
    \qquad \text{ for all } j \in \ive{n-\ell}.
    \]
    Here, we are using exponential notation, so that $1^\ell$ means $\underbrace{1,1,\ldots,1}_{\ell \text{ times}}$.
\end{prop}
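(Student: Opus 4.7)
My plan is to induct on the size $n-j-\ell$ of the horizontal strip $\lambda \sm \mu$, where $\lambda = (n-\ell, 1^\ell)$ and $\mu = (j, 1^\ell)$, using the one-box recursion of Proposition~\ref{prop:eigen-rec}. The argument directly mirrors the proof of Proposition~\ref{prop:evals-n-1-1} (the $\ell = 1$ case), with the key simplification that the geometry of $\lambda \sm \mu$ is identical for every value of $\ell$.

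The first observation is that the skew diagram of $\lambda \sm \mu$ consists precisely of the cells $(1, j+1), (1, j+2), \ldots, (1, n-\ell)$ in the first row. In particular, $\lambda \sm \mu$ is indeed a horizontal strip, and the cell of $\frakt^{\lambda \sm \mu}$ containing the entry $n$ is $(1, n-\ell)$, which has content $n-\ell-1$. For the base case $j = n-\ell$ we have $\mu = \lambda$; Proposition~\ref{prop:eigen-rec}(a)--(b) then give $\eigen_{\lambda \sm \mu}(k) = \ive{k=0}$, which matches the claimed formula since the factor $\qbinom{0}{k}_q$ is $1$ when $k=0$ and $0$ otherwise.

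For the induction step, fix $j < n-\ell$ (so $n-\ell \geq 2$, guaranteeing that $(1, n-\ell)$ is a corner of $\lambda$). Proposition~\ref{prop:eigen-rec}(c) applies with $\lambda' = (n-\ell-1, 1^\ell)$ and $c_{\lambda \sm \lambda'} = n-\ell-1$, so that $n+1-k + c_{\lambda \sm \lambda'} = 2n-k-\ell$. This yields
\[
\eigen_{\lambda \sm \mu}(k) = q^k \, \eigen_{\lambda' \sm \mu}(k) + [2n-k-\ell]_q \, \eigen_{\lambda' \sm \mu}(k-1) .
\]
Substituting the induction hypothesis (applied with $n-1$ in place of $n$ and the same $j, \ell$) on the right, and using $[k]_q \qbinom{m}{k}_q = [m]_q \qbinom{m-1}{k-1}_q$ to extract the common factor $[k-1]!_q \qbinom{n-j-\ell-1}{k-1}_q \qbinom{n+j-1}{k-1}_q$, the desired equality collapses to the single numerical identity
\[
[a]_q [b]_q = q^k [a-k]_q [b-k]_q + [k]_q [a+b-k]_q
\qquad \text{with } a = n-j-\ell,\ b = n+j.
\]

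This last identity is verified in one line by expanding every $[m]_q$ as $(1-q^m)/(1-q)$: the $1$'s and the $q^{a+b}$'s on the right combine to reproduce those on the left, while the cross terms $-q^a - q^b$ likewise match. The main ``obstacle'', such as it is, lies in this routine $q$-integer manipulation; the structural part of the proof is entirely contained in the recursion of Proposition~\ref{prop:eigen-rec}, together with the observation that the induction keeps us inside the family of hooks $(m, 1^\ell)$ with the parameter $\ell$ preserved.
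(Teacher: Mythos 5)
Your proposal is correct and follows essentially the same route as the paper: fix $j,\ell$ and induct (your induction on the strip size $n-j-\ell$ is the paper's induction on $n\geq j+\ell$), apply the one-box recursion of Proposition~\ref{prop:eigen-rec}(c) with $\lambda'=(n-\ell-1,1^\ell)$ and $c_{\lambda\sm\lambda'}=n-\ell-1$, and reduce via the $q$-factorial formula to the identity $[a]_q[b]_q=q^k[a-k]_q[b-k]_q+[k]_q[a+b-k]_q$ with $a=n-j-\ell$, $b=n+j$, which is exactly the identity the paper checks. The only (harmless) omission is that in the induction step the case $k=0$ must be dispatched separately via part (a) rather than (c), just as the paper does in a single phrase.
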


\begin{proof}
    This is analogous to the above proof of \eqref{pf.prop:evals-n-1-1:equal}:
    Fix $j$ and induct on $n \geq j+\ell$.
    The induction step boils down to checking that
    \[
    \left[  n-j-\ell\right]  _q\left[  n+j\right]  _q
    =q^{k}\left[
    n-j-\ell-k\right]  _q\left[  n+j-k\right]  _q+\left[  2n-\ell-k\right] _q\left[  k\right]  _q.
    \qedhere
    \]
\end{proof}

The eigenvalues covered by Propositions~\ref{prop:evals-n} and~\ref{prop:evals-n-l-1l} are merely the tip of the iceberg.
But, as the following proposition shows, they include the two largest eigenvalues of $\R_{n,k}$ when $q$ is a positive real\footnote{For $\R_{n,1}$, this was already proved in \cite[Corollaries 7.9 and 7.12]{r2r1}.}:


\begin{prop}
\label{prop:max2}Assume that $\mathbf{k}\subseteq\mathbb{R}$ and $q\in
\RR_{>0}$. Then, the largest two eigenvalues of $\R_{n,k}$ acting on $\HH_{n}$
are $\mathcal{E}_{(n) \sm \varnothing}(k)$ and $\mathcal{E}%
_{(n-1,1) \sm (1,1)}\left(  k\right)  $ (in this order). More concretely:

\begin{enumerate}
\item[(a)] For any horizontal strip $\lambda \sm \mu$ with $\lambda\vdash
n$, we have $\mathcal{E}_{\lambda \sm \mu}(k)\leq\mathcal{E}_{\left(
n\right)   \sm \varnothing}(k)$.

\item[(b)] For any horizontal strip $\lambda \sm \mu\neq\left(  n\right)
 \sm \varnothing$ with $\lambda\vdash n$ and $d^{\mu}\neq0$, we have
$\mathcal{E}_{\lambda \sm \mu}(k)\leq\mathcal{E}_{(n-1,1) \sm 
(1,1)}(k)$. (Here, $d^{\mu}$ denotes the number of desarrangement tableaux of
shape $\mu$.)
\end{enumerate}
\end{prop}

\begin{proof}
Theorem \ref{thm:charpoly} shows that the eigenvalues of $\R_{n,k}$ are the
$\mathcal{E}_{\lambda \sm \mu}(k)$ for the horizontal strips
$\lambda \sm \mu$ with $\lambda\vdash n$ and $d^{\mu}\neq0$. Hence, it
suffices to prove parts (a) and (b).

We shall only prove part (b), since the proof of (a) is analogous but easier.

We fix $\mu$ and proceed by induction on
$n = \abs{\lambda} \geq \abs{\mu}$.
The base case is $\lambda=\mu$, which
follows from Theorem~\ref{lem:positive} and Proposition
\ref{prop:eigen-rec} (a) and (b). For the induction step, we assume that
$\lambda\neq\mu$, and we define the partition $\lambda^{\prime}\vdash n-1$ as
in Proposition \ref{prop:eigen-rec} (c). We furthermore assume that $k>0$,
since the $k=0$ case is clear from Proposition \ref{prop:eigen-rec} (a).
Moreover, we assume that $k \leq n$, since otherwise all relevant
eigenvalues are $0$ (since $\R_{n,k} = 0$ for $k > n$).

From $\lambda \sm \mu\neq\left(  n\right)   \sm \varnothing$, we
know $\lambda^{\prime} \sm \mu\neq\left(  n-1\right)
 \sm \varnothing$. (Indeed, this is clear if $\mu\neq\varnothing$. If
$\mu=\varnothing$, then $\lambda \sm \mu\neq\left(  n\right)
 \sm \varnothing$ shows that $\lambda\neq\left(  n\right)  $, and
furthermore the fact that $\lambda \sm \mu$ is a horizontal strip ensures
that $\lambda\neq\left(  n-1,1\right)  $; but these two facts ensure that
$\lambda^{\prime}\neq\left(  n-1\right)  $ (since the only partitions
$\lambda$ that would satisfy $\lambda^{\prime}=\left(  n-1\right)  $ are
$\left(  n\right)  $ and $\left(  n-1,1\right)  $) and thus $\lambda^{\prime
} \sm \mu\neq\left(  n-1\right)   \sm \varnothing$ again.)
Hence, we
can apply the induction hypothesis to $\lambda^{\prime} \sm \mu$ instead
of $\lambda \sm \mu$, and conclude that $\mathcal{E}_{\lambda^{\prime
} \sm \mu}(k)\leq\mathcal{E}_{(n-2,1) \sm (1,1)}(k)$ and (by applying
it to $k-1$ instead of $k$) also $\mathcal{E}_{\lambda^{\prime} \sm \mu
}\left(  k-1\right)  \leq\mathcal{E}_{(n-2,1) \sm (1,1)}(k-1)$. Also,
$\mathcal{E}_{(n-2,1) \sm (1,1)}(k-1)\geq0$ by Theorem~\ref{lem:positive}.

Next we shall show that $c_{\lambda \sm \lambda^{\prime}}\leq n-2$. In
fact, it suffices to prove that the content of each cell of $\lambda$ is $\leq
n-2$ (this is sufficient, since $c_{\lambda \sm \lambda^{\prime}}$ is the
content of the single cell in $\lambda \sm \lambda^{\prime}$). To prove
this, assume the contrary. Then, $\lambda$ has a cell with content at least $ n-1$.
Since $\lambda\vdash n$, this cell can only be $\left(  1,n\right)  $, and
thus we conclude that $\lambda=\left(  n\right)  $. Thus, $\mu=\left(
j\right)  $ for some $j\leq n$ (since $\lambda \sm \mu$ is a skew
diagram). Combining this with $d^{\mu}\neq0$, we obtain $\mu=\varnothing$
(since the only desarrangement tableau with only one row is the empty tableau
of shape $\varnothing$). Thus, $\lambda \sm \mu=\left(  n\right)
 \sm \varnothing$, contradicting $\lambda \sm \mu\neq\left(  n\right)
 \sm \varnothing$. Hence, our assumption was wrong, so that $c_{\lambda
 \sm \lambda^{\prime}}\leq n-2$ is proved.

We now have $c_{\lambda \sm \lambda^{\prime}}\leq n-2=c_{\left(
n-1,1\right)   \sm \left(  n-1,1\right)  ^{\prime}}$ (since the skew
diagram $\left(  n-1,1\right)   \sm \left(  n-1,1\right)  ^{\prime}$
consists of the single cell $\left(  1,n-1\right)  $). Since $q>0$, this
entails $[n+1-k+c_{\lambda \sm \lambda^{\prime}}]_{q}\leq\lbrack
n+1-k+c_{\left(  n-1,1\right)   \sm \left(  n-1,1\right)  ^{\prime}}]_{q}%
$.
Also, $[n + 1 - k + c_{(n-1, 1) \setminus (n-1, 1)'}]_q \geq 0$,
since $n + 1 - k + c_{(n-1, 1) \setminus (n-1, 1)'}
= n + 1 - k + \tup{n-2} = 2n-1-k \geq 0$ (because $k \leq n$).

Now, Proposition \ref{prop:eigen-rec} (c) yields
\begin{align*}
\mathcal{E}_{\lambda \sm \mu}(k)  & =q^{k}\,\underbrace{\mathcal{E}%
_{\lambda^{\prime} \sm \mu}(k)}_{\leq\mathcal{E}_{(n-2,1) \sm 
(1,1)}(k)}+\underbrace{[n+1-k+c_{\lambda \sm \lambda^{\prime}}]_{q}}%
_{\leq\lbrack n+1-k+c_{\left(  n-1,1\right)   \sm \left(  n-1,1\right)
^{\prime}}]_{q}}\,\underbrace{\mathcal{E}_{\lambda^{\prime} \sm \mu}%
(k-1)}_{\leq\mathcal{E}_{(n-2,1) \sm (1,1)}(k-1)}\\
& \leq q^{k}\,\mathcal{E}_{(n-2,1) \sm (1,1)}(k)+[n+1-k+c_{\left(
n-1,1\right)   \sm \left(  n-1,1\right)  ^{\prime}}]_{q}\,\mathcal{E}%
_{(n-2,1) \sm (1,1)}(k-1)\\
& =\mathcal{E}_{(n-1,1) \sm (1,1)}(k)\qquad\left(  \text{again by
Proposition \ref{prop:eigen-rec} (c), now applied to }\left(  n-1,1\right)
 \sm \left(  1,1\right)  \right)  .
\end{align*}
(Note that we have tacitly used $\mathcal{E}_{(n-2,1) \sm (1,1)}%
(k-1)\geq0$ and $[n + 1 - k + c_{(n-1, 1) \setminus (n-1, 1)'}]_q \geq 0$
here as we multiplied two inequalities.) This completes the
induction step, and thus the proof of Proposition \ref{prop:max2}.
\end{proof}

With this proposition, the proof of Theorem~\ref{TheoremC} is complete.

\subsection{More about the eigenvalues}

Some more properties of the polynomials $\eigen_{\lambda \sm \mu}\tup{k}$ are not hard to show:

\begin{prop} \label{prop:Edeg}
Let $q$ be an indeterminate.
Let $k \geq 0$, and let $\lambda \sm \mu$ be a horizontal strip with $\lambda \vdash n$.

\begin{enumerate}
\item[(a)] If $k > \abs{\lambda \sm \mu}$, then $\eigen_{\lambda \sm \mu}\tup{k}$ is identically zero.

\item[(b)] Now assume that $k \leq \abs{\lambda \sm \mu}$.
Then, $\eigen_{\lambda \sm \mu}\tup{k}$ is a monic polynomial in $q$ of degree $k\tup{n-k} + w_k\tup{\lambda \sm \mu}$, where $w_k\tup{\lambda \sm \mu}$ is the sum of the contents of the rightmost $k$ boxes of $\lambda \sm \mu$.
\end{enumerate}
\end{prop}

\begin{proof}
(a) The equality \eqref{eq:def-eigen2} expresses $\eigen_{\lambda \sm \mu}(k)$ as a sum. If $k > n-j$, then this sum is empty, since there are no $k$ distinct numbers $\ell_1 < \ell_2 < \cdots < \ell_k$ between $j$ and $n$ in this case.
Hence, $\eigen_{\lambda \sm \mu}(k)$ is identically zero if $k > n-j$.
Since $\abs{\lambda \sm \mu} = n-j$, this is precisely the claim of Proposition~\ref{prop:Edeg} (a).

(b) The equality \eqref{eq:def-eigen2} expresses $\eigen_{\lambda \sm \mu}(k)$ as a sum. As we saw in the proof of Theorem~\ref{lem:positive}, all addends of this sum are polynomials with nonnegative integer coefficients.
Moreover, all these polynomials are monic, since any positive $q$-integer (i.e., any $q$-integer $\ive{u}_q$ with $u > 0$) is a monic polynomial in $q$ (namely, it is $1 + q + q^2 + \cdots + q^{u-1}$).
Thus, it suffices to show that the largest degree of these addends is $k\tup{n-k} + w_k\tup{\lambda \sm \mu}$, and is attained for only one addend.

To show this, we observe that
each $\dq_{\ell_m, m, \lambda \sm \mu} = [\ell_m + 1 - m + \content_{\frakt^{\lambda \sm \mu}, \ell_m}]_q$ appearing in \eqref{eq:def-eigen2} is a positive $q$-integer (since \eqref{eq:pf:lem:positive:posnum} says that $\ell_m + 1 - m + \content_{\frakt^{\lambda \sm \mu}, \ell_m} > 0$), and thus
is a monic polynomial of degree $\ell_m + 1 - m + \content_{\frakt^{\lambda \sm \mu}, \ell_m} - 1$
(since any positive $q$-integer $\ive{u}_q$ is a monic polynomial in $q$ of degree $u-1$).

Therefore, any addend of the sum in \eqref{eq:def-eigen2} is a polynomial in $q$ of degree
\[
nk - {k \choose 2} + \sum_{m=1}^k \tup{-\ell_m + \tup{\ell_m + 1 - m + \content_{\frakt^{\lambda \sm \mu}, \ell_m} - 1}}
\qquad \text{ for some } j < (\ell_1 < \ell_2 < \cdots < \ell_k) \leq n.
\]
This formula for the degree can easily be simplified to
\[
nk - {k \choose 2} + \sum_{m=1}^k \tup{-m + \content_{\frakt^{\lambda \sm \mu}, \ell_m}}
= \underbrace{nk - {k \choose 2} + \sum_{m=1}^k \tup{-m}}_{= nk - k^2 = k \tup{n-k}} + \sum_{m=1}^k \content_{\frakt^{\lambda \sm \mu}, \ell_m}
= k \tup{n-k} + \sum_{m=1}^k \content_{\frakt^{\lambda \sm \mu}, \ell_m}.
\]
This is clearly maximized for exactly one choice of indices $\ell_1,\ell_2,\ldots,\ell_k$: namely, for the choice in which the boxes of $\frakt^{\lambda \sm \mu}$ containing the entries $\ell_1,\ell_2,\ldots,\ell_k$ are the $k$ boxes of $\lambda \sm \mu$ that have the largest contents.
Since the contents of the boxes of $\lambda \sm \mu$ strictly increase from left to right (because $\lambda \sm \mu$ is a horizontal strip), these are precisely the $k$ rightmost boxes of $\lambda \sm \mu$.
In this case, the sum $\sum_{m=1}^k \content_{\frakt^{\lambda \sm \mu}, \ell_m}$ becomes precisely $w_k\tup{\lambda \sm \mu}$, and thus the above formula for the degree of the addend becomes $k\tup{n-k} + w_k\tup{\lambda \sm \mu}$.
Thus, the latter degree is the degree of the whole polynomial $\eigen_{\lambda \sm \mu}\tup{k}$.
Furthermore, the polynomial is monic, since this degree is attained for only one choice of $\ell_1,\ell_2,\ldots,\ell_k$.
\end{proof}

The next proposition is concerned with the particular case when the horizontal strip $\lambda \sm \mu$ is supported on a single row.
This case makes up for most examples for low $n$.

\begin{prop} \label{prop:onewor}
Let $\lambda \sm \mu$ be a horizontal strip whose boxes all lie in the same row.
Let $k \geq 0$. Then, $\eigen_{\lambda \sm \mu}\tup{k}$ (as a polynomial in $q$) is palindromic (i.e., its sequence of coefficients reads the same forward and backward) and unimodal (i.e., its coefficients first weakly increase and then weakly decrease).
\end{prop}

\begin{proof}[Proof idea.]
The degree of the polynomial can be obtained from Proposition~\ref{prop:Edeg}.
Now, induct on $n$ using Proposition~\ref{prop:eigen-rec} (c) and the fact that palindromic unimodal polynomials are closed under multiplication (\cite[Proposition 1]{Stanley-LogConc}).
\end{proof}

\begin{question}
Are all the eigenvalues $\eigen_{\lambda \sm \mu}(k)$ unimodal as polynomials in $q$?
This has been verified using SageMath for all $n \leq 28$.
(Note that, as we saw in Example~\ref{exa:non-q-ints}, these polynomials are generally not palindromic.
Nor are they generally log-concave, as we see for $\eigen_{(2, 1, 1) \sm (1, 1)}(1)$.)
\end{question}

\begin{question}
Assume that $q$ is generic (e.g., an indeterminate).
Then, the dimension of the $\kk$-subalgebra of $\HH_n$ generated by $\R_{n,1}, \R_{n,2}, \ldots, \R_{n,n}$ equals the total number of distinct ``eigenvalue packets'' (i.e., tuples $\tup{\varepsilon_1, \varepsilon_2, \ldots, \varepsilon_n}$ of eigenvalues of $\tup{\R_{n,1}, \R_{n,2}, \ldots, \R_{n,n}}$ acting on a joint eigenvector in $\HH_n$)
(since the bases $\basis_\lambda$ of the Specht modules $S^\lambda$ are joint eigenbases for this subalgebra, and can be combined into a joint eigenbasis of the whole $\HH_n$).
By Theorem~\ref{thm:eigenbasis}, this number is bounded from above by the number of horizontal strips $\lambda \sm \mu$ with $d^\mu \neq 0$.
Yet this bound is not sharp.
Distinct horizontal strips often lead to the same eigenvalue packet, even discounting the obvious case when $\lambda = \mu$.
Can the number be computed?
\end{question}

\section*{Acknowledgments}
We thank Pavel Etingof, Nadia Lafreni\`{e}re, and Vic Reiner for interesting and informative conversations.
This paper was started at the Mathematisches Forschungsinstitut Oberwolfach in October 2024, as the four authors were Oberwolfach Research Fellows (2442p),
and finished at the ICERM program ``Categorification and Computation in Algebraic Combinatorics''\ in Fall 2025.
The first author is partially supported by the NSF MSPRF DMS-2303060 and the second author was partially supported by an NSF GRFP fellowship. 
The fourth author was supported by NSERC (RGPIN-2023-04476).
The SageMath computer algebra system \cite{sagemath} was used to find several of the results.

\bibliographystyle{abbrv}
\bibliography{bibliography}

\begin{thebibliography}{10}

\bibitem{r2r1}
I.~Axelrod-Freed, S.~Brauner, J.~H.-H. Chiang, P.~Commins, and V.~Lang.
\newblock Spectrum of random-to-random shuffling in the {H}ecke algebra, 2024.
\newblock \url{https://arxiv.org/abs/2407.08644v3}.

\bibitem{Bahran}
C.~Bahran.
\newblock Notes on splitting fields, 2025.
\newblock \url{https://arxiv.org/abs/2501.10554}.

\bibitem{BHR}
P.~Bidigare, P.~Hanlon, and D.~Rockmore.
\newblock A combinatorial description of the spectrum for the {Tsetlin} library
  and its generalization to hyperplane arrangements.
\newblock {\em Duke Math. J.}, 99(1):135--174, 1999.

\bibitem{Brown2000}
K.~S. Brown.
\newblock Semigroups, rings, and {M}arkov chains.
\newblock {\em J. Theoret. Probab.}, 13(3):871--938, 2000.
\newblock \url{https://arxiv.org/abs/math/0006145v1}.

\bibitem{BrownDiaconis1998}
K.~S. Brown and P.~Diaconis.
\newblock Random walks and hyperplane arrangements.
\newblock {\em Ann. Probab.}, 26(4):1813--1854, 1998.
\newblock \url{https://pi.math.cornell.edu/~kbrown/papers/hyperplane_walk.pdf}.

\bibitem{Bufetov2020}
A.~Bufetov.
\newblock Interacting particle systems and random walks on {H}ecke algebras,
  2020.
\newblock \url{https://arxiv.org/abs/2003.02730}.

\bibitem{burgess2024domain}
W.~D. Burgess and R.~Raphael.
\newblock On domain-like objects in the category of unitary rings.
\newblock {\em Theory and Applications of Categories}, 41(29):927--959, 2024.

\bibitem{ChambertLoir}
A.~Chambert-Loir.
\newblock {\em ({Mostly}) commutative algebra}.
\newblock Universitext. Cham: Springer, 2021.

\bibitem{DiaconisSaloffCoste1993}
P.~Diaconis and L.~Saloff-Coste.
\newblock Comparison techniques for random walk on finite groups.
\newblock {\em Ann. Probab.}, 21(4):2131--2156, 1993.

\bibitem{DiekerSaliola}
A.~B. Dieker and F.~V. Saliola.
\newblock Spectral analysis of random-to-random {Markov} chains.
\newblock {\em Adv. Math.}, 323:427--485, 2018.

\bibitem{DipperJames86}
R.~Dipper and G.~James.
\newblock Representations of {H}ecke algebras of general linear groups.
\newblock {\em Proc. London Math. Soc. (3)}, 52(1):20--52, 1986.

\bibitem{DummitFoote}
D.~S. Dummit and R.~M. Foote.
\newblock {\em Abstract algebra}.
\newblock Chichester: Wiley, 3rd ed. edition, 2004.

\bibitem{Humphreys}
J.~E. Humphreys.
\newblock {\em Reflection groups and {Coxeter} groups}, volume~29 of {\em Camb.
  Stud. Adv. Math.}
\newblock Cambridge etc.: Cambridge University Press, 1990.

\bibitem{Lafreniere}
N.~Lafreni\`ere.
\newblock {\em Valeurs propres des op\'erateurs de m\'elanges sym\'etris\'es}.
\newblock PhD thesis, Universit\'e du Qu\'ebec \`a Montr\'eal, 2019.
\newblock \url{https://arxiv.org/abs/1912.07718v1}.

\bibitem{Mathas}
A.~Mathas.
\newblock {\em Iwahori-{H}ecke algebras and {S}chur algebras of the symmetric
  group}, volume~15 of {\em University Lecture Series}.
\newblock American Mathematical Society, Providence, RI, 1999.

\bibitem{Murphy92}
G.~E. Murphy.
\newblock On the representation theory of the symmetric groups and associated
  {H}ecke algebras.
\newblock {\em J. Algebra}, 152(2):492--513, 1992.

\bibitem{Murphy95}
G.~E. Murphy.
\newblock The representations of {H}ecke algebras of type {$A_n$}.
\newblock {\em J. Algebra}, 173(1):97--121, 1995.

\bibitem{RSW2014}
V.~Reiner, F.~Saliola, and V.~Welker.
\newblock Spectra of symmetrized shuffling operators.
\newblock {\em Mem. Amer. Math. Soc.}, 228(1072):vi+109, 2014.

\bibitem{Sagan2020}
B.~E. Sagan.
\newblock {\em Combinatorics: the art of counting}, volume 210 of {\em Grad.
  Stud. Math.}
\newblock Providence, RI: American Mathematical Society (AMS), 2020.

\bibitem{schocker-descent}
M.~Schocker.
\newblock The descent algebra of the symmetric group.
\newblock In {\em Representations of finite dimensional algebras and related
  topics in {L}ie theory and geometry}, volume~40 of {\em Fields Inst.
  Commun.}, pages 145--161. Amer. Math. Soc., Providence, RI, 2004.

\bibitem{Stanley-LogConc}
R.~P. Stanley.
\newblock Log-concave and unimodal sequences in algebra, combinatorics, and
  geometry.
\newblock In {\em Graph theory and its applications: East and West. Proceedings
  of the first China-USA international conference, held in Jinan, China, June
  9--20, 1986.}, pages 500--535. New York: New York Academy of Sciences, 1989.

\bibitem{sagemath}
{The Sage Developers}.
\newblock {\em {S}ageMath, the {S}age {M}athematics {S}oftware {S}ystem
  ({V}ersion 10.7)}, 2024.
\newblock \url{https://www.sagemath.org}.

\bibitem{UyemuraReyes2002}
J.-C. Uyemura-Reyes.
\newblock {\em Random walk, semidirect products, and card shuffling}.
\newblock PhD thesis, Stanford University, 2002.

\end{thebibliography}

\appendix

\section{\label{sec:yjm-proofs}Proofs of some folklore lemmas}

\begin{proof}[Proof of Lemma~\ref{TiJi}.]
The definition of $J_n$ yields
\begin{align}
J_n
& =\sum_{i=1}^{n-1}q^{i-n}T_{\left(  i,n\right)  }
=\underbrace{q^{\left(  n-1\right)  -n}}_{=q^{-1}}\underbrace{T_{\left(
n-1,n\right)  }}_{=T_{n-1}}
+\sum_{i=1}^{n-2}\underbrace{q^{i-n}}_{=q^{-1} q^{i-\tup{n-1}}}
T_{\left(  i,n\right)}\nonumber\\
& =q^{-1}T_{n-1}
+q^{-1}\sum_{i=1}^{n-2}q^{i-\tup{n-1}}T_{\left(  i,n\right)  }
.\label{pf.TiJi:0}
\end{align}

Let $i\in \ive{n-2}$. It is known that $s_{n-1}s_{n-2}\cdots
s_i\cdots s_{n-2}s_{n-1}$ (with the subscripts decreasing from $n-1$ to $i$
and then rising again to $n-1$) is a reduced expression for the transposition
$\left(  i,n\right)  $. Hence,
\begin{equation}
T_{\left(  i,n\right)  }
= T_{n-1}T_{n-2}\cdots T_i\cdots T_{n-2}T_{n-1}.
\label{pf.TiJi:1}
\end{equation}
Similarly,
\begin{equation}
T_{\left(  i,n-1\right)  }
= T_{n-2}T_{n-3}\cdots T_i\cdots T_{n-3}T_{n-2}.
\label{pf.TiJi:2}
\end{equation}
Now, \eqref{pf.TiJi:1} can be rewritten as
\[
T_{\left(  i,n\right)  }
=T_{n-1}\underbrace{T_{n-2}T_{n-3}\cdots
T_i\cdots T_{n-3}T_{n-2}}_{\substack{=T_{\left(  i,n-1\right)  }\\\text{(by
\eqref{pf.TiJi:2})}}}T_{n-1}
=T_{n-1}T_{\left(  i,n-1\right)  }T_{n-1}.
\]

We have proved this equality for all $i\in \ive{n-2}$. Multiplying
it by $q^{i-\tup{n-1}}$ and summing over all $i\in \ive{n-2}$, we obtain
\begin{align*}
\sum_{i=1}^{n-2}q^{i-\tup{n-1}}T_{\left(  i,n\right)  }  & =\sum_{i=1}^{n-2}
q^{i-\left(  n-1\right)  }T_{n-1}T_{\left(
i,n-1\right)  }T_{n-1}\\
& =T_{n-1}\underbrace{\sum_{i=1}^{n-2}q^{i-\left(  n-1\right)
}T_{\left(  i,n-1\right)  }}_{\substack{=J_{n-1}\\\text{(by the definition of
}J_{n-1}\text{)}}}T_{n-1}\\
& =T_{n-1}J_{n-1}T_{n-1}.
\end{align*}
Thus, we can rewrite \eqref{pf.TiJi:0} as
\[
J_n = q^{-1}T_{n-1}+q^{-1}T_{n-1}J_{n-1}T_{n-1}
= q^{-1}\left(  1+T_{n-1} J_{n-1}\right)  T_{n-1}.
\]
Multiplying this equality by $T_{n-1}-q+1$ from the right, we find
\begin{align*}
J_n \left(  T_{n-1}-q+1\right)    & =q^{-1}\left(  1+T_{n-1}J_{n-1}\right)
\underbrace{T_{n-1}\left(  T_{n-1}-q+1\right)  }_{\substack{=T_{n-1}^{2}
-\left(  q-1\right)  T_{n-1}=q\\\text{(since \eqref{eq:Hndef:1} yields
}T_{n-1}^{2}=\left(  q-1\right)  T_{n-1}+q\text{)}}}\\
& =q^{-1}\left(  1+T_{n-1}J_{n-1}\right)  q
= 1+T_{n-1}J_{n-1}.
\end{align*}
Thus, $T_{n-1}J_{n-1} = J_n \left(  T_{n-1}-q+1\right)  -1$.
This proves Lemma~\ref{TiJi}.
\end{proof}

For our next proof, we need a minor lemma:

\begin{lem}
\label{lem:commute1}
Let $n,i>0$ and $a \in \HH_n$. Then, $T_i$
commutes with $T_iaT_i + qa$.
\end{lem}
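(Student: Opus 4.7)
The plan is to verify this by direct computation using the quadratic relation $T_i^2 = (q-1)T_i + q$ from \eqref{eq:Hndef:1}. Nothing fancier should be needed, since the claim is a purely algebraic identity in the subalgebra generated by $T_i$ and $a$.

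First I would expand $T_i \cdot (T_i a T_i + q a)$ and $(T_i a T_i + q a) \cdot T_i$ separately. On the left, the leading term is $T_i^2 a T_i$; substituting $T_i^2 = (q-1)T_i + q$ turns this into $(q-1) T_i a T_i + q a T_i$, and adding the $q T_i a$ term gives
\[
T_i(T_i a T_i + q a) = (q-1) T_i a T_i + q a T_i + q T_i a.
\]
On the right, the leading term is $T_i a T_i^2$; the same substitution rewrites it as $(q-1) T_i a T_i + q T_i a$, and adding $q a T_i$ gives the same expression. Thus both sides agree.

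I do not expect any obstacle: the calculation is symmetric in the two occurrences of $T_i^2$, and the $(q-1) T_i a T_i$ terms match automatically while the ``leftover'' $q a T_i$ and $q T_i a$ contributions appear on both sides by construction. The lemma does not require any assumption on $a$ beyond membership in $\HH_n$, so no case analysis on $a$ is needed. This makes the argument a single short computation with no moving parts, which is presumably why it is isolated as a lemma only to be invoked in the subsequent proof of Lemma~\ref{lem:TiJk}.
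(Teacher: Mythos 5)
Your computation is correct and is exactly the paper's proof: both expand $T_i(T_iaT_i+qa)$ and $(T_iaT_i+qa)T_i$ via the quadratic relation \eqref{eq:Hndef:1} and observe that both equal $(q-1)T_iaT_i + qaT_i + qT_ia$. No issues.
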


\begin{proof}
We have
\[
T_i\left(  T_iaT_i+qa\right)  =\underbrace{T_i^{2}}%
_{\substack{=\left(  q-1\right)  T_i+q\\\text{(by
\eqref{eq:Hndef:1})}}}aT_i+qT_ia
= \left(  q-1\right)  T_iaT_i + qaT_i + qT_ia.
\]
The same computation (just with the order of factors reversed) yields
\[
\left(  T_iaT_i+qa\right)  T_i
= \left(  q-1\right)  T_iaT_i + qT_ia + qaT_i.
\]
The right hand sides of these two equalities are clearly equal. Thus, so are
the left hand sides. In other words, $T_i\left(  T_iaT_i+qa\right)
=\left(  T_iaT_i+qa\right)  T_i$. This proves Lemma \ref{lem:commute1}.
\end{proof}

\begin{proof}[Proof of Lemma \ref{lem:TiJk}.]
%
We must prove that $T_i$ commutes
with $J_n$. If $i>n$, then $T_i$ commutes with all the generators
$T_{1},T_{2},\ldots,T_{n-1}$ of $\HH_n$ (by \eqref{eq:Hndef:2}), and
thus commutes with all elements of $\HH_n$, including $J_n$.
Hence, without loss of generality, we assume that $i\leq n$.
Since $i\notin\left\{  n-1,n\right\}  $,
this entails $i\leq n-2$. The definition of $J_n$ yields
\begin{align*}
J_n  & =\sum_{j=1}^{n-1}q^{j-n}T_{\left(  j,n\right)  }=\sum_{j\in
\ive{n-1} \setminus\left\{  i,i+1\right\}  }q^{j-n}T_{\left(  j,n\right)
}+\underbrace{q^{i-n}T_{\left(  i,n\right)  }+q^{\left(  i+1\right)
-n}T_{\left(  i+1,n\right)  }}_{=q^{i-n}\left(  T_{\left(  i,n\right)
}+qT_{\left(  i+1,n\right)  }\right)  }\\
& =\sum_{j\in \ive{n-1}  \setminus\left\{  i,i+1\right\}  }
q^{j-n}T_{\left(  j,n\right)  }+q^{i-n}\left(  T_{\left(  i,n\right)
}+qT_{\left(  i+1,n\right)  }\right)  .
\end{align*}
Hence, in order to show that $T_i$ commutes with $J_n$, it will suffice to
show that
\begin{equation}
T_i\text{ commutes with }T_{\left(  j,n\right)  }\text{ for each }
j\in \ive{n-1}  \setminus\left\{  i,i+1\right\}
\label{pf.lem:TiJk:goal1}
\end{equation}
and that
\begin{equation}
T_i\text{ commutes with }T_{\left(  i,n\right)  }+qT_{\left(  i+1,n\right)
}.
\label{pf.lem:TiJk:goal2}
\end{equation}

Let us first verify \eqref{pf.lem:TiJk:goal1}. Indeed, let $j\in
\ive{n-1}  \setminus\left\{  i,i+1\right\}  $. Then, the transpositions
$s_i=\left(  i,i+1\right)  $ and $\left(  j,n\right)  $ are disjoint (since
$i\leq n-2$) and thus commute: i.e., we have $s_i\left(  j,n\right)
=\left(  j,n\right)  s_i$. Hence, $T_{s_i\left(  j,n\right)  }=T_{\left(
j,n\right)  s_i}$. Moreover, $\left(  j,n\right)  \left(  i\right)  <\left(
j,n\right)  \left(  i+1\right)  $ (since this is just saying $i<i+1$). Hence,
$T_{\left(  j,n\right)  s_i}=T_{\left(  j,n\right)  }T_i$ (since
$T_{ws_i}=T_{w}T_i$ for any $w\in\mathfrak{S}_n$ satisfying $w\left(
i\right)  <w\left(  i+1\right)  $). Furthermore, $\left(  j,n\right)
^{-1}\left(  i\right)  <\left(  j,n\right)  ^{-1}\left(  i+1\right)  $ (again
since $i<i+1$), so that $T_{s_i\left(  j,n\right)  }=T_iT_{\left(
j,n\right)  }$  (since $T_{s_iw}=T_iT_{w}$ for any $w\in\mathfrak{S}_n$
satisfying $w^{-1}\left(  i\right)  <w^{-1}\left(  i+1\right)  $). Thus,
$T_iT_{\left(  j,n\right)  }=T_{s_i\left(  j,n\right)  }=T_{\left(
j,n\right)  s_i}=T_{\left(  j,n\right)  }T_i$. In other words, $T_i$
commutes with $T_{\left(  j,n\right)  }$. This proves \eqref{pf.lem:TiJk:goal1}.

It remains to verify \eqref{pf.lem:TiJk:goal2}.
Here, we observe that
$s_is_{i+1}\cdots s_{n-1}\cdots s_{i+1}s_i$
(where the subscripts rise from $i$ to $n-1$ and then decrease back to $i$)
is a reduced expression for the transposition $\tup{i, n}$.
Thus,
\begin{equation}
T_{\left(  i,n\right)  }
= T_iT_{i+1}\cdots T_{n-1}\cdots T_{i+1}T_i.
\label{pf.lem:TiJk:4}
\end{equation}
Similarly,
\begin{equation}
T_{\left(  i+1,n\right)  }
= T_{i+1}T_{i+2}\cdots T_{n-1}\cdots T_{i+2}T_{i+1}.
\label{pf.lem:TiJk:6}
\end{equation}
Now, we can rewrite \eqref{pf.lem:TiJk:4} as
\[
T_{\left(  i,n\right)  }
= T_i\underbrace{T_{i+1}T_{i+2}\cdots T_{n-1}\cdots
T_{i+2}T_{i+1}}_{\substack{=T_{\left(  i+1,n\right)  }\\\text{(by
\eqref{pf.lem:TiJk:6})}}}T_i
=T_iT_{\left(  i+1,n\right)  }T_i.
\]
Hence,
\begin{equation}
T_{\left(  i,n\right)  }+qT_{\left(  i+1,n\right)  }=T_iT_{\left(
i+1,n\right)  }T_i+qT_{\left(  i+1,n\right)  }.
\label{pf.lem:TiJk:7}
\end{equation}
But Lemma \ref{lem:commute1} (applied to $a=T_{\left(  i+1,n\right)  }$) shows
that $T_i$ commutes with $T_iT_{\left(  i+1,n\right)  }T_i+qT_{\left(
i+1,n\right)  }$. In other words, $T_i$ commutes with $T_{\left(  i,n\right)
}+qT_{\left(  i+1,n\right)  }$ (by
\eqref{pf.lem:TiJk:7}). Thus, \eqref{pf.lem:TiJk:goal2} is proved, and we are
done proving Lemma \ref{lem:TiJk}.
\end{proof}

\section{\label{sec:commut-pf2}Another proof of commutativity}

Another proof of Theorem~\ref{thm:commut} relies on the following lemma:

\begin{lem}
\label{lem:sunshine}
Assume that $\kk \subseteq \mathbb{R}$ and
$q>0$. Let $a\in \B_n^\ast \HH_n$. If
$\B_n a=0$, then $a=0$.
\end{lem}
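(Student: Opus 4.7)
The plan is to exhibit a positive-definite symmetric bilinear form on $\HH_n$ with respect to which left multiplication by $\B_n^\ast$ is adjoint to left multiplication by $\B_n$; the conclusion then drops out in one line.

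Let $\tau : \HH_n \to \kk$ denote the symmetrizing trace defined on the basis $\tup{T_w}_{w \in \symm_n}$ by $\tau(T_w) = \delta_{w,\id}$, and set $\langle x, y \rangle := \tau\tup{x^\ast y}$ for $x, y \in \HH_n$. Since $T_w^\ast = T_{w^{-1}}$, we have $\tau(z^\ast) = \tau(z)$ for every $z \in \HH_n$, which together with the anti-automorphism property of $\ast$ yields both the symmetry
$\langle x, y \rangle = \tau\bigl((x^\ast y)^\ast\bigr) = \tau(y^\ast x) = \langle y, x \rangle$
and the adjoint relation
\[
\langle xa, b \rangle = \tau\bigl((xa)^\ast b\bigr) = \tau(a^\ast x^\ast b) = \langle a, x^\ast b \rangle
\]
for all $x, a, b \in \HH_n$. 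In particular, left multiplication by $\B_n$ is adjoint to left multiplication by $\B_n^\ast$.

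For positive definiteness I would invoke the standard dual-basis identity
\[
\tau\tup{T_v T_{w^{-1}}} = q^{\ell(w)}\, \delta_{v, w} \qquad \text{for all } v, w \in \symm_n,
\]
(see, e.g., \cite{Mathas}), which expresses the fact that $\{q^{-\ell(w)} T_{w^{-1}}\}_{w \in \symm_n}$ is the $\tau$-dual basis to $\{T_w\}$. This gives $\langle T_v, T_w \rangle = \tau\tup{T_{v^{-1}} T_w} = q^{\ell(w)} \delta_{v, w}$, so the Gram matrix of $\langle \cdot, \cdot \rangle$ in the basis $\tup{T_w}_{w \in \symm_n}$ is diagonal with entries $q^{\ell(w)} > 0$ (using $q > 0$ and $\kk \subseteq \mathbb{R}$). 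Hence the form is positive definite on the real vector space $\HH_n$.

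To finish, write $a = \B_n^\ast b$ with $b \in \HH_n$; then $\B_n \B_n^\ast b = \B_n a = 0$, and
\[
\langle a, a \rangle = \langle \B_n^\ast b, \B_n^\ast b \rangle = \langle b, \B_n \B_n^\ast b \rangle = 0,
\]
so positive definiteness forces $a = 0$. The only substantive ingredient is the dual-basis identity for $\tau$, which is standard Hecke algebra material; every other step is a one-line formal consequence of $\ast$ being an involutive anti-automorphism, so I anticipate no real obstacle beyond recalling this identity.
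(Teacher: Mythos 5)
Your argument is correct. It follows the same skeleton as the paper's proof — write $a = \B_n^\ast b$ and use the anti-automorphism to cancel against $\B_n a = 0$ — but where the paper stops at $a^\ast a = b^\ast \B_n a = 0$ and then cites \cite[Lemma 6.1 (1)]{axelrodfreed2024spectrumrandomtorandomshufflinghecke} for the implication $a^\ast a = 0 \Rightarrow a = 0$ (valid for $\kk \subseteq \mathbb{R}$, $q>0$), you prove that positivity ingredient from scratch: you introduce the symmetrizing trace $\tau$, verify that $\langle x,y\rangle = \tau(x^\ast y)$ is symmetric with $\langle \B_n a, b\rangle = \langle a, \B_n^\ast b\rangle$, and deduce positive definiteness from the dual-basis identity $\tau(T_v T_{w^{-1}}) = q^{\ell(w)}\delta_{v,w}$, so that $\langle a,a\rangle = \tau(a^\ast a) = 0$ already forces $a=0$. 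In substance this is the same mechanism (your $\langle a,a\rangle$ is exactly $\tau(a^\ast a)$, and the cited lemma rests on the same trace form), so the mathematical content is identical; what your version buys is self-containedness, replacing the external citation by the standard Hecke-algebra trace identity from \cite{Mathas}, and it even shows that the weaker conclusion $\tau(a^\ast a)=0$, rather than $a^\ast a = 0$, suffices. All the individual steps (symmetry of the form, the adjunction, the diagonal Gram matrix with entries $q^{\ell(w)}>0$, and the final cancellation $\langle \B_n^\ast b, \B_n^\ast b\rangle = \langle b, \B_n \B_n^\ast b\rangle = 0$) check out.
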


\begin{proof}
Assume that $\B_n a=0$. We know that $a\in\B_n^{\ast
}\HH_n$. In other words, $a = \B_n^\ast b$ for some
$b\in\HH_n$. Thus, $a^\ast a=\left(  \B_n^\ast
b\right)  ^\ast a = b^\ast \B_n a=0$ (since $\B_n a=0$),
so that \cite[Lemma 6.1 (1)]%
{r2r1}
yields $a=0$. This proves Lemma \ref{lem:sunshine}.
\end{proof}

\begin{proof}[Proof of Theorem \ref{thm:commut}.]
We induct on $n$ (the case $n=0$ being obvious).

Let $n>0$ and $i,j\geq0$. We must show that
$\R_{n,i}\R_{n,j}=\R_{n,j}\R_{n,i}$.
This is obvious if $i = 0$ (since $\R_{n,i} = 1$ in this case), obvious if $j = 0$ (for similar reasons), obvious if $i > n$ (since $\R_{n,i} = 0$ in this case, by \eqref{eq:Rnk:zero}) and obvious if $j > n$ (for similar reasons).
Hence, we WLOG assume that both $i$ and $j$ are positive integers $\leq n$.
Furthermore, since our claim ($\R_{n,i}\R_{n,j}=\R_{n,j}\R_{n,i}$) is a polynomial identity in $q$, we assume without loss of generality that the ground
field $\kk$ is $\mathbb{R}$ and that $q \in \RR_{>0}$.

We shall first show that
$\B_n \R_{n,j}\R_{n,i} = \B_n \R_{n,i}\R_{n,j}$.

Indeed, Theorem~\ref{thm:BnRnk} yields
\[
\B_n \R_{n,i}=\left(  q^{i}\R_{n-1,i}
+\left(  \ive{n+1-i}_q +q^{n+1-i}J_n\right)  \R_{n-1,i-1}\right) \B_n.
\]
Hence,
\begin{align*}
\B_n \R_{n,i}\R_{n,j} &  =\left(  q^{i}
\R_{n-1,i}+\left(  \ive{n+1-i}_q +q^{n+1-i}J_n\right)
\R_{n-1,i-1}\right)  \B_n \R_{n,j}\\
&  =\left(  q^{i}\R_{n-1,i}+\left(  \ive{n+1-i}_q +q^{n+1-i}
J_n \right)  \R_{n-1,i-1}\right)  \\
&  \ \ \ \ \ \ \ \ \ \ \left(  q^{j}\R_{n-1,j}+\left(
\ive{n+1-j}_q +q^{n+1-j}J_n \right)  \R_{n-1,j-1}\right)
\B_n
\end{align*}
(here, we have applied Theorem~\ref{thm:BnRnk} again, this time to
$\B_n \R_{n,j}$). Interchanging $i$ and $j$, we obtain
the equality
\begin{align*}
\B_n \R_{n,j}\R_{n,i} &  =\left(  q^{j}
\R_{n-1,j}+\left(  \ive{n+1-j}_q +q^{n+1-j}J_n \right)
\R_{n-1,j-1}\right)  \\
&  \ \ \ \ \ \ \ \ \ \ \left(  q^{i}\R_{n-1,i}+\left(
\ive{n+1-i}_q +q^{n+1-i}J_n \right)  \R_{n-1,i-1}\right)
\B_n.
\end{align*}
Our intermediate goal is to show that the left hand sides of these two equalities are the
same. Clearly, it suffices to show that the right hand sides are the same. For
this, it suffices to show that the two factors
\begin{align*}
&  q^{i}\R_{n-1,i}+\left(  \ive{n+1-i}_q +q^{n+1-i}J_n \right)
\R_{n-1,i-1}
\qquad\text{and}\\
&  q^{j}\R_{n-1,j}+\left(  \ive{n+1-j}_q +q^{n+1-j}J_n \right)
\R_{n-1,j-1}
\end{align*}
commute. But this is clear, since the elements $\R_{n-1,i}$,
$\R_{n-1,i-1}$, $\R_{n-1,j}$ and $\R_{n-1,j}$
pairwise commute (by the induction hypothesis) and also commute with $J_n$
(since they lie in $\HH_{n-1}$, but any element of $\HH_{n-1}$
commutes with $J_n$ by Lemma~\ref{lem:TiHn-1}).
Thus, the left hand sides are the same.
In other words, we have shown that
\begin{equation}
\B_n \R_{n,j}\R_{n,i} = \B_n \R_{n,i}\R_{n,j}.
\label{eq.lem:commutB}
\end{equation}

However,
$\R_{n,i}=\frac{1}{\ive{i}!_q}\B_{n,i}^\ast \B_{n,i}\in
\B_n^\ast \HH_n$ (since $i>0$ entails $\B_{n,i}^{\ast
}=\B_n^\ast \B_{n-1}^\ast \cdots\B_{n-i+1}^\ast
\in\B_n^\ast \HH_n$). Thus,
$\R_{n,i}\R_{n,j}\in \B_n^\ast \HH_n$.
Similarly, $\R_{n,j}\R_{n,i}\in \B_n^\ast \HH_n$.
Hence, the difference $\R_{n,i}\R_{n,j}
-\R_{n,j}\R_{n,i}$ belongs to $\B_n^{\ast
}\HH_n$ as well. But \eqref{eq.lem:commutB} shows that
$\B_n \left(  \R_{n,i}\R_{n,j}-\R_{n,j}\R_{n,i}\right)  =0$.
Thus, Lemma \ref{lem:sunshine} (applied
to $a=\R_{n,i}\R_{n,j}-\R_{n,j}\R_{n,i}$)
yields $\R_{n,i}\R_{n,j}-\R_{n,j}\R_{n,i}=0$.
In other words, $\R_{n,i}\R_{n,j}=\R_{n,j}\R_{n,i}$.
This is exactly what we wanted to show.
Thus, the induction is complete.
\end{proof}

\begin{remark}
The above proof seems paradigmatic: It renders the commutativity of the $\R_{n,k}$ an almost predestined consequence of the recursion given in Theorem~\ref{thm:BnRnk}, provided that the general case can be reduced to the fair-weather case of $q > 0$.
It appears likely that the same strategy can be used to prove commutativity of other families in the Hecke algebra, provided that similar recursions are found.
\end{remark}

\section{A theory of split elements}\label{appendix.splitelements}

In this appendix, we shall give an alternative proof for the following part of Theorem~\ref{thm:positive}:

\begin{prop}
\label{prop:evals-qpols}
Let $n, k \geq 0$.
All eigenvalues of $\R_{n,k}$ (acting by right
multiplication on $\HH_n$ over $\kk$) are polynomials in
$q$ with integer coefficients. That is, the characteristic polynomial of
$\R_{n,k}$ (acting by right multiplication on $\HH_n$) factors as a product
of linear terms $x-p_{i}\left(  q\right)  $, where each $p_{i}$ is a
univariate polynomial over $\mathbb{Z}$.
\end{prop}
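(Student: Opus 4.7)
The plan is to induct on $n$, exploiting the recursion $\B_n \R_{n,k} = \mathcal{W}_{n,k} \B_n$ of Theorem~\ref{thm:BnRnk} to build splittings on $\HH_n$ from splittings on $\HH_{n-1}$, where $\mathcal{W}_{n,k} := q^k \R_{n-1,k} + ([n+1-k]_q + q^{n+1-k} J_n) \R_{n-1,k-1}$. To frame the bookkeeping, I would first introduce the notion of a \emph{$\ZZ[q]$-split element}: $a \in \HH_n$ is \emph{split} if there exists a flag $0 = V_0 \subset V_1 \subset \cdots \subset V_{n!} = \HH_n$ of $R_a$-invariant subspaces on which right multiplication by $a$ acts as a scalar in $\ZZ[q]$ on each successive quotient. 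This is equivalent to asking that $R_a$ be upper-triangularizable (over $\kk$, or over a suitable extension) with diagonal entries in $\ZZ[q]$, which in particular forces the characteristic polynomial of $R_a$ to factor as claimed. Three closure properties will be needed: (i) if $a$ is split on $V$ and $W \subseteq V$ is $R_a$-invariant, then $a$ is split on both $W$ and $V/W$, by intersecting and projecting the flag; (ii) if $a, b$ are commuting split elements, then every polynomial in $a$ and $b$ with coefficients in $\ZZ[q]$ is again split, via simultaneous triangularization of commuting operators; (iii) every element of $\ZZ[q]$ is trivially split.

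The induction base $n = 0$ is trivial, and $\R_{n, 0} = 1$ handles $k = 0$. For $n, k \ge 1$, the plan is to show first that $\mathcal{W}_{n,k}$ is split on $\HH_n$. Its three building blocks $\R_{n-1,k}$, $\R_{n-1,k-1}$, and $J_n$ are split on $\HH_n$ for the following reasons. The two $\R$-elements lie in $\HH_{n-1}$, where by the induction hypothesis they are split; I would lift a split flag on $\HH_{n-1}$ to one on $\HH_n$ via the decomposition $\HH_n = \bigoplus_{i=1}^n b_i \HH_{n-1}$ as a free right $\HH_{n-1}$-module, on which right multiplication by an element of $\HH_{n-1}$ acts blockwise as $n$ identical copies. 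The element $J_n$ is split because its minimal polynomial divides the $\ZZ[q]$-polynomial $\prod_c (x - [c]_q)$, where $c$ ranges over the contents of cells of partitions of $n$ (integers in $[1-n, n-1]$); this polynomial identity for $J_n$ can be verified using its action on the Young seminormal basis (cf.\ \cite[Theorem 2.22]{axelrodfreed2024spectrumrandomtorandomshufflinghecke}) in the generic case, then extended to all $q$ by specialization. Finally, $J_n$ commutes with both $\R$-elements by Lemma~\ref{lem:TiHn-1}, and the two $\R$-elements commute with each other by Theorem~\ref{thm:commut} applied inductively in $\HH_{n-1}$. Closure (ii) then delivers splitness of $\mathcal{W}_{n,k}$.

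To transfer splitness from $\mathcal{W}_{n,k}$ to $\R_{n,k}$, set $K = \ker(R_{\B_n})$ and $I = \im(R_{\B_n}) = \HH_n \B_n$. The identity $\B_n \R_{n,k} = \mathcal{W}_{n,k} \B_n$ expresses precisely that the surjection $R_{\B_n}: \HH_n \twoheadrightarrow I$ intertwines $R_{\mathcal{W}_{n,k}}$ on $\HH_n$ with $R_{\R_{n,k}}$ on $I$; in particular, $R_{\mathcal{W}_{n,k}}$ preserves $K$, $R_{\R_{n,k}}$ preserves $I$, and $R_{\B_n}$ induces an isomorphism $\HH_n/K \cong I$ compatible with the induced actions. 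Closure (i) then shows $\mathcal{W}_{n,k}$ is split on $\HH_n/K$, hence $\R_{n,k}$ is split on $I$. Since $\B_{n,k} = \B_{n-k+1} \cdots \B_n$ together with Lemma~\ref{lem:divisible} implies $\R_{n,k} \in I$, the induced action of $R_{\R_{n,k}}$ on $\HH_n/I$ is zero, hence trivially split. Concatenating the split flag on $I$ with one on $\HH_n/I$ produces a split flag on $\HH_n$, completing the induction.

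The principal obstacle is formulating and proving property (ii) carefully: commuting $\ZZ[q]$-split elements must admit a joint split flag whose diagonal entries still lie in $\ZZ[q]$. The standard simultaneous-triangularization proof operates over an algebraically closed field, so here one must track the coefficient ring and argue, inductively in the dimension of the ambient space, that a common eigenvector of $a$ and $b$ exists over $\kk$ with both eigenvalues in $\ZZ[q]$—using that each operator's char poly already splits over $\ZZ[q]$, and that each eigenspace of $a$ is preserved by $b$. A secondary subtlety is the splitness of $J_n$ on the whole of $\HH_n$ (rather than just on Specht modules) without assuming semisimplicity; this should reduce, by a generic-$q$ argument followed by specialization, to the standard fact that the eigenvalues of $J_n$ on each Specht module are $q$-contents.
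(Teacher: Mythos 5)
Your proposal follows the same skeleton as the paper's Appendix~\ref{appendix.splitelements}: induct on $n$, use the recursion $\B_n \R_{n,k} = \mathcal{W}_{n,k}\B_n$ of Theorem~\ref{thm:BnRnk}, get splitness of $\mathcal{W}_{n,k}$ from splitness of $\R_{n-1,k}$, $\R_{n-1,k-1}$ (induction) and of $J_n$ (Murphy's annihilating polynomial), and then transfer splitness across $\B_n$. The packaging differs: the paper defines ``split'' by an annihilating product $\prod_i(a-u_i)=0$ with scalars $u_i\in\kk$, proves closure under commuting sums/products by a short ideal-chain argument (Lemma~\ref{lem:split:1}) valid over any commutative ring, and transfers via the purely computational Theorem~\ref{thm:split:br=wb} ($bc=fb$, $c=db$, $f$ split $\Rightarrow c$ split); your flag/triangularization formulation replaces these with simultaneous triangularization of commuting operators and a kernel/image analysis of $R_{\B_n}$, which is essentially the linear-algebra translation of the same facts (your extra ``$\R_{n,k}$ acts as $0$ on $\HH_n/\im R_{\B_n}$'' step corresponds exactly to the extra factor $u_0=0$ in the paper's proof). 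Your ``principal obstacle'' (a joint flag with diagonal entries that are integer-polynomial values of $q$) is real but surmountable along the lines you sketch; the paper's definition sidesteps eigenvector-existence questions entirely, which also lets it lift splitness from $\HH_{n-1}$ to $\HH_n$ for free, whereas you need the freeness of $\HH_n$ over $\HH_{n-1}$.

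The genuine gap is the treatment of degenerate parameters. The proposition is asserted for an arbitrary field $\kk$ and arbitrary $q\in\kk$, but your induction is run directly over $\kk$ at the given $q$: at $q=0$ the Young--Jucys--Murphy element $J_n$ (and hence $\mathcal{W}_{n,k}$ and the right-hand side of Theorem~\ref{thm:BnRnk}) is not even defined, so the induction cannot start, and your ``generic-$q$ then specialize'' remark is applied only to the splitness of $J_n$, not to the whole argument --- and one cannot specialize a flag in any case. The paper resolves this by proving splitness only over $\mathbb{Q}(q)$ with $q$ an indeterminate (Lemma~\ref{lem:split:Rnk}), then reducing the general statement to the universal case $\kk=\mathbb{Z}[q]$ and descending from eigenvalues in $\mathbb{Q}(q)$ to eigenvalues in $\mathbb{Z}[q]$ via integrality of the eigenvalues over $\mathbb{Z}[q]$ (monic characteristic polynomial) together with the integral closedness of $\mathbb{Z}[q]$ in $\mathbb{Q}(q)$; the factorization then specializes to every $\kk$ and every $q$, including $q=0$ and roots of unity. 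Some such polynomiality/specialization step (or an equivalent integrality argument) is needed to make your proof cover the full statement.
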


Our proof will rely on a notion of \emph{splitness} of an element over a
commutative ring. This notion is somewhat similar to the classical notion of
integrality over a commutative ring (\cite[Section 15.3]{DummitFoote},
\cite[Section 4.1]{ChambertLoir}), but also related to the notion of split
algebras (\cite{Bahran}, \cite[Definition 2.8]{burgess2024domain}). We
believe that it might have uses beyond the proof of
Proposition \ref{prop:evals-qpols}; thus, we present it at a greater
generality than directly necessary.

In this section, $\kk$ shall be an arbitrary commutative ring (not
necessarily a field). Its elements will be called \emph{scalars}.

\begin{definition}
\label{def:split}
Let $A$ be a $\kk$-algebra. An element $a\in A$ is
said to be \emph{split} (over $\kk$) if there exist some scalars
$u_1, u_2, \ldots, u_n \in\kk$ (not necessarily distinct) such that
$\prod\limits_{i=1}^n \left(  a-u_i \right)  =0$.
\end{definition}
Note that above, the product here is well-defined, since all the $a-u_i$ commute.
The elements of $\kk$ itself are split, since each
$a\in\kk$ satisfies $\prod_{i=1}^{1} \left(  a-a\right)  =a-a=0$.
Split elements are integral over $\kk$, but not vice versa.
The relation between the notion of splitness and Proposition
\ref{prop:evals-qpols} is the following:

\begin{prop}
\label{prop:split:matrix}
Assume that $\kk$ is a field. Let $M$ be a
square matrix over $\kk$. Then, $M$ is split (regarded as an element of
the matrix ring) if and only if all eigenvalues of $M$ belong to $\kk$.
\end{prop}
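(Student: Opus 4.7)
The plan is to prove both implications by exploiting the polynomial $p(x) := \prod_{i=1}^n (x - u_i) \in \kk[x]$ hidden in the definition of splitness, together with the Cayley--Hamilton theorem on the other side.

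For the ``if'' direction, I would suppose that all eigenvalues of $M$ lie in $\kk$. Let $\chi_M(x) \in \kk[x]$ be the characteristic polynomial of $M$. Since $\kk$ is a field and the roots of $\chi_M$ in an algebraic closure are exactly the eigenvalues of $M$, which by hypothesis lie in $\kk$, the polynomial $\chi_M$ splits as $\chi_M(x) = \prod_{i=1}^n (x - \lambda_i)$ with $\lambda_i \in \kk$. By Cayley--Hamilton, $\chi_M(M) = 0$, i.e.\ $\prod_{i=1}^n (M - \lambda_i) = 0$, which is exactly the splitness condition.

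For the ``only if'' direction, assume $M$ is split, so there exist $u_1, \ldots, u_n \in \kk$ with $\prod_{i=1}^n (M - u_i) = 0$. Let $\lambda$ be an eigenvalue of $M$ in an algebraic closure $\overline{\kk}$, and pick a corresponding eigenvector $v \neq 0$ in $\overline{\kk}^{\,N}$ (where $M$ is $N \times N$). Since the factors $M - u_i$ commute pairwise and the polynomial $p(x) = \prod_{i=1}^n (x - u_i)$ satisfies $p(M) = 0$, evaluating on $v$ gives $0 = p(M) v = p(\lambda) v$, hence $p(\lambda) = 0$. Therefore $\lambda = u_j$ for some $j$, so $\lambda \in \kk$.

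I do not expect real obstacles here; both directions are one-liners once the polynomial $p(x) = \prod(x - u_i)$ is introduced. The only care needed is to evaluate $p(M)$ on an eigenvector over the algebraic closure rather than over $\kk$ itself (so that eigenvalues and eigenvectors actually exist), and to note that this step uses $\kk$ being a field so that a nonzero vector in $\overline{\kk}^{\,N}$ lets us cancel to get $p(\lambda) = 0$.
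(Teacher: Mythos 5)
Your proof is correct and follows essentially the same route as the paper: the ``if'' direction is the identical Cayley--Hamilton argument, and the ``only if'' direction rests on the same fact that every eigenvalue is a root of any polynomial annihilating $M$. The only cosmetic difference is that you establish this by evaluating $\prod_{i}(M-u_i)$ on an eigenvector over $\overline{\kk}$, whereas the paper cites the minimal polynomial dividing $\prod_i(x-u_i)$; both are equally valid one-liners.
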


\begin{proof}
$\Longrightarrow:$ Assume that $M$ is split. Thus, there exist some scalars
$u_1, u_2, \ldots, u_n \in\kk$ such that 
\[
\prod\limits_{i=1}^n \left(  M-u_i\right)  =0.
\]
Consider these scalars. Hence, the minimal
polynomial of $M$ must divide the polynomial 
\[
\prod\limits_{i=1}^n \left(  x-u_i\right)  .
\]
Therefore, all eigenvalues of $M$ belong to $\left\{
u_1, u_2, \ldots, u_n \right\}  $ (since all eigenvalues of a matrix are
roots of its minimal polynomial), and thus belong to $\kk$ as well.

$\Longleftarrow:$ Assume that all eigenvalues of $M$ belong to $\kk$.
Let $u_1, u_2, \ldots, u_n \in\kk$ be these eigenvalues (with their
algebraic multiplicities). Hence, by the Cayley--Hamilton theorem, we have
$\prod\limits_{i=1}^n \left(  M-u_i\right)  =0$. This shows that $M$ is split.
\end{proof}

Proposition \ref{prop:split:matrix} reduces the proof of Proposition
\ref{prop:evals-qpols} to showing that $\R_{n,k}$ is split over $\mathbb{Z}
\left[  q\right]  $. To show this, we will now show general properties of
split elements.

\begin{lem}
\label{lem:split:1}Let $A$ be a $\kk$-algebra. Let $a$ and $b$ be two
commuting elements of $A$. Let $u_1, u_2, \ldots, u_n \in\kk$ be
scalars such that $\prod_{i=1}^n \left(  a-u_i\right)  =0$. Let
$v_{1},v_{2},\ldots,v_{m}\in\kk$ be scalars such that
$\prod_{j=1}^{m}\left(  b-v_{j}\right)  =0$. Then,
\begin{equation}
\prod_{i=1}^n \ \ \prod_{j=1}^{m}\left(  a+b-u_i-v_{j}\right)  =0
\label{eq:lem:split:1:a+b}
\end{equation}
and
\begin{equation}
\prod_{i=1}^n \ \ \prod_{j=1}^{m}\left(  ab-u_iv_{j}\right)  =0.
\label{eq:lem:split:1:ab}
\end{equation}

\end{lem}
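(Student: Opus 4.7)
The plan is to prove both identities by the same combinatorial expansion, exploiting the commutativity of $a$ and $b$ (and the fact that the $u_i, v_j$ are central scalars) together with a pigeonhole-style argument.

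For \eqref{eq:lem:split:1:a+b}, I first rewrite each factor as
\[
a+b-u_i-v_j = (a-u_i) + (b-v_j).
\]
Expanding the full product over all pairs $(i,j) \in [n] \times [m]$ by the distributive law, I get a sum of $2^{nm}$ terms indexed by subsets $S \subseteq [n]\times[m]$: for each pair $(i,j)$, pick the factor $(a-u_i)$ if $(i,j)\in S$, and pick $(b-v_j)$ otherwise. Because all of $a,b,u_i,v_j$ commute pairwise, the term corresponding to $S$ equals
\[
T_S \;=\; \prod_{i=1}^n (a-u_i)^{r_i(S)} \ \cdot\ \prod_{j=1}^m (b-v_j)^{c_j(S)},
\]
where $r_i(S) = |\{j : (i,j)\in S\}|$ and $c_j(S) = |\{i : (i,j)\notin S\}|$.

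The key combinatorial dichotomy is: either every row is hit by $S$ (that is, $r_i(S) \geq 1$ for all $i \in [n]$), or else there exists an $i_0$ such that $(i_0,j)\notin S$ for every $j$, in which case $c_j(S) \geq 1$ for all $j \in [m]$. In the first case, $T_S$ has $\prod_{i=1}^n (a-u_i) = 0$ as a factor, so $T_S = 0$; in the second case, $T_S$ has $\prod_{j=1}^m (b-v_j) = 0$ as a factor, so again $T_S = 0$. Summing gives \eqref{eq:lem:split:1:a+b}.

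For \eqref{eq:lem:split:1:ab}, the only change is the starting identity
\[
ab - u_i v_j = (a-u_i)\,b + u_i\,(b-v_j),
\]
which one verifies by expansion. Expanding $\prod_{i,j}\bigl((a-u_i)b + u_i(b-v_j)\bigr)$ the same way gives a sum indexed by $S \subseteq [n]\times[m]$ whose term is (after commuting all scalars and $b$'s to one side)
\[
T'_S \;=\; \Bigl(\prod_{(i,j)\notin S} u_i\Bigr) \, b^{|S|} \, \prod_{i=1}^n (a-u_i)^{r_i(S)} \, \prod_{j=1}^m (b-v_j)^{c_j(S)}.
\]
The exact same dichotomy on $S$ forces either $\prod_i(a-u_i)$ or $\prod_j(b-v_j)$ to appear as a factor in $T'_S$, so every $T'_S = 0$, and \eqref{eq:lem:split:1:ab} follows.

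The only conceptual subtlety is the combinatorial lemma that for every $S \subseteq [n]\times[m]$ either $S$ meets every row or $S^c$ contains a full row (and hence meets every column); this is immediate, since if some row $i_0$ is missed by $S$ then the entire row $\{i_0\}\times[m]$ lies in $S^c$. Apart from that, the argument is just a careful expansion using commutativity, and no deeper structure of $A$ or $\kk$ is needed — in particular, $\kk$ need not be a field, which matches the generality in which the lemma is stated.
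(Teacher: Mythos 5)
Your proof is correct, and it takes a genuinely different route from the paper. You expand $\prod_{i,j}\bigl((a-u_i)+(b-v_j)\bigr)$ (respectively $\prod_{i,j}\bigl((a-u_i)b+u_i(b-v_j)\bigr)$, using the identity $ab-u_iv_j=(a-u_i)b+u_i(b-v_j)$) into $2^{nm}$ monomials indexed by subsets $S\subseteq[n]\times[m]$, and kill each monomial by the dichotomy ``either $S$ meets every row, so $\prod_i(a-u_i)=0$ is a factor, or some row lies entirely in $S^c$, so every column contributes a factor $(b-v_j)$ and $\prod_j(b-v_j)=0$ appears.'' All rearrangements are legitimate because every element involved lies in the commutative subalgebra generated by $a$ and $b$, so the argument is complete; and indeed it needs no hypotheses on $\kk$ beyond commutativity, matching the paper's generality. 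The paper instead argues module-theoretically: it filters the commutative subalgebra $B$ by the ideals $U_i=(a-u_1)\cdots(a-u_i)B$, observes that $a$ acts as the scalar $u_i$ on each quotient $U_{i-1}/U_i$, hence $\prod_j(a+b-u_i-v_j)$ (resp.\ $\prod_j(ab-u_iv_j)$) acts as $\prod_j(b-v_j)=0$ (resp.\ $u_i^m\prod_j(b-v_j)=0$) there, and then pushes $1\in U_0$ down the filtration to $U_n=0$. The filtration argument is more structural --- the step ``$P(a)$ acts as $P(u_i)$ on $U_{i-1}/U_i$ for any polynomial $P$'' handles both identities (and would handle other polynomial combinations of $a$ and $b$) uniformly, and it scales linearly in $n$ rather than exponentially in $nm$ --- whereas your expansion is more elementary and self-contained, requiring no talk of ideals or quotient modules. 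Either proof is acceptable here.
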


\begin{proof}
Let $B$ be the $\kk$-subalgebra of $A$ generated by $a$ and $b$. This
subalgebra $B$ is commutative, since its generators $a$ and $b$ commute.

For each $i\in\left[  n\right]  $, we set
$s_{i}:=\prod_{j=1}^{m}\left(  a+b-u_i-v_{j}\right)  $ and
$p_{i}:=\prod_{j=1}^{m}\left(  ab-u_i v_{j}\right)  $.
Thus, our goal is to show that $\prod_{i=1}^n s_{i}=0$ and
$\prod_{i=1}^n p_{i}=0$.

For each $i\in\left\{  0,1,\ldots,n\right\}  $, define the ideal
\[
U_{i}:=\left(  a-u_{1}\right)  \left(  a-u_{2}\right)  \cdots\left(
a-u_i\right)  B
\]
of $B$. Clearly, $U_0 \supseteq U_1 \supseteq U_2 \supseteq
\cdots \supseteq U_n$. Furthermore, $U_0 = B$
(since the product $\left(  a-u_{1}\right)
\left(  a-u_{2}\right)  \cdots\left(  a-u_{0}\right)  $ is empty and thus
equals $1$) and $U_n = 0$ (since $\left(  a-u_{1}\right)  \left(
a-u_{2}\right)  \cdots\left(  a-u_n \right)  =\prod_{i=1}^n \left(
a-u_i\right)  =0$).

Now, fix $i\in\left[  n\right]  $. Then, $U_{i-1}=\left(  a-u_{1}\right)
\left(  a-u_{2}\right)  \cdots\left(  a-u_{i-1}\right)  B$, so that
\begin{align*}
\left(  a-u_i\right)  U_{i-1}  &  =\underbrace{\left(  a-u_i\right)
\cdot\left(  a-u_{1}\right)  \left(  a-u_{2}\right)  \cdots\left(
a-u_{i-1}\right)  }_{=\left(  a-u_{1}\right)  \left(  a-u_{2}\right)
\cdots\left(  a-u_i\right)}B\\
&  =\left(  a-u_{1}\right)  \left(  a-u_{2}\right)  \cdots\left(
a-u_i\right)  B=U_{i}.
\end{align*}
Hence, the element $a-u_i$ acts as $0$ on the quotient $B$-module
$U_{i-1}/U_{i}$. In other words, the element $a$ acts as $u_{i}$ on this
$B$-module $U_{i-1}/U_{i}$. Therefore, for any polynomial $P\left(  x\right)
\in\kk\left[  x\right]  $, the element $P\left(  a\right)  \in B$ acts
as $P\left(  u_{i}\right)  $ on this $B$-module $U_{i-1}/U_{i}$. Applying this
to $P\left(  x\right)  =\prod_{j=1}^{m}\left(  x+b-u_i-v_{j}\right)  $, we
conclude that the element $\prod_{j=1}^{m}\left(  a+b-u_i-v_{j}\right)  \in
B$ acts as $\prod_{j=1}^{m}\underbrace{\left(  u_{i}+b-u_i-v_{j}\right)
}_{=b-v_{j}}=\prod_{j=1}^{m}\left(  b-v_{j}\right)  =0$ on this $B$-module
$U_{i-1}/U_{i}$. In other words, the element $s_{i}$ acts as $0$ on the
$B$-module $U_{i-1}/U_{i}$ (since $s_{i}=\prod_{j=1}^{m}\left(  a+b-u_i
-v_{j}\right)  $). In other words, $s_{i}U_{i-1}\subseteq U_{i}$.

A similar argument shows that $p_{i}U_{i-1}\subseteq U_{i}$ (here, we need to
observe that $p_{i}=\prod_{j=1}^{m}\left(  ab-u_iv_{j}\right)  $ acts as
$\prod_{j=1}^{m}\underbrace{\left(  u_{i}b-u_iv_{j}\right)  }_{=u_{i}\left(
b-v_{j}\right)  }=u_{i}^{m}\underbrace{\prod_{j=1}^{m}\left(  b-v_{j}\right)
}_{=0}=0$ on $U_{i-1}/U_{i}$).

Now, forget that we fixed $i$. We thus have shown that each $i\in\left[
n\right]  $ satisfies
\begin{align}
s_{i}U_{i-1}  &  \subseteq U_{i}\qquad\text{and}
\label{pf:lem:split:1:sU}\\
p_{i}U_{i-1}  &  \subseteq U_{i}.
\label{pf:lem:split:1:pU}
\end{align}

Now,
\begin{align*}
\prod_{i=1}^n s_{i}  &  = s_n s_{n-1}\cdots s_{3}s_{2}s_{1}=s_n
s_{n-1}\cdots s_{3}s_{2}s_{1}\underbrace{1}_{\in B=U_{0}}\\
&  \in s_n s_{n-1}\cdots s_{3}s_{2}
\underbrace{s_{1}U_{0}}_{\substack{\subseteq U_{1}\\
\text{(by \eqref{pf:lem:split:1:sU})}}}\subseteq
s_n s_{n-1}\cdots s_{3}\underbrace{s_{2}U_{1}}_{\substack{\subseteq
U_{2}\\\text{(by \eqref{pf:lem:split:1:sU})}}}
\subseteq s_n s_{n-1}\cdots\underbrace{s_{3}U_{2}}_{\substack{\subseteq
U_{3}\\\text{(by \eqref{pf:lem:split:1:sU})}}}
\subseteq\cdots\subseteq U_n =0.
\end{align*}
In other words, $\prod_{i=1}^n s_{i}=0$. Similarly, using
\eqref{pf:lem:split:1:pU}, we obtain $\prod_{i=1}^n p_{i}=0$. As explained
above, this completes the proof of Lemma \ref{lem:split:1}.
\end{proof}

\begin{theorem}
\label{thm:split:a+b-ab}
Let $A$ be a $\kk$-algebra. Let $a$ and $b$ be
two commuting split elements of $A$. Then, the elements $a+b$ and $ab$ are
split as well.
\end{theorem}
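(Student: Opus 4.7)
The statement follows essentially immediately from Lemma~\ref{lem:split:1}, so my plan is little more than unwinding the definitions. Since $a$ and $b$ are split, I would start by choosing scalars $u_1,u_2,\ldots,u_n \in \kk$ and $v_1,v_2,\ldots,v_m \in \kk$ witnessing the splitness of $a$ and $b$ respectively, i.e., satisfying $\prod_{i=1}^n (a-u_i) = 0$ and $\prod_{j=1}^m (b-v_j) = 0$.

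Next, since $a$ and $b$ commute by hypothesis, Lemma~\ref{lem:split:1} applies and yields the two vanishing products $\prod_{i=1}^n \prod_{j=1}^m (a+b-u_i-v_j) = 0$ and $\prod_{i=1}^n \prod_{j=1}^m (ab - u_i v_j) = 0$. The first of these identities exhibits $a+b$ as a root of a product of linear factors $x - (u_i+v_j)$ with each $u_i + v_j \in \kk$, and the second does the same for $ab$ with factors $x - u_i v_j$, each $u_i v_j \in \kk$. By the very definition of splitness (Definition~\ref{def:split}), this shows $a+b$ and $ab$ are split, completing the proof.

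There is no real obstacle here: the theorem is basically a repackaging of Lemma~\ref{lem:split:1}, and the only thing worth remarking on is the use of commutativity of $a$ and $b$, which is needed both to make sense of the products (their factors must mutually commute) and to invoke the lemma. Consequently, the only step that might deserve a sentence is a note that the products in the conclusion of Lemma~\ref{lem:split:1} are well-defined because all factors $a+b-u_i-v_j$ (resp.\ $ab - u_i v_j$) commute with each other, as they lie in the commutative subalgebra generated by $a$ and $b$.
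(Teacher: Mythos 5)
Your proof is correct and is exactly the paper's argument: the paper also deduces the theorem directly from Lemma~\ref{lem:split:1} by choosing witnessing scalars and reading off the vanishing products as splitness certificates for $a+b$ and $ab$. Your extra remark about the factors commuting is a fine (if unnecessary) clarification.
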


\begin{proof}
This follows from Lemma \ref{lem:split:1}.
\end{proof}

\begin{prop}
\label{prop:split:lama}
Let $A$ be a $\kk$-algebra. Let $a$ be a split
element of $A$. Let $\lambda\in\kk$. Then, $\lambda a$ is also split.
\end{prop}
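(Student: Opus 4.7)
The plan is to lift the witness for the splitness of $a$ directly to $\lambda a$ by scaling each root by $\lambda$. Since $a$ is split, by Definition~\ref{def:split} there exist scalars $u_1, u_2, \ldots, u_n \in \kk$ such that $\prod_{i=1}^n (a - u_i) = 0$. I would then propose the scalars $\lambda u_1, \lambda u_2, \ldots, \lambda u_n \in \kk$ as a witness for the splitness of $\lambda a$.

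To verify this works, I would compute
\[
\prod_{i=1}^n \bigl(\lambda a - \lambda u_i\bigr) = \prod_{i=1}^n \lambda (a - u_i) = \lambda^n \prod_{i=1}^n (a - u_i) = \lambda^n \cdot 0 = 0,
\]
where the middle equality uses that $\lambda \in \kk$ is central in the $\kk$-algebra $A$ (so every factor of $\lambda$ commutes past each $(a - u_i)$ and can be collected on the left). This exhibits scalars in $\kk$ whose associated product vanishes on $\lambda a$, so $\lambda a$ is split.

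There is no real obstacle here: the result is essentially immediate from the definition, and the only ingredient beyond formal manipulation is the centrality of scalars. In particular, unlike the proof of Theorem~\ref{thm:split:a+b-ab}, no filtration argument or appeal to Lemma~\ref{lem:split:1} is needed.
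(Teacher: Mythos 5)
Your proof is correct and is essentially identical to the paper's: both take the scalars $\lambda u_1,\ldots,\lambda u_n$ and observe that $\prod_{i=1}^n(\lambda a-\lambda u_i)=\lambda^n\prod_{i=1}^n(a-u_i)=0$. Nothing further is needed.
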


\begin{proof}
Since $a$ is split, there exist scalars $u_1, u_2, \ldots, u_n \in
\kk$ satisfying $\prod_{i=1}^n \left(  a-u_i\right)  =0$. Consider
these scalars. Then, $\prod_{i=1}^n \underbrace{\left(  \lambda a-\lambda
u_{i}\right)  }_{=\lambda\left(  a-u_i\right)  }=\lambda^n
\underbrace{\prod_{i=1}^n \left(  a-u_i\right)  }_{=0}=0$, and this shows
that $\lambda a$ is split.
\end{proof}

\begin{theorem}
\label{thm:split:subalg}
Let $A$ be a commutative $\kk$-algebra that is
generated by split elements. Then, any element of $A$ is split.
\end{theorem}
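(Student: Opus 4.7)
The plan is straightforward: show that the set $S$ of split elements of $A$ is a $\kk$-subalgebra of $A$. Since $S$ contains all generators of $A$ by hypothesis, this will force $S = A$, which is the desired conclusion.

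To verify that $S$ is a $\kk$-subalgebra, I would check the closure properties one by one. First, constants from $\kk$ (and in particular $0$ and $1$) lie in $S$ by the observation immediately following Definition~\ref{def:split}. Closure under scalar multiplication is exactly Proposition~\ref{prop:split:lama}. The crucial closures under addition and multiplication come from Theorem~\ref{thm:split:a+b-ab}: given $a, b \in S$, the elements $a$ and $b$ commute (because $A$ is commutative by assumption), so Theorem~\ref{thm:split:a+b-ab} applies and yields that $a+b$ and $ab$ both belong to $S$. Closure under additive inverses follows from $-a = (-1) \cdot a$ together with Proposition~\ref{prop:split:lama} applied with $\lambda = -1$.

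Having established that $S$ is a $\kk$-subalgebra of $A$ containing all the generators of $A$, I conclude $S = A$, so every element of $A$ is split.

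There is essentially no obstacle here: the entire theorem is a formal consequence of the results in Theorem~\ref{thm:split:a+b-ab} and Proposition~\ref{prop:split:lama}, with the commutativity hypothesis on $A$ precisely removing the one nontrivial side condition (the ``commuting'' assumption) from Theorem~\ref{thm:split:a+b-ab}. The only care needed is to notice that every element of a $\kk$-algebra generated by a set $G$ can be written as a $\kk$-polynomial expression in finitely many elements of $G$, so closure of $S$ under the three operations $+$, $\cdot$, and scaling by $\kk$ suffices.
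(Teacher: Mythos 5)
Your proposal is correct and follows essentially the same route as the paper: both take $S$ to be the set of split elements, verify it is a $\kk$-subalgebra via Theorem~\ref{thm:split:a+b-ab} (with commutativity of $A$ supplying the commuting hypothesis) and Proposition~\ref{prop:split:lama}, and conclude $S=A$ since $S$ contains the generators. No gaps.
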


\begin{proof}
Let $S$ be the set of all split elements of $A$. Then, $S$ is closed under
addition and multiplication (by Theorem \ref{thm:split:a+b-ab}) and under
scaling (by Proposition \ref{prop:split:lama}), and contains $\kk$ as a
subset (since each element of $\kk$ is split). Thus, $S$ is a
$\kk$-subalgebra of $A$. Since $S$ contains a set of generators of $A$
(because $A$ is generated by split elements), we thus see that $S$ must be $A$
itself. Hence, any element of $A$ lies in $S$, and thus is split.
\end{proof}

Theorem~\ref{thm:split:subalg} provides a convenient way to infer the splitness of certain elements from the splitness of other elements.
Another tool for such inferences (particularly useful in noncommutative $\kk$-algebras) is the following theorem:

\begin{theorem}
\label{thm:split:br=wb}
Let $A$ be a $\kk$-algebra. Let $b,c,d,f$ be
elements of $A$ such that $f$ is split and such that $bc=fb$ and $c=db$. Then,
$c$ is split.
\end{theorem}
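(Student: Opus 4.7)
The plan is to exploit the intertwining relation $bc = fb$ to transfer polynomial identities for $f$ across to $c$, and then use $c = db$ to absorb the stray factor of $b$.

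First, I would observe that the relation $bc = fb$ iterates: a quick induction on $k \geq 0$ gives $b c^k = f^k b$. Extending by linearity in the exponent, this yields
\[
b \, P(c) = P(f) \, b \qquad \text{for every polynomial } P(x) \in \kk[x].
\]
Next, since $f$ is split, Definition~\ref{def:split} furnishes scalars $u_1, u_2, \ldots, u_n \in \kk$ with $\prod_{i=1}^n (f - u_i) = 0$. Taking $P(x) := \prod_{i=1}^n (x - u_i)$, the displayed identity becomes
\[
b \, P(c) = P(f) \, b = 0.
\]

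The final step is to use the second hypothesis $c = db$ to conclude. Multiplying $b \, P(c) = 0$ on the left by $d$ gives
\[
c \, P(c) = (db) \, P(c) = d \cdot b \, P(c) = 0,
\]
which rearranges to
\[
(c - 0) \prod_{i=1}^n (c - u_i) = 0.
\]
This is precisely a witness of the form required by Definition~\ref{def:split}, with the scalar list $0, u_1, u_2, \ldots, u_n \in \kk$. Hence $c$ is split, as desired.

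There isn't really a hard step here; the only thing to get right is the order in which $b$ and $c$ are manipulated, since $A$ need not be commutative. The inductive identity $b c^k = f^k b$ is the sole nontrivial ingredient, and it drops out of $bc = fb$ by pushing $b$ leftward through successive copies of $c$, each swap converting a $c$ on the left of $b$ into an $f$ on the right. Everything else is formal.
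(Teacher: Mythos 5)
Your proof is correct and follows essentially the same route as the paper's: both rest on the intertwining identity $b\prod_{i=1}^n(c-u_i)=\prod_{i=1}^n(f-u_i)\,b=0$ (you package it as $b\,P(c)=P(f)\,b$ via $bc^k=f^kb$, the paper pulls $b$ through one linear factor at a time), followed by left-multiplying with $d$ and appending the scalar $0$ to the list of witnesses. No gaps.
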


\begin{proof}
Since $f$ is split, there exist scalars $u_1, u_2, \ldots, u_n \in
\kk$ such that $\prod_{i=1}^n \left(  f-u_i\right)  =0$. Consider
these scalars. For each $i\in\left[  n\right]  $, we have
\begin{equation}
b\left(  c-u_i\right)
=\underbrace{bc}_{=fb}- \,u_{i}b
=fb-u_ib
=\left(  f-u_i\right)  b.
\label{pf:thm:split:ab=bb:1}
\end{equation}

Now,
\begin{align*}
b\prod_{i=1}^n \left(  c-u_i\right)
&  =\underbrace{b\left(
c-u_{1}\right)  }_{\substack{=\left(  f-u_{1}\right)  b\\\text{(by
\eqref{pf:thm:split:ab=bb:1})}}}\left(  c-u_{2}\right)  \left(  c-u_{3}
\right)  \cdots\left(  c-u_n \right) \\
&  =\left(  f-u_{1}\right)  \underbrace{b\left(  c-u_{2}\right)
}_{\substack{=\left(  f-u_{2}\right)  b\\\text{(by
\eqref{pf:thm:split:ab=bb:1})}}}\left(  c-u_{3}\right)  \cdots\left(
c-u_n \right) \\
&  =\left(  f-u_{1}\right)  \left(  f-u_{2}\right)  \underbrace{b\left(
c-u_{3}\right)  }_{\substack{=\left(  f-u_{3}\right)  b\\\text{(by
\eqref{pf:thm:split:ab=bb:1})}}}\cdots\left(  c-u_n \right) \\
&  =\cdots\\
&  =\underbrace{\left(  f-u_{1}\right)  \left(  f-u_{2}\right)  \cdots\left(
f-u_n \right)  }_{=\prod_{i=1}^n \left(  f-u_i\right)  =0}b=0.
\end{align*}
But $c = db$, and thus
$c\prod_{i=1}^n \left(  c-u_i\right)
= d\underbrace{b\prod_{i=1}^n \left(  c-u_i\right)}_{=0}
= 0$.
In other words, $\prod_{i=0}^n \left( c-u_i\right)  =0$, where we set
$u_{0}:=0$. This shows that $c$ is split.
\end{proof}

We observe that Theorem~\ref{thm:split:br=wb} has two remarkable corollaries, which we shall not use:

\begin{cor}
\label{cor:split:ab-ba}Let $A$ be a $\kk$-algebra. Let $a$ and $b$ be
two elements of $A$ such that the product $ba$ is split. Then, $ab$ is also split.
\end{cor}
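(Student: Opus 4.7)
The plan is to deduce Corollary~\ref{cor:split:ab-ba} as an immediate application of Theorem~\ref{thm:split:br=wb}. Recall that the theorem takes in four elements $b, c, d, f$ of $A$ with $f$ split, $bc = fb$, and $c = db$, and concludes that $c$ is split. To prove that $ab$ is split given that $ba$ is split, I would make the assignments $c := ab$ (the element whose splitness we want) and $f := ba$ (the element whose splitness is given), and then find appropriate elements to play the roles of $b$ and $d$.

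The key observation is that our $b$ itself serves as an intertwiner between $ab$ and $ba$. Indeed, by associativity we have
\[
b \cdot (ab) = (ba) \cdot b,
\]
which is exactly the relation $bc = fb$ required by Theorem~\ref{thm:split:br=wb} with the theorem's ``$b$'' taken to be our $b$. The remaining hypothesis $c = db$ is trivially satisfied by the choice $d := a$, since then $db = a \cdot b = ab = c$. All the hypotheses of Theorem~\ref{thm:split:br=wb} are thereby met, and the theorem gives that $c = ab$ is split, as desired.

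I do not anticipate any real obstacle: the proof is a one-line dictionary between the two statements. Conceptually, the corollary is the noncommutative analogue of the classical matrix fact that $ab$ and $ba$ have the same nonzero eigenvalues, where the witness is exactly the same intertwining $b \cdot ab = ba \cdot b$. All the structural work has already been carried out inside Theorem~\ref{thm:split:br=wb}; this corollary is simply the most familiar and transparent consequence of that theorem.
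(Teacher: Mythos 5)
Your proposal is correct and matches the paper's own proof exactly: the paper also applies Theorem~\ref{thm:split:br=wb} with $c = ab$, $f = ba$, $d = a$, and the theorem's $b$ equal to $b$, using the identity $b \cdot ab = ba \cdot b$. Nothing further is needed.
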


\begin{proof}
We can apply Theorem~\ref{thm:split:br=wb} to $c = ab$ and $f = ba$ and $d = a$
(since $ba$ is split and since $b\cdot ab = ba \cdot b$ and $ab = ab$).
Thus we conclude that $ab$ is split.
\end{proof}

\begin{cor}
\label{cor:split:ab=bb}Let $A$ be a $\kk$-algebra. Let $a$ and $b$ be
two elements of $A$ such that $b^{2}=ab$. Assume that $a$ is split. Then, $b$
is also split.
\end{cor}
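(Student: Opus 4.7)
The plan is to derive this corollary as a direct special case of Theorem~\ref{thm:split:br=wb}, which was just proved immediately above. That theorem takes as input four elements $b, c, d, f$ of a $\kk$-algebra $A$ satisfying $bc = fb$ and $c = db$, with $f$ split, and concludes that $c$ is split. I would apply it with the four elements chosen as follows: keep $b$ as the given $b$, set $c := b$, set $d := 1_A$, and set $f := a$.

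With this instantiation, the three hypotheses of Theorem~\ref{thm:split:br=wb} translate to: (i) $f = a$ is split, which holds by assumption; (ii) $bc = fb$, which reads $b \cdot b = a \cdot b$, i.e., $b^2 = ab$, which is exactly our hypothesis; and (iii) $c = db$, which reads $b = 1_A \cdot b$, which is trivially true. The conclusion of Theorem~\ref{thm:split:br=wb} is that $c$ is split, i.e., that $b$ is split, which is exactly what we want.

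There is no substantive obstacle: this corollary is essentially an observation that Theorem~\ref{thm:split:br=wb} already contains the statement we want, upon recognizing the relation $b^2 = ab$ as the pattern $bc = fb$ with $c = b$. One might wish, for expository reasons, to unfold the proof of Theorem~\ref{thm:split:br=wb} in this special case: if $\prod_{i=1}^n (a - u_i) = 0$ with $u_i \in \kk$, then $b^2 = ab$ gives inductively $b \prod_{i=1}^n (b - u_i) = \prod_{i=1}^n (a - u_i) \cdot b = 0$, whence $\prod_{i=0}^n (b - u_i) = 0$ with $u_0 := 0$, showing $b$ is split. But strictly speaking, invoking Theorem~\ref{thm:split:br=wb} as a black box is enough.
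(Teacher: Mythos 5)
Your proof is correct and is essentially identical to the paper's own argument: the paper likewise deduces the corollary by applying Theorem~\ref{thm:split:br=wb} with $c = b$, $d = 1$, $f = a$, checking $bb = b^2 = ab$ and $b = 1b$. Your optional unfolding of the product argument is also accurate, but nothing further is needed.
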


\begin{proof}
We can apply Theorem~\ref{thm:split:br=wb} to $c = b$ and $f = a$ and $d = 1$
(since $a$ is split, and since $bb = b^2 = ab$ and $b = 1b$).
Thus we conclude that $b$ is split.
\end{proof}

To apply the above theory to the Hecke algebras, we need one input from
representation theory:

\begin{prop}
\label{prop:split:J}Assume that $q$ is invertible in $\kk$. Then, the
Young--Jucys--Murphy elements $J_1, J_2, \ldots, J_n$ in $\HH_n$ are
split (over $\kk$).
\end{prop}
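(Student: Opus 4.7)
The plan is to exhibit an explicit annihilating polynomial for each $J_k$. Specifically, for every $k \in \ive{n}$, I will show that
\[
P_k(J_k) := \prod_{c = -(k-1)}^{k-1} \big( J_k - [c]_q \big) = 0 \quad \text{in } \HH_n.
\]
Note that $P_k(x) \in \kk[x]$ is well-defined thanks to the invertibility of $q$, which ensures that the Laurent $q$-integers $[c]_q$ for negative $c$ are genuine elements of $\kk$. Once $P_k(J_k) = 0$ is established, splitness of $J_k$ is immediate from Definition~\ref{def:split}, with the $2k-1$ scalars $[-(k-1)]_q, \ldots, [k-1]_q$ serving as witnesses.

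My first step would be a universality reduction. The Hecke algebra $\HH_n$ over $\kk$ is obtained by base change from the universal Hecke algebra over $\mathbb{Z}[q, q^{-1}]$ along the ring map sending the abstract $q$ to the given $q \in \kk$, and the element $P_k(J_k)$ expands in the basis $(T_w)_{w \in \symm_n}$ with coefficients that are universal Laurent polynomials in $q$ (depending only on $n$ and $k$). Since $\mathbb{Z}[q, q^{-1}] \hookrightarrow \mathbb{Q}(q)$, it suffices to prove $P_k(J_k) = 0$ in $\HH_n$ over the field $\mathbb{Q}(q)$ with $q$ transcendental, and then specialize.

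In this generic setting the hypotheses of Section~\ref{section.eigenvalues} apply: $\HH_n$ is semisimple, the regular right module decomposes via \eqref{eq:Hn=sumSlam} into Specht modules, and each $S^\lambda$ has as a basis the Young seminormal units $w_\frakt$ for $\frakt \in \syt(\lambda)$. By \cite[Theorem 2.22 (2)]{axelrodfreed2024spectrumrandomtorandomshufflinghecke}, these satisfy
\[
w_\frakt \, J_k = [\content_{\frakt, k}]_q \, w_\frakt ,
\]
where $\content_{\frakt, k}$ is the content of the cell of $\frakt$ containing $k$. The subtableau $\frakt\vert_k$ has some shape $\mu \vdash k$, so this cell lies in the Young diagram of $\mu$, and its content must be an integer in $\set{-(k-1), -(k-2), \ldots, k-1}$. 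Hence one of the factors of $P_k(J_k)$ annihilates $w_\frakt$ under right multiplication, for every $\frakt$ and $\lambda$. Since these seminormal units (across all $\lambda$ and $\frakt$, with appropriate multiplicities) assemble into a basis of $\HH_n^{\mathbb{Q}(q)}$, the element $P_k(J_k)$ acts as zero on the entire regular module and therefore equals zero.

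The main care-point will be the base-change step: while standard, one must track explicitly that the coefficients of $P_k(J_k)$ in the $T_w$ basis are the same universal Laurent polynomials regardless of $\kk$, and that the inclusion $\mathbb{Z}[q, q^{-1}] \hookrightarrow \mathbb{Q}(q)$ faithfully detects their vanishing. The genuinely representation-theoretic content of the argument is packaged entirely in the cited eigenvalue formula for $J_k$ on seminormal units, together with the elementary observation bounding the range of cell contents.
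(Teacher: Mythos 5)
Your proposal is correct, but it takes a genuinely different route from the paper. The paper's proof is essentially a one-line citation: Murphy's identity \cite[(5.2)]{Murphy92} already asserts $\prod_{c\in\mathcal{R}(m)}(J_m-c)=0$ over any coefficient ring with $q$ invertible, and splitness follows immediately. You instead re-derive such an annihilating identity: you reduce by base change to the generic case $\kk=\mathbb{Q}(q)$ (valid, since $J_k$ and hence $P_k(J_k)$ have coefficients in $\mathbb{Z}[q,q^{-1}]$ in the $T_w$-basis, and $\mathbb{Z}[q,q^{-1}]\hookrightarrow\mathbb{Q}(q)$), then use semisimplicity, the decomposition \eqref{eq:Hn=sumSlam}, the seminormal eigenvalue formula $w_\frakt J_k=[\content_{\frakt,k}]_q\,w_\frakt$, and the elementary bound $\abs{\content_{\frakt,k}}\leq k-1$ to see that $P_k(J_k)$ kills every Specht module and hence vanishes by faithfulness of the regular representation. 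This is sound; two small points to note are that your argument needs the cited eigenvalue theorem for \emph{all} Jucys--Murphy elements $J_k$ (the present paper only invokes it for $J_n$, though the standard seminormal-form statement does cover all $k$), and that your polynomial always includes the factor $x-[0]_q$, which Murphy omits for $m<4$; this only makes your annihilating polynomial non-minimal, which is harmless for splitness. What your approach buys is independence from Murphy's specific formula (at the cost of relying on the seminormal basis machinery and a polynomiality/base-change step); what the paper's approach buys is brevity and the fact that the cited identity is already stated over an arbitrary ground ring, so no generic-to-special reduction is needed.
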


\begin{proof}
The formula \cite[(5.2)]{Murphy92} says that each $m\in\left[  n\right]  $
satisfies
\[
\prod_{c\in\mathcal{R}\left(  m\right)  }\left(  J_{m}-c\right)  =0,
\]
where\footnote{If some of the $q$-integers $\ive{k}_q$ are equal, then
this $\mathcal{R}\tup{m}$ should be understood as a multiset.}
\[
\mathcal{R}\left(  m\right)  :=
\begin{cases}
\left\{  \ive{k}_q \ \mid\ -m<k<m\right\}  , & \text{if }m\geq4;\\
\left\{  \ive{k}_q \ \mid\ -m<k<m\right\}  \setminus\left\{
0\right\}  , & \text{if }m<4.
\end{cases}
\]
Since all the $c\in\mathcal{R}\left(  m\right)  $ are scalars in $\kk$,
this equality shows that each $J_{m}$ is split. Proposition \ref{prop:split:J}
is proved.
\end{proof}

We are now ready to prove the following:

\begin{lem}
\label{lem:split:Rnk}
Assume that $\kk$ contains the field of rational
functions $\mathbb{Q}\left(  q\right)  $. Let $n,k \geq 0$. Then, the
element $\R_{n,k}$ of $\HH_n$ is split (over $\kk$).
\end{lem}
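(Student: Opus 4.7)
The plan is to induct on $n$, with trivial base case ($n=0$ gives $\HH_0 = \kk$, so every element is split). For the inductive step, assuming that $\R_{n-1,j}$ is split over $\kk$ for every $j \geq 0$, I will show that $\R_{n,k}$ is split for every $k \geq 0$. The cases $k = 0$ (where $\R_{n,0} = 1$) and $k > n$ (where $\R_{n,k} = 0$ by \eqref{eq:Rnk:zero}) are immediate, so assume $1 \leq k \leq n$.

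The key idea is to apply Theorem~\ref{thm:split:br=wb} with $b = \B_n$, $c = \R_{n,k}$, and
\[
f := \mathcal{W}_{n,k} = q^{k}\R_{n-1,k} + \left( \ive{n+1-k}_q + q^{n+1-k}J_n \right)\R_{n-1,k-1}.
\]
The relation $bc = fb$, i.e.\ $\B_n \R_{n,k} = \mathcal{W}_{n,k}\, \B_n$, is precisely Theorem~\ref{thm:BnRnk}. The factorization $c = db$ follows from combining \eqref{eq:Rnk:def} with \eqref{eq:trivrec1}:
\[
\R_{n,k} = \tfrac{1}{\ive{k}!_q}\B^*_{n,k}\B_{n,k} = \underbrace{\tfrac{1}{\ive{k}!_q}\B^*_{n,k}\B_{n-1,k-1}}_{=:d}\cdot \B_n,
\]
where $\ive{k}!_q$ is invertible since $\kk \supseteq \mathbb{Q}(q)$. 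So all that remains is to verify that $f = \mathcal{W}_{n,k}$ is split over $\kk$.

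To see this, observe that the three elements $\R_{n-1,k}$, $\R_{n-1,k-1}$, and $J_n$ pairwise commute: the first two commute with each other by Theorem~\ref{thm:commut}, and each commutes with $J_n$ by Lemma~\ref{lem:TiHn-1} (since they lie in $\HH_{n-1}$). All three are split: $\R_{n-1,k}$ and $\R_{n-1,k-1}$ by the inductive hypothesis, and $J_n$ by Proposition~\ref{prop:split:J}. Therefore the commutative $\kk$-subalgebra of $\HH_n$ generated by these three elements consists entirely of split elements by Theorem~\ref{thm:split:subalg}. Since $\mathcal{W}_{n,k}$ is a polynomial in these three generators with coefficients in $\kk$, we conclude that $\mathcal{W}_{n,k}$ is split, as needed.

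Having checked the three hypotheses of Theorem~\ref{thm:split:br=wb}, we conclude that $\R_{n,k}$ is split, completing the inductive step. I do not anticipate any significant obstacle here: the only nontrivial ingredients are the recursion of Theorem~\ref{thm:BnRnk}, the commutativity of Theorem~\ref{thm:commut}, and the splitness of the Young--Jucys--Murphy elements in Proposition~\ref{prop:split:J}, all of which have already been established. The structure of the argument mirrors the second proof of commutativity in Appendix~\ref{sec:commut-pf2}, where the same recursion is exploited to lift a property from $\HH_{n-1}$ to $\HH_n$; here the role played there by Lemma~\ref{lem:sunshine} is taken over by Theorem~\ref{thm:split:br=wb}.
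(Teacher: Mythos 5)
Your proof is correct and follows essentially the same route as the paper's: induction on $n$, the identity $\B_n \R_{n,k} = \mathcal{W}_{n,k}\B_n$ from Theorem~\ref{thm:BnRnk}, splitness of $\mathcal{W}_{n,k}$ via Theorem~\ref{thm:split:subalg}, Proposition~\ref{prop:split:J}, Theorem~\ref{thm:commut} and Lemma~\ref{lem:TiHn-1}, and finally Theorem~\ref{thm:split:br=wb} with the factorization $\R_{n,k} = d\,\B_n$. The only cosmetic difference is that you write $d = \tfrac{1}{\ive{k}!_q}\B^*_{n,k}\B_{n-1,k-1}$ via \eqref{eq:trivrec1} while the paper unfolds $\B_{n,k}$ from \eqref{eq:Bnk:def}; these are the same element.
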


The assumption on $\kk$ will soon be disposed of, but we need it for
the following proof:

\begin{proof}[Proof of Lemma \ref{lem:split:Rnk}.]
We induct on $n$. The base case ($n=0$)
is obvious, so we pass to the induction step.

If $k=0$, then our claim is obvious, since $\R_{n,0}=1$. So is it for $k>n$,
since $\R_{n,k}=0$ in this case. Thus, without loss of generality,
we assume that $1\leq k\leq n$.
Hence, Theorem \ref{thm:BnRnk} yields
\begin{equation}
\B_n \R_{n,k} = \mathcal{W}_{n,k} \B_n,
\label{eq:lem:split:Rnk:BR=WB}
\end{equation}
where
\[
\mathcal{W}_{n,k}:=q^{k}\R_{n-1,k}+\Big(\left[  n+1-k\right]  _q
+q^{n+1-k}J_n \Big)\R_{n-1,k-1}.
\]

Now we shall show that $\mathcal{W}_{n,k}$ is split. Indeed, by our induction
hypothesis, the elements $\R_{n-1,k}$ and $\R_{n-1,k-1}$ are split (in
$\HH_{n-1}$ and thus also in the larger algebra $\HH_n$). So is the element
$J_n$ (by Proposition \ref{prop:split:J}). Moreover, these three elements
$\R_{n-1,k}$ and $\R_{n-1,k-1}$ and $J_n$ all commute (indeed, Theorem
\ref{thm:commut} shows that the elements $\R_{n-1,k}$ and $\R_{n-1,k-1}$
commute, whereas Lemma~\ref{lem:TiHn-1} yields that $J_n$ commutes with both
of them as well). Hence, the $\kk$-subalgebra $A$ of $\HH_n$
generated by these three elements $\R_{n-1,k}$ and $\R_{n-1,k-1}$ and $J_n$
is commutative. Since its generators $\R_{n-1,k}$ and $\R_{n-1,k-1}$ and
$J_n$ are split, we thus conclude by Theorem \ref{thm:split:subalg} that any
element of this subalgebra $A$ must be split. In particular,
$\mathcal{W}_{n,k}$ must be split (since the definition of
$\mathcal{W}_{n,k}$ shows that $\mathcal{W}_{n,k}$ belongs to $A$).

Moreover, \eqref{eq:Rnk:def} yields
\begin{align*}
\R_{n,k}  &  =\frac{1}{\left[  k\right]  !_q}\B_{n,k}^\ast \B_{n,k}\\
&  =\frac{1}{\left[  k\right]  !_q}\B_{n,k}^\ast \B_{n-(k-1)}\cdots
\B_{n-1}\B_n\ \ \ \ \ \ \ \ \ \ \left(  \text{by \eqref{eq:Bnk:def}}\right)
\\
&  =\mathcal{U}_{n,k}\B_n,\ \ \ \ \ \ \ \ \ \ \text{where }
\mathcal{U}_{n,k}
:=\frac{1}{\left[  k\right]  !_q}\B_{n,k}^\ast \B_{n-(k-1)}
\cdots\B_{n-2}\B_{n-1}.
\end{align*}

Therefore, Theorem \ref{thm:split:br=wb} (applied to $\HH_n$, $\B_n$,
$\R_{n,k}$, $\mathcal{U}_{n,k}$ and $\mathcal{W}_{n,k}$ instead of $A$, $b$,
$c$, $d$ and $f$) shows that $\R_{n,k}$ is split (because of
\eqref{eq:lem:split:Rnk:BR=WB}). This completes the induction step, and thus
proves Lemma \ref{lem:split:Rnk}.
\end{proof}

We can now prove Proposition \ref{prop:evals-qpols}:

\begin{proof}
[Proof of Proposition \ref{prop:evals-qpols}.]We must
show that the characteristic polynomial of $\R_{n,k}$  (acting by right
multiplication on $\HH_n$ over $\kk$) can be factored into monic
linear factors of the form $x-p_{i}\left(  q\right)  $, where each $p_{i}$ is
a univariate polynomial over $\mathbb{Z}$.

Clearly, it suffices to show this in the case when our base ring $\kk$
is $\mathbb{Z}\left[  q\right]  $. Thus,
without loss of generality, we assume that we are in this
case. Note that $\mathbb{Z}\left[  q\right]  $ is a unique factorization
domain with quotient field $\mathbb{Q}\left(  q\right)  $; it is thus
integrally closed in $\mathbb{Q}\left(  q\right)  $ (since any unique
factorization domain is integrally closed in its quotient field --- see
\cite[Theorem (4.1.9)]{ChambertLoir}). In other words, any element of
$\mathbb{Q}\left(  q\right)  $ that is integral over $\mathbb{Z}\left[
q\right]  $ must belong to $\mathbb{Z}\left[  q\right]  $.

The equality \eqref{eq:uI:Rnk=} contains no denominators or negative powers of
$q$. Thus, the element $\R_{n,k}\in\HH_n$ is defined over the ring
$\mathbb{Z}\left[  q\right]  $. We shall view the Hecke algebra $\HH_n$
defined over the ring $\mathbb{Z}\left[  q\right]  $ as a subring of the
corresponding Hecke algebra $\HH_n$ defined over the field $\mathbb{Q}%
\left(  q\right)  $. The element $\R_{n,k}$ belongs to both of these Hecke
algebras, and thus acts on them both by right multiplication; the actions
clearly have the same characteristic polynomial and the same eigenvalues. We
must show that this characteristic polynomial can be factored into monic
linear factors of the form $x-p_{i}\left(  q\right)  $ with $p_{i}
\in\mathbb{Z}\left[  q\right]  $.

By the Cayley--Hamilton theorem, it suffices to show that all eigenvalues of
$\R_{n,k}$ belong to $\mathbb{Z}\left[  q\right]  $.

Recall that $\R_{n,k}$ is defined over the ring $\mathbb{Z}\left[  q\right]
$. Hence, the characteristic polynomial of $\R_{n,k}$ is a monic polynomial of
degree $n!$ with coefficients in $\mathbb{Z}\left[  q\right]  $. Therefore, all
eigenvalues of $\R_{n,k}$ are integral over the ring $\mathbb{Z}\left[
q\right]  $ (since they are the roots of the characteristic polynomial of
$\R_{n,k}$).

But Lemma \ref{lem:split:Rnk} shows that $\R_{n,k}$ is split over the field
$\mathbb{Q}\left(  q\right)  $. Hence, Proposition \ref{prop:split:matrix}
(applied to the field $\mathbb{Q}\left(  q\right)  $) shows that all
eigenvalues of $\R_{n,k}$ belong to $\mathbb{Q}\left(  q\right)  $. But at the
same time, these eigenvalues are integral over the ring $\mathbb{Z}\left[
q\right]  $, as we just saw. Hence, they must belong to $\mathbb{Z}\left[
q\right]  $ (since any element of $\mathbb{Q}\left(  q\right)  $ that is
integral over $\mathbb{Z}\left[  q\right]  $ must belong to $\mathbb{Z}\left[
q\right]  $). As explained above, this proves Proposition
\ref{prop:evals-qpols}.
\end{proof}

\section{The eigenvalues $\eigen_{\lambda \setminus \mu}(k)$, expanded}\label{sec:Appendix-Raw-Data}

For $3 \leq n \leq 5$, we include the polynomial expansion of the nonzero eigenvalues of the $\R_{n, k}$; compare with the tables in Examples~\ref{ex:eigen-n=3}, \ref{ex:eigen-n=4}, and \ref{ex:eigen-n=5}.

\subsection*{$n = 3$}\begin{center}
\renewcommand{\arraystretch}{1.5}
\begin{tabular}{c|c|c}
   $\lambda \setminus \mu$  &  $k$ & $\eigen_{\lambda \setminus \mu}(k)$\\ \hline \hline
    $(3) \setminus \emptyset$ & $1$ & $q^{4} + 2q^{3} + 3q^{2} + 2q + 1
$\\ \hline 
 $(3) \setminus \emptyset$ & $2$ & $q^{5} + 3q^{4} + 5q^{3} + 5q^{2} + 3q + 1
$\\ \hline
    $(3) \setminus \emptyset$ & $3$ & $q^{3} + 2q^{2} + 2q + 1
$\\ \hline
     $(2,1) \setminus (1,1)$ & $1$ & $q^3 + q^2 + q + 1$\\ \hline
     $(1,1,1) \setminus (1,1)$ & $1$ & $1$\\ \hline
\end{tabular}
\end{center}

\subsection*{$n = 4$}
\begin{center}
\renewcommand{\arraystretch}{1.5}
\begin{tabular}{c|c|c}
   $\lambda \setminus \mu$  &  $k$ & $\eigen_{\lambda \setminus \mu}(k)$\\ \hline \hline
    $(4) \setminus \emptyset$ & $1$ & $q^{6} + 2q^{5} + 3q^{4} + 4q^{3} + 3q^{2} + 2q + 1
$ \\ \hline 
 $(4) \setminus \emptyset$ & $2$ & $q^{9} + 3q^{8} + 7q^{7} + 11q^{6} + 14q^{5} + 14q^{4} + 11q^{3} + 7q^{2} + 3q + 1
$\\ \hline
    $(4) \setminus \emptyset$ & $3$ & $q^{9} + 4q^{8} + 9q^{7} + 15q^{6} + 19q^{5} + 19q^{4} + 15q^{3} + 9q^{2} + 4q + 1
$\\ \hline
     $(4) \setminus \emptyset$ & $4$ & $q^{6} + 3q^{5} + 5q^{4} + 6q^{3} + 5q^{2} + 3q + 1
$\\ \hline
     $(3,1) \setminus (2,1)$ & $1$ & $q^5 + q^4 + q^3 + q^2 + q + 1$\\ \hline
     $(3,1) \setminus (1,1)$ & $1$ & $q^{5} + 2q^{4} + 2q^{3} + 2q^{2} + 2q + 1
$\\ \hline
     $(3,1) \setminus (1,1)$ & $2$ & $q^{7} + 2q^{6} + 3q^{5} + 4q^{4} + 4q^{3} + 3q^{2} + 2q + 1
$\\ \hline
     $(2,2) \setminus (2,1)$ & $1$ & $q^3 + q^2 + q + 1$\\ \hline
      $(2,1,1) \setminus (2,1)$ & $1$ & $q + 1$\\ \hline
    $(2,1,1) \setminus (1,1)$ & $1$ & $q^{4} + q^{3} + q^{2} + 2q + 1
$\\ 
    \hline
     $(2,1,1) \setminus (1,1)$ & $2$ & $q^3 + q^2 + q + 1$\\ 
\end{tabular}
\end{center}

\newpage
\subsection*{$n = 5$}
\begin{center}
\renewcommand{\arraystretch}{1.5}
\begin{tabular}{c|c|c}
   $\lambda \setminus \mu$  &  $k$ & $\eigen_{\lambda \setminus \mu}(k)$\\ \hline \hline
   $(5) \setminus \emptyset$ & $1$ & $q^{8} + 2q^{7} + 3q^{6} + 4q^{5} + 5q^{4} + 4q^{3} + 3q^{2} + 2q + 1
$\\ \hline
   $(5) \setminus \emptyset$ & $2$ & $q^{13} + 3q^{12} + 7q^{11} + 13q^{10} + 20q^{9} + 26q^{8} + 30q^{7} + $\\
& & $30q^{6} + 26q^{5} + 20q^{4} + 13q^{3} + 7q^{2} + 3q + 1
$ \\ \hline
   $(5) \setminus \emptyset$ & $3$ & $q^{15} + 4q^{14} + 11q^{13} + 23q^{12} + 40q^{11} + 59q^{10} + 76q^{9} + 86q^{8} + 86q^{7} + 76q^{6} +  
$ \\ & & $ 59q^{5} + 40q^{4} + 23q^{3} + 11q^{2} + 4q + 1$\\ \hline
   $(5) \setminus \emptyset$ & $4$ & $q^{14} + 5q^{13} + 14q^{12} + 29q^{11} + 49q^{10} + 70q^{9} + 86q^{8} + 92q^{7} + 86q^{6} + 70q^{5} + 
$\\ & & $49q^{4} + 29q^{3} + 14q^{2} + 5q + 1$ \\ \hline
   $(5) \setminus \emptyset$ & $5$ & $q^{10} + 4q^{9} + 9q^{8} + 15q^{7} + 20q^{6} + 22q^{5} + 20q^{4} + 15q^{3} + 9q^{2} + 4q + 1
$\\ \hline
   $(4, 1) \setminus (3,1)$ & $1$ & $q^{7} + q^{6} + q^{5} + q^{4} + q^{3} + q^{2} + q + 1
$\\ \hline
   $(4,1) \setminus (2,1)$ & $1$ & $q^{7} + 2q^{6} + 2q^{5} + 2q^{4} + 2q^{3} + 2q^{2} + 2q + 1
$\\ \hline
$(4,1) \setminus (2,1)$ & $2$ & $q^{11} + 2q^{10} + 3q^{9} + 4q^{8} + 5q^{7} + 6q^{6} + 6q^{5} + 5q^{4} + 4q^{3} + 3q^{2} + 2q + 1
$\\ \hline
$(4,1) \setminus (1,1)$ & $1$ & $q^{7} + 2q^{6} + 3q^{5} + 3q^{4} + 3q^{3} + 3q^{2} + 2q + 1
$ \\ \hline
$(4,1) \setminus (1,1)$ & $2$ & $q^{11} + 3q^{10} + 6q^{9} + 9q^{8} + 12q^{7} + 14q^{6} + 14q^{5} + 12q^{4} + 9q^{3} + 6q^{2} + 3q + 1
$\\ \hline
$(4,1) \setminus (1,1)$ & $3$ & $q^{12} + 3q^{11} + 6q^{10} + 10q^{9} + 14q^{8} + 17q^{7} + 18q^{6} + 17q^{5} + 14q^{4} + 10q^{3} + 6q^{2} + 3q + 1
$\\ \hline
$(3,2) \setminus (2,2)$ & $1$ & $q^6 + q^5 + q^4 + q^3 + q^2 + q + 1$\\ \hline
$(3,2) \setminus (3,1)$ & $1$ & $q^4 + q^3 + q^2 + q + 1$\\ \hline
$(3,2) \setminus (2,1)$ & $1$ & $q^{6} + q^{5} + 2q^{4} + 2q^{3} + 2q^{2} + 2q + 1
$\\ \hline
$(3,2) \setminus (2,1)$ & $2$ & $q^{8} + 2q^{7} + 3q^{6} + 4q^{5} + 4q^{4} + 4q^{3} + 3q^{2} + 2q + 1
$ \\ \hline
$(3,1,1) \setminus (2,1,1)$ & $1$ & $q^6 +q^5 + q^4 + q^3 + q^2 + q + 1$ \\ \hline
$(3,1,1) \setminus (3,1)$ & $1$ & $q^2 + q + 1$\\ \hline
$(3,1,1) \setminus (2,1)$ & $1$ & $q^{6} + q^{5} + q^{4} + q^{3} + 2q^{2} + 2q + 1
$\\ \hline
$(3,1,1) \setminus (2,1)$ & $2$ & $q^{6} + 2q^{5} + 2q^{4} + 2q^{3} + 2q^{2} + 2q + 1
$ \\ \hline
$(3,1,1) \setminus (1,1)$ & $1$ & $q^{6} + 2q^{5} + 2q^{4} + 2q^{3} + 3q^{2} + 2q + 1
$\\ \hline
$(3,1,1) \setminus (1,1)$ & $2$ & $q^{9} + 2q^{8} + 3q^{7} + 5q^{6} + 7q^{5} + 7q^{4} + 6q^{3} + 5q^{2} + 3q + 1
$ \\ \hline
$(3,1,1) \setminus (1,1)$ & $3$ & $q^{7} + 2q^{6} + 3q^{5} + 4q^{4} + 4q^{3} + 3q^{2} + 2q + 1
$ \\ \hline
$(2,2,1) \setminus (2,2)$ & $1$ & $q^2 + q + 1$\\ \hline
$(2,2,1) \setminus (2,1,1)$ & $1$ & $q^4 + q^3 + q^2 + q + 1$ \\ \hline
$(2,2,1) \setminus (2,1)$ & $1$ & $q^{4} + q^{3} + 2q^{2} + 2q + 1
$\\ \hline
$(2,2,1) \setminus (2,1)$ & $2$ & $q^{4} + 2q^{3} + 2q^{2} + 2q + 1
$ \\ \hline
$(2,1,1,1) \setminus (1,1,1,1)$ & $1$ & $q^5 + q^4 + q^3 + q^2 + q + 1$\\ \hline
$(2, 1, 1, 1) \setminus (2,1, 1)$ & $1$ & $q + 1$\\ \hline
$(1,1, 1, 1, 1) \setminus (1,1,1, 1)$ & $1$ & $1$\\ \hline
\end{tabular}
\end{center}

\begin{private}
\section{Further relations}
\label{sec:furtherrels}
\sarah[inline]{I don't think this appendix is necessary for the arxiv version}
\darij[inline]{Would it hurt though?}
In this appendix, we collect a few further properties of the elements of $\HH_n$ studied above.

\begin{prop}[Commutation relations] \label{prop.JnBn}
    Let $n > 1$. Then:
    \begin{align}
	J_n T_{n-1} - T_{n-1} J_n &= T_{n-1} J_{n-1} - J_{n-1} T_{n-1};
	\label{eq:prop.JnBn.1} \\
	J_n \B_n - \B_n J_n &= \Big( T_{n-1}J_{n-1} - J_{n-1} T_{n-1} \Big) \B_{n-1};
	\label{eq:prop.JnBn.2} \\
    \B^*_n J_n - J_n \B^*_n &= \B^*_{n-1} \Big( J_{n-1} T_{n-1} - T_{n-1}J_{n-1} \Big).
	\label{eq:prop.JnBn.3}
    \end{align}
\end{prop}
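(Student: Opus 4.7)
The three equations are intimately linked, so the plan is to prove them in order and let each feed the next. The main tool is Lemma~\ref{TiJi}, its image under the anti-automorphism $a\mapsto a^\ast$, and Lemma~\ref{lem:TiHn-1}.

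\smallskip
\textbf{Step 1: Prove \eqref{eq:prop.JnBn.1}.} Lemma~\ref{TiJi} gives
\[
T_{n-1} J_{n-1} = J_n (T_{n-1} - q + 1) - 1 = J_n T_{n-1} - (q-1) J_n - 1.
\]
Applying the anti-automorphism $a\mapsto a^\ast$ to this identity, and using $T_{n-1}^\ast = T_{n-1}$ together with $J_{n-1}^\ast = J_{n-1}$ and $J_n^\ast = J_n$ (by \eqref{eq:star:J}), yields
\[
J_{n-1} T_{n-1} = (T_{n-1} - q + 1) J_n - 1 = T_{n-1} J_n - (q-1) J_n - 1.
\]
Subtracting these two equalities, the $(q-1) J_n$ and the $-1$ terms cancel, leaving
\[
T_{n-1} J_{n-1} - J_{n-1} T_{n-1} = J_n T_{n-1} - T_{n-1} J_n,
\]
which is precisely \eqref{eq:prop.JnBn.1}.

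\smallskip
\textbf{Step 2: Prove \eqref{eq:prop.JnBn.2}.} The definition of $\B_n$ immediately gives the recursion $\B_n = 1 + T_{n-1}\, \B_{n-1}$. Hence
\[
J_n \B_n - \B_n J_n = J_n T_{n-1} \B_{n-1} - T_{n-1} \B_{n-1} J_n.
\]
Since $\B_{n-1} \in \HH_{n-1}$, Lemma~\ref{lem:TiHn-1} shows that $\B_{n-1}$ commutes with $J_n$. Therefore we can slide $J_n$ past $\B_{n-1}$ in the second term, and factor:
\[
J_n \B_n - \B_n J_n = (J_n T_{n-1} - T_{n-1} J_n)\, \B_{n-1}.
\]
Now apply Step~1 to rewrite $J_n T_{n-1} - T_{n-1} J_n = T_{n-1} J_{n-1} - J_{n-1} T_{n-1}$, yielding \eqref{eq:prop.JnBn.2}.

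\smallskip
\textbf{Step 3: Prove \eqref{eq:prop.JnBn.3}.} Apply the anti-automorphism $a\mapsto a^\ast$ to both sides of \eqref{eq:prop.JnBn.2}. On the left,
\[
(J_n \B_n - \B_n J_n)^\ast = \B_n^\ast J_n - J_n \B_n^\ast,
\]
using \eqref{eq:star:Bn} and \eqref{eq:star:J} and the fact that $a\mapsto a^\ast$ reverses products. On the right, the same reversal gives
\[
\left( (T_{n-1} J_{n-1} - J_{n-1} T_{n-1})\, \B_{n-1} \right)^\ast = \B_{n-1}^\ast \, (J_{n-1} T_{n-1} - T_{n-1} J_{n-1}),
\]
which matches the claimed right-hand side of \eqref{eq:prop.JnBn.3}.

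\smallskip
There is no real obstacle here; the only subtlety is remembering to invoke Lemma~\ref{lem:TiHn-1} to move $J_n$ past $\B_{n-1}$ in Step~2, and that the anti-automorphism fixes $T_{n-1}$, $J_n$ and $J_{n-1}$ (while reversing products) in Steps~1 and~3.
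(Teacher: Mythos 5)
Your proposal is correct and follows essentially the same route as the paper's proof: Lemma~\ref{TiJi} plus the anti-automorphism $a\mapsto a^\ast$ for \eqref{eq:prop.JnBn.1}, the decomposition $\B_n = 1 + T_{n-1}\B_{n-1}$ together with Lemma~\ref{lem:TiHn-1} for \eqref{eq:prop.JnBn.2}, and applying $a\mapsto a^\ast$ to \eqref{eq:prop.JnBn.2} for \eqref{eq:prop.JnBn.3}. No gaps.
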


\begin{proof} We prove each item in turn:
\begin{enumerate}
    \item Lemma \ref{TiJi} yields $T_{n-1} J_{n-1} = J_n \tup{T_{n-1} -q+1} - 1 = J_n T_{n-1} - (q-1)J_n -1$. Thus,
    \begin{equation}
    J_n  T_{n-1} = T_{n-1} J_{n-1}  + (q-1) J_n  + 1.
    \label{pf.prop.JnBn.1}
    \end{equation}
    By an analogous argument (or by applying the $a \mapsto a^*$ anti-involution of $\HH_n$ to the equality \eqref{pf.prop.JnBn.1} and recalling that $J_n^* = J_n$ and $T_{n-1}^* = T_{n-1}$), we find
    \begin{equation}
    T_{n-1} J_n  = J_{n-1} T_{n-1}  + (q-1) J_n  + 1.
    \label{pf.prop.JnBn.2}
	\end{equation}
	Subtracting this equality form \eqref{pf.prop.JnBn.1}, we find
	\[
	J_n T_{n-1} - T_{n-1} J_n = T_{n-1} J_{n-1} - J_{n-1} T_{n-1}.
	\]
	This proves \eqref{eq:prop.JnBn.1}.
    \item
	The element $J_n$ commutes with $\B_{n-1} $ (by Lemma~\ref{lem:TiHn-1}).
	That is, $\B_{n-1} J_n = J_n \B_{n-1}$.
	On the other hand, Definition \ref{def.B-B*} easily
	yields $\B_n  = 1 + T_{n-1} \B_{n-1}$
	(since $T_{n-1} T_{n-2} \cdots T_i = T_{n-1} (T_{n-2} T_{n-3} \cdots T_i)$
	whenever $i < n$).
	Therefore,
    \begin{align*}
	J_n \B_n - \B_n J_n
	&= J_n \tup{1 + T_{n-1} \B_{n-1}} - \tup{1 + T_{n-1} \B_{n-1}} J_n \\
	&= J_n T_{n-1} \B_{n-1} - \, T_{n-1} \B_{n-1} J_n \\
	&= J_n T_{n-1} \B_{n-1} - \, T_{n-1} J_n \B_{n-1}
	\qquad \tup{\text{since } \B_{n-1} J_n = J_n \B_{n-1}}\\
	&= \tup{J_n T_{n-1} - T_{n-1} J_n} \B_{n-1} \\
	&= \tup{T_{n-1} J_{n-1} - J_{n-1} T_{n-1}} \B_{n-1}
	\qquad \tup{\text{by \eqref{eq:prop.JnBn.1}}}.
	\end{align*}
	Thus, \eqref{eq:prop.JnBn.2} is proved.
	\item Finally, applying $*$ to \eqref{eq:prop.JnBn.2}, we find
    \begin{align*}
    \Big(J_n  \B_n  - \B_n  J_n \Big)^* &= \Big( \big( T_{n-1} J_{n-1}  - J_{n-1}  T_{n-1} \big) \B_{n-1}  \Big)^*
	=\B_{n-1}^*  \big( J_{n-1}  T_{n-1} - T_{n-1} J_{n-1} \big),
    \end{align*}
	since $J_n ^* = J_n$ and $J_{n-1}^* = J_{n-1}$ and $T_{n-1}^* = T_{n-1}$.
	Comparing this with
    \[
	\Big(J_n  \B_n  - \B_n  J_n \Big)^*
	= \B_n^* J_n - J_n \B_n^* \qquad \tup{\text{since }J_n^* = J_n},
    \]
    we obtain \eqref{eq:prop.JnBn.3}. \qedhere
\end{enumerate}
\end{proof}
\end{private}

\begin{private}
\newpage

\section{The affine Hecke algebras}

\darij[inline]{This section has been removed from the paper, since there are no finished results or even concrete conjectures here.}

Assume that $q \neq 0$.
The Hecke algebra $\HH_n$ can be embedded in the \emph{affine Hecke algebra $\HH_n^{\aff}$}, which contains both $\HH_n$ and the Laurent polynomial ring $\kk\ive{x_1^{\pm1}, x_2^{\pm1}, \ldots, x_n^{\pm1}}$ as subalgebras and connects them by the additional relations
\begin{align}
    T_i x_i T_i &= q x_{i+1} \qquad \text{ for all } i \in \ive{n-1}; \label{eq:Hnaff:1} \\
    T_i x_j &= x_j T_i \qquad \text{ whenever } j \notin \set{i, i+1} \label{eq:Hnaff:2}.
\end{align}
In view of \eqref{eq:Hndef:1} and $q \neq 0$, we can replace the relation \eqref{eq:Hnaff:1} by the equivalent relations
\begin{align}
    T_i x_i &= x_{i+1} T_i + \tup{q-1} x_{i+1} \qquad \text{ for all } i \in \ive{n-1}; \label{eq:Hnaff:1a} \\
    T_i x_{i+1} &= x_i T_i - \tup{q-1} x_{i+1} \qquad \text{ for all } i \in \ive{n-1}. \label{eq:Hnaff:1b}
\end{align}
\darij[inline]{Note that $T_i$ commutes with $x_i + x_{i+1}$ and with $x_i x_{i+1}$ and also with all the other $x_j$'s. This allows us to use a lot of reasoning from the classical Hecke/SGA theory even if we can't easily commute $T_i$ past $x_i$.}

As a $\kk$-vector space, $\HH_n^{\aff}$ has a basis consisting of all products $T_w \mathfrak{m}$, where $w$ ranges over the permutations in $\symm_n$ while $\mathfrak{m}$ ranges over the Laurent monomials in $x_1,x_2,\ldots,x_n$.
The involutive anti-automorphism $a \mapsto a^*$ on $\HH_n$ can be extended to an involutive anti-automorphism of $\HH^{\aff}_n$ by setting $f^* := f$ for any Laurent polynomial $f$.
See \cite[Section 5]{Vazirani} for more about this algebra (note that $x_i$ is called $X_i$ in \cite{Vazirani}, and the algebra $\HH_n^{\aff}$ itself is called $H_n$).
When $q=1$, this algebra $\HH_n^{{\aff}}$ becomes the group algebra of the wreath product of the symmetric group $\symm_n$ with its natural $\ZZ$-module $\ZZ^n$.

Now, for any $n,k \ge 0$, we define
\begin{align}
		\Rt_{n,k} & := \frac{1}{\ive{k}!_q} \B^*_{n,k} \tup{x_{n-k+1} \cdots x_{n-1} x_n} \B_{n,k} \in \HH^{\aff}_n.
        \label{eq:Rtnk:def}
\end{align}
This can be viewed as a deformation of $\R_{n,k}$.
We conjecture that an analogue of Theorem~\ref{thm:commut} holds for these elements $\Rt_{n,k}$ as well:

\begin{conj}
    \label{conj:commut-x}
    Let $n\geq0$. The elements $\Rt_{n,k}$ for all $k \geq0$ commute.
\end{conj}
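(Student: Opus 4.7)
The proposal is to adapt the proof of Theorem~\ref{thm:commut}, with the Young--Jucys--Murphy element $J_n$ replaced systematically by the affine generator $x_n$. The $\ast$-symmetry $(\Rt_{n,k})^\ast = \Rt_{n,k}$ is immediate: each $x_i$ is fixed by $\ast$ and the $x_i$'s commute among themselves, so $(x_{n-k+1} \cdots x_n)^\ast = x_{n-k+1} \cdots x_n$, and the rest follows from \eqref{eq:star:Bnk}--\eqref{eq:star:B*nk} exactly as in Lemma~\ref{lem:star:R}. More importantly, $x_n$ commutes with every element of $\HH^{\aff}_{n-1}$: it commutes with $x_1, \ldots, x_{n-1}$ tautologically, and with $T_1, \ldots, T_{n-2}$ by \eqref{eq:Hnaff:2}. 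This is the perfect affine analog of Lemma~\ref{lem:TiHn-1}.

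The crux is to establish an affine analog of Theorem~\ref{thm:BnRnk}: a recursion
\begin{equation*}
    \B_n \Rt_{n,k} = \widetilde{\mathcal{W}}_{n,k} \B_n,
\end{equation*}
in which $\widetilde{\mathcal{W}}_{n,k}$ is a polynomial (with scalar coefficients in $\kk$) in $\Rt_{n-1,k}$, $\Rt_{n-1,k-1}$, and $x_n$. The approach is to expand $\B_n \B^\ast_{n,k} (x_{n-k+1} \cdots x_n) \B_{n,k}$ by first invoking Proposition~\ref{prop:rewritingBnkstar} to replace $\B_n \B^\ast_{n,k}$ with $\B^\ast_{n-1,k}(T_{n-1} \cdots T_{n-k}) \B_{n-k} + \Lambda_{n,k}$, then sliding $\B_{n-k}$ past the Laurent monomial (they commute, since $\B_{n-k} \in \HH_{n-k}$ involves only $T_i$ with $i \le n-k-1$), and finally commuting the factor $(T_{n-1} \cdots T_{n-k})$ through $x_{n-k+1} \cdots x_n$ via the Bernstein relations \eqref{eq:Hnaff:1}--\eqref{eq:Hnaff:2}. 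The expectation is that, after right-multiplication by $\B_{n-k} \B_{n,k} = \B_{n,k+1}$, the terms reorganize into a $q^k \Rt_{n-1,k} \B_n$ contribution and a $\Rt_{n-1,k-1} \B_n$ contribution carrying a coefficient in $\kk[x_n]$, mirroring the $\ive{n+1-k}_q + q^{n+1-k} J_n$ that appears in Theorem~\ref{thm:BnRnk}. Any residual terms should vanish modulo $\ker \B_{n,k}$, in the spirit of Proposition~\ref{identity-mod-ker}.

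Once such a recursion is in hand, the induction runs exactly as in Section~\ref{section.commutativity}: by the inductive hypothesis the elements $\Rt_{n-1,i}, \Rt_{n-1,i-1}, \Rt_{n-1,j}, \Rt_{n-1,j-1}$ pairwise commute, and each commutes with $x_n$, so $\widetilde{\mathcal{W}}_{n,i}$ commutes with $\Rt_{n-1,j-1}$; combining this with $\ast$-symmetry and the affine counterpart of \eqref{eq:trivrec3}, the sandwich argument yields $\Rt_{n,i} \Rt_{n,j} = \Rt_{n,j} \Rt_{n,i}$. The hard part will be the Bernstein computation for the recursion: commuting $T_{n-k}$ past $x_{n-k+1}$ replaces $x_{n-k+1}$ by $x_{n-k}$ plus a scalar correction, so iterating across $x_{n-k+1} \cdots x_n$ produces cross-terms mixing variables inside and outside the window $\set{n-k+1, \ldots, n}$, and it is not transparent that these will collapse into the desired form. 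If a direct reorganization proves stubborn, an alternative is to mimic Appendix~\ref{sec:commut-pf2}: establish an affine version of Lemma~\ref{lem:sunshine} in a suitable specialization, reduce commutativity to the vanishing of $\B_n (\Rt_{n,i} \Rt_{n,j} - \Rt_{n,j} \Rt_{n,i})$, and then conclude by a polynomial-identity argument in $q$ and in the variables $x_i$.
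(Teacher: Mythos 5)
There is no proof to compare against: the statement you are proving is an unproved conjecture in the paper, and the authors' own notes record that it is in fact \emph{false} --- commutativity of the $\Rt_{n,k}$ already fails for $n=3$ as soon as $q\neq 1$ (only the $q=1$ specialization, where $\HH_n^{\aff}$ degenerates to the wreath product $\symm_n\ltimes\ZZ^n$, has a chance). Consequently no completion of your outline can succeed, and it is worth seeing exactly where it must break. Everything you verify honestly is fine: the $\ast$-invariance of $\Rt_{n,k}$, the fact that $x_n$ commutes with $\HH^{\aff}_{n-1}$ (the affine stand-in for Lemma~\ref{lem:TiHn-1}), and even the affine analogue of \eqref{eq:trivrec3}, namely $\ive{k}_q\,\Rt_{n,k}=\B_n^\ast\,\Rt_{n-1,k-1}\,x_n\,\B_n$, all hold. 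The step you defer --- the hoped-for recursion $\B_n\,\Rt_{n,k}=\widetilde{\mathcal{W}}_{n,k}\,\B_n$ with $\widetilde{\mathcal{W}}_{n,k}$ a scalar-coefficient polynomial in $\Rt_{n-1,k}$, $\Rt_{n-1,k-1}$ and $x_n$ --- is precisely the step that cannot exist in that form: if it did, your induction (which is a correct transcription of the argument for Theorem~\ref{thm:commut}, resp.\ Appendix~\ref{sec:commut-pf2}) would establish commutativity, contradicting the $n=3$ counterexample. So the ``hard part'' you flag is not merely hard; it is where the analogy with Theorem~\ref{thm:BnRnk} genuinely collapses.

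Concretely, the obstruction is the one you already sense in the Bernstein relations: pushing $T_{n-k},\dots,T_{n-1}$ through the window monomial $x_{n-k+1}\cdots x_n$ via $T_i x_i = x_{i+1}T_i+(q-1)x_{i+1}$ and $T_i x_{i+1} = x_i T_i-(q-1)x_{i+1}$ creates correction terms proportional to $q-1$ that drag variables $x_{n-k},x_{n-k-1},\dots$ into play; unlike the $J_n$-corrections in the proof of Proposition~\ref{identity-mod-ker}, these do \emph{not} reduce modulo $\ker\B_{n,k}$ to an expression built solely from $\Rt_{n-1,k}$, $\Rt_{n-1,k-1}$ and $x_n$ (note that $T_j\equiv q$ modulo $\ker\B_{n,k}$ says nothing about the new $x$-factors, which are not killed by $\B_{n,k}$). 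Your fallback is equally blocked: an affine Lemma~\ref{lem:sunshine} would reduce the claim to the vanishing of $\B_n\bigl(\Rt_{n,i}\Rt_{n,j}-\Rt_{n,j}\Rt_{n,i}\bigr)$, but that commutator is itself nonzero for generic $q$, so nothing can make this product vanish. If you want a salvageable statement, restrict to $q=1$ (where the paper's Proposition on $\noninv$-type expansions gives the elements a combinatorial meaning) or first search for a different $x$-deformation of $\R_{n,k}$ for which a recursion of the above shape can actually be proved; for the deformation as defined, small-$n$ computation (e.g.\ $n=3$, $k=1,2$) is the fastest way to convince yourself the commutator does not vanish.
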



In the case $q = 1$, the elements $\Rt_{n,k}$ have a clear combinatorial meaning:

\begin{prop}
\label{prop:q=1-R2R-x}
Assume that $q=1$. Let $n \ge k \ge 0$.
Then,
\begin{align*}
\Rt_{n,n-k}
&= \sum_{w \in \symm_n} \ \ \sum_{\substack{1 \leq i_1 < i_2 < \cdots < i_k \leq n; \\ w\left(i_1\right) < w\left(i_2\right) < \cdots < w\left(i_k\right)}}
w \cdot \prod_{\substack{i \in \ive{n}; \\ i \notin \set{i_1,i_2,\ldots,i_k}}} x_i.
\end{align*}
\end{prop}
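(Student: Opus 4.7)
The plan is to mimic the proof of Proposition~\ref{prop:q=1-R2R}, inserting the $x$-monomial $x_{k+1} x_{k+2} \cdots x_n$ into the middle of the factorization \eqref{eq:uI:Rnk=} of $\B^\ast_{n,n-k} \B_{n,n-k}$ and then using the wreath-product relation $\pi \cdot x_j = x_{\pi(j)} \cdot \pi$ (valid for $q=1$) to push monomials past permutations.

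First, I would set $k' = n-k$ in \eqref{eq:Rtnk:def} to rewrite
\[
\Rt_{n,n-k}(1) = \frac{1}{(n-k)!}\, \B^*_{n,n-k}(1) \cdot (x_{k+1} x_{k+2} \cdots x_n) \cdot \B_{n,n-k}(1),
\]
and then apply \eqref{eq:uI:Bnk=} together with $\M^*_{n,n-k} = \M_{n,n-k}$ to expand both $\B_{n,n-k}(1)$ and $\B^*_{n,n-k}(1)$ as $\M_{n,n-k} \X_{n,n-k}$ and $\X^*_{n,n-k} \M_{n,n-k}$ respectively. The key observation is that $x_{k+1} x_{k+2} \cdots x_n$ is symmetric in $x_{k+1}, \ldots, x_n$, and hence $w$-invariant for every $w \in \symm_{\ive{n} - \ive{k}}$; since $w \cdot x_i = x_{w(i)} \cdot w$ at $q = 1$, this monomial commutes with $\M_{n,n-k}(1)$. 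Using \eqref{eq:uI:Mnk2} to evaluate $\M_{n,n-k}^2 = (n-k)! \, \M_{n,n-k}$ then collapses the middle and cancels the $(n-k)!$ denominator, leaving
\[
\Rt_{n,n-k}(1) = \sum_{\substack{I \subseteq \ive{n}; \\ |I|=k}}\ \sum_{w \in \symm_{\ive{n}-\ive{k}}}\ \sum_{\substack{J \subseteq \ive{n}; \\ |J|=k}} u_I^{-1} \cdot (x_{k+1} \cdots x_n) \cdot w \cdot u_J.
\]

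Next I would push the $x$-factors to the right. Since $u_I^{-1}$ sends $\{k+1,\ldots,n\}$ to $\ive{n} \setminus I$ in increasing order, the relation $\pi x_j = x_{\pi(j)} \pi$ (applied repeatedly) yields
\[
u_I^{-1} \cdot (x_{k+1} \cdots x_n) = \Bigl(\prod_{i \in \ive{n} \setminus I} x_i \Bigr) u_I^{-1}.
\]
The same trick shows that for any permutation $\pi = u_I^{-1} w u_J$, we have $\prod_{i \in \ive{n} \setminus I} x_i \cdot \pi = \pi \cdot \prod_{j \in \ive{n} \setminus J} x_j$, because $\pi$ carries $\ive{n} \setminus J$ bijectively onto $\ive{n} \setminus I$. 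This lets me move the $x$-monomial past the permutation factor, so that the $x$-indices get indexed by $J$ (the same on both sides of the sum).

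Finally, I would carry out the same reindexing as in the proof of Proposition~\ref{prop:q=1-R2R}: for fixed $I, J$, the permutations $u_I^{-1} w u_J$ as $w$ ranges over $\symm_{\ive{n}-\ive{k}}$ are exactly those $\pi \in \symm_n$ that are strictly increasing on $J$ and satisfy $\pi(J) = I$. Summing over $I$ first removes the constraint $\pi(J) = I$, and then swapping the order of summation, letting $J$ play the role of $\{i_1 < \cdots < i_k\}$, produces precisely the claimed formula. The only potential obstacle is keeping the two sets of indices --- $I$ (the image, controlling the $x$ indices before they are moved) and $J$ (the source, controlling them afterward) --- straight, but the commutation relation $\pi x_j = x_{\pi(j)} \pi$ handles this cleanly.
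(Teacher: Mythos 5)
Your proof is correct, and it is exactly the natural adaptation of the paper's proof of Proposition~\ref{prop:q=1-R2R} (the paper states this affine variant without giving a proof): you factor $\B^*_{n,n-k}(1)\,(x_{k+1}\cdots x_n)\,\B_{n,n-k}(1)$ as $\X^*_{n,n-k}\,\M_{n,n-k}\,(x_{k+1}\cdots x_n)\,\M_{n,n-k}\,\X_{n,n-k}$ via \eqref{eq:uI:Bnk=} and \eqref{eq:uI:Mnk*}, use that the $\symm_{\ive{n}-\ive{k}}$-invariant monomial commutes with $\M_{n,n-k}(1)$ so that \eqref{eq:uI:Mnk2} cancels the $(n-k)!$, and push monomials through permutations via $\pi x_j = x_{\pi(j)}\pi$. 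All steps check out, including the key observation that $\pi = u_I^{-1} w u_J$ carries $\ive{n}\setminus J$ onto $\ive{n}\setminus I$, after which the reindexing over $I$, $J$ and $w$ from the proof of Proposition~\ref{prop:q=1-R2R} goes through verbatim.
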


OUCH! Conjecture~\ref{conj:commut-x} fails for $n = 3$ when $q \neq 1$.
We probably need another deformation of the $q=1$ case...

\end{private}

\end{document}